\documentclass{article}

\usepackage[preprint]{neurips_2026}

\usepackage[utf8]{inputenc} 
\usepackage[T1]{fontenc}    
\usepackage{hyperref}       
\usepackage{url}            
\usepackage{booktabs}       
\usepackage{amsfonts}       
\usepackage{nicefrac}       
\usepackage{microtype}      
\usepackage{xcolor}         
\usepackage{amsmath,amssymb,amsthm,mathtools}
\usepackage{thm-restate}
\usepackage{booktabs,tabularx,array}
\usepackage{enumitem}

  \providecommand{\R}{\mathbb{R}} %

  \DeclareMathOperator{\E}{{\mathbb E}}

  \providecommand{\cB}{\mathcal{B}}

  \providecommand{\cF}{\mathcal{F}}
  \providecommand{\cG}{\mathcal{G}}
  \providecommand{\cH}{\mathcal{H}}
  \providecommand{\cI}{\mathcal{I}}

  \providecommand{\cN}{\mathcal{N}}
  
  \providecommand{\cP}{\mathcal{P}}
  
  \providecommand{\cR}{\mathcal{R}}
  
  \providecommand{\cT}{\mathcal{T}}

  \providecommand{\cX}{\mathcal{X}}
  \providecommand{\cY}{\mathcal{Y}}
  \providecommand{\cW}{\mathcal{W}}

  \usepackage{bm}

\providecommand{\mycomment}[3]{\todo[caption={},size=footnotesize,color=#1!20]{\textbf{#2: }#3}}%
\providecommand{\inlinecomment}[3]{%
  {\color{#1}#2: #3}}%
\newcommand\commenter[2]%
{%
  \expandafter\newcommand\csname i#1\endcsname[1]{\inlinecomment{#2}{#1}{##1}}
  \expandafter\newcommand\csname #1\endcsname[1]{\mycomment{#2}{#1}{##1}}
}

\newtheorem{proposition}{Proposition}
\newtheorem{lemma}{Lemma}

\newtheorem{definition}{Definition}
\newtheorem{remark}[lemma]{Remark}

\newtheorem{theorem}[lemma]{Theorem}

\newcommand{\e}{\mathbb{E}}

\newcommand{\vp}{\varphi}
\newcommand{\ve}{\varepsilon}
\newcommand{\II}{\mathbb{I}}

\title{Statistical Unlearning of Distributions: 
\\
A Hypothesis Testing Approach}

\author{%
  Aaradhya Pandey \\
  Princeton University\\
\texttt{aaradhyapandey@princeton.edu} 
  \And
  Sanjeev Kulkarni\\
  Princeton University \\
\texttt{kulkarni@princeton.edu} \\
}

\begin{document}

\vspace{-4 mm}
\maketitle
\vspace{-4 mm}

\begin{abstract} 
\vspace{-2 mm}
Machine learning systems increasingly face requirements to forget not only individual data points, but entire domains of information, such as toxic language, copyrighted corpora, or demographic biases. This raises a fundamental dilemma of statistical-computational tradeoffs: removing all samples from an unwanted domain may be computationally prohibitive, while randomly removing a subset may not provide distribution-level statistical guarantees.  We propose a statistical framework for distributional unlearning, in which domains are modeled as probability distributions, and the goal is to remove a carefully chosen subset of samples that reduces the effect of an unwanted distribution while preserving performance on a desired one.   We formalize this  using a hypothesis test of the edited data with the desired and unwanted domains, leading to an interpretable and robust criterion for selecting samples to remove. Within this statistical framework, we characterize the fundamental region of the allowable edited data distributions and the removal-preservation Pareto frontier for a broad class of distribution families. This includes parametric families such as shifted Gaussians of arbitrary dimension, a one-dimensional location family with log-concave noise, and the one-dimensional Poisson family. It also includes nonparametric families such as the Gaussian white noise model, a canonical model for nonparametric regression. We prove composition rules that describe how distributional unlearning behaves across multimodal unwanted domains, and introduce a central-limit behavior for the removal-preservation baselines when composing a large number of such families. Finally, we provide finite sample guarantees by providing Pareto frontiers for some selection algorithms, and observe an information-computation gap.
\vspace{-2 mm}
\end{abstract}
\vspace{-3 mm}
\section{Introduction and motivation} \label{sec:Intro}
\vspace{-2mm}

As machine learning models remain deployed over time, portions of their training data may later become legally or ethically unacceptable, requiring deletion. Increasingly, such deletion requests concern not just individual records but whole subpopulations or domains, corresponding to unwanted concepts, data-defined groups, harmful biases, or toxic language \citet{kurmanji2024towards}. This shift creates a basic tension: removing every example from the target domain can be computationally expensive, since the cost of many efficient unlearning methods grows with the size of the forget set \citet{guo2020certified}. A concrete example is the attempt to unlearn creative works such as the Harry Potter series \citet{eldan2023s}, where the volume of identified text and the scale of the trained model make the unlearning step costly. At the same time, discarding a small random subset is often statistically ineffective, because the domain may continue to influence the model through the remaining data. For example, large language models can still reproduce memorized training sequences even after the specific source document is removed ~\citep{liu2025language}, owing to overlapping contexts elsewhere in the corpus. This exposes a methodological gap between costly full deletion and unreliable naive subsampling, and points to the need for principled domain unlearning.

\vspace{-1 mm}
To address this gap, we begin from the observation that a domain’s statistical influence is often carried by a small, high-impact subset of its samples. This suggests an intermediate strategy: remove only the influential subset, achieving both computational efficiency and statistical effect. We formalize this idea by modeling a domain or subpopulation as a probability distribution, a standard abstraction in statistical learning~\citep{shalev2014understanding} and natural language processing~\citep{blei2003latent,srivastava2017autoencoding}. This leads to the central question: \emph{what is the smallest set of data points whose removal makes the edited data far from an unwanted domain while keeping it close to a retained one?} This is known as \emph{distributional unlearning} ~\citep{allouah2025distributionalmachineunlearningselective}. This perspective complements what is called \emph{sample-level} unlearning ~\citep{pandey2026gaussian}: methods for approximating retraining after a specified set of records is removed. Prior approaches based on influence functions~\citep{guo2020certified} or data sharding~\hbox{\citep{bourtoule2021machine}} address the \emph{computational} side of unlearning, but not the \emph{statistical} problem of choosing the most impactful samples to forget. Likewise, class-level unlearning~\citep{tarun2023fast,kodge2024deep} and concept erasure~\citep{ravfogel2020null,belrose2023leace} operate on domain-like objects, but either modify internal representations, making erasure potentially reversible, or do not provide a principled rule for selecting a small removal set. Thus, the basic question remains: which samples to remove?

\vspace{-4mm}
\subsection{Summary of our contributions} \label{ssec:contribution}
\vspace{-2mm}

\textbf{Hypothesis testing framework:}
We formalize distributional unlearning through a hypothesis-testing framework
(Definition~\ref{def:dist-unlearning}) to balance removal and preservation guarantees. This framework generalizes the KL-divergence-based
formulation of~\citet{allouah2025distributionalmachineunlearningselective} (Proposition~\ref{thm:CoD}) and admits an equivalent characterization through a
data-processing principle (Theorem~\ref{thm:Blackwell}). We also prove composition rules for distributional unlearning over product and multimodal domains (Lemma~\ref{lem:CDU}).

\textbf{Robust downstream guarantees:} We prove that our unlearning framework  \eqref{eq:dist-unlearning} provides distributional guarantees for downstream models (Lemma \ref{lem:DG}),  robust guarantees under the total variation loss  (Proposition \ref{prop:predictive}), and more generally for any divergence-based loss satisfying data-processing.

\textbf{Feasible region and Pareto frontier:}  We provide an interpretable and  clear geometric description (Figure \ref{fig:alpha_e_plot_1}) of the fundamental region of allowable edited data distributions and characterize the closed-form Pareto frontier (Figure \ref{fig:alpha-eps-feasibleM}) of achievable removal-preservation levels for several families of distributions, including parametric families such as shifted Gaussians (Proposition \ref{thm:FFR}, Lemma \ref{lem:PFGF}), a one-dimensional location family with symmetric log-concave noise (Proposition \ref{prop:FFRLC}), and the one-dimensional Poisson family (Proposition \ref{thm:FRP}). It also includes nonparametric families with infinite-dimensional parameter spaces such as the Gaussian white noise model (Proposition \ref{thm:GWN}).

\textbf{Algorithms \& information-computation gaps:} We analyze the random removal algorithm of ~\citet{allouah2025distributionalmachineunlearningselective} in Propositions~\ref{th:random} and~\ref{th:randomLC}, together with Remark~\ref{rem:RRLaplace}. We also analyze the selective removal mechanism of ~\citet{allouah2025distributionalmachineunlearningselective} in Propositions~\ref{th:selective} and~\ref{th:randomLCSR}. We show that both the algorithmic procedures attain smaller feasible regions than the information-theoretic feasible region, thereby giving rise to information-computation gaps (Remarks \ref{rem:ICGRRM}, \ref{rem:ICGSR}). Moreover, in the one-dimensional Gaussian case, we show that under large separation (Proposition \ref{th:CSRRODG}, Remark \ref{rem:CSROGH}), selective removal yields a more favorable information-computation gap than random removal.

\vspace{-4mm}
\subsection{Comparison with previous work}
\label{ssec:Comparison}
\vspace{-2mm}

\textbf{Sample-level unlearning.}
Sample-level unlearning has advanced rapidly, providing efficient model updates and formal deletion guarantees~\citep{neel2021descent,zhang2024towards,chien2024langevin,allouah2024utility,koloskova2025certified,pandey2026gaussian}. 
Influence-function methods approximate the effect of deleting individual points without retraining~\citep{izzo2021approximate}; certified unlearning bounds the distance to a scratch-retrained model~\citep{guo2020certified}; and data sharding enables efficient removal of small batches~\citep{bourtoule2021machine}. 
These methods, however, typically assume the forget set is already specified. Our work is complementary: we ask which samples should be flagged and removed, and in what quantity, to erase a domain’s statistical footprint.

\textbf{Comparison with ~\citet{allouah2025distributionalmachineunlearningselective}.}  
We propose an interpretable and robust definition of distributional unlearning based on hypothesis testing  (Definition \ref{def:dist-unlearning}) that extends and encompasses the definition of \citep{allouah2025distributionalmachineunlearningselective} (Proposition \ref{thm:CoD}), as well as the accompanying downstream guarantees (Lemma \ref{lem:DG}). We also prove a robust version of the downstream distributional guarantee (Proposition \ref{prop:predictive}). For shifted Gaussian families, we find the information-theoretic feasible regions (Proposition \ref{thm:FFR}) and the corresponding statistical Pareto frontier (Lemma \ref{lem:PFGF}) as well as the algorithmic Pareto frontier for both the random removal (Proposition \ref{th:random}) and selective removal (Proposition \ref{th:selective}) algorithms. On the statistical front, we extend the one-dimensional Gaussian results of \citep{allouah2025distributionalmachineunlearningselective} to higher dimensions (Proposition \ref{thm:FFR}, Lemma \ref{lem:PFGF}), including the non-parametric Gaussian white noise model (Proposition \ref{thm:GWN}), as well as for non-Gaussian log-concave location families (Proposition \ref{prop:FFRLC}) such as Laplace. We also provide a characterization of the feasible region for the Poisson family (Proposition \ref{thm:FRP}), which is a non-location family. On the algorithmic front, we provide theorems on the behavior of both the removal algorithms on all the parametric families mentioned above (Propositions \ref{th:random}, \ref{th:randomLC}, \ref{th:selective}, \ref{th:randomLCSR}). We refer to \citep{allouah2025distributionalmachineunlearningselective} for more details on the connections of unlearning to  concept \textit{ erasure} methods  
~\citep{ravfogel2020null, kaushik2020learning, elazar2018adversarial}, and \textit{Domain adaptation} ~\citep{ben2010theory}.

\vspace{-4mm}
\section{Unlearning a distribution: the hypothesis testing framework}
\label{sec:problem}
\vspace{-2mm}
The primary focus in machine unlearning has been at the level of samples,  by efficiently removing the influence of selected data points. On the contrary, \cite{allouah2025distributionalmachineunlearningselective} introduced distributional unlearning under a divergence-based constraint, where the goal is to erase the statistical influence of an entire subpopulation. In this section, we formalize this using an interpretable hypothesis testing approach, introduce its feasible regions, and discuss the consequences for downstream tasks.

\vspace{-1 mm}
\textbf{Problem statement.}
We first model the unwanted population as a probability distribution $p_1$.  Now, the objective is to construct a new data distribution $p$ that is statistically distinguishable (far) from $p_1$ yet remains indistinguishable (close) to a retained distribution $q_1$. In practice, these true distributions are unknown, and we have finite samples $S_1 = \{x_i^{(1)}\}_{i=1}^{n_1}$ drawn IID from the unwanted distribution $p_1$, and $S_2 = \{x_j^{(2)}\}_{j=1}^{n_2}$ from the retained distribution $q_1$.
We further assume that these sets are obtained via some upstream process, and throughout we assume that the random vector $S_1$ is generated independently of $S_2$. Now, we would like to solve the following problem from a statistical or information theoretic perspective: given these identified samples, which subset of the entire data should be removed to most efficiently achieve our objective defined at the distribution level?
\vspace{-4mm}
\subsection{\texorpdfstring{$(f_d,f_c)$}{(f_d,f_c)} distributional unlearning}
\vspace{-2mm}
We formalize this using a hypothesis testing approach instead of the Kullback-Leibler (KL) divergence  \cite{allouah2025distributionalmachineunlearningselective} because of its interpretability and tightness under post-processing.

\begin{definition}[Trade-off function \cite{dong2022gaussian}]    \label{def:TOF} 
Given two probability distributions $P, Q$ on a measurable space $(\cW, \mathcal{F}_{\cW})$, we define the  \emph{trade-off function} as  the map $= T(P,Q):[0,1]\to[0,1]$  
\vspace{-1mm}
\begin{equation}\label{eq:TOF}
 T(P,Q)(\alpha):=
  \inf_{\phi}\Bigl\{
        \beta_{\vp} := \e_{Q}[1-\vp] 
        \,\Bigm\vert\,
        \alpha_{\vp}:= \e_{P}[\vp]\le \alpha,\;
        \vp:\cW\!\to[0,1]\text{ measurable}
      \Bigr\}.
\end{equation}
\textbf{Intuition:} For type I error $\alpha$, the trade-off function (TOF) returns the smallest value of type II error $\beta_{\vp}$ over all possible test functions $\vp\colon \cW\to[0,1]$. So, $T(P,Q)(\alpha) \geq T(P',Q')(\alpha)$  $\forall \alpha \in [0,1]$ captures the intuition that the pair $(P,Q)$ is more difficult to distinguish than  $(P',Q')$ across every scale $\alpha$. \cite{dong2022gaussian} proved that  trade off functions $\cT:=\{T(P,Q): (P,Q)\}= \{f: [0,1] \to [0,1] \text{ convex, continuous, decreasing and } f(x)\leq 1-x \forall x\in [0,1]\}$ satisfy Blackwell ordering.
\end{definition}
\begin{definition}[$(f_d,f_c)$ distributional unlearning]
\label{def:dist-unlearning}
For TOFs $f_d,f_c \in \cT$, a distribution $p \in \mathcal{P}$ (a class of distributions on $(\cW, \cF_{\cW})$) satisfy $(f_d,f_c)$ unlearning with respect to $p_1, q_1 \in \cP$ if:
\begin{align} \label{eq:dist-unlearning}
T(p, p_1) \leq f_d =T(P_d, Q_d) \quad\text{(removal)},
\quad 
T(p, q_1) \geq f_c = T(P_c, Q_c) \quad\text{(preservation)}.
\end{align}
\end{definition}
\vspace{-1mm}
\textbf{Generalization to multiple unwanted and retained populations:}
In the appendix (Definition \ref{def:dist-unlearning_multi}) we generalize the above definition  to $k$ many unwanted populations $\{p_1, \cdots, p_{k}\}$ with baseline TOFs $\{f_{d1}, \cdots, f_{dk}\}$ and $l$ many retained ones $\{q_1, \cdots, q_{l}\}$ with baseline TOFs $\{f_{c1}, \cdots, f_{cl}\}$. 

\textbf{Intuition and flexibility in the choice of the baseline pairs $(f_d,f_c)$ depending on $\cP$:}  Let $f_d= T(N(0,1), N(\alpha, 1))$ for $\alpha >0$. Then from the definition of $T(P,Q)$, the inequality  $T(p, p_1) \leq f_d$ has the interpretation that the pair (edited data, unwanted data) $=$ $(p, p_1)$ is statistically easier to differentiate by an adversary than the pair of normally distributed samples $(N(0,1), N(\alpha, 1))$. Similarly, for $f_c= T(N(0,1), N(\ve, 1))$ for $\ve>0$, the inequality  $T(p, q_1)\geq f_c$ captures the intuition that the pair (edited data, retained data) $= (p,q_1)$ is statistically more difficult to separate by an adversary than the pair $(N(0,1), N(\ve, 1))$. In comparison to $(\alpha, \ve)$ unlearning of \cite{allouah2025distributionalmachineunlearningselective} our definition gives us the flexibility to adaptively choose the baseline TOFs $(f_d,f_c)$ depending upon our prior knowledge about the type of populations $\cP \ni  p_1, q_1 $.

\vspace{-1mm}

\textbf{Consistency of the framework: }  If $p$ satisfies $(f_d,f_c)$ unlearning with respect to $p_1,q_1\in \cP$, then $p$ also satisfies $(f_{d'},f_{c'})$ unlearning for TOFs $f_{d'} \geq f_d$ and $f_c \geq f_{c'}$. For $f_d=T(N(0,1), N(\alpha, 1))$ and  $f_c=T(N(0,1), N(\ve, 1))$, if $p$ satisfies $(f_d \leftrightarrow \alpha ,f_c \leftrightarrow \ve)$ unlearning, then as a consequence of Lemma \ref{lem: GTOF} it also satisfies  $(f_{d'} \leftrightarrow \alpha' ,f_{c'} \leftrightarrow \ve')$ unlearning for any $0\leq \alpha' \leq \alpha$ and $\ve' \geq \ve$.

In the remainder of this section, we analyze the properties of Definition \ref{def:dist-unlearning}  at the population level, capturing the fundamental feasible region and Pareto Frontier of extremal $(\alpha, \ve)$ pairs of the problem. 

\begin{restatable}[Comparison with $(\alpha, \varepsilon)$ unlearning ~\citet{allouah2025distributionalmachineunlearningselective}]{proposition}{CoD}
\label{thm:CoD}
For TOFs $f_d =T(P_d, Q_d),f_c= T(P_c, Q_c) $, if  a distribution $p \in \mathcal{P}$ (a class of distributions on $(\cW, \cF_{\cW})$) satisfy $(f_d,f_c)$ unlearning with respect to $p_1, q_1 \in \cP$ in the sense of \eqref{eq:dist-unlearning}, then it satisfies $(\alpha, \varepsilon)$ distributional unlearning of ~\citet{allouah2025distributionalmachineunlearningselective} with $\alpha = KL(Q_d, P_d)$ and $\varepsilon= KL(Q_c,P_c)$. Equivalently,
\vspace{-1 mm}
\begin{equation} \label{eq:aeu}
KL(p_1, p) \geq \alpha =KL(Q_d,P_d) \text{ (removal) },
\quad 
KL(q_1, p) \leq \varepsilon= KL(Q_c, P_c) \text{ (preservation)}.
\end{equation}
\end{restatable}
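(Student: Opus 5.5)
The plan is to reduce both inequalities in \eqref{eq:aeu} to a single monotonicity principle. If two pairs of distributions satisfy $T(P,Q)\ge T(P',Q')$ pointwise on $[0,1]$, then $KL(Q,P)\le KL(Q',P')$. Granting this, both parts of the statement follow by reading off the correct order of arguments.

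\emph{Removal.} The hypothesis $T(p,p_1)\le f_d=T(P_d,Q_d)$ applies the principle with $(P,Q)=(P_d,Q_d)$ and $(P',Q')=(p,p_1)$. This gives $KL(Q_d,P_d)\le KL(p_1,p)$, which is the removal bound with $\alpha=KL(Q_d,P_d)$.

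\emph{Preservation.} The hypothesis $T(p,q_1)\ge f_c=T(P_c,Q_c)$ applies the principle with $(P,Q)=(p,q_1)$ and $(P',Q')=(P_c,Q_c)$. This gives $KL(q_1,p)\le KL(Q_c,P_c)=\varepsilon$.

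The first/second-argument conventions have to be matched carefully. In $T(P,Q)$ the type I error is computed under $P$. The KL divergence in \eqref{eq:aeu} puts the "alternative" distribution first, so it is exactly $KL(Q,P)$ for the pair $(P,Q)$. No reversal of arguments is therefore needed.

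To prove the monotonicity principle, I would invoke Blackwell's theorem for binary experiments, in the form used by \citet{dong2022gaussian}. If $T(P,Q)\ge T(P',Q')$, there exists a Markov kernel $K$ with $KP'=P$ and $KQ'=Q$; that is, $(P,Q)$ is a garbling of $(P',Q')$. The data-processing inequality for KL (indeed for any $f$-divergence) then gives $KL(Q,P)=KL(KQ',KP')\le KL(Q',P')$.

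The main obstacle is the applicability of Blackwell's theorem on a general measurable space $(\cW,\cF_{\cW})$, where the existence of a kernel can be delicate. I would sidestep this by a canonical reduction. By the Neyman--Pearson lemma, the likelihood ratio is sufficient for the pair. Any pair $(P,Q)$ with trade-off function $f$ can therefore be replaced, without changing $KL(Q,P)$, by the canonical pair on $[0,1]$ formed by $P_f=\mathrm{Unif}[0,1]$ and $Q_f$ with density $x\mapsto -f'(1-x)$ (interpreting $f'$ as a one-sided derivative, and adding an atom if $f(0)<1$). This pair has trade-off function $f$ as well. On $[0,1]$, a standard Borel space, Blackwell's theorem applies without technicalities.

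Alternatively, I would write the divergence directly as a functional of the trade-off function. The formula is $KL(Q,P)=\int_0^1 \log\bigl(-f'(x)\bigr)\,\bigl(-f'(x)\bigr)\,dx$ plus boundary terms. I would then check that this functional is monotone decreasing under the pointwise order on $\cT$ by convex-ordering arguments. I expect the Blackwell route to be cleaner, and I would use the explicit formula only as a fallback.

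Finally, I would note that infinite values of KL cause no issue. The inequalities remain valid in $[0,\infty]$, since data processing holds in the extended sense.
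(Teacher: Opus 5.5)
Your proof is correct and uses the same core argument as the paper: Blackwell ordering (Theorem~\ref{thm:Blackwell}) followed by data processing for KL. The paper first swaps arguments via $f^{-1}=T(Q,P)$ and the order-preservation of inverses (Lemma~\ref{lem:ITOF}) before applying Lemma~\ref{prop:data-processing-functional}, whereas you push both components through the Blackwell kernel and apply data processing to $KL(Q\|P)$ directly, so no inversion is needed; your canonical-pair reduction is extra care the paper omits, since it takes Theorem~\ref{thm:Blackwell} as given on general measurable spaces.
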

\vspace{-4 mm}
\subsection{Removal-preservation trade-off and the feasible region}
\vspace{-2 mm}
The baseline trade-off functions $(f_d,f_c)$ captures how different the edited data $p$ can be from the unwanted distribution $p_{1}$ while remaining indistinguishable to the desired one $q_1$. 
For a given population $\cP$, we would have to choose an appropriate pair $(f_d,f_c)$ to understand whether the output $p$ achieves  \eqref{eq:dist-unlearning} jointly for $p_1, q_1 \in \cP$. Now, we formalize the feasible region.
\vspace{-1 mm}
\begin{definition}[Feasible region] \label{def:FR}
    Given probability distributions $p_1,q_1 \in \cP$ and trade-off functions $f_d,f_c \in \cT$ we define the feasible region  $\cR=\{p \in \cP: T(p, p_1) \leq f_d, \text{ and } T(p, q_1) \geq f_c\}$
\end{definition}
\begin{remark}
This fundamental region of feasibility and its boundary \textit{generalizes} the \emph{Pareto frontier} concept of \cite{allouah2025distributionalmachineunlearningselective} and provides tighter boundary regions in our examples of interest. 
\end{remark}
\vspace{-4 mm}
\subsection{Compositional rules of unlearning under multimodal inputs}
\vspace{-2 mm}
The hypothesis testing approach to unlearning \eqref{eq:dist-unlearning} extends naturally to multimodal settings. It allows us to produce a product distribution $p\otimes p'$, when there are multiple modes $p\in \cP, p'\in \cP'$ that need to be combined. For instance, the unwanted distribution $p_1 \in \cP$ may correspond to toxic language in text, while $p_1' \in \cP'$  captures inappropriate content in images, both of which are to be jointly unlearned under a framework. We formalize this as a compositional rule in the following Lemma.
\vspace{-1 mm}
\begin{restatable}[Composition laws of distributional unlearning]{lemma}{CDU}
\label{lem:CDU}
If a distribution $p \in \cP$ satisfy $(f_d=T(P_d,Q_d),f_c=T(P_c,Q_c))$ unlearning with respect to  probability distributions $p_1,q_1\in \cP$ on some $(\cW, \cF_{\cW})$ \eqref{eq:dist-unlearning}, and a distribution $p' \in \cP'$ satisfy $(f_d'=T(P_d',Q_d'),f_c'=T(P_c',Q_c'))$ unlearning with respect to  probability distributions $p_1',q_1'\in \cP'$ on some $(\cW', \cF_{\cW}')$, then the product distribution $p\otimes p'$ satisfy $(f_d\otimes f_d', f_c\otimes f_c')$  unlearning with respect to $p_1\otimes p_1', q_1\otimes q_1' \in \cP \otimes \cP'$, where  for trade-off functions $f=T(P,Q)$ and $f'=T(P',Q')$ we define $f\otimes f':= T(P\otimes P', Q\otimes Q')$, and for families of distributions $\cP$ and $\cP'$ we define $\cP\otimes \cP':=\{p\otimes p': p\in \cP, p'\in \cP'\}$.
\end{restatable}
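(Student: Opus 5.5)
The plan is to reduce both inequalities to one monotonicity property of the tensor product of trade-off functions: if $T(P,Q)\le T(R,S)$ and $T(P',Q')\le T(R',S')$ pointwise on $[0,1]$, then
\[
T(P\otimes P',Q\otimes Q')\le T(R\otimes R',S\otimes S').
\]
Once this holds, the removal condition follows by taking $(P,Q)=(p,p_1)$, $(R,S)=(P_d,Q_d)$ and the primed analogues. This gives $T(p\otimes p',p_1\otimes p_1')\le f_d\otimes f_d'$. The preservation condition follows from the same statement with the roles reversed, $(P,Q)=(P_c,Q_c)$ and $(R,S)=(p,q_1)$, which gives $f_c\otimes f_c'\le T(p\otimes p',q_1\otimes q_1')$. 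Membership $p\otimes p'\in\cP\otimes\cP'$ holds by definition, so the lemma follows.

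For the monotonicity I will use Blackwell's theorem, the equivalence recalled after Definition~\ref{def:TOF} and presumably formalized in Theorem~\ref{thm:Blackwell}. The inequality $T(P,Q)\le T(R,S)$ means $(P,Q)$ is at least as informative as $(R,S)$. So there is a Markov kernel $K$, possibly on an enlarged space, with $K P=R$ and $K Q=S$. Similarly there is a kernel $K'$ with $K'P'=R'$ and $K'Q'=S'$.

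The product kernel $K\otimes K'$, acting independently on each coordinate, then maps $P\otimes P'\mapsto R\otimes R'$ and $Q\otimes Q'\mapsto S\otimes S'$. The data-processing inequality for trade-off functions (post-processing can only increase $T$) yields the claimed inequality. A by-product is that $f\otimes f'$ is well defined: it depends only on the trade-off functions $f,f'$ and not on the chosen representing pairs, because equal trade-off functions give mutual kernels. This is needed for the statement to make sense.

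The main obstacle is justifying Blackwell's theorem in the generality of arbitrary measurable spaces, where the kernel may only exist after a randomization. If that is delicate, I will instead follow \cite{dong2022gaussian}. Every trade-off function $f$ is realized by a canonical pair on $[0,1]$, namely $U[0,1]$ versus the distribution with CDF $1-f(1-\cdot)$. Since $T(p,p_1)$ is itself a trade-off function, I can then construct the kernel explicitly from the canonical pair of the more informative trade-off function to any pair with a larger trade-off function. This can be done by mapping the pair to its likelihood-ratio statistic and comparing ROC curves via a monotone coupling. A direct alternative is to go through the test functions in Definition~\ref{def:TOF} and push a test $\vp$ through $K\otimes K'$, using $\vp\circ(K\otimes K')$ as a test on the original pair. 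That step is routine once the kernels exist.
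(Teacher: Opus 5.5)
Your argument is correct and is essentially the paper's. Both reduce removal and preservation to monotonicity of $\otimes$ under the pointwise order of trade-off functions. The paper cites Proposition~\ref{prop:tensor-basic-properties} and chains one coordinate at a time, $T(p,p_1)\otimes T(p',p_1')\le f_d\otimes T(p',p_1')\le f_d\otimes f_d'$. You instead prove the two-coordinate version directly from Theorem~\ref{thm:Blackwell}, the product kernel $K\otimes K'$ and data processing, which, as you note, works with the given representing pairs and so does not need well-definedness of $\otimes$.

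Your fallback is unnecessary and contains a slip. The canonical pair realizing $f$ is $U[0,1]$ versus the law with CDF $f(1-\cdot)$ on $[0,1)$ and an atom of mass $1-f(0)$ at $1$. The function $1-f(1-\cdot)$ you wrote is that law's survival function, not a CDF.
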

\vspace{-2 mm}
 \begin{remark}(Compositing large number of multimodal inputs and natural baselines)
      For desired and unwanted pairs of domains $q_n, p_n$ respectively within a mode of input $\cP_n$ such as text, image, video, audio, if $p^{(n)}\in \cP_n$ satisfy $(f_{dn},f_{cn})$ unlearning \eqref{eq:dist-unlearning} with respect to $p_n,q_n \in \cP_n$, then $p^{[n]}= p^{(1)} \otimes  \cdots \otimes p^{(n)}$ satisfy $(f_{dn}^{\otimes n},f_{cn}^{\otimes n})$ unlearning \eqref{eq:dist-unlearning} with respect to $\otimes_{i=1}^n p_i, \otimes_{i=1}^n q_i \in \otimes_{i=1}^n \cP_i$. Now, as $n\uparrow \infty$ or equivalently, number of forms of inputs becomes large, as a consequence of ~\citep[Theorem 6]{pandey2025infinitely}, we have for a sequence of trade-off functions $\{f_n\}_n$ with $f_n(0)=1$, if $f_n^{\otimes n} \to f_{\infty} \in \cT$ and the convergence is pointwise on $[0,1]$, then $f_{\infty} \in \cI$, where \vspace{-1 mm} 
\begin{equation}
    \cI := \{T(P,Q): P \text{ infinitely divisible  on } \R \text{ with } P(e^x) =1 \text{ and } dQ(x)= e^x dP(x)\}.\footnote{We refer to ~\citep{kallenberg2021foundations} for definitions of infinitely divisible distributions on the real line $(\R, \cB_{\R})$.}
\end{equation}
Therefore, we obtain a natural class of removal-preservation baselines $(f_d,f_c)$ whenever we have multimodal inputs with a large number of modes of input. A consequence of a central limit type theorem dictates the natural choices of baselines $f_d,f_c \in \cI$ ~\citet{pandey2025infinitely}. Mathematically, if one considers $\cP_n= \{\mu+Z:\mu\in \R\}$ with $Z\sim N(0,1)$, then $\otimes_{i=1}^n\cP_i=\{\mu +Z :\mu\in \R^n\}$ with $Z\sim N(0,\II_n)$. So, when $n$ is large, considering baselines  $(f_d,f_c)$ with $f_d,f_c\in \cI$ becomes natural.  Moreover, the Gaussian baseline with $f=T(N(0,1), N(\alpha,1))$ for any $\alpha >0$ and the Poisson baseline $g=T(P(\lambda_1), P(\lambda_2))$ with any $\lambda_1, \lambda_2 >0$ belong to $\cI$ ~\citep[Lemma 4]{pandey2025infinitely}.
\end{remark}
\vspace{-1 mm}
\begin{remark}(Behavior under additional source of information)
    The hypothesis-testing approach to unlearning \eqref{eq:dist-unlearning} behaves naturally when we add more \textit{independent columns} to our data. More precisely, if $\cP'=\{p'\}$ a single ton, and $p\in \cP$ satisfy $(f_d,f_c)$ unlearning with respect to $p_1,q_1\in \cP$, then $p\otimes p'$ satisfy $(f_d,f_c)$ unlearning with respect to $p_1\otimes p', q_1\otimes p' \in \cP\otimes \{p'\}$.  This follows from Proposition \ref{prop:tensor-basic-properties} as $T(p\otimes p', p_1\otimes p')= T(p,p_1) \leq f_d$ and $T(p\otimes p', q_1\otimes p')= T(p,q_1) \geq f_c$. 
\end{remark}
\vspace{-3mm}
\subsection{Downstream guarantees}
\vspace{-2 mm}
 Blackwell ordering (Theorem \ref{thm:Blackwell}) implies our unlearning framework (Definition \ref{def:dist-unlearning}) encompasses the KL divergence-based unlearning of \cite{allouah2025distributionalmachineunlearningselective} (Proposition \ref{thm:CoD}). More generally, the framework \ref{def:dist-unlearning} encompasses every divergence-based unlearning $D(\cdot, \cdot)$ satisfying data processing inequality (Proposition \ref{prop:data-processing-functional}). This means the bivariate function $D(P, Q)$ of pairs of probability measures that satisfies $D(R\circ P, R\circ Q) \leq D(P,Q)$ for any Markov kernel $R$.

\textbf{Downstream performance.}
We now connect our unlearning objective of outputting a distribution $p$ on $(\cW, \cF_{\cW})$ that is close to $q_1$ and far from $p_1$ to a robust notion of predictive performance by measuring the generalization error difference on $\cF=\{h:(\cW, \cF_{\cW}) \to ([-1,1], \cB_{[-1,1]})\}$.
\vspace{-1 mm}
\begin{equation}
2TV(p, p_1):=\sup_{h \in \cF}|\E_p[h]- \E_{p_1}[h]| \text{ and }
2TV(p, q_1):=\sup_{h \in \cF}|\E_p[h]- \E_{q_1}[h]|.
\vspace{-1 mm}
\end{equation}
\vspace{-2mm}
 \begin{remark}[Intuition]
     We measure predictive performance of $h$ at the distributional level by looking at averaged quantities $\E_p[h], \E_{p_1}[h],\E_{q_1}[h]$ for a given loss $h$. Moreover, in our definition, we simultaneously allow different choices of $h$.  More precisely, we  robustly measure the differences $|\E_p[h]- \E_{p_1}[h]|$ and  $|\E_p[h]- \E_{q_1}[h]|$ not just for a fixed $h$, but for a collection. To be concrete, we consider the largest such class by allowing all measurable functions $h$ with $\|h\|_{\infty}\leq 1$.
 \end{remark}
\vspace{-2 mm}
\begin{restatable}{proposition}{predictive}
\label{prop:predictive}
Consider $\mathcal{P}$, a class of distributions on $(\cW, \cF_{\cW})$. For TOFs $f_d= T(P_d, Q_d), f_c=T(P_c, Q_c)$, if a distribution $p\in \cP$ satisfy $(f_d,f_c)$ unlearning with respect to $p_1, q_1 \in \cP$ \ref{def:dist-unlearning}, then,
\begin{align}
TV(p,p_1) \geq TV(P_d, Q_d) \text{ and } TV(p,q_1) \leq TV(P_c,Q_c).
\end{align}
\end{restatable}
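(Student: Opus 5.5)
The plan is to express total variation directly through the trade-off function and then use the pointwise inequalities in \eqref{eq:dist-unlearning}. The key identity, standard in the $f$-DP literature \citep{dong2022gaussian}, is
\begin{equation*}
TV(P,Q) \;=\; \sup_{\alpha\in[0,1]} \bigl(1-\alpha - T(P,Q)(\alpha)\bigr),
\end{equation*}
valid for any pair $(P,Q)$ on a common measurable space. With this formula in hand, the statement reduces to monotonicity of a supremum under pointwise ordering of functions.

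I would first prove the identity, using the normalization $TV(P,Q)=\sup_{A}|P(A)-Q(A)|$, which matches the $2TV$ definition in the paper. For the direction $\ge$, fix a test $\vp$ with $\e_P[\vp]\le\alpha$. Then
\begin{equation*}
1-\alpha-\e_Q[1-\vp]\le \e_Q[\vp]-\e_P[\vp]\le \sup_{0\le \vp\le 1}\bigl(\e_Q[\vp]-\e_P[\vp]\bigr)=TV(P,Q).
\end{equation*}
The last equality holds because the supremum of a linear functional over $[0,1]$-valued tests is attained at an indicator, namely $\mathbf 1\{dQ/d\mu>dP/d\mu\}$ for a common dominating measure $\mu$. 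Taking the infimum over admissible $\vp$ gives $1-\alpha-T(P,Q)(\alpha)\le TV(P,Q)$ for every $\alpha$.

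For the direction $\le$, take $A^\ast=\{dQ/d\mu>dP/d\mu\}$ and set $\alpha^\ast=P(A^\ast)$. The test $\vp=\mathbf 1_{A^\ast}$ is admissible at level $\alpha^\ast$, so
\begin{equation*}
T(P,Q)(\alpha^\ast)\le 1-Q(A^\ast),
\end{equation*}
and therefore $1-\alpha^\ast-T(P,Q)(\alpha^\ast)\ge Q(A^\ast)-P(A^\ast)=TV(P,Q)$.

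Next I apply the identity to both conditions. From removal, $T(p,p_1)(\alpha)\le f_d(\alpha)=T(P_d,Q_d)(\alpha)$ for every $\alpha$. Hence $1-\alpha-T(p,p_1)(\alpha)\ge 1-\alpha-T(P_d,Q_d)(\alpha)$ pointwise, and taking suprema gives $TV(p,p_1)\ge TV(P_d,Q_d)$. Preservation reverses the pointwise inequality, so the same argument gives $TV(p,q_1)\le TV(P_c,Q_c)$.

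The identity is the only step with real content, and its main delicacy is the attainment or measurability of the optimal set. Working through densities with respect to $\mu=P+Q$ avoids this issue. An alternative route is to invoke Theorem~\ref{thm:Blackwell} or Proposition~\ref{prop:data-processing-functional} with $D=TV$, which satisfies data processing. However, the paper's ordering there presumably yields $D(p,p_1)\ge D(P_d,Q_d)$ only after an argument of the same kind, so the direct supremum formula is cleaner and I would use it.
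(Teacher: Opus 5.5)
Your proof is correct, but it is not the paper's route; the paper uses exactly the alternative you set aside. It notes that $TV$ satisfies the data-processing inequality and applies Lemma~\ref{prop:data-processing-functional}, a corollary of Blackwell's Theorem~\ref{thm:Blackwell}. That lemma says every such functional has the form $D(P,Q)=\ell_D(T(P,Q))$ with $\ell_D$ non-increasing, so the removal and preservation inequalities transfer to $TV$ immediately. Contrary to your guess, that route needs no further argument of the same kind. Once Blackwell's theorem supplies a Markov kernel $R$ with $R\circ p=P_d$ and $R\circ p_1=Q_d$, data processing gives $TV(P_d,Q_d)\le TV(p,p_1)$ in one line. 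Likewise, a kernel carrying $(P_c,Q_c)$ to $(p,q_1)$ gives the preservation bound. The paper's argument is short and works verbatim for KL, Hellinger, or any $f$-divergence. However, it rests on the nontrivial direction of Blackwell's theorem (pointwise ordering of trade-off functions implies existence of a randomization), which is a much deeper fact than anything you use. Your argument is elementary and self-contained, needing only the Neyman--Pearson-type set $\{dQ/d\mu>dP/d\mu\}$. It also identifies the monotone functional explicitly as $\ell_{TV}(f)=\sup_{\alpha\in[0,1]}(1-\alpha-f(\alpha))$, which makes the monotonicity transparent. The price is that it is tailored to $TV$; each other divergence would need its own variational formula in terms of the trade-off function.
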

\vspace{-2 mm}
\begin{remark}[Intuition]
    These bounds show that unlearning guarantees increased loss under the unwanted domain $p_1$ and decreased loss under the preserved one $q_1$. In this sense, our framework provides meaningful control over downstream predictive behavior under this robust notion. Moreover, with $f_d=T(N(0,1), N(\alpha, 1))$ and $f_c= T(N(0,1), N(\ve, 1))$, we have $TV(P_d, Q_d) = 2 \Phi(\frac{\alpha}{2}) -1$ and $TV(P_c, Q_c) = 2 \Phi(\frac{\ve}{2}) -1$ \citep[Equation 7.40]{polyanskiy_wu_2025}.
\end{remark}
\vspace{-2 mm}
\begin{remark}[Downstream guarantees]
    From Proposition \ref{thm:CoD}, and as shown in the appendix (Lemma \ref{lem:DG}), our $(f_d,f_c)$ unlearning framework (Definition \ref{def:dist-unlearning}) also gives analogous downstream guarantees in the sense of ~\citep{allouah2025distributionalmachineunlearningselective} with $\alpha= KL(Q_d,P_d)$ and $\varepsilon=KL(Q_c,P_c)$. 
\end{remark}
\vspace{-2 mm}

\vspace{-4mm}
\section{Results: feasible regions} \label{sec:statistics}
\vspace{-4mm}
In this section, we determine feasible regions for several parametric and non-parametric models.
\vspace{-4mm}
\subsection{Feasible region of shifted Gaussians}
\vspace{-2mm}
\begin{restatable}[Feasible Region of Shifted Gaussians]{proposition}{FFR}
\label{thm:FFR}
Let $p_1 = N(\mu_1, \Sigma), q_1= N(\nu_1, \Sigma)\in \mathcal{P}:= \{N(\mu, \Sigma): \mu \in \mathbb{R}^d\}$ be class of shifted Gaussian distributions with common covariance $\Sigma \succ 0$. Take $f_d= T(N(0,1), N(\alpha, 1))$ and $f_c= T(N(0,1), N(\ve,1))$ for $\alpha, \ve \geq 0$. Then  
\vspace{-2mm}
\begin{equation} \label{eq:GFR}
\cR(\cP, (p_1,q_1), (f_d,f_c)) =
\cR= \{p= N(\mu, \Sigma): \mu \in \mathbb{R}^d: \alpha'(\mu) \geq \alpha, \ve'(\mu) \leq \ve\} 
\end{equation}
\begin{equation} \label{eq:GFRparameters0}
\text{ where } \alpha'^2= \langle \mu_1 -\mu, \Sigma^{-1}(\mu_1-\mu) \rangle \text{ and }\ve'^2= \langle \nu_1 -\mu, \Sigma^{-1}(\nu_1-\mu) \rangle. 
\end{equation}
Let $\Delta= \|\Sigma^{-1/2}(\mu_1- \nu_1)\|_2$,  then $\cR$ is empty $\leftrightarrow$ $ \Delta + \epsilon <\alpha $. Moreover, $\cR= \overline{B}(\nu_1, \ve) \leftrightarrow \Delta - \ve \geq \alpha $, and $\cR$ is annular cap otherwise.  See the proof  \ref{proof:FFRG} for plots of the fundamental region (Figure \ref{fig:alpha_e_plot_1}). 
\end{restatable}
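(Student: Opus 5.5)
The plan is to reduce each trade-off function inequality in \eqref{eq:dist-unlearning} to a scalar inequality between Mahalanobis distances, and then read off the geometry in whitened coordinates. Write $G_a := T(N(0,1),N(a,1))$ for $a\ge 0$.

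\textbf{Step 1: a closed form for the trade-off function.} The first step is to show that for $p=N(\mu,\Sigma)$ and any $\theta\in\R^d$,
\[
T\bigl(N(\mu,\Sigma),N(\theta,\Sigma)\bigr)=G_{\|\Sigma^{-1/2}(\theta-\mu)\|_2}.
\]
I would do this in two moves.
\begin{itemize}
\item Apply the bijective affine map $x\mapsto \Sigma^{-1/2}(x-\mu)$. Trade-off functions are invariant under measurable bijections by data processing applied in both directions (Theorem~\ref{thm:Blackwell}). The pair therefore becomes $(N(0,\II_d),N(v,\II_d))$ with $v=\Sigma^{-1/2}(\theta-\mu)$.
\item Rotate so that $v=\|v\|e_1$, again a bijection. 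The remaining $d-1$ coordinates are identical $N(0,1)$ factors in both hypotheses. By the independent-columns property (Proposition~\ref{prop:tensor-basic-properties}) they can be dropped, leaving $T(N(0,1),N(\|v\|,1))$.
\end{itemize}
As a sanity check, one can instead use Neyman--Pearson directly: the log-likelihood ratio is linear in $x$, namely $\langle v,x\rangle$ up to constants. Its laws under the two hypotheses are $N(-\|v\|^2/2,\|v\|^2)$ and $N(\|v\|^2/2,\|v\|^2)$, which gives the same $G_{\|v\|}$. Taking $\theta=\mu_1$ gives the exponent $\alpha'(\mu)$, and taking $\theta=\nu_1$ gives $\ve'(\mu)$, as in \eqref{eq:GFRparameters0}.

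\textbf{Step 2: monotonicity of the Gaussian family.} Next I need the family $\{G_a\}$ to be totally ordered and strictly monotone. Precisely, $G_a\le G_b$ pointwise iff $a\ge b$. This comes from the formula $G_a(x)=\Phi(\Phi^{-1}(1-x)-a)$:
\begin{itemize}
\item It is pointwise nonincreasing in $a$.
\item It is strictly decreasing for $x\in(0,1)$, which gives the converse direction.
\end{itemize}
This is the content of Lemma~\ref{lem: GTOF}, which I would invoke.

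With Step 2, removal $G_{\alpha'(\mu)}\le G_\alpha$ is equivalent to $\alpha'(\mu)\ge\alpha$. Likewise, preservation $G_{\ve'(\mu)}\ge G_\ve$ is equivalent to $\ve'(\mu)\le \ve$. This yields \eqref{eq:GFR}.

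I expect this equivalence to be the main point requiring care. Without the converse direction of Step 2, one only gets the inclusion "$\supseteq$" in \eqref{eq:GFR}, not the characterization. The degenerate cases $\alpha=0$ or $\ve=0$, where $G_0(x)=1-x$, should be checked separately.

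\textbf{Step 3: geometry of the region.} Finally, pass to whitened coordinates $y=\Sigma^{-1/2}\mu$. Set $\tilde\mu_1=\Sigma^{-1/2}\mu_1$ and $\tilde\nu_1=\Sigma^{-1/2}\nu_1$, so that $\|\tilde\mu_1-\tilde\nu_1\|_2=\Delta$. In these coordinates $\cR$ is the closed Euclidean ball $\overline B(\tilde\nu_1,\ve)$ minus the open ball $B(\tilde\mu_1,\alpha)$. So $\overline B(\nu_1,\ve)$ in the statement is understood in the Mahalanobis metric. Three cases follow from the triangle inequality.
\begin{itemize}
\item \emph{Empty region.} $\cR=\emptyset$ iff the closed ball lies inside the open one. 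The farthest point of $\overline B(\tilde\nu_1,\ve)$ from $\tilde\mu_1$ is at distance $\Delta+\ve$, so this happens iff $\Delta+\ve<\alpha$.
\item \emph{Full ball.} $\cR=\overline B(\tilde\nu_1,\ve)$ iff the two balls are disjoint. The nearest point of the closed ball to $\tilde\mu_1$ is at distance $\max(\Delta-\ve,0)$, so this happens iff $\Delta-\ve\ge\alpha$.
\item \emph{Annular cap.} In all remaining cases the open ball cuts a proper, nonempty piece out of the closed ball, which is the annular cap.
\end{itemize}
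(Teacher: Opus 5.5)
Your proposal is correct and follows the paper's proof: the dimension-freeness and monotonicity properties of Gaussian trade-off functions reduce removal and preservation to $\alpha'(\mu)\ge\alpha$ and $\ve'(\mu)\le\ve$, and the three cases then come from the closed $\ve$-ball minus the open $\alpha$-ball in whitened coordinates (you re-derive dimension-freeness where the paper cites it). One caveat, which the statement itself shares: your nearest-point computation actually gives $\cR=\overline{B}(\nu_1,\ve)$ if and only if $\max(\Delta-\ve,0)\ge\alpha$, and this coincides with $\Delta-\ve\ge\alpha$ only when $\alpha>0$; for $\alpha=0<\ve-\Delta$ the open ball is empty, so the region is the full ball even though $\Delta-\ve<\alpha$.
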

\vspace{-3mm}
\begin{figure}[t]\label{alpha_e_plot_2}
  \centering
  \includegraphics[width=0.9\linewidth]{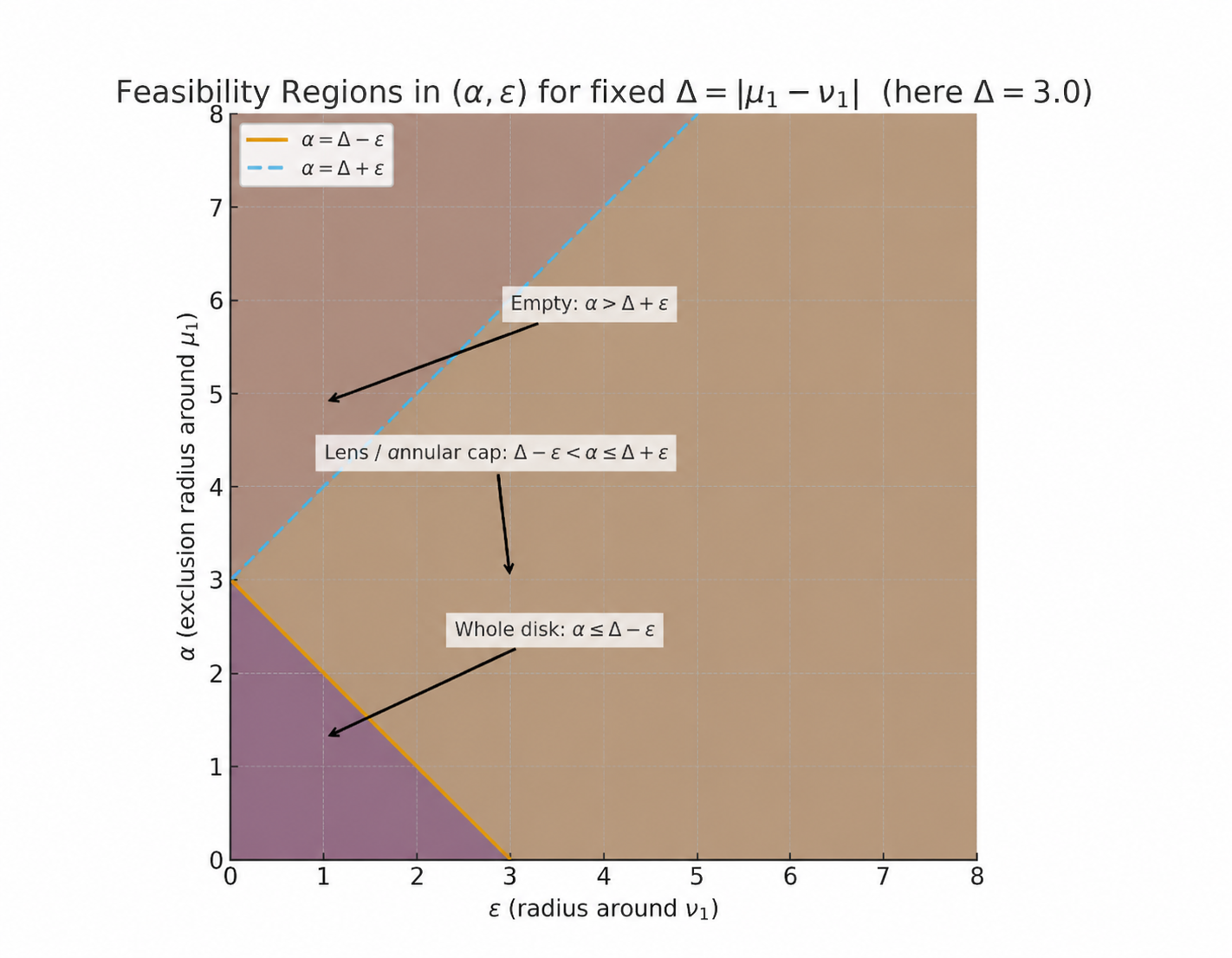}
  \caption{Feasibility regions in $(\alpha,\varepsilon)$ for fixed $\Delta=\|\mu_1-\nu_1\|_2$. 
  The lines $\alpha=\Delta-\varepsilon$ (solid) and $\alpha=\Delta+\varepsilon$ (dashed) partition the positive orthant of the plane into three regions based on what $\cR$ is:
  whole disk ($\alpha\le\Delta-\varepsilon$), lens/annular cap ($\Delta-\varepsilon<\alpha\le\Delta+\varepsilon$), and empty ($\alpha>\Delta+\varepsilon$).}
  \label{fig:alpha-eps-feasibleM}
\end{figure}
\textbf{Interpretation of the result :} This feasible region $\cR$ quantifies a natural cost of distributional unlearning: a minimal preservation loss is incurred  ($\ve \leftrightarrow f_c$) for any given removal level ($\alpha \leftrightarrow f_d$), governed by the baseline trade-off functions.  Finally, the result also highlights the suboptimality of keeping only the retained data $q_1$. While this strategy achieves perfect preservation $\ve = 0 \leftrightarrow f_c(x) = 1-x$  for $x \in [0,1]$, the region $\cR$ in Proposition \ref{thm:FFR} shows it is possible to attain a significantly higher level of removal by accepting a  small preservation loss $\ve >0 \leftrightarrow f_c(x) < 1- x$ for $x \in [0,1]$. By dimension invariance of Gaussian TOFs (Lemma \ref{lem: GTOF}) one dimensional $f_d,f_c$ are not restrictive.
\vspace{-1mm}
\begin{lemma}[Pareto Frontier] \label{lem:PFGF}
Consider $p_1, q_1, \cP$  as in Proposition \ref{thm:FFR}  with $\Sigma \succ 0$, the Gaussian forms of $f_d$ and $f_c$, one obtains a curve of $(\alpha, \varepsilon)$ pairs, capturing for a fixed preservation level of $\varepsilon \geq 0$, the largest level of removal $\alpha \geq 0$  that is jointly achievable or, equivalently, allows $\cR \neq \emptyset$.
\begin{equation} \label{eq:Pareto}
    \text{PF}(p_1, q_1, \cP) := \{(\Delta +\varepsilon, \varepsilon): \varepsilon \geq 0\} \text{ with } \Delta = ||\Sigma^{-1/2}(\mu_1- \nu_1)||_2.
\end{equation}
\end{lemma}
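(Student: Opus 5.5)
The plan is to derive the frontier directly from the feasible-region description in Proposition~\ref{thm:FFR}. That proposition shows $\cR$ is exactly the set of $N(\mu,\Sigma)$ with $\alpha'(\mu)\ge\alpha$ and $\ve'(\mu)\le\ve$, where $\alpha'(\mu),\ve'(\mu)$ are the Mahalanobis distances from $\mu$ to $\mu_1$ and $\nu_1$. So for fixed $\ve\ge 0$ the largest jointly achievable removal level is
\[
\alpha^\ast(\ve)=\sup\{\alpha'(\mu):\ve'(\mu)\le\ve\}.
\]
The claim is then $\alpha^\ast(\ve)=\Delta+\ve$, and feasibility (i.e.\ $\cR\neq\emptyset$) holds exactly when $\alpha\le\Delta+\ve$.

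\textbf{Step 1: whiten.} Substitute $x=\Sigma^{-1/2}\mu$, $a=\Sigma^{-1/2}\mu_1$, $b=\Sigma^{-1/2}\nu_1$. Then $\alpha'=\|a-x\|_2$, $\ve'=\|b-x\|_2$, $\Delta=\|a-b\|_2$, and the map $\mu\mapsto x$ is a bijection of $\R^d$ because $\Sigma\succ0$.

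\textbf{Step 2: upper bound.} For any $x$ with $\|b-x\|_2\le\ve$, the triangle inequality gives $\|a-x\|_2\le\|a-b\|_2+\|b-x\|_2\le\Delta+\ve$. Hence no $\alpha>\Delta+\ve$ is feasible. This matches the emptiness criterion $\Delta+\ve<\alpha$ already stated in Proposition~\ref{thm:FFR}, which can be cited directly.

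\textbf{Step 3: attainment.} If $a\ne b$, take $x^\ast=b+\ve\,(b-a)/\Delta$. This gives $\|b-x^\ast\|_2=\ve$ and $\|a-x^\ast\|_2=\Delta+\ve$, so $\alpha=\Delta+\ve$ is achieved; it is the extreme point of the closed ball $\overline{B}(b,\ve)$ farthest from $a$. If $\Delta=0$, any $x$ with $\|x-b\|_2=\ve$ works. Mapping back gives $\mu^\ast=\Sigma^{1/2}x^\ast$.

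\textbf{Step 4: monotonicity.} By the consistency remark following Definition~\ref{def:dist-unlearning} (via Lemma~\ref{lem: GTOF}), every $\alpha\le\Delta+\ve$ is then also feasible. So for each $\ve$ the maximal removal level is exactly $\Delta+\ve$, and the frontier is $\{(\Delta+\ve,\ve):\ve\ge0\}$.

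The one delicate point is the reduction in the first step: the TOF constraints $T(p,p_1)\le f_d$ and $T(p,q_1)\ge f_c$ must be equivalent to the scalar inequalities $\alpha'\ge\alpha$ and $\ve'\le\ve$. This rests on Gaussian TOFs depending only on the Mahalanobis distance and being monotone in it, i.e.\ Lemma~\ref{lem: GTOF}, and that equivalence is already packaged into Proposition~\ref{thm:FFR}. Given it, the rest is elementary geometry.
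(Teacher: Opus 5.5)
Your proposal is correct and follows the paper's route. The paper also whitens to $\Sigma=\II_d$ and uses Lemma~\ref{lem: GTOF} to identify $\cR$ with $\overline{B}(\nu_1,\ve)\cap(\R^d\setminus B(\mu_1,\alpha))$; it then reads off the frontier from the emptiness criterion $\overline{B}(\nu_1,\ve)\subset B(\mu_1,\alpha)\leftrightarrow \Delta+\ve<\alpha$. Your triangle-inequality bound and explicit farthest point $x^\ast=b+\ve(b-a)/\Delta$ spell out the two directions of that equivalence, which the paper calls immediate.
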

\begin{remark} We can take $\Sigma= \II_d$ and work in $(\mu_1', \nu_1', \mu')= (\Sigma{^{-1/2} \mu_1}, \Sigma{^{-1/2} \nu_1}, \Sigma{^{-1/2} \mu})$  coordinates. In our unlearning framework (Definition \ref{def:dist-unlearning}), the geometric description (Figure \ref{fig:alpha_e_plot_1}) as well as the Pareto frontiers (Figure \ref{fig:alpha-eps-feasibleM}) for finite-dimensional Gaussians in Proposition \ref{thm:FFR} and Lemma \ref{lem:PFGF} extends naturally to any one-parameter location family with a symmetric log-concave base noise (Proposition \ref{prop:FFRLC}), and even to non-parametric families \citep{gine2016mathematical}, \citep{castillo2024bayesian} with infinite dimensional parameter spaces such as the  Gaussian white noise model (Proposition \ref{thm:GWN}). 
\end{remark}
\vspace{-3mm}
\subsection{Feasible region of the Gaussian white noise model}
\vspace{-2mm}
We now describe the additive Gaussian white noise model, one of the canonical models of non-parametric statistics ~\citet{gine2016mathematical, johnstone2019gaussian, castillo2024bayesian}, where the parametric family $\{p_{\mu}: \mu\in \R^d\}$ of shifted Gaussians is replaced by a nonparametric family $\{p_f: f\in  H\}$ for  
\vspace{-2 mm}
\begin{equation}
H:= L^{2}([0,1], \cB_{[0,1]}, \lambda)
\text{ with }\langle f,g\rangle_{ H}:=\int_0^1 f(t)g(t)\,dt.
\vspace{-2 mm}
\end{equation}
\vspace{-2 mm}

Let \(W:= \{W(\varphi):\varphi \in H\} \) be a centered Gaussian process over \(H= L^2[0,1]\). More precisely,  for each \(\varphi\in H\), \(W(\varphi)\) is a centered Gaussian on $(\R, \cB_{\R})$  with  covariance $\mathbb E[W(\varphi)W(\psi)] =
    \langle \varphi,\psi\rangle_{H}$ for $\varphi,\psi\in H.$ Informally, $W= \{W(t): t\in [0,1]\}$ viewed as a random function, is `derivative' of Brownian motion $B$ and Gaussian white noise $W(\varphi)=\int_{0}^1 \varphi(t)dB(t)$ \cite{gine2016mathematical} is the Wiener integral. Now, for a signal \(f\in H\), the Gaussian white-noise observation is an SDE.
\begin{equation} \label{eq:AGWN}
dY_t=f(t)\,dt+\,dB_t \text{ for } 0\le t\le 1 \leftrightarrow Y(t)= \int_{0}^t f(s)ds + B(t) \text{ for } 0\le t\le 1, 
\end{equation}
 where $\{B(t): t\in [0,1]\}$ is the standard Brownian motion.
More precisely, for each test function \(\varphi\in H\), one has the observation  $Y_f(\varphi) =\int_0^1 \varphi(t)f(t)\,dt + W(\varphi) \overset{d}{=} N(\langle f, \varphi \rangle_{H}, \langle \varphi, \varphi\rangle_H ),$ as we have $W(\varphi)= \int_{0}^1 \varphi(t) dB(t)\overset{d}{=} N(0, \langle \varphi, \varphi \rangle_H)$.
Let \(p_f\) denote the distribution of the process  $\{Y_f(\varphi):\varphi\in H\}.$
For \(f,g\in H\), the likelihood ratio is   \text{ ~\citet{gine2016mathematical}[Proposition 6.1.1]}
\begin{equation}
    \log\frac{dp_g}{dp_f}(Y)
    =
    \int_0^1 (g-f)(t)\,dY_t
    -
    \left(
        \frac{\|g\|_{H}^2-\|f\|_{H}^2}{2}
    \right).
\end{equation}
Under \(p_f\), the SDE path $\{Y_t:t \in [0,1]\}$ has the form  $dY_t=f(t)\,dt+ dB_t.$ Therefore, we have
\[
\log\frac{dp_g}{dp_f}(Y)
    =
    \int_0^1 (g-f)(t)\,dB_t
    -
    \frac{\Delta^2}{2}, \text{ where } \Delta^2= \|f-g\|_{H}^2 =\langle f-g, f-g\rangle_H .
\]
\begin{equation}
 \text{Consequently, }   \log\frac{dp_g}{dp_f}
    \sim
    N\left(- \frac{\Delta^2}{2},
        \Delta^2
    \right) \quad \text{under } p_f.
\end{equation}
Under \(p_g\), the SDE path $\{Y_t:t \in [0,1]\}$ has the form  $dY_t=g(t)\,dt+ dB_t.$ Therefore,
\begin{equation}
    \text{Similarly, }
    \log\frac{dp_g}{dp_f}
    \sim
    N\left( \frac{\Delta^2}{2},
        \Delta^2
    \right)
    \quad \text{under } p_g
\end{equation}
From ~\citet{pandey2025infinitely}[Lemma 4]  $T(P,Q)= T\left(P \circ \log \left(\frac{dQ}{dP}\right)^{-1}, Q \circ \log \left( \frac{dQ}{dP}\right)^{-1}\right)$.

\begin{equation}
   \text{So, } T(p_f,p_{g})
    = 
    T\left(N\left(-\frac{\Delta^2}{2}, \Delta^2\right), N\left(\frac{\Delta^2}{2}, \Delta^2\right)\right)
=T\bigl(N(0,1),N(\Delta,1)\bigr).
\end{equation}

As a consequence, we have the following result in infinite-dimensional non-parametric families.

\begin{restatable}[Feasible region of the Gaussian White Noise Model]{proposition}{GWN}
\label{thm:GWN}

Following the notations from above, for $f,g\in H$, let $p_f, p_g \in   \mathcal{P}:= \{p_h: h \in H\}$ be the class of additive Gaussian white noise models \eqref{eq:AGWN}. Take $f_d= T(N(0,1), N(\alpha, 1))$ and $f_c= T(N(0,1), N(\ve,1))$ for $\alpha, \ve \geq 0$. Then  
\vspace{-2mm}
\begin{equation} \label{eq:GWNFR}
\cR(\cP, (p_f,p_g), (f_d,f_c)) =
\cR= \{p_h: h\in H: \alpha'(h) \geq \alpha, \ve'(h) \leq \ve\} 
\end{equation}
\begin{equation} \label{eq:GWNFRparameters0}
\text{ where } \alpha'^2(h)= \langle f-h, f-h \rangle \text{ and }\ve'^2(h)= \langle g-h, g-h \rangle_{H}. 
\end{equation}
Let $\Delta= \|f- g\|_{H}$  then $\cR$ is empty $\leftrightarrow$ $ \Delta + \epsilon <\alpha $. Moreover, $\cR= \overline{B}(\nu_1, \ve) \leftrightarrow \Delta - \ve \geq \alpha $, and $\cR$ is annular cap otherwise. Moreover, the Pareto frontier of removal-preservation pairs is given by
\begin{equation} \label{eq:ParetoGWN}
    \text{PF}(p_f, p_g, \cP) := \{(\Delta +\varepsilon, \varepsilon): \varepsilon \geq 0\} \text{ with } \Delta = ||f-g||_H.
\end{equation}
\end{restatable}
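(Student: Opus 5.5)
The plan is to reduce the infinite-dimensional statement to the Euclidean geometry already handled in Proposition~\ref{thm:FFR} and Lemma~\ref{lem:PFGF}, using the trade-off identity derived just above the statement.

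\textbf{Step 1: Gaussian form of the trade-off functions.} For any $h\in H$, the computation preceding the statement (with the pair $(f,g)$ replaced by $(h,f)$ and by $(h,g)$) gives
\[
T(p_h,p_f)=T\bigl(N(0,1),N(\|f-h\|_H,1)\bigr), \qquad T(p_h,p_g)=T\bigl(N(0,1),N(\|g-h\|_H,1)\bigr).
\]
This uses the likelihood-ratio formula of \citet{gine2016mathematical}[Proposition 6.1.1] and the reduction to log-likelihood laws from \citet{pandey2025infinitely}[Lemma 4].

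\textbf{Step 2: Reduction to norm conditions.} By the monotonicity of Gaussian TOFs (Lemma~\ref{lem: GTOF}), $T(N(0,1),N(a,1))\le T(N(0,1),N(b,1))$ pointwise if and only if $a\ge b$. Hence the removal condition $T(p_h,p_f)\le f_d$ is equivalent to $\alpha'(h)=\|f-h\|_H\ge\alpha$. Likewise, the preservation condition $T(p_h,p_g)\ge f_c$ is equivalent to $\ve'(h)=\|g-h\|_H\le\ve$. This establishes \eqref{eq:GWNFR}.

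\textbf{Step 3: Geometry in $H$.} The feasible region is now $\overline{B}(g,\ve)\setminus B(f,\alpha)$ in the Hilbert space $H$. (The $\nu_1$ in the statement should read $g$.) Its shape depends only on the two-dimensional span of $f-g$ together with the radial structure, so the case analysis runs exactly as in Proposition~\ref{thm:FFR}.

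\emph{Empty case.} The triangle inequality gives $\|f-h\|\le\Delta+\|g-h\|\le\Delta+\ve$ for every $h\in\overline{B}(g,\ve)$. So $\alpha>\Delta+\ve$ forces $\cR=\emptyset$. Conversely, if $\alpha\le\Delta+\ve$, the point $h^*=g+\ve(g-f)/\Delta$ is feasible, since $\|f-h^*\|=\Delta+\ve$. When $\Delta=0$, take any unit vector $u$ and set $h^*=g+\ve u$.

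\emph{Whole-ball case.} If $\Delta-\ve\ge\alpha$, then every $h\in\overline{B}(g,\ve)$ satisfies $\|f-h\|\ge\Delta-\ve\ge\alpha$, so $\cR$ is the whole ball. Conversely, the point $g+\ve(f-g)/\Delta$ lies in $\overline{B}(g,\ve)$ at distance $|\Delta-\ve|$ from $f$. If $\alpha>\Delta-\ve$, this point is excluded, so $\cR$ is a proper subset of the ball. This requires a small check when $\ve>\Delta$, where one instead picks $h=f$.

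\emph{Annular-cap case.} The remaining range is $\Delta-\ve<\alpha\le\Delta+\ve$. Here $\cR$ is a nonempty proper subset of the ball, cut out by the exterior of $B(f,\alpha)$, which is what we call an annular cap.

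\textbf{Step 4: Pareto frontier.} For fixed $\ve$, the maximal $\alpha$ with $\cR\neq\emptyset$ equals $\sup_{h\in\overline{B}(g,\ve)}\|f-h\|=\Delta+\ve$, and this supremum is attained at $h^*$. This gives \eqref{eq:ParetoGWN}, mirroring Lemma~\ref{lem:PFGF}.

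\textbf{Main obstacle.} The only delicate point is Step 1, namely justifying the likelihood ratio and the resulting Gaussian law for arbitrary $h\in H$, not just the fixed pair $(f,g)$. That justification is exactly the computation given above the statement with the roles relabeled, so once it is accepted the rest is elementary Hilbert-space geometry.
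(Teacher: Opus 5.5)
Your proposal is correct and follows the paper's route. The log-likelihood-ratio computation gives $T(p_h,p_f)=T(N(0,1),N(\|f-h\|_H,1))$, and monotonicity of Gaussian trade-off functions turns the two constraints into $\|f-h\|_H\ge\alpha$ and $\|g-h\|_H\le\varepsilon$. The ball-minus-open-ball geometry is then handled exactly as in Proposition~\ref{thm:FFR}; the paper says no more than this, so your explicit witness points are a welcome addition.

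One small caveat concerns your converse in the whole-ball case. Your choice $h=f$ for $\varepsilon>\Delta$ only excludes a point when $\alpha>0$. For $\alpha=0$ the region is always all of $\overline{B}(g,\varepsilon)$, so the stated equivalence $\cR=\overline{B}(g,\varepsilon)\leftrightarrow\Delta-\varepsilon\ge\alpha$ is itself false in that degenerate case.
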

\vspace{-2 mm}
We prove a generalization of the above in the appendix for shifted Gaussians on Hilbert spaces \ref{ssec:GaussiansonHS}.
\vspace{-1mm}
\begin{remark}
    Definition~\ref{def:dist-unlearning} yields the same geometric form of the fundamental region for additive Gaussian models, in both finite and infinite dimensions. In particular, for location models with unwanted $p_1\overset{d}{=} \mu_1+X$ and desired $q_1\overset{d}{=}\nu_1+X$, our unlearning framework ~\eqref{eq:dist-unlearning} formalizes the intuition in a clear geometric way (Figure \ref{fig:alpha_e_plot_1}) that the edited data should be close to $\nu_1$ and far from $\mu_1$, with the precise tradeoff governed by the allowable removal and preservation levels $\alpha,\varepsilon$.
\end{remark}
\vspace{-5mm}
\subsection{Feasible region for Poisson family}
\vspace{-2mm}
We consider a parametric, but a non-location Poisson family $\cP= \{P(\mu): \mu >0\}$ on $(\mathbb{Z}_{\geq 0}, 2^{\mathbb{Z}_{\geq 0}})$ with baselines $f_d=T(P(1), P(\alpha))$ and $f_c=T(P(1), P(\varepsilon))$ for some $\alpha, \varepsilon >0$. The intuition is that when the data $\{X_i\}_i$, or a relevant summary statistic $\{f(X_i)\}_i$ of the data, is count-valued, for example, the number of occurrences $f(X_i)$ of a specific word in a text $X_i$, the Poisson family provides a natural model. In this setting, distributional unlearning aims to carefully remove a subset of the text corpora $\{X_i\}_i$ coming from the unwanted domain with \(f(X) \sim P(\mu_1)\) so that the edited data remain close to the desired domain with \(f(X)\sim P(\nu_1)\).  Moreover, it is natural to adapt the removal-preservation baselines $f_d,f_c$ as trade-off functions between Poisson pairs.

\begin{restatable}[Feasible region of Poissons]{proposition}{FRP}
\label{thm:FRP}

Let $p_1 = P(\mu_1), q_1= P(\nu_1)\in \mathcal{P}:= \{P(\mu): \mu >0\}$. Take $f_d= T(P(1), P(\alpha))$ and $f_c= T(P(1), P(\ve))$ for some $\alpha>1,\ve<1, \mu_1, \nu_1 > 0$. Then  
\vspace{-2mm}
\begin{equation} \label{eq:FRP}
\cR(\cP, p_1,q_1,f_d,f_c)=\cR_P= 
\left[ \nu_1 ,
\min\left\{
\frac{\mu_1}{\alpha},
\mu_1-\alpha+1, \frac{\nu_1}{\varepsilon},
\nu_1-\varepsilon+1
\right\}
\right],
\end{equation}
\begin{equation} \label{eq:FRPparameters0}
\text{ provided we have } \nu_1\leq \min\left\{
\frac{\mu_1}{\alpha},
\mu_1-\alpha+1
\right\}.
\end{equation}
Otherwise, $\cR_{P}$ is empty.  See Table \ref{table:Poissonregion} for a description of the feasible region $\cR_P$ for other pairs of  $(\alpha, \ve)$ different from $\alpha >1,\varepsilon <1$. Moreover, see Table \ref{table:Paretofrontiers} for the Pareto frontier of $(\alpha, \varepsilon)$ pairs.
\end{restatable}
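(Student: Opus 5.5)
The plan is to reduce everything to a single comparison criterion for pairs of Poisson laws, and then read off $\cR_P$ by intersecting the resulting parameter constraints. The central auxiliary claim is the following. For $0<a<b$ and $0<c<d$,
\[
T(P(a),P(b)) \le T(P(c),P(d)) \iff \frac{b}{a}\ge \frac{d}{c}\ \text{ and }\ b-a\ge d-c .
\]
In words, the pair $(P(a),P(b))$ is Blackwell-more-informative than $(P(c),P(d))$ exactly when both the ratio and the gap of the means dominate. By the Blackwell characterization (Theorem~\ref{thm:Blackwell}), it suffices to prove this at the level of garblings.

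\emph{Sufficiency} is constructive and uses two Markov kernels that preserve the Poisson family.
\begin{itemize}
\item Binomial thinning with retention probability $t\in[0,1]$ maps $P(\lambda)\mapsto P(t\lambda)$.
\item Adding independent $P(s)$ noise maps $P(\lambda)\mapsto P(\lambda+s)$.
\end{itemize}
Composing them sends $(P(a),P(b))$ to $(P(ta+s),P(tb+s))$. Setting $t=(d-c)/(b-a)\le 1$ and $s=c-ta$ gives the target pair, and $s\ge 0$ is equivalent to $d/c\le b/a$. Data processing for trade-off functions then gives the inequality.

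\emph{Necessity} comes from the piecewise-linear structure of Poisson trade-off functions. The likelihood ratio $(b/a)^k e^{-(b-a)}$ is increasing in $k$, so by Neyman--Pearson the kinks of $T(P(a),P(b))$ sit at the points $\alpha=P_a(X\ge k)$. The segment slopes are $-(b/a)^k e^{-(b-a)}$.
\begin{itemize}
\item Near $\alpha=1$ the only non-rejected outcome is $k=0$, so the final slope is $-e^{-(b-a)}$. Pointwise domination $T_{ab}\le T_{cd}$ together with $T_{ab}(1)=T_{cd}(1)=0$ forces the left derivatives at $1$ to compare as $e^{-(b-a)}\le e^{-(d-c)}$. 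This gives the gap condition.
\item Near $\alpha=0$, compare the decay of $1-T(\alpha)$. The exponential growth rate in $k$ of the slopes, $\log(b/a)$ versus $\log(d/c)$, must dominate, since otherwise $T_{cd}$ would eventually drop below $T_{ab}$. This gives the ratio condition.
\end{itemize}
I expect this $\alpha\to0$ tail argument to be the main obstacle. To make it rigorous I would compare $T(\alpha)$ along the sequences $\alpha_k=P_a(X\ge k)$ and use the Poisson tail asymptotics $P_a(X\ge k)\sim a^k e^{-a}/k!$.

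With the criterion in hand, I apply it to $p=P(\mu)$.
\begin{itemize}
\item \emph{Removal.} The condition is $T(P(\mu),P(\mu_1))\le T(P(1),P(\alpha))$ with $\alpha>1$. This requires the orientation $\mu<\mu_1$, and then the criterion gives $\mu_1/\mu\ge\alpha$ and $\mu_1-\mu\ge\alpha-1$, i.e.\ $\mu\le\min\{\mu_1/\alpha,\ \mu_1-\alpha+1\}$.
\item \emph{Preservation.} The condition is $T(P(\mu),P(\nu_1))\ge T(P(1),P(\varepsilon))$ with $\varepsilon<1$. I first use the inversion identity $T(Q,P)=T(P,Q)^{-1}$ to put both pairs in the same orientation, handling the orientation of $\mu$ relative to $\nu_1$ as a case split. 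In the case $\mu\ge\nu_1$, the criterion applied to $(P(\nu_1),P(\mu))$ versus $(P(\varepsilon),P(1))$ yields $\mu/\nu_1\le 1/\varepsilon$ and $\mu-\nu_1\le 1-\varepsilon$, i.e.\ $\mu\le\min\{\nu_1/\varepsilon,\ \nu_1-\varepsilon+1\}$.
\end{itemize}

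Intersecting these constraints gives the interval in \eqref{eq:FRP}. It is nonempty precisely under \eqref{eq:FRPparameters0}; the constraints $\nu_1/\varepsilon$ and $\nu_1-\varepsilon+1$ are automatically at least $\nu_1$, so they never empty it. The remaining check is the $\mu<\nu_1$ branch of the case split, which I would verify separately to confirm that it contributes nothing beyond the interval with left endpoint $\nu_1$. The other sign regimes of $(\alpha,\varepsilon)$ summarized in Table~\ref{table:Poissonregion} follow by the same orientation bookkeeping.
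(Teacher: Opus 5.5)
Your outline is correct and shares the paper's skeleton. Sufficiency composes binomial thinning with independent Poisson superposition, which are the paper's kernels $K^r\circ K_s$. The gap condition $b-a\ge d-c$ comes from the terminal slope $e^{-(b-a)}=\operatorname*{ess\,inf}_{P(a)} dP(b)/dP(a)$ at $\alpha=1$, which is exactly Lemma~\ref{lem:ordering}.

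Where you genuinely differ is the ratio condition. The paper never examines the curve near $\alpha=0$. It uses the Hellinger transform $H_t(P(a),P(b))=\exp\{-(1-t)a-tb+a^{1-t}b^t\}$, which by data processing must satisfy $H_t(P(a),P(b))\ge H_t(P(c),P(d))$ for $t>1$ whenever $T(P(a),P(b))\le T(P(c),P(d))$. The term $-c(d/c)^t$ then forces $b/a\ge d/c$ as $t\to\infty$.

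Your tail route is valid but more delicate. At a fixed small $\alpha$ the two curves sit on segments with different indices $k_a(\alpha)$ and $k_c(\alpha)$. To show the slope ratio $e^{(b-a)-(d-c)}(d/c)^{k_c}/(b/a)^{k_a}$ diverges when $d/c>b/a$, you need $k_c(\alpha)-k_a(\alpha)=O(k/\log k)$. This follows from $\log P_a(X\ge k)=-k\log k+O(k)$. You then integrate the slope comparison from $0$, using that both curves have $1-T(0)=0$. The Hellinger argument buys a one-line limit with no index matching. Yours buys a self-contained Neyman--Pearson argument showing which feature of the curve encodes each condition.

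The step you deferred cannot be left open. Your criterion covers only pairs ordered the same way, yet $\cR\subseteq\cR_P$ needs two cross-orientation exclusions. One is your asserted requirement $\mu<\mu_1$ in the removal constraint. The other is the excluded branch $\mu<\nu_1$ in preservation, and this exclusion is precisely what produces the left endpoint $\nu_1$.

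Both reduce to one fact: if $a>b$ and $c<d$, then $T(P(a),P(b))\le T(P(c),P(d))$ is impossible. Your own endpoint analysis at $\alpha=0$ closes it. For $a>b$ the likelihood ratio $e^{a-b}(b/a)^k$ is bounded by $e^{a-b}$, so $1-T(P(a),P(b))(\alpha)\le e^{a-b}\alpha$. For $c<d$ the ratio is unbounded, so $(1-T(P(c),P(d))(\alpha))/\alpha\to\infty$. This contradicts $1-T(P(a),P(b))\ge 1-T(P(c),P(d))$ near $0$.

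The equal-parameter cases are handled by noting that $T(P(\mu),P(\mu))$ is the maximal trade-off function $x\mapsto 1-x$. This rules out $\mu=\mu_1$, since $\alpha\neq 1$, and admits $\mu=\nu_1$. Together these give the ``similarly ordered'' clause of Lemma~\ref{lem:Poisson-experiments}. With that in place, your intersection step and the nonemptiness condition \eqref{eq:FRPparameters0} are correct.
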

\begin{remark}[Intuition]
The description of the feasible region for the Poisson family is more involved compared to a location family. For a given $\mu_1 >\nu_1>0$, it is natural from Lemma \ref{lem:Poisson-experiments} to consider $\alpha >1$ and $\ve <1$ as the baseline parameters. Moreover, the Pareto frontier of $(\alpha, \ve)$ pairs (\ref{table:Paretofrontiers}) involves several  parameters including  $\frac{\mu_1}{\nu_1}, \mu_1- \nu_1$.  The proof is a consequence of a characterization result (Lemma \ref{lem:Poisson-experiments}) of when exactly do we have $f=T(P(a), P(b)) \leq T(P(c), P(d))$ for constants $a,b,c,d>0$.
In the appendix, we provide the feasible region of the Binomial and the Bernoulli family (Lemma \ref{lem:binomregion}), along with a similar characterization result (Lemma \ref{lem:Binom}) required in the proof.
\end{remark}
\vspace{-2mm}
\textbf{Consistency with Poisson baselines:}  For $f_d=T(P(1),P(\alpha))$ and  $f_c=T(P(1),P(\ve))$, with $\alpha >1$ and $\ve <1$ if $p$ satisfies $(f_d \leftrightarrow \alpha ,f_c \leftrightarrow \ve)$ unlearning, then as a consequence of Lemma \ref{lem:Poisson-experiments} it also satisfies  $(f_{d'} \leftrightarrow \alpha' ,f_{c'} \leftrightarrow \ve')$ unlearning for any $1\leq \alpha' \leq \alpha$ and $\ve' \leq \ve < 1$.

\vspace{-4mm}
\section{Algorithms and information-computation gaps}
\vspace{-3mm}
\label{sec:theory}
The preceding sections developed our framework at the level of distributions. We now shift focus to the practically relevant finite-sample regime, where achieving $(f_d, f_c)$ unlearning must be accomplished using only the finite but practically large collections of drawn samples introduced in Section~\ref{sec:problem}. To develop rigorous analytical intuition about finite-sample behavior, we restrict attention to an idealized yet foundational setting of parametric families $\cP= \{p_{\mu}: \mu\in \mathbb{R}^d\}$ on some space $(\cW, \cF_{\cW})$. Our theoretical results are for Gaussian distributions in $d$ dimensions with variance $\sigma^2 \mathbb{I}_d$,  and removal-preservation baselines $f_d = T(\mathcal{N}(0,1), \mathcal{N}(\alpha, 1))$, $f_c = T(\mathcal{N}(0,1), \mathcal{N}(\varepsilon, 1))$. While the proofs extend to arbitrary covariances $\Sigma \succ 0$, we focus on other aspects of the problem. The analytical tractability of the Gaussian setting enables us to obtain closed-form sample complexity bounds, which yield essential intuition regarding the comparative efficiency of distinct removal strategies.  What follows presents and analyzes two distinct deletion strategies: a random baseline and a selective, distance-based approach. Our analysis concentrates on the distribution class $\mathcal{P} \coloneqq \left\{ \mathcal{N}(\mu, \sigma^2\mathbb{I}_d) \colon \mu \in \mathbb{R}^d \right\}$, with $\sigma > 0$ treated as known. Starting from $n_1$ IID samples drawn from the unwanted distribution $p_1 \in \mathcal{P}$ and $n_2$ IID samples drawn from the retained distribution $q_1 \in \mathcal{P}$, together with a deletion budget $0 \leq n_r \leq n_1$, we establish high-probability bounds on the $(f_d \leftrightarrow \alpha, f_c \leftrightarrow \varepsilon)$-distributional unlearning guarantees that result from each strategy. 

\vspace{-2mm}
\subsection{Random removal: behavior on Gaussian and log-concave families}
\vspace{-2mm}
\textbf{Random removal} We begin with a baseline deletion strategy that treats every sample equally, deleting $n_r$ points chosen uniformly at random from the $n_1$ samples of $p_1$. The formal procedure is:

\textbf{Algorithm (Random removal by deleting points from unwanted data) \citep{allouah2025distributionalmachineunlearningselective}.} 
\begin{enumerate}[nosep] \label{algo:RR}
\item Randomly select $n_r$ out of the $n_1$ samples of $p_1$ without replacement, and remove them.
\item  Fit MLE (maximum likelihood estimator) on the remaining data,  take weighted average.
\end{enumerate}

The following result provides in our framework \ref{def:dist-unlearning} a finite-sample guarantee for achieving $(f_d \leftrightarrow \alpha,f_c \leftrightarrow \varepsilon)$-distributional unlearning using random removal with a deletion budget $n_r$.

\begin{restatable}[Random Removal]{proposition}{samples}
\label{th:random}
Let $p_1=N(\mu_1, \sigma^2 \II_d), q_1= N(\nu_1, \sigma^2 \II_d) \in \mathcal{P}= \{N(\mu, \sigma^2\II_d): \mu \in \R^d\}$ and $\delta \in (0,1)$. We observe $n_1$ IID samples from $p_1$ and $n_2$ IID samples from $q_1$, and randomly remove $n_r$ samples from $p_1$ before fitting a weighted MLE according to \ref{algo:RR}. With probability at least $1 - \delta$, the resulting $p$ satisfies $(f_d \leftrightarrow \alpha,f_c\leftrightarrow  \varepsilon)$- unlearning \ref{def:dist-unlearning} with:
\begin{align*}
\alpha & \leq \alpha_M(R):= \Delta -  \ve_m(R),
\varepsilon \geq  \ve_m(R):= \frac{n_1'  \Delta }{n_1' + n_2} + \frac{n_1' \gamma(n_1',d,\delta) + n_2 \gamma(n_2, d,\delta) }{n_1' + n_2}.
\end{align*}
where $\sigma \Delta := \|\mu_1- \nu_1\|_2$, $n_1'= n_1- n_r$ and $\sqrt{n} \gamma(n, d, \delta):= \sqrt{d+2\sqrt{d \log (1/\delta)} + 2 \log (1/\delta)}$.  
\end{restatable}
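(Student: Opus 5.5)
Since every candidate output $p=N(\hat\mu,\sigma^2\II_d)$ lies in the shifted Gaussian family, the plan is to reduce everything to Proposition \ref{thm:FFR}. With $\Sigma=\sigma^2\II_d$, that result says $p$ satisfies $(f_d\leftrightarrow\alpha, f_c\leftrightarrow\ve)$ unlearning exactly when two conditions hold:
\[\alpha'(\hat\mu)=\|\mu_1-\hat\mu\|_2/\sigma\ge\alpha,\qquad \ve'(\hat\mu)=\|\nu_1-\hat\mu\|_2/\sigma\le\ve.\]
Here $\hat\mu$ is the weighted MLE
\[\hat\mu=\frac{n_1'\bar X_1+n_2\bar X_2}{n_1'+n_2},\]
where $\bar X_1$ is the mean of the $n_1'=n_1-n_r$ retained samples from $p_1$ and $\bar X_2$ is the mean of the $n_2$ samples from $q_1$. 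The removed indices are chosen independently of the data, so conditionally on that choice the retained points are still IID $N(\mu_1,\sigma^2\II_d)$. Consequently $\bar X_1=\mu_1+\sigma Z_1/\sqrt{n_1'}$ and $\bar X_2=\nu_1+\sigma Z_2/\sqrt{n_2}$, with $Z_1,Z_2\sim N(0,\II_d)$ independent, using that $S_1$ is independent of $S_2$.

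The first step is to control $\ve'$ with the triangle inequality. Write
\[\hat\mu-\nu_1=\frac{n_1'(\mu_1-\nu_1)}{n_1'+n_2}+\frac{n_1'(\bar X_1-\mu_1)+n_2(\bar X_2-\nu_1)}{n_1'+n_2}.\]
Dividing by $\sigma$ gives
\[\ve'(\hat\mu)\le \frac{n_1'\Delta}{n_1'+n_2}+\frac{n_1'\|Z_1\|/\sqrt{n_1'}+n_2\|Z_2\|/\sqrt{n_2}}{n_1'+n_2}.\]
To bound the noise terms, I would apply the Laurent–Massart chi-square bound
\[\Pr\bigl(\|Z\|^2\ge d+2\sqrt{dt}+2t\bigr)\le e^{-t}\]
with $t=\log(1/\delta)$. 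This gives $\|Z_i\|/\sqrt{n}\le\gamma(n,d,\delta)$ with probability at least $1-\delta$ for each $i$, which yields $\ve'\le\ve_m(R)$. Any $\ve\ge\ve_m(R)$ therefore satisfies the preservation constraint. There is a probability bookkeeping issue: a union bound over the two norms costs $2\delta$. To get $1-\delta$ as stated, I would instead either apply the bound with $\delta/2$ (absorbed into the constants), or observe that the noise term $\bigl(\sqrt{n_1'}\,Z_1+\sqrt{n_2}\,Z_2\bigr)/(n_1'+n_2)$ is itself a single Gaussian $N\bigl(0,\II_d/(n_1'+n_2)\bigr)$. Its norm is controlled by one chi-square tail, and the result is dominated by the stated sum because $\sqrt{n_1'+n_2}\le\sqrt{n_1'}+\sqrt{n_2}$. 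The second route gives exactly probability $1-\delta$, so I would use it.

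The second step handles removal, again on the same event, via the reverse triangle inequality:
\[\alpha'(\hat\mu)=\|\mu_1-\hat\mu\|/\sigma\ge \|\mu_1-\nu_1\|/\sigma-\|\hat\mu-\nu_1\|/\sigma\ge \Delta-\ve_m(R).\]
Hence every $\alpha\le\alpha_M(R)$ satisfies $\alpha'\ge\alpha$, and the pair $(\alpha,\ve)$ lies on the feasible side, in agreement with the Pareto line of Lemma \ref{lem:PFGF}.

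I expect the main obstacle to be the probability and independence bookkeeping rather than the geometry. Two points need care: justifying that random removal preserves the IID Gaussian law of the retained sample (condition on the random index set), and ensuring that a single event of probability at least $1-\delta$ controls both constraints simultaneously. Both constraints depend only on $\|\hat\mu-\nu_1\|$, so one concentration event suffices.
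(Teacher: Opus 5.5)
Your argument is correct, and its skeleton matches the paper's proof. Both reduce to Proposition \ref{thm:FFR}, bound $\|\hat\mu-\nu_1\|_2$ by the triangle inequality around $\frac{n_1'}{n_1'+n_2}(\mu_1-\nu_1)$, and obtain the removal bound on the same event by the reverse triangle inequality.

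You differ from the paper only in the concentration step. The paper applies Laurent--Massart (Lemma \ref{lem:hoeffding}) separately to $\hat\mu_1'$ and $\hat\nu_1$ and then takes a union bound. That yields probability $1-\delta$ only under the lemma's convention, where $\gamma(n,d,\delta)$ effectively carries $\log(2/\delta)$. With the definition of $\gamma$ written in the statement, the union bound guarantees only $1-2\delta$.

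Your route avoids this. The total noise $\bigl(n_1'(\hat\mu_1'-\mu_1)+n_2(\hat\nu_1-\nu_1)\bigr)/(n_1'+n_2)$ is the average of $n_1'+n_2$ IID $N(0,\sigma^2\II_d)$ vectors, so it equals $\sigma W/\sqrt{n_1'+n_2}$ with $W\sim N(0,\II_d)$. This needs a single chi-square tail and gives exactly $1-\delta$ with the stated $\gamma$. It also produces the sharper noise term $\gamma(n_1'+n_2,d,\delta)$, which is dominated by the stated weighted sum because $\sqrt{n_1'+n_2}\le\sqrt{n_1'}+\sqrt{n_2}$.

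The price is that you rely on the independence of $S_1$ and $S_2$ and on the removed index set being chosen independently of the data; both are available here. The paper's union bound works under arbitrary dependence. It also keeps separate control of each estimator, which the paper reuses when it substitutes other estimators, such as the median in Remark \ref{rem:RRLaplace}.
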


\begin{remark}[Information-Computation gap] \label{rem:ICGRRM}
The feasible region $\cR$ allows any $\ve \geq 0$ and requires $\alpha \leq \Delta +\ve$ \ref{thm:FFR}. However, in the finite sample case, where we do not know the locations $\mu_1, \nu_1$, the baseline random removal with weighted MLE has a  lower bound on $\ve \geq \ve_m(R)>0$ and an upper bound $\alpha \leq \alpha_M(R)= \Delta -\ve_m(R)$ (see Remark \ref{rem:ICGRR} in the appendix for a more detailed analysis of the Information-computation gap). In both $\alpha_M(R)$ and  $\ve_m(R)$, the latter two terms involves  $\gamma(n, d, \delta)$ with weights. The quantity $\gamma(n, d, \delta)$ comes from the tail of $\|\sum_{i=1}^n X_i\|_2$ for $X_i$ IID $N(0,\II_d)$, and allow the upper bounds to hold with probability $1-\delta$. So, this is the cost in finite sample to avoid cases where samples are far from their mean. The first term involving $\Delta$ appears because weighted MLE involves the unwanted samples, and the associated weights $\frac{n_2}{n_1' + n_2}$ and $\frac{n_1'}{n_1' + n_2}$ capture that. A generalization of the above to  $\{\mu+X:\mu\in \R\}$ with symmetric log-concave $X$ (Proposition \ref{th:randomLC}), including the analysis of weighted median for Laplace (Remark \ref{rem:RRLaplace}), is in the appendix.
\end{remark}

\vspace{-4mm}
\subsection{Selective removal: behavior on Gaussian and log-concave families}
\vspace{-2mm}

For a family of distributions $\cP=\{p_{\mu}: \mu\in \Theta\}$, indexed by a metric space  $(\Theta, d(\cdot, \cdot))$  including the shifted Gaussian $\{N(\mu, \sigma^2 \II_d): \mu \in \R^d\}$ or the symmetric log-concave family $\{\mu+X: \mu\in \R\}$, a more effective strategy than random removal decides which samples to delete, based on relative separation between the distributions $p_1$ and $q_1$. For location families, our intuition suggests shifting the dataset's empirical mean away from the unwanted center $\mu_1$ and towards the retained center $\nu_1$. The most impactful samples to remove are those from $p_1$ that are furthest from $\nu_1$. Within our $(f_d,f_c)$ unlearning framework \ref{def:dist-unlearning} this intuition extends to families $\cP=\{\theta+X: \theta\in \Theta\}$ if $T(p_{\theta_1}, p_{\theta_2}) \uparrow$ whenever $d(\theta_1,\theta_2)\downarrow $. The families $\{N(\mu, \sigma^2 \II_d): \mu \in \R^d\}$ and $\{\mu+X: \mu\in \R\}$ satisfy that.
This intuition leads to the proposed selective removal strategy of ~\citep{allouah2025distributionalmachineunlearningselective}, using the empirical mean of the retained data $\hat{\mu}_2$ as a reference point for selection. 

\textbf{Algorithm (Distance based selective removal algorithm of ~\citep{allouah2025distributionalmachineunlearningselective}).}
\begin{enumerate}[nosep] \label{algo:SR}
\item Compute  MLE (maximum likelihood estimator) $\hat\nu_1$ of $n_2$ samples $x_2^{(1)}, \ldots, x_2^{(n_2)}$  from $q_1$.
\item For each of the $n_1$ samples $x_1^{(i)}$ from $p_1$, compute the score $s_i = ||x_1^{(i)} - \hat\nu_1||_2$.
\item Delete  $n_r$ samples with  largest scores $s_i$ and fit by a weighted mean on the remaining data.
\end{enumerate}
\begin{remark}[Empirical mean versus MLE]
   For Gaussians with unknown mean and known variance \ref{th:selective}, the empirical mean $\frac{1}{n} \sum_{i=1}^n X_i$ is the MLE. However, it is not  MLE for shifted log-concave families in general, such as shifted Laplace family, for which the median$(X_1, \cdots, X_n)$ is the MLE.
\end{remark}
\vspace{-2 mm}
The following result provides in our framework \ref{def:dist-unlearning} a finite-sample guarantee for achieving $(f_d \leftrightarrow \alpha,f_c \leftrightarrow \varepsilon)$-distributional unlearning using selective removal with a deletion budget $n_r$.

\begin{restatable}[Selective Removal]{proposition}{samplesselection}
\label{th:selective}
Let $p_1=N(\mu_1, \sigma^2 \II_d), q_1= N(\nu_1, \sigma^2 \II_d) \in \mathcal{P}= \{N(\mu, \sigma^2\II_d): \mu \in \R^d\}$ and $\delta \in (0,1)$. We observe $n_1$ IID samples from $p_1$ and $n_2$ IID samples from $q_1$, and selectively remove $n_r$ samples from $p_1$. With probability atleast $1 - \delta$, the resulting $p$ satisfies $(f_d= T(N(0,1), N(\alpha, 1)),f_c= T(N(0,1), N(\ve, 1)))$ unlearning \ref{def:dist-unlearning} with:
\begin{align*}
    \alpha
    \leq 
   \alpha_M(S):= \Delta - \ve_m(S),
    \varepsilon
    \geq  
    \ve_m(S):= 
    \frac{1}{\sigma}\frac{n_1'}{n_1' + n_2} 
    F_1^{-1}\left(\frac{n_1'}{n_1} +d(n_1, \delta)\right)
    + 
    2\gamma(n_2,d,\delta)
\end{align*}
where $\sigma \Delta := \|\mu_1- \nu_1\|_2$, $n_1'= n_1- n_r$, $\sqrt{n} \gamma(n, d, \delta):= \sqrt{d+2\sqrt{d \log (1/\delta)} + 2 \log (1/\delta)}$, $d(n,\delta)=\sqrt{\frac{\ln(4/\delta)}{2n}}$ and $F_1^{-1}$ is the inverse of $F_1(t):= \mathbb{P}[\|\sigma Z + \mu_1- \nu_1\|_2 \leq t] \text{ with } Z \sim N(0, \II_d).$ 
\end{restatable}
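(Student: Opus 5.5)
By Proposition~\ref{thm:FFR} (applied with $\Sigma=\sigma^2\II_d$), the output $p=N(\hat\mu,\sigma^2\II_d)$ satisfies $(f_d\leftrightarrow\alpha,f_c\leftrightarrow\ve)$ unlearning exactly when
\[
\|\hat\mu-\mu_1\|_2/\sigma\geq \alpha \quad\text{and}\quad \|\hat\mu-\nu_1\|_2/\sigma\leq \ve .
\]
Here $\hat\mu=\frac{n_1'\bar x_{\mathrm{kept}}+n_2\hat\nu_1}{n_1'+n_2}$ is the weighted mean after selective removal. The plan is therefore purely geometric.

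\textbf{Step 1: reduce to a bound on $\|\hat\mu-\nu_1\|_2$.} By the triangle inequality, $\|\hat\mu-\mu_1\|_2\geq\sigma\Delta-\|\hat\mu-\nu_1\|_2$. So it suffices to show that, with probability $1-\delta$, $\|\hat\mu-\nu_1\|_2/\sigma\leq\ve_m(S)$. Then every $\ve\geq\ve_m(S)$ and every $\alpha\leq\Delta-\ve_m(S)$ is certified. We decompose
\[
\hat\mu-\nu_1=\frac{n_1'}{n_1'+n_2}(\bar x_{\mathrm{kept}}-\nu_1)+\frac{n_2}{n_1'+n_2}(\hat\nu_1-\nu_1).
\]

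\textbf{Step 2: the retained-sample term.} Since $\hat\nu_1-\nu_1\sim N(0,\sigma^2\II_d/n_2)$, the Laurent--Massart chi-square tail gives $\|\hat\nu_1-\nu_1\|_2\leq\sigma\gamma(n_2,d,\delta)$ with probability $1-\delta/4$ (adjusting the constant in $\log(1/\delta)$ if needed). This handles the second summand, with weight at most $1$.

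\textbf{Step 3: the kept unwanted points.} This is the main obstacle, because the selection is based on $\hat\nu_1$ rather than $\nu_1$, so the scores are not exactly the i.i.d.\ quantities $\|x_1^{(i)}-\nu_1\|_2$ whose CDF is $F_1$. Every kept point has score at most the $n_1'$-th smallest score, so $\|x-\hat\nu_1\|_2\leq s_{(n_1')}$. Since $S_1$ is independent of $S_2$, we condition on $S_2$, and then $|s_i-\|x_1^{(i)}-\nu_1\|_2|\leq\|\hat\nu_1-\nu_1\|_2$. Hence
\[
s_{(n_1')}\leq r_{(n_1')}+\sigma\gamma(n_2,d,\delta),
\]
where $r_{(k)}$ are the order statistics of $r_i=\|x_1^{(i)}-\nu_1\|_2$, which are i.i.d.\ with CDF $F_1$. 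The DKW inequality at level $\delta/2$ gives $\sup_t|\hat F_{n_1}(t)-F_1(t)|\leq d(n_1,\delta)$. Therefore $r_{(n_1')}\leq F_1^{-1}(n_1'/n_1+d(n_1,\delta))$. Returning to $\nu_1$ costs one more $\|\hat\nu_1-\nu_1\|_2$. By convexity of the norm, the average of the kept points then satisfies
\[
\|\bar x_{\mathrm{kept}}-\nu_1\|_2\leq F_1^{-1}\!\left(\tfrac{n_1'}{n_1}+d(n_1,\delta)\right)+2\sigma\gamma(n_2,d,\delta).
\]
In the $F_1^{-1}$ term I would make sure the normalization by $\sigma$ matches the statement's convention.

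\textbf{Step 4: combine.} Weighting by $\tfrac{n_1'}{n_1'+n_2}$ and $\tfrac{n_2}{n_1'+n_2}$, the $\gamma$ contributions from Steps 2 and 3 are bounded by $2\gamma(n_2,d,\delta)$ after dividing by $\sigma$, since the convex weights sum to one. If this bookkeeping overshoots, a sharper route is to bound $\bar x_{\mathrm{kept}}-\hat\nu_1$ directly and add $\hat\nu_1-\nu_1$ once. A union bound over the DKW event and the Gaussian-tail event gives total probability at least $1-\delta$, which yields the stated $\ve_m(S)$ and $\alpha_M(S)=\Delta-\ve_m(S)$.
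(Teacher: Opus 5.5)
Your proposal is correct and is essentially the paper's proof. The ingredients match: Laurent--Massart for $\hat\nu_1$; the deterministic shift between distances to $\hat\nu_1$ and to $\nu_1$, where your order-statistic bound $s_{(n_1')}\le r_{(n_1')}+\|\hat\nu_1-\nu_1\|_2$ is the paper's $\widehat F_1(\hat\tau_r-\sigma\gamma(n_2,d,\delta))\le \widehat E_1(\hat\tau_r)=n_1'/n_1$ in different words; DKW on the i.i.d.\ distances $\|x_1^{(i)}-\nu_1\|_2$ and inversion of $F_1$ (which is in unnormalized units, so the statement's $1/\sigma$ is just your final division); convexity of the norm; and the triangle inequality for removal.

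Your decomposition around $\nu_1$ and the paper's route through $\hat\mu_1'-\hat\nu_1$ give the identical bound $\frac{n_1'}{n_1'+n_2}F_1^{-1}(\cdot)+\bigl(1+\frac{n_1'}{n_1'+n_2}\bigr)\sigma\gamma(n_2,d,\delta)$, so no "sharper route" is needed. Also, a single Gaussian event serves both Steps 2 and 3, so the paper's split $\delta/2+\delta/2$ suffices rather than your $\delta/4$.
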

\begin{remark}[Information-Computation gap] \label{rem:ICGSR}
The feasible region $\cR$ allows any $\ve \geq 0$ and  $ 0\leq \alpha \leq \Delta +\ve$ \ref{thm:FFR}. In comparison to the random removal with $\ve \geq \ve_m(R)>0$ and  $\alpha \leq \alpha_M(R)= \Delta  -\ve_m(R)$, the selective removal provides a trade-off, with  $\ve \geq \ve_m(S)>0$ and  $\alpha \leq \alpha_M(S)=  \Delta -\ve_m(S)$. The factor $d(n_1,\delta)$ comes from  approximating $F_1$ with its empirical version $\widehat{F}_1$ (Lemma \ref{lem:DKW}). The CDF $F_1$ appears naturally, since the selective removal is based on the sample version of the ordered list of distances   $\left\{\|x_1^{(i)} - \nu_1\|_2: i\in [n_1]\right\}$ for $x_1^{(i)} \overset{d}{=} p_1\overset{d}{=} \mu_1 + \sigma Z$. A generalization to  $\{\mu+X:\mu\in \R\}$ with log-concave noise $X$ is in the appendix (Proposition \ref{th:randomLCSR}). In the appendix (Proposition \ref{th:CSRRODG}, Remark \ref{rem:CSROGH}), we analyze in detail when it is the case that $\ve_m(S)\leq \ve_m(R)$, and observe that one needs a minimal separation $\Delta \geq \Delta_m$ (Equation \ref{eq:Delta_m}) and $n_r \geq \frac{n_1 (1+ o(1))}{2}$ (Equation \ref{eq:n_r}) for the selective removal to allow a smaller preservation level $\ve_m(S)$ than random removal $\ve_m(R)$.
\end{remark}
\vspace{-2 mm}

\vspace{-4mm}
\section{Conclusion and future directions}
\label{sec:conclusion}
\vspace{-3mm}
 Machine unlearning increasingly requires moving beyond individual record deletion to erase the influence of entire subpopulations. Our theoretical study is focused on analyzing the statistical or information-theoretic tradeoff in this task: full removal is computationally expensive, while random partial removal is statistically inefficient. At the distributional level, we formalize this removal of a carefully chosen subset of the unwanted domain as \emph{distributional unlearning}, a framework that optimally balances forgetting an unwanted distribution while preserving a desired one. Our theoretical analysis describes the information-theoretic feasible regions for the edited population for several parametric and non-parametric families, and provides provable guarantees connecting this approach to downstream model performance. We also provide finite-sample guarantees for a few data selection algorithms. The statistical framework of unlearning itself is model-agnostic. Moreover, since the selection algorithms we analyze are based on a metric on data representations rather than model internals, the algorithms are model-agnostic too.

While our theoretical results establish several benefits of selective data deletion, some limitations remain and suggest natural avenues for future work. One important direction is to characterize the feasible regions for broader parametric families $\{p_{\mu}: \mu \in \Theta\}$, and to develop finite-sample or non-asymptotic concentration guarantees for weighted MLE-based procedures $\hat{\mu}$. A more basic limitation is that our framework assumes access to samples that have already been labeled as coming from the unwanted subpopulation or the desired one. In practice,  identifying all samples from such a domain may itself be a difficult upstream task. It would also be valuable to understand the behavior of the method when the available retain set is incomplete, noisy, or only approximately specified. Finally, from an algorithmic perspective, the success of distance-based removal depends on the quality of the representation space ($\Theta,\|\cdot\|$). This raises the question of how the theoretical guarantees change under alternative feature maps, particularly non-isometric representations.

\bibliographystyle{plainnat}
\bibliography{references}


\appendix
\newpage
\appendix
\section{Appendix: Basics of the unlearning framework}
\label{app:proofs}

\subsection{Statistical unlearning versus sample-level unlearning}
In this section, we further motivate the framework of statistical unlearning of distributions. 
\paragraph{Data selection.} Our distributional unlearning framework in Definition~\ref{def:dist-unlearning} addresses a different part of the entire machine unlearning pipeline than standard sample-level unlearning. We focus on the \emph{data-selection} step: once a collection of candidate samples associated with an unwanted domain or subpopulation has been identified, our goal is to select a small, carefully chosen subset whose removal most effectively erases the influence of that subpopulation at the distributional level. This selection problem is complementary to both upstream detection, which identifies candidate samples from the unwanted population, and downstream model-update procedures such as approximate retraining from scratch, which modify a trained model after a forget set has been specified. 

\paragraph{Statistical framework. } We do not propose an algorithmic method for efficiently removing prescribed samples from a trained model, nor do we claim formal model-level privacy or deletion guarantees. Instead, our framework isolates the statistical question of which data points should be removed, while treating sample-level unlearning methods as downstream black-box procedures on the edited data. Once the carefully chosen forget set is selected, any sample-level unlearning algorithm may consume it as input to update the model. We also emphasize that identifying all samples belonging to an unwanted domain can itself be a challenging upstream problem in practice. Finally, the deletion budget may be chosen according to computational constraints: given a downstream sample-level unlearning method, its cost scaling can be translated into a maximum acceptable forget-set size, and our selection rule can then be applied under this budget.

\paragraph{Sample versus computational efficiency. }
The computational overhead of sample-level unlearning methods grows directly with the cardinality of the forget set. Influence-function-based techniques, such as that of \citet{guo2020certified}, necessitate computing gradients or Hessian-vector products over forget samples in order to approximate or execute a second-order parameter update such as \citet{zou2025certified}, \citet{pandey2026gaussian}. Alternative methods, such as SalUn \citet{fan2024salun}, locate and adjust salient model weights by performing backpropagation through the network for every sample in the forget set. Shrinking the forget set therefore directly reduces the number of gradient computations required, translating into tangible runtime savings. This dependence on forget set size applies equally to the broad family of certified unlearning methods that operate via noise injection \citet{chourasia2023forget, allouah2024utility, waerebeke2025when}.

\paragraph{Computational complexity of data selection. }

The selection strategies considered in this theoretical work require negligible to moderate computational demands. The random removal method imposes negligible computational overhead, with complexity scaling roughly linearly in both the number of forget samples $n_1$ and the desired ones $n_2$. As we compute the weighted empirical mean after the random removal of samples from the forget set, we require an overall cost of approximately $\Theta(n_1 +n_2)$. The selective removal algorithm carries a moderate computational burden of computing the distance for every point in the forget set to the empirical mean of the retained set, and then sorting them before computing the weighted average.

\subsection{Preliminaries on Gaussian trade-off functions}\label{sec:prelimGTOF}
Our \textbf{first} lemma summarizes some of the basic but extremely fruitful properties of the Gaussian trade-off function $f_{G, \ve}(x)=T(N(0,1), N(\ve,1))(x)= \Phi(\Phi^{-1}(1-x) - \ve)$ for all $x \in [0,1]$. 

\begin{lemma} (\cite{dong2022gaussian})\label{lem: GTOF} The Gaussian trade off functions satisfy the following properties:
\begin{itemize}
\item Monotonicity:  For any pair  $\ve_1, \ve_2  \geq0$, $\ve_1 \leq \ve_2$ if and only if  
\begin{equation} \label{eq:GMontone}
   f_{ G,\ve_1}(x)  \geq f_{G,\ve_2}(x) \text{ for all } x \in [0,1]. 
\end{equation}
\item Closure under suprema:  For any collection of $(\ve_i)_{i\in I} \subset 
\R$ with index set $I$
\begin{equation} \label{eq:Gsuprema}
   \inf_{i\in I} f_{ G,\ve_i}(x) = f_{G,\sup_{i\in I}|\ve_i|}(x) \text{ for all } x \in [0,1]
\end{equation}
\item Symmetry: $f_{G,\varepsilon}= f_{G,-\varepsilon}$ for $\varepsilon >0$. For any  $\mu_1, \nu_1 \in \R^p$ and $\sigma >0$ 
\begin{equation} \label{eq:symmetry}
   T(\nu_1 + \sigma N(0, \II_p), \mu_1+ \sigma N(0, \II_p)) =   T(\mu_1+ \sigma N(0, I_p), \nu_1 + \sigma N(0, I_p))
\end{equation}
\item Dimension freeness: Let $\mu_1, \nu_1 \in \R^p$, $\Sigma  \succ 0$ and  $\ve^2 := \langle (\mu_1- \nu_1), \Sigma^{-1} (\mu_1- \nu_1)\rangle $. Then
\begin{equation} \label{eq:dto1a}
    T(\mu_1+ \sqrt{\Sigma} N(0, \II_p), \nu_1 + \sqrt{\Sigma}N(0, \II_p)) \equiv T(N(0,1), N(\ve, 1)), 
\end{equation}
\end{itemize}
 \end{lemma}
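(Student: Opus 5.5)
The plan is to prove all four properties from one explicit formula. First I derive $f_{G,\ve}(x)=\Phi(\Phi^{-1}(1-x)-\ve)$ for $\ve\ge 0$ via Neyman--Pearson. For $P=N(0,1)$ and $Q=N(\ve,1)$ the likelihood ratio $dQ/dP(w)=\exp(\ve w-\ve^2/2)$ is monotone in $w$. The optimal level-$x$ test is therefore $\vp=\mathbf 1\{w>\Phi^{-1}(1-x)\}$, with type II error $\Phi(\Phi^{-1}(1-x)-\ve)$. For $\ve<0$ the ratio is decreasing, so the optimal test rejects for small $w$; this gives $\Phi(\Phi^{-1}(1-x)-|\ve|)$. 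Hence $T(N(0,1),N(\ve,1))=f_{G,|\ve|}$, which is the symmetry $f_{G,\ve}=f_{G,-\ve}$.

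\textbf{Monotonicity and suprema.} For fixed $x$, the map $\ve\mapsto\Phi(\Phi^{-1}(1-x)-\ve)$ is nonincreasing, and strictly decreasing when $x\in(0,1)$. This gives the forward direction of the monotonicity claim. For the converse, suppose $\ve_1>\ve_2$; then strict decrease at, say, $x=1/2$ contradicts $f_{G,\ve_1}\ge f_{G,\ve_2}$. For the infimum identity, set $s=\sup_i|\ve_i|$. By symmetry each $f_{G,\ve_i}=f_{G,|\ve_i|}$. Continuity and monotonicity of $\ve\mapsto\Phi(c-\ve)$ then give $\inf_i\Phi(c-|\ve_i|)=\Phi(c-s)$ pointwise. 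If $s=\infty$, the infimum is $0$ on $(0,1]$, so I will interpret $f_{G,\infty}$ as the perfectly distinguishing trade-off function (value $1$ at $0$, $0$ elsewhere) and note this convention.

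\textbf{Multivariate symmetry and dimension freeness.} The main tool is invariance of $T$ under bijective measurable maps, since such maps can be undone; data processing applied in both directions gives equality. Apply the affine bijection $w\mapsto \Sigma^{-1/2}(w-\mu_1)$, followed by a rotation $U$ sending $\Sigma^{-1/2}(\nu_1-\mu_1)$ to $\ve e_1$ with $\ve$ the Mahalanobis norm. The pair becomes $(N(0,\II_p),N(\ve e_1,\II_p))$, which is a product of $(N(0,1),N(\ve,1))$ with $p-1$ identical factors $N(0,1)$. Identical coordinates carry no information. Formally, the likelihood ratio depends only on $w_1$, so the Neyman--Pearson tests are functions of $w_1$; alternatively, one can invoke the tensor property $T(P\otimes R,Q\otimes R)=T(P,Q)$ referenced in the paper. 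This yields \eqref{eq:dto1a}.

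\textbf{The swapped pair.} For \eqref{eq:symmetry}, the map $w\mapsto \mu_1+\nu_1-w$ exchanges $\mu_1+\sigma N(0,\II_p)$ and $\nu_1+\sigma N(0,\II_p)$. So it sends the ordered pair $(\nu_1+\sigma Z,\mu_1+\sigma Z)$ to $(\mu_1+\sigma Z,\nu_1+\sigma Z)$, and invariance under bijections gives equality. Both sides also equal $f_{G,\|\mu_1-\nu_1\|/\sigma}$ by dimension freeness.

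The main obstacle I expect is bookkeeping rather than any single step. In the dimension-freeness argument, the Neyman--Pearson optimality must be rigorous, including randomized tests at atoms; these are absent here because the likelihood ratio is continuously distributed. One must also check that the bijection argument uses measurable inverses.
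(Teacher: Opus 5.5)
Your proposal is correct and follows essentially the paper's route. The paper only remarks that the lemma is immediate from the explicit formula $f_{G,\ve}(x)=\Phi(\Phi^{-1}(1-x)-\ve)$ obtained via Neyman--Pearson, with orthogonal invariance of the standard Gaussian giving the multivariate statements, and your whitening, rotation and reflection bijections together with dropping the uninformative coordinates are a careful rendering of that; your reading of $f_{G,\ve}$ as $T(N(0,1),N(\ve,1))=f_{G,|\ve|}$ for $\ve<0$ (the literal formula would exceed $1-x$ there) and your convention for $\sup_i|\ve_i|=\infty$ are exactly what the symmetry and closure claims need to hold as stated.
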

\textbf{Intuition and importance:} The proof of this lemma is immediate from the explicit description of $f_{G, \ve}(x)=\Phi(\Phi^{-1}(1-x) - \ve)$ where $\Phi(\cdot)$ is the CDF of a standard Gaussian variable $\Phi(t)= \mathbb{P}[N(0,1)\leq t]$ for all $t\in \mathbb{R}$. It also requires applying the Neyman-Pearson lemma or the likelihood ratio test \cite{polyanskiy_wu_2025}. But, the conclusions that they imply are extremely powerful. 
\par
\textbf{Monotonicity:} The \textit{monotonicity} condition \eqref{eq:GMontone} reduces an apriori difficult functional comparison between two functions $f$ and $g$ at uncountably many points to a comparison of just one parameter $\varepsilon$.
\par
\textbf{Closure under suprema:} The closure under suprema property \eqref{eq:Gsuprema} essentially says two very important things. First, it makes it easy to identify what the suprema of an apriori arbitrary collection of functions $\{f_i\}_i$ is (in fact explicitly). Second, the limiting object is a function of the same kind: it is again a Gaussian trade-off function with a different choice of parameter $\ve$.
\par
\textbf{Symmetry:} The symmetry property  \eqref{eq:symmetry}  (requires Neyman-Pearson lemma) is very interesting because the definition of a trade-off function $T(P,Q)$ ~\ref{def:TOF} is asymmetric in general, between its first and second arguments.  But, for a pair of shifted isotropic Gaussians, they match because of the spherical symmetry (orthogonal invariance) of the standard Gaussian density in any dimension.
\par
\textbf{Dimension freeness:} The dimension freeness property \eqref{eq:dto1a}  (requires Neyman-Pearson lemma)  makes the case for Gaussian certifiability in high dimensions stronger than any other notion of certifiability. This is because, almost all the results of classical statistics that are true in low dimensions, fail to hold in high dimensions, because many of the quantities involved in controlling the errors are dimension dependent and blows up when $p \uparrow \infty$. It is often the case that finding a dimension-free quantity or even an inequality that \textit{tensorizes} \footnote{We are referring dimension free Poincare and Logarithmic Sobolev inequalities of high-dimensional statistics that are extremely important in obtaining concentration bounds in high dimensions \cite{vanhandelAPC550}.} help us resolve high dimensional issues.

\subsection{Preliminaries on log-concave trade-off functions}\label{sec:prelimLCTOF}
Our \textbf{next} lemma says that the some aspects of the Gaussian trade-off functions generalize. 
\begin{lemma}\cite{dong2022gaussian}[Lemma A.2, Proposition A.3] \label{lem:SLCTOF}
Consider a \textit{symmetric} random variable $X\overset{d}{=} -X$ with CDF $F$ having a log-concave Lebesgue density. Consider its  baseline trade-off function $f_{X,\ve}(x)=T(X,\ve+X)(x)$ for all $x \in [0,1]$. 
\begin{enumerate}
    \item \textit{Symmetry:} For any $t_1,t_2 \in \mathbb{R}$ and $\delta = |t_2- t_1|$ the following holds  for all $x \in [0,1]$.
    \begin{equation}
T(t_1+X, t_2+X)(x) = F\left(F^{-1}(1-x)-\delta \right)=T(t_2+X,t_1+X)(x). 
\end{equation}
\begin{equation}
\text{So, }T(t_1+X, t_2+X)(x)= T(X,\delta+X)(x)=T(\delta+X,X)(x) 
\end{equation}
\item \textit{Monotonicity:}
For $\ve_1 ,\ve_2 \geq 0$. Then $\ve_1 \leq \ve_2$ if and only if 
\begin{equation}
    f_{X,\ve_1}(x) \geq f_{X,\ve_2}(x) \text{ for all } x \in [0,1].
\end{equation}
\end{enumerate}
\end{lemma}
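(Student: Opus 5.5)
The plan is to work entirely from the explicit location-family formula. For a symmetric $X$ with log-concave density $\rho$ and CDF $F$, I first compute $T(t_1+X,t_2+X)$ directly by Neyman--Pearson, and then read off both parts of the lemma from that closed form.

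\textbf{Step 1: reduce to $t_1=0$.} Both distributions translate together under $x\mapsto x-t_1$, which is a bijection, and $T$ is invariant under applying a common measurable bijection to both arguments. So it suffices to treat $T(X,s+X)$ with $s=t_2-t_1$.

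\textbf{Step 2: the case $s=\delta\ge 0$.} The likelihood ratio is $\rho(x-\delta)/\rho(x)$. Log-concavity means $\log\rho$ is concave, and this makes $x\mapsto \log\rho(x-\delta)-\log\rho(x)$ nondecreasing: the family has monotone likelihood ratio. This is the step I expect to carry the real content, and also the one needing care. The difficulty is that $\rho$ may vanish outside an interval, so I would define the ratio in $[0,\infty]$ and verify monotonicity on the support.

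Given monotonicity, the Neyman--Pearson lemma says the most powerful level-$x$ tests are threshold tests $\varphi=\mathbf 1\{w>c\}$. Since $F$ is continuous, no randomization is needed. Choosing $c=F^{-1}(1-x)$ gives type I error $1-F(c)=x$ under $X$. The type II error under $\delta+X$ is $\mathbb P[\delta+X\le c]=F(F^{-1}(1-x)-\delta)$. Hence $T(X,\delta+X)(x)=F(F^{-1}(1-x)-\delta)$.

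\textbf{Step 3: the case $s=-\delta<0$.} Reflect by $x\mapsto -x$. Using $X\overset{d}{=}-X$, this maps the pair $(X,-\delta+X)$ to $(X,\delta+X)$, so the trade-off function is the same. Combining with Step 1 gives $T(t_1+X,t_2+X)=F(F^{-1}(1-\cdot)-|t_2-t_1|)$.

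\textbf{Step 4: swapping the arguments.} The formula depends only on $|t_2-t_1|$, so swapping $t_1$ and $t_2$ gives the same function. This yields both displayed identities, including $T(X,\delta+X)=T(\delta+X,X)$.

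\textbf{Step 5: monotonicity, forward direction.} For each fixed $x$, the map $\varepsilon\mapsto F(F^{-1}(1-x)-\varepsilon)$ is nonincreasing because $F$ is nondecreasing. Hence $\varepsilon_1\le\varepsilon_2$ implies $f_{X,\varepsilon_1}\ge f_{X,\varepsilon_2}$.

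\textbf{Step 6: monotonicity, converse.} Suppose $\varepsilon_1>\varepsilon_2$. Pick $x\in(0,1)$ such that $u=F^{-1}(1-x)$ lies in the interior of the support, with $u-\varepsilon_1$ also in the interior. Since $\rho>0$ on the interior of the support (log-concavity makes the support an interval), $F$ is strictly increasing on that interval. Therefore $F(u-\varepsilon_1)<F(u-\varepsilon_2)$, contradicting $f_{X,\varepsilon_1}\ge f_{X,\varepsilon_2}$.

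One edge case needs checking here: bounded support with a large shift. In that case the point $u-\varepsilon_1$ may leave the support. I would handle it by taking $u$ near the upper endpoint, so that $u-\varepsilon_2$ stays inside the support while $F(u-\varepsilon_1)$ is still strictly smaller, possibly equal to $0$.
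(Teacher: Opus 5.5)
Your route is the one the paper intends. The paper's proof defers to Dong et al.\ and the closed form $f_{X,\varepsilon}(x)=F(F^{-1}(1-x)-\varepsilon)$, obtained from Neyman--Pearson and the monotone likelihood ratio that log-concavity gives, with the negative shift handled by reflection. Your Steps 1--5 carry this out correctly, including the care you take where $\rho$ vanishes.

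The gap is in Step 6, and it cannot be closed, because the converse is false in the corner you flagged. Write the support as $[a,b]$. Your bounded-support fix needs a point $u$ in the support with $u-\varepsilon_2$ still in the interior of the support. Such a $u$ exists only if $\varepsilon_2<b-a$.

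When $\varepsilon_2\ge b-a$, no such $u$ exists, and the two trade-off functions actually coincide. For every $\varepsilon\ge b-a$, the supports of $X$ and $\varepsilon+X$ meet in at most the single point $b$. The test $\mathbf{1}\{w>b\}$ then has type I error $0$ and type II error $F(b-\varepsilon)=0$, so $f_{X,\varepsilon}\equiv 0$ on $[0,1]$.

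A concrete counterexample is $X\sim\mathrm{Unif}[-1,1]$, which is symmetric and log-concave and is listed by the paper itself among its examples. It gives $f_{X,3}\equiv f_{X,2}\equiv 0$, so $f_{X,3}\ge f_{X,2}$ holds even though $3>2$.

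What your argument actually proves is the following. The forward implication holds in general. The converse holds under the extra hypothesis $\min\{\varepsilon_1,\varepsilon_2\}<b-a$, which is automatic for full-support noise such as Gaussian or Laplace. You should state the converse with that hypothesis rather than claim the edge case is handled.

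The paper's one-line proof passes over the same point. The equivalence $f_{X,\alpha'}\le f_{X,\alpha}\leftrightarrow \alpha'\ge\alpha$ used in Proposition~\ref{prop:FFRLC} inherits the problem for bounded-support noise.
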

\textbf{Intuition and importance:}  The proof of this lemma is along the same lines of the proof given in \cite{dong2022gaussian}[Lemma A.2, Proposition A.3]. Using log-concavity of the density of $X$ and symmetry $X \overset{d}{=}-X$, there is an explicit description of $f_{X, \ve}(x)=F(F^{-1}(1-x) - \ve)$ where $F(\cdot)$ is the CDF of the random variable $X$, $F(t)= \mathbb{P}[X\leq t]$ for all $t\in \mathbb{R}$, and $\ve \geq 0$.

\subsection{Behavior of  trade-off functions under taking inverses}\label{sec:prelimTOFI}
Our \textbf{next} lemma describes the  behavior of pairs of  tradeoff-functions under taking inverses.
\begin{lemma}\label{lem:ITOF}
Let \(f,g:[0,1]\to[0,1]\) be nonincreasing functions (for all $x\geq y \implies f(x) \leq f(y)$) 
\[
  \text{such that }  f(x)\leq g(x) \text{ for all } x\in[0,1].
\]
For a nonincreasing function \(h:[0,1]\to[0,1]\), define its generalized left inverse by
\[
    h^{-1}(t)
    :=
    \inf\{x\in[0,1]: h(x)\leq t\},
    \text{ for } t\in[0,1],
\]
with the convention that \(\inf\varnothing=1\). Then
$  f^{-1}(t)\leq g^{-1}(t),
    \text{ for } t\in[0,1].$
\end{lemma}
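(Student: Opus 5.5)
The proof is a direct set-inclusion argument on the sublevel sets that define the generalized left inverse. Monotonicity of $f,g$ is not needed for the inequality itself; only the pointwise domination $f\le g$ is used. Fix $t\in[0,1]$ and introduce the sublevel sets
\[
A_f(t):=\{x\in[0,1]: f(x)\le t\},\qquad A_g(t):=\{x\in[0,1]: g(x)\le t\},
\]
so that $f^{-1}(t)=\inf A_f(t)$ and $g^{-1}(t)=\inf A_g(t)$, with the convention $\inf\varnothing=1$.

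The first step is to show $A_g(t)\subseteq A_f(t)$. If $x\in A_g(t)$, then $f(x)\le g(x)\le t$, hence $x\in A_f(t)$.

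The second step passes the inclusion to infima, taking care of the convention $\inf\varnothing=1$.

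\emph{Case $A_g(t)$ nonempty.} Then $A_f(t)$ is also nonempty, and the infimum over a larger set is smaller. Hence $f^{-1}(t)=\inf A_f(t)\le \inf A_g(t)=g^{-1}(t)$.

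\emph{Case $A_g(t)=\varnothing$.} Then $g^{-1}(t)=1$. Whether or not $A_f(t)$ is empty, $f^{-1}(t)$ is either an infimum of a subset of $[0,1]$ or equal to $1$ by convention. In both situations $f^{-1}(t)\le 1=g^{-1}(t)$.

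Since $t\in[0,1]$ was arbitrary, this proves $f^{-1}\le g^{-1}$ on $[0,1]$.

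The only point needing care is this empty-set convention, because the ordinary "infimum over a larger set is smaller" argument breaks down when the smaller set $A_g(t)$ is empty. The case split handles it: it is consistent precisely because every generalized inverse is bounded above by $1$.
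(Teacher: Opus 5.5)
Your proof is correct and follows the paper's argument: the sublevel-set inclusion $A_g(t)\subseteq A_f(t)$, passed to infima. Your explicit case split for the convention $\inf\varnothing=1$ is a small point the paper's proof leaves implicit.
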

\begin{remark} \label{rem:ITOF}
    \citet{dong2022gaussian}[Lemma A.2] shows that if $f=T(P,Q)$ is a trade-off function for distributions $P,Q$ on some $(\cW, \cF_W)$ , then $f^{-1}$ is also a trade-off function, and $f^{-1}= T(Q,P)$.
\end{remark}
\begin{proof}
Fix \(t\in[0,1]\). Define the sublevel sets for the functions $f$ and $g$ as
\[
    A_f(t):=\{x\in[0,1]: f(x)\leq t\},
    \qquad
    A_g(t):=\{x\in[0,1]: g(x)\leq t\}.
\]
We observe that $A_g(t)\subseteq A_f(t)$, because, if \(x\in A_g(t)\), then \(g(x)\leq t\) $\implies  f(x)\leq g(x)\leq t$. So, \(x\in A_f(t)\) and therefore \(A_g(t)\subseteq A_f(t)\). As a consequence we have 
\[
     f^{-1}(t)= \inf A_f(t)\leq \inf A_g(t)= g^{-1}(t) \text{ for any } t\in[0,1].
\]
\end{proof}
\subsection{Blackwell ordering and the data-processing inequality}
Our \textbf{next} theorem describes  that an ordering of trade-off functions $T(P,Q)=f \leq g =T(P',Q')$ is equivalent to data-processing $(R\circ P= P', R\circ Q= Q')$, also known as Blackwell ordering.
\begin{theorem}[Blackwell ordering]  \citet{dong2022gaussian}[Theorem 2] \label{thm:Blackwell}
Let \(P,Q\) be probability distributions on some measurable space $(\cW, \cF_{\cW})$ and let
\(P',Q'\) be probability distributions on some measurable space $(\cW', \cF_{\cW'})$. The following
two statements are equivalent:
\begin{enumerate}
    \item \(f=T(P,Q) \leq T(P',Q')=g\) $\leftrightarrow f(x) \leq g(x)$  for all $x\in [0,1]$.
    \item There exists a Markov kernel \footnote{A Markov kernel  $\mathrm{R}: (\cW, \cF_{\cW})  \to (\cW', \cF_{\cW'})$ is a measurable collection $\{R_w:w\in \cW\}$ of probability measures on $(\cW', \cF_{\cW'})$. We refer the reader to \cite{kallenberg2021foundations}  for more details on Markov kernels.}
        $\mathrm{R}: (\cW, \cF_{\cW})  \to (\cW', \cF_{\cW'})$
    such that
    \[
        R \circ P=P' \text{ and }
        R \circ Q=Q',  \text{ where}
    \]
   for a probability distribution  $p$ on  $(\cW, \cF_{\cW})$, we define the probability distribution $R\circ p$ as   
    \begin{equation}  \label{eq:MKDef}
    R\circ p(A'):= \int_{\cW} R(w, A') dp(w)    \text{ for } A' \in \cF_{\cW'}.
    \end{equation}
\end{enumerate}
\end{theorem}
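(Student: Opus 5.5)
The easy direction (2 $\Rightarrow$ 1) is a direct test-transport argument, and the converse (1 $\Rightarrow$ 2) is the substantive step, which I would handle by reducing to binary likelihood-ratio experiments and constructing an explicit kernel between them.

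\textbf{Direction (2) $\Rightarrow$ (1).} Assume $R\circ P=P'$ and $R\circ Q=Q'$. Take any test $\vp':\cW'\to[0,1]$ and pull it back to $\vp(w):=\int_{\cW'}\vp'(w')\,R(w,dw')$. This $\vp$ is measurable with values in $[0,1]$, and by \eqref{eq:MKDef} and Fubini,
\[
\e_P[\vp]=\e_{P'}[\vp'], \qquad \e_Q[1-\vp]=\e_{Q'}[1-\vp'].
\]
So every (type I, type II) pair achievable for $(P',Q')$ is achievable for $(P,Q)$. Taking the infimum in \eqref{eq:TOF} at each level $\alpha$ gives $T(P,Q)(\alpha)\le T(P',Q')(\alpha)$ for every $\alpha\in[0,1]$, which is statement 1.

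\textbf{Direction (1) $\Rightarrow$ (2).} First reduce both experiments to their likelihood-ratio statistics. By \citet{pandey2025infinitely}[Lemma 4], $T(P,Q)$ equals the trade-off function of the laws of $L=\log(dQ/dP)$ under $P$ and $Q$, allowing values $\pm\infty$ to account for singular parts. The map $w\mapsto L(w)$ is a deterministic kernel, so it suffices to work with these pushforward experiments.

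Next I would use a canonical representation. A trade-off function $f$ determines a canonical pair on $[0,1]$: $P_f=\mathrm{Unif}[0,1]$, and $Q_f$ the law with CDF $x\mapsto 1-f(x)$ on $[0,1]$, plus an atom at $0$ of mass $1-f(0)$ when $f(0)<1$. One then shows that $(P,Q)$ and $(P_f,Q_f)$ are mutually convertible by kernels. From $(P,Q)$ to $(P_f,Q_f)$, the kernel randomizes the Neyman–Pearson test: map $w$ to $U=$ the randomized rank of $L(w)$ under $P$, i.e.\ $1-F_P(L)$ randomized over atoms by independent uniform tie-breaking. Then $U\sim\mathrm{Unif}$ under $P$, and under $Q$ the CDF of $U$ at $x$ equals the power of the level-$x$ Neyman–Pearson test, namely $1-f(x)$. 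For the reverse direction, sample $L$ conditionally on its rank, using a quantile transform together with conditional sampling of $w$ given $L$, which is valid since $L$ is sufficient.

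It then remains to compare $(P_f,Q_f)$ with $(P_g,Q_g)$ when $f\le g$. Both experiments have the first marginal uniform, and the densities $q_f=-f'$ and $q_g=-g'$ are decreasing, by convexity. I would build a kernel $K$ on $[0,1]$ with $K\circ\mathrm{Unif}=\mathrm{Unif}$ and $K\circ Q_f=Q_g$. Since $f\le g$ with $f(1)=g(1)=0$, the convex function $g$ majorizes $f$, and this is exactly the convex-order or majorization condition. The Hardy–Littlewood–Pólya / Blackwell–Sherman–Stein theorem then gives a doubly stochastic (Lebesgue-preserving) kernel. Concretely, I would approximate $f,g$ by piecewise-linear trade-off functions, build finite doubly stochastic matrices on each piece by the HLP theorem, and pass to a weak limit of kernels using compactness of $[0,1]$.

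I expect the main obstacle to be this last step: rigorously producing the kernel from majorization in the continuum and controlling the limit, together with the atom at $0$ and the $\pm\infty$ likelihood values. If this gets too delicate, the fallback is to cite \citet{dong2022gaussian}[Theorem 2] directly, since the statement is taken verbatim from there. Composing the three kernels, $(P,Q)\to(P_f,Q_f)\to(P_g,Q_g)\to(P',Q')$, then yields $R$.
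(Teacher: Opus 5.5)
The paper gives no proof of this theorem; it imports it from \citet{dong2022gaussian}, so your argument has to stand on its own. Your direction (2)$\Rightarrow$(1) is correct and complete. The architecture of (1)$\Rightarrow$(2) is the classical one for dichotomies, and the first arrow $(P,Q)\to(P_f,Q_f)$ via the randomized Neyman--Pearson rank works on any measurable space, with the $Q$-singular part landing in the atom at $0$.

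The genuine gap is the last arrow, ``conditional sampling of $w$ given $L$, which is valid since $L$ is sufficient.'' Sufficiency gives, for each $A'\in\cF_{\cW'}$, a version of the conditional probability of $A'$ given $L'$ that does not depend on the hypothesis. It does not give a Markov kernel $l\mapsto N(l,\cdot)$; that needs a regularity assumption such as $(\cW',\cF_{\cW'})$ being standard Borel. This is not a removable technicality: without it, (1)$\Rightarrow$(2) is false. Here is a counterexample.
\begin{itemize}
\item Take $P=\mathrm{Unif}[0,1]$ and $Q$ with density $2x$ on $([0,1],\cB_{[0,1]})$.
\item Let $E\subset[0,1]$ have Lebesgue outer measure $1$ and inner measure $0$, and set $\cF'=\sigma(\cB_{[0,1]}\cup\{E\})$.
\item Let $P'$ be the extension of Lebesgue measure to $\cF'$ with $P'(E)=1/2$, and let $dQ'/dP'(x)=2x$.
\end{itemize}
The likelihood ratio is Borel, so the Neyman--Pearson tests are Borel and $T(P',Q')=T(P,Q)$. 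Suppose a kernel $R$ satisfied $R\circ P=P'$ and $R\circ Q=Q'$. Let $X\sim P$ and $Y\mid X\sim R_X$, restricted to Borel sets. Both marginals are uniform, and the $Q$-condition gives $\e[X\mid Y]=Y$. Hence $\e[(X-Y)^2]=\e[X^2]-\e[Y^2]=0$, so $R_x=\delta_x$ for a.e.\ $x$. Then $x\mapsto R_x(E)$ equals $\mathbf{1}_E$ almost everywhere, contradicting its Borel measurability. So the statement needs such a hypothesis, as in the classical formulations of Blackwell's theorem. Under that hypothesis your disintegration step is fine.

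The comparison step $(P_f,Q_f)\to(P_g,Q_g)$ is also only sketched, and as described it fails exactly at the atom you flag. A Lebesgue-preserving kernel maps absolutely continuous measures to absolutely continuous ones. When $f(0)<g(0)$, the absolutely continuous parts $-f'\,dx$ and $-g'\,dx$ have different masses $f(0)$ and $g(0)$. So no doubly stochastic matrix can relate the discretized cell vectors.

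You must use the freedom at the Lebesgue-null point $0$. Set $c=g(0)-f(0)$, choose $m\ge 0$ with $\int_0^1(-g'-m)_+\,dx=c$, and put $K(0,\cdot)=\bigl((1-g(0))\delta_0+(-g'-m)_+\,dx\bigr)/(1-f(0))$. Then $\min(-g',m)$ and $-f'$ are decreasing with equal mass $f(0)$. Moreover, $f\le g$ together with concavity of $t\mapsto f(0)-f(t)$ gives $\int_0^t\min(-g',m)\le f(0)-f(t)$ for all $t$. So $\min(-g',m)$ is majorized by $-f'$, and Ryff's theorem (the continuum HLP theorem) directly supplies the Lebesgue-preserving part of $K$ on $(0,1]$. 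This removes the need for the piecewise-linear approximation and weak limit, which would in any case need a uniform-integrability argument, since $-f'$ can be unbounded near $0$.

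A cleaner alternative avoids atoms altogether. Let $M=\tfrac12(P+Q)$ and $H=dQ/dM\in[0,2]$. The weighted testing risks $\inf_{\vp}\bigl(a\,\e_P[\vp]+b\,\e_Q[1-\vp]\bigr)=\e_M[\min(a(2-H),bH)]$ determine $f$. Hence $f\le g$ iff the law of $H'$ under $M'$ is dominated in convex order by the law of $H$ under $M$. Strassen's martingale coupling $\e[H\mid H']=H'$ then yields the kernel $H\mapsto H'$ at once. Incidentally, it is $Q_f$ that majorizes $Q_g$; pointwise $f\le g$ is not ``$g$ majorizes $f$''.
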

Our \textbf{next} lemma is a consequence of Blackwell ordering \ref{thm:Blackwell}. It describes that any bivariate functional $D(P,Q)$ of probability measures $P,Q$ satisfying data processing inequality $D(R\circ P, R \circ Q) \leq D(P,Q)$ for all probability distributions $P,Q$ on some $(\cW, \cF_{\cW})$ and Markov kernels $R:=\{R_w:w \in \cW\}$ to some $(\cW', \cF_{\cW'})$ has to be a non-increasing functional $\ell_D(T(P,Q))$.

\begin{lemma} \citet{dong2022gaussian}[Proposition B.1, Lemma B.2, Corollary B.3]
\label{prop:data-processing-functional}
 Let \(D(\cdot\|\cdot)\) be a bivariate functional  that takes two probability distributions $P,Q$ on some measurable space $(\cW,\cF_{\cW})$ and outputs a real number (including $\pm \infty$). If it satisfies the data processing inequality, which means
\begin{equation} \label{eq:DPI}
    D(R\circ P, R \circ Q) \leq D(P,Q)
\end{equation}
for all probability distributions $P,Q$ on some measurable space $(\cW, \cF_{\cW})$ and Markov kernels $R:=\{R_w:w \in \cW\}$ to some measurable space $(\cW', \cF_{\cW'})$. Then  there exists
a functional
\begin{equation}
    \ell_D:\cT \to \mathbb R \cup \{\pm \infty\} \text{ such that }
    D(P, Q)=\ell_D\bigl(T(P,Q)\bigr), \text{ where} 
\end{equation}
\begin{equation}
    \cT= \{T(P,Q): [0,1] \to [0,1]:  (P,Q) \text{ on some } (\cW,\cF_{\cW})\}.
\end{equation}
Moreover, this functional $\ell_D$ is non-increasing. More precisely, for  $f= T(P,Q)$ and $g= T(P',Q') $
\begin{equation}
   \text{if } f \leq g \text{ then } D(P'\|Q') =\ell_D(g)\leq  \ell_D(f) =D(P\|Q).
\end{equation}

In particular, if $T(P,Q)=T(P',Q')$, 
then $ D(P\|Q)=D(P'\|Q')$.
\end{lemma}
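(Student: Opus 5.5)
The plan is to build $\ell_D$ directly from $D$ and to let Theorem~\ref{thm:Blackwell} do all the work. For $f\in\cT$, by definition of $\cT$ there is at least one pair $(P,Q)$ on some measurable space $(\cW,\cF_{\cW})$ with $T(P,Q)=f$. I will set
\[
\ell_D(f):=D(P,Q)\quad\text{for any such pair } (P,Q).
\]
The whole proof then reduces to two points: this value does not depend on which representing pair is chosen, and the resulting map is non-increasing. The final claim, that $T(P,Q)=T(P',Q')$ forces $D(P\|Q)=D(P'\|Q')$, is exactly the first point.

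\textbf{Step 1 (well-definedness).} Suppose $T(P,Q)=T(P',Q')=f$, where the two pairs may live on different spaces $(\cW,\cF_{\cW})$ and $(\cW',\cF_{\cW'})$. Since $T(P,Q)\le T(P',Q')$, the implication $1\Rightarrow 2$ of Theorem~\ref{thm:Blackwell} gives a Markov kernel $R:\cW\to\cW'$ with $R\circ P=P'$ and $R\circ Q=Q'$. The data processing inequality \eqref{eq:DPI} then yields $D(P',Q')\le D(P,Q)$. The reverse inequality $T(P',Q')\le T(P,Q)$ gives a kernel $R'$ in the opposite direction, hence $D(P,Q)\le D(P',Q')$. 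Both inequalities hold in $\R\cup\{\pm\infty\}$, so the two values are equal and $\ell_D(f)$ is unambiguous. If one is uneasy about choosing a representative from a proper class of pairs, the definition can instead be phrased as the common value of $D$ over all representing pairs, which Step 1 shows is a single number.

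\textbf{Step 2 (monotonicity).} Let $f=T(P,Q)\le g=T(P',Q')$ pointwise on $[0,1]$. The same direction of Theorem~\ref{thm:Blackwell} supplies a kernel $R$ with $R\circ P=P'$ and $R\circ Q=Q'$. Applying \eqref{eq:DPI} gives
\[
\ell_D(g)=D(P',Q')=D(R\circ P,R\circ Q)\le D(P,Q)=\ell_D(f),
\]
so $\ell_D$ is non-increasing.

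\textbf{Main obstacle.} The only delicate step is the use of Theorem~\ref{thm:Blackwell} across arbitrary measurable spaces. Blackwell-type equivalences are typically proved under regularity assumptions, for example standard Borel spaces, or by passing through the likelihood-ratio (canonical) experiment. Since the paper states the theorem for general spaces, I will invoke it as given.

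If extra care is needed, I would route both pairs through their canonical representations on $[0,1]$. For instance, I would use the pair $(U,\ f(1-U)\text{-type})$ construction, or the log-likelihood-ratio push-forward used earlier via \citet{pandey2025infinitely}[Lemma 4]. This has the same trade-off function as the original pair. It is obtained by a deterministic measurable map, so \eqref{eq:DPI} already gives one inequality; the converse inequality follows from sufficiency of the likelihood ratio. This reduces Steps 1 and 2 to pairs on a common standard Borel space, where Theorem~\ref{thm:Blackwell} applies without caveat.
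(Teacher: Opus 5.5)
Your proof is correct and follows the same route as the cited result of \citet{dong2022gaussian}, which is also how the paper uses the lemma. Applying Theorem~\ref{thm:Blackwell} in both directions together with \eqref{eq:DPI} makes $\ell_D$ well defined and gives the final claim, and a single application gives monotonicity. One caveat on your optional fallback: recovering $(P,Q)$ on an arbitrary $(\cW,\cF_{\cW})$ from its likelihood-ratio push-forward needs a Markov kernel, i.e.\ a regular conditional distribution, so sufficiency alone does not give the reverse inequality, and the standard-Borel assumption is relocated rather than removed. This does not affect your main argument, because the paper states Theorem~\ref{thm:Blackwell} for general spaces.
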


Now, we work towards the proof of the composition rule \ref{lem:CDU}.  First, we collect the basic properties of the product operation \(\otimes\). Let $\cP, \cP'$ be collections of probability measures on $(\cW, \cF_{\cW})$ and $(\cW', \cF_{\cW'})$ respectively. We define  $\cP\otimes \cP':= \{p \otimes p': p\in \cP, p'\in \cP'\}$ as the product family, where $p\otimes p'$ is the product distribution on $(\cW \times \cW', \cF_{\cW} \otimes \cF_{\cW'})$ defined as 
\[
\int_{\cW \times \cW'} g(w) g'(w') dp \otimes p'(w,w')= \int_{\cW} g(w) dp(w) \int_{\cW'} g'(w') dp'(w')
\]
for all bounded measurable functions $g: (\cW, \cF_{\cW}) \to (\R, \cB_{\R})$ and $g': (\cW', \cF_{\cW'}) \to (\R, \cB_{\R})$. For trade-off functions $f= T(P,Q)$ and $f'= T(P',Q')$, we define the product  
\begin{equation} \label{eq:TPTOF}
    f\otimes f':= T(P\otimes P', Q\otimes Q')
\end{equation}

\begin{proposition}[Tensor product sturcture and ordering] \citet{dong2022gaussian}[Propositions C.2, D.1]
\label{prop:tensor-basic-properties}
The product \(\otimes\)of trade-off functions  defined in \ref{eq:TPTOF} has the following properties:
\begin{enumerate}
    \item The product \(\otimes\) is well-defined, commutative and associative.
    \item \(f \otimes \mathrm{Id} = \mathrm{Id} \otimes f = f\) for $Id(\alpha)= 1-\alpha$, and \((f \otimes g)^{-1} = f^{-1} \otimes g^{-1}\). 
    \item If \(g_1 \geq g_2\), then \(f \otimes g_1 =g_1\otimes f \geq   g_2\otimes f= f \otimes g_2\).
    \item For Gaussian trade-off functions $G_{\mu}= T(N(0,1), N(\mu, 1))$ we have
    \begin{equation}
        G_{\mu_1} \otimes G_{\mu_2} \otimes \cdots \otimes G_{\mu_n}
        =
        G_{\mu},
        \qquad
        \text{where }
        \mu = \sqrt{\mu_1^2+\cdots+\mu_n^2}.
    \end{equation}
\end{enumerate}
\end{proposition}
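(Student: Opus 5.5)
We prove the four items of Proposition~\ref{prop:tensor-basic-properties} in order, using Blackwell ordering (Theorem~\ref{thm:Blackwell}) as the main tool. For item 1, \emph{well-definedness} means that $T(P\otimes P',Q\otimes Q')$ depends only on $f=T(P,Q)$ and $f'=T(P',Q')$, not on the representing pairs.

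Suppose $T(P,Q)=T(\tilde P,\tilde Q)$. By Theorem~\ref{thm:Blackwell}, applied in both directions, there are Markov kernels $R,\tilde R$ with
\[
R\circ P=\tilde P,\quad R\circ Q=\tilde Q,\qquad \tilde R\circ\tilde P=P,\quad \tilde R\circ \tilde Q=Q .
\]
The product kernel $R\otimes \mathrm{id}$ acts coordinatewise and sends $P\otimes P'\mapsto \tilde P\otimes P'$ and $Q\otimes Q'\mapsto \tilde Q\otimes Q'$. Theorem~\ref{thm:Blackwell} (the direction $2\Rightarrow 1$) then gives $T(P\otimes P',Q\otimes Q')\le T(\tilde P\otimes P',\tilde Q\otimes Q')$. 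The kernel $\tilde R\otimes\mathrm{id}$ gives the reverse inequality, so the two are equal. Repeating the argument in the second coordinate finishes well-definedness.

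Commutativity follows because the coordinate swap $(w,w')\mapsto(w',w)$ is a deterministic kernel with a deterministic inverse. Blackwell ordering in both directions therefore gives $T(P\otimes P',Q\otimes Q')=T(P'\otimes P,Q'\otimes Q)$. Associativity is the same argument using the canonical bijection $(\cW\times\cW')\times\cW''\cong\cW\times(\cW'\times\cW'')$.

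For item 3, suppose $g_1=T(P_1,Q_1)\ge g_2=T(P_2,Q_2)$. Theorem~\ref{thm:Blackwell} gives a kernel $R$ with $R\circ P_2=P_1$ and $R\circ Q_2=Q_1$. Then $\mathrm{id}\otimes R$ maps $P\otimes P_2\mapsto P\otimes P_1$ and $Q\otimes Q_2\mapsto Q\otimes Q_1$, so $f\otimes g_2\le f\otimes g_1$. Commutativity from item 1 gives the other orderings.

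For item 2, write $\mathrm{Id}=T(\delta_0,\delta_0)$, the trade-off function of two identical laws. Then $P\otimes\delta_0$ and $Q\otimes\delta_0$ are images of $P$ and $Q$ under the deterministic embedding $w\mapsto(w,0)$. They are also mapped back to $P$ and $Q$ by the projection onto the first coordinate. Blackwell in both directions yields $f\otimes\mathrm{Id}=f$.

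For the inverse, Remark~\ref{rem:ITOF} gives $f^{-1}=T(Q,P)$. Hence
\[
(f\otimes g)^{-1}=T(Q\otimes Q',P\otimes P')=T(Q,P)\otimes T(Q',P')=f^{-1}\otimes g^{-1},
\]
where the middle equality is well-definedness from item 1.

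For item 4, represent each $G_{\mu_i}$ by the pair $(N(0,1),N(\mu_i,1))$. The $n$-fold product is then $T\bigl(N(0,\II_n),N((\mu_1,\dots,\mu_n),\II_n)\bigr)$. The dimension-freeness property \eqref{eq:dto1a} of Lemma~\ref{lem: GTOF}, with $\Sigma=\II_n$, identifies this with $T(N(0,1),N(\mu,1))=G_\mu$ for $\mu=\|(\mu_1,\dots,\mu_n)\|_2$. Associativity from item 1 lets us bracket the $n$-fold product arbitrarily.

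The only delicate point is the claim that product kernels such as $R\otimes\mathrm{id}$ are Markov kernels that push product measures to product measures. Checking this on rectangles and extending by a $\pi$--$\lambda$ argument should suffice. The rest is bookkeeping with Theorem~\ref{thm:Blackwell}.
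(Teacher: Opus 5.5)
Your argument is correct. The paper gives no proof of its own and simply cites \citet{dong2022gaussian}. Your route is essentially the Blackwell-based argument behind that cited result: coordinatewise Blackwell kernels for well-definedness, commutativity, associativity, the identity and monotonicity; Remark~\ref{rem:ITOF} for the inverse; and the dimension-freeness property \eqref{eq:dto1a} for the Gaussian product. The measurability point you flag for $R\otimes\mathrm{id}$ is settled by checking on rectangles and a $\pi$--$\lambda$ argument, so nothing essential is missing.
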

\section{Appendix: compositional rules and downstream guarantees}
\subsection{Proofs for compositional rules of statistical unlearning}
\CDU*
\begin{proof}
    It follows from the preservation of Blackwell ordering under the product of trade-off functions \ref{prop:tensor-basic-properties}. More precisely,  if $T(p,p_1)\leq f_d$, then for any  $g=T(P,Q)\in \cT$ $T(p,p_1)\otimes g \leq f_d\otimes g$. 
    
    Now, observe that if $p \in \cP$ satisfy $(f_d=T(P_d,Q_d),f_c=T(P_c,Q_c))$ unlearning with respect to  probability distributions $p_1,q_1$ in the family $\cP$, then we have
    \[
    T(p,p_1) \leq f_d= T(P_d,Q_d) \text{ and } T(p,q_1) \geq f_c = T(P_c,Q_c)
    \]
    Moreover, the probability distribution $p' \in \cP'$ satisfy $(f_d'=T(P_d',Q_d'),f_c'=T(P_c',Q_c'))$ unlearning with respect to  probability distributions $p_1',q_1'\in \cP'$ implies that 
    \[
    T(p',p_1') \leq f_d'= T(P_d',Q_d') \text{ and } T(p',q_1') \geq f_c'= T(P_c',Q_c')
    \]
    Combining the two assumptions on $p$ and $p'$ and the fact stated above, we have the following 
    \begin{equation}
       T(p\otimes p', p_1\otimes p_1')
       =
       T(p,p_1) \otimes  T(p',p_1') 
       \leq
       f_d \otimes T(p',p_1') 
       \leq 
       f_d\otimes f_d'
    \end{equation}
    \begin{equation}
        T(p\otimes p', q_1\otimes q_1') 
        =
        T(p,q_1) \otimes  T(p',q_1') 
        \geq 
        f_c \otimes  T(p',q_1') 
        \geq f_c\otimes f_c'
    \end{equation}
\end{proof}
\subsection{Comparison with \citet{allouah2025distributionalmachineunlearningselective}: the unlearning framework}
\CoD* 
\begin{proof} \label{proof:CoD}
    The proof follows from two observations from \cite{dong2022gaussian}[Lemma A.2] and from \ref{lem:ITOF}. The first one along with \ref{lem:ITOF} says if the trade-off functions $f=T(P, Q)$ and $g=T(P',Q') $ satisfy $f\leq g$ for probability distributions $P,Q$ on some $(\cW, \cF_W)$ and $P',Q'$ on some $(\cW', \cF_W')$, then we have  $f^{-1}=T(Q, P) \leq T(Q',P')=  g^{-1}$. As a consequence, we have the following
    \begin{equation}
        T(p,p_1) \leq f_d=T(P_d,Q_d) \implies   T(p_1,p) \leq T(Q_d,P_d),
    \end{equation}
    \begin{equation}
        T(p,q_1) \leq f_c= T(P_c,Q_c) \implies   T(q_1,p) \leq T(Q_c,P_c).
    \end{equation}
    Now, since KL divergence satisfy data-processing inequality $KL(R\circ P,R\circ Q) \leq KL(P,Q)$ \cite{polyanskiy_wu_2025}[Theorem 2.17], we have the following immediately from \ref{prop:data-processing-functional}.
    \begin{align*}
KL(p_1, p) \geq \alpha =KL(Q_d,P_d) \quad\text{(removal)},
\\
\quad 
KL(q_1, p) \leq \varepsilon= KL(Q_c, P_c) \quad\text{(preservation)}.
\end{align*}
\end{proof}

\begin{remark}
    We observe that in comparison with \ref{thm:FFR}, if $f_d= T(N(0,1), N(\alpha, 1))$ and $f_c= T(N(0,1), N(\varepsilon, 1))$, then a quick computation reveals $KL(Q_d, P_d)= \frac{\alpha^2}{2}$ and $KL(Q_c, P_c)= \frac{\ve^2}{2}$.
\end{remark}
\subsection{Comparison with \citet{allouah2025distributionalmachineunlearningselective}: downstream guarantees}
 Now, we show that under our $(f_d,f_c)$ distributional unlearning framework \ref{def:dist-unlearning}, we have the same downstream guarantees as in \citet[Proposition 2]{allouah2025distributionalmachineunlearningselective} with $\alpha= KL(Q_d,P_d)$ and $\varepsilon= KL(Q_c,P_c)$. First,  recall that for proving downstream guarantees, \citet{allouah2025distributionalmachineunlearningselective} considers a predictor (Markov kernel)  $h:(\cX, \cF_{\cX}) \to \cP(\cY, \cF_{\cY}),$ with \(\mathcal X\) is the input space and \(\cP(\cY, \cF_{\cY})\) the probability simplex over label space \(\mathcal Y\), trained on a
distribution \(p\) over \(\mathcal X\times\mathcal Y\) that satisfies \((f_d,f_c)\)-distributional unlearning with respect to $p_1,q_1\in \cP \subset \cP(\cX \otimes \cY, \cF_{\cX}\otimes \cF_{\cY})$. \citet{allouah2025distributionalmachineunlearningselective} measures the performance of \(h\) under the true data-generating distributions \(p_1\) (unwanted) and \(q_1\) (desired) with the expected log-loss 
\begin{equation}
    \mathcal L(h;p)
    :=
    \mathbb 
    E_{(x,y)\sim p}\bigl[\ell(y,h(x))\bigr]
    =
    \mathbb 
    E_{(x,y)\sim p}\bigl[-\log h(x)(y)\bigr]
    .
\end{equation}

\begin{proposition}\cite[Proposition 2]{allouah2025distributionalmachineunlearningselective} \label{lem:DG} Consider input space $\cX,$ prediction space $\cY,$ and distributions $ p_1,q_1 \in \cP \subset \cP(\cX\times \cY, \cF_{\cX}\otimes \cF_{\cY})$ as above.  Let \(h\) minimize \(\mathcal L(h;p)\) over $\cP$, and let \(h_1,h_2\) be optimal
predictors under \(p_1,p_2\in\mathcal P\), respectively. If \(p\)
satisfies \((\alpha,\varepsilon)\)-distributional unlearning in the sense of \citet{allouah2025distributionalmachineunlearningselective} with respect
to \((p_1,p_2)\), then
\begin{equation}
    \mathcal L(h;p_1)-\mathcal L(h_1;p_1)
    \geq
    \alpha-\delta_1,
    \text{ and }
    \mathcal L(h;p_2)-\mathcal L(h_2;p_2)
    \leq
    \varepsilon-\delta_2, \text{ where}
\end{equation}
$\delta_1 := \mathrm{KL}(p_{1,X}\,\|\,p_X),$ and $ \delta_2 := \mathrm{KL}(p_{2,X}\,\|\,p_X),$
denote the marginal KL divergence over inputs.
\end{proposition}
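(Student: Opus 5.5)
The plan is to prove Proposition~\ref{lem:DG} by decomposing the excess log-loss into a conditional-KL term and then using the chain rule for KL divergence to relate it to the joint divergences controlled by $(\alpha,\varepsilon)$-unlearning. Throughout, write $p(y\mid x)$ for the conditional of the joint $p$ on $\cX\times\cY$, and write $p_X$ for its input marginal. Since $h$ ranges over all Markov kernels, the population log-loss minimizer under $p$ is the Bayes conditional $h(x)=p(\cdot\mid x)$. Likewise, $h_1(x)=p_1(\cdot\mid x)$ and $h_2(x)=p_2(\cdot\mid x)$. This is the standard Gibbs-inequality fact: $\E_{y\sim r}[-\log s(y)]$ is minimized over $s$ at $s=r$.

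The first step is the identity
\[
\mathcal L(h;p_i)-\mathcal L(h_i;p_i)=\E_{x\sim p_{i,X}}\bigl[\mathrm{KL}\bigl(p_i(\cdot\mid x)\,\|\,p(\cdot\mid x)\bigr)\bigr],
\]
for $i=1,2$. It follows by writing both losses as expectations under $p_i$ and taking the log-ratio $\log\frac{p_i(y\mid x)}{p(y\mid x)}$.

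The second step is the chain rule for KL:
\[
\mathrm{KL}(p_i\|p)=\mathrm{KL}(p_{i,X}\|p_X)+\E_{x\sim p_{i,X}}\bigl[\mathrm{KL}\bigl(p_i(\cdot\mid x)\,\|\,p(\cdot\mid x)\bigr)\bigr].
\]
Hence the excess loss equals $\mathrm{KL}(p_i\|p)-\delta_i$.

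The third step applies the definition of $(\alpha,\varepsilon)$-distributional unlearning, namely $\mathrm{KL}(p_1\|p)\ge\alpha$ and $\mathrm{KL}(p_2\|p)\le\varepsilon$ as in \eqref{eq:aeu}. This immediately gives $\mathcal L(h;p_1)-\mathcal L(h_1;p_1)\ge\alpha-\delta_1$ and $\mathcal L(h;p_2)-\mathcal L(h_2;p_2)\le\varepsilon-\delta_2$. Combined with Proposition~\ref{thm:CoD}, the same bounds then hold for $(f_d,f_c)$-unlearning, with $\alpha=\mathrm{KL}(Q_d,P_d)$ and $\varepsilon=\mathrm{KL}(Q_c,P_c)$.

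The main obstacle is measure-theoretic rather than conceptual. First, regular conditional distributions must exist; I would assume $\cX,\cY$ are standard Borel. Second, the chain rule must be justified when some KL divergences are infinite. If $\mathrm{KL}(p_1\|p)=\infty$ while $\delta_1<\infty$, both sides are $+\infty$, so the first inequality still holds. For the second inequality, finiteness of $\mathrm{KL}(p_2\|p)\le\varepsilon$ forces $p_2\ll p$, so every term is finite and the subtraction is legitimate. A further subtlety is the phrase ``minimize over $\cP$'': I would read it as minimization over all kernels $h$, so that $h$ is the Bayes conditional. If $h$ is restricted to a smaller class, the identity in the first step fails and the argument would need a well-specified-model assumption.
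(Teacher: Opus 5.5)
Your proof is correct and is the standard argument for this result: the log-loss minimizers are the conditionals, the excess loss under $p_i$ equals $\E_{x\sim p_{i,X}}[\mathrm{KL}(p_i(\cdot\mid x)\,\|\,p(\cdot\mid x))]$, and the chain rule turns this into $\mathrm{KL}(p_i\|p)-\delta_i$, to which the $(\alpha,\varepsilon)$ conditions apply directly. The paper does not reprove this proposition but imports it from \citet{allouah2025distributionalmachineunlearningselective}, and your closing transfer to $(f_d,f_c)$-unlearning via Proposition~\ref{thm:CoD} is exactly the paper's accompanying remark.
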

\begin{remark}
    From \ref{thm:CoD}, it is immediate that with the same setup as in the proposition above, if a distribution $p\in \cP$ satisfies $(f_d=T(P_d,Q_d),f_c=T(P_c,Q_c))$ unlearning \ref{def:dist-unlearning} with respect to $p_1,q_1\in \cP$, then it satisfies $(\alpha, \varepsilon)$ unlearning of \citet{allouah2025distributionalmachineunlearningselective} with $\alpha = KL(Q_d,P_d)$ and $\varepsilon =KL(Q_c,P_c)$, and hence the conclusion of the above proposition. 
\end{remark}
\subsection{Robust predictive performance and distributional guarantees}

\predictive*

\begin{proof}
    We observe that for the class of functions $\cF= \{h: (\cW, \cF_{\cW}) \to ([-1,1], \cB_{[-1,1]})\}$ the scaled total variation distance defined as $2TV(p,p_1)= \sup_{h\in \cF} |p[h]-p_1[h]|$ is a bivariate divergence functional that satisfies the data-processing inequality  $TV(R\circ P, R\circ Q) \leq TV(P,Q)$ \citet[Theorems 7.4 and 7.7]{polyanskiy_wu_2025}, and therefore, from \ref{prop:data-processing-functional} we have 
    \begin{equation*}
        T(p,p_1) \leq T(P_d, Q_d) \implies TV(p,p_1) \geq TV(P_d,Q_d)
    \end{equation*}
    \begin{equation*}
        T(p,q_1) \geq T(P_c, Q_c) \implies TV(p,q_1) \leq TV(P_c,Q_c).
    \end{equation*}
\end{proof}

\begin{remark}[Different classes of $\cF$]
    Depending on requirements, one can consider a different class $\cG$ of functions on $(\cW, \cF_{\cW})$ than $\cF$ to have $\cG(p, p_1):=\sup_{h \in \cG}|\E_p[h]- \E_{p_1}[h]|$. However, 
    \begin{equation*}
    T(p,p_1) \leq T(P_d,Q_d) \text{ for probability  distributions } P_d, Q_d \text{ on }(\cW_d, \cF_{\cW_d}) \text{ implies  there exists}
    \end{equation*}
    a Markov kernel $R_d:(\cW, \cF_{\cW}) \to (\cW_d, \cF_{\cW_d})$ such that $R_d\circ p= P_d$ and $R_d\circ p_1= Q_d$. So\footnote{For a Markov kernel $(\cW, \cF_{\cW}) \overset{R_d}{\to}(\cW_d, \cF_{\cW_d})$ and a function $(\cW_d, \cF_{\cW_d} ) \overset{h}{\to} (\R, \cB_{\R})$, we define a function $R_d \circ h : (\cW, \cF_{\cW}) \to (\R, \cB_{\R})$ as $R_d \circ h (w):= \int_{\cW_d} R(w, dw_d) h(w_d)$. Then, we have \citet{kallenberg2021foundations} $R_d \circ p [h]=  \int_{\cW_d} h(w_d) \int_{\cW} R(w, dw_d) dp(w) =  \int_{\cW} dp(w) \int_{\cW_d} h(w_d)  R(w, dw_d)  =p [R_d \circ h]$.} ,
     \begin{align*}
    \cG_d(P_d,Q_d)
    &
    := \sup_{h \in \cG_d}|P_d[h]- Q_d[h]|
    = \sup_{h \in \cG_d}|p[R_d\circ h]- p_1[R_d\circ h]|
    \\
    &
     = \sup_{h \in R_d\circ \cG_d}|p[h]- p_1[h]| 
     =: R_d\circ \cG_d (p,p_1) \leq \cG(p,p_1)
     \end{align*}
     whenever $R_d \circ \cG_d \subset \cG$. Similarly, $T(p,q_1) \geq T(P_c,Q_c)$ for some $P_c, Q_c$ on $(\cW_c, \cF_{\cW_c})$ implies there exists  Markov kernel $R_c:(\cW_c, \cF_{\cW_c}) \to(\cW, \cF_{\cW})$ with $R_c\circ P_c= p$ and $R_c\circ Q_c= q_1$. So, 
     \begin{align*}
     \cG(p,q_1) 
     := \sup_{h \in \cG}|p[h]- q_1[h]|
      = \sup_{h \in \cG}|P_c[R_c \circ h]-  Q_c[R_c \circ h]|
     \\
     = \sup_{h \in R_c\circ \cG}|P_c[h]-  Q_c[h]|.
     =: R_c\circ \cG (P_c, Q_c) \geq \cG_c(P_c, Q_c)
     \end{align*}
     whenever $\cG_c \subset R_c\circ \cG$. Combining all the above, first, we have an implicit existence of the Markov kernels $R_d$ and $ R_c$ for a given $\cP \ni p, p_1, q_1$, with $f_d=T(P_d,Q_d)$ and $ f_c=T (P_c,Q_c)$. Now, we have the robust downstream guarantees for a class of functions $h\in \cG$ from $(\cW, \cF_W) \to (\R, \cB_{\cR})$
     \begin{equation}
         \cG(p,p_1) = R_d^{-1} \circ  \cG (P_d, Q_d) \text{ and } \cG(p,q_1) = R_c \circ \cG (P_c,Q_c).
     \end{equation}
     However, the description of the Markov kernels $R_d, R_c$ is often implicit given the quantities $\cP, p,p_1, q_1, P_d, Q_d, P_c, Q_c$, and therefore finding appropriate bounds on the quantities $R_d^{-1} \circ  \cG (P_d, Q_d)$ and $R_c \circ \cG (P_c,Q_c)$ remains an interesting future direction of research.
\end{remark}

\section{Appendix: information theoretic feasible regions}
\subsection{Feasible region of Gaussian family}
\FFR*
\begin{proof} \label{proof:FFRG}
Let $p = N(\mu, \Sigma) \in \mathcal{P}$. From the properties of Gaussian trade-off functions \eqref{lem: GTOF} we have  
\begin{equation}
T(p,p_1)= T(N(\mu, \Sigma), N(\mu_1, \Sigma)) = T(N(0,\Sigma), N(\mu_1- \mu, \Sigma)) = T(N(0,1), N(\alpha',1 )) 
\end{equation}
\begin{equation}
    T(p,q_1)= T(N(\mu, \Sigma), N(\nu_1, \Sigma)) = T(N(0,\Sigma), N(\nu_1- \mu, \Sigma)) = T(N(0,1), N(\ve',1 )) 
\end{equation}
\begin{equation} \label{eq:GFRparameters}
\text{ where } \alpha'^2= \langle \mu_1 -\mu, \Sigma^{-1}(\mu_1-\mu) \rangle \text{ and }\ve'^2= \langle \nu_1 -\mu, \Sigma^{-1}(\nu_1-\mu) \rangle. \text{ Now,}
\end{equation}
\begin{equation}
    T(p,p_1) = T(N(0,1), N(\alpha',1 ))  \leq T(N(0,1), N(\alpha,1 )) =f_d \leftrightarrow \alpha' \geq \alpha
\end{equation}
\begin{equation}
    T(p,q_1) = T(N(0,1), N(\ve',1 )) \geq T(N(0,1), N(\ve,1 )) =f_c \leftrightarrow \ve' \leq \ve
\end{equation}

So, given the class $\cP$ of shifted Gaussian distributions with a common covariance $\Sigma$,  $p_1,q_1$, and $f_d,f_c$ as above, the feasible region $\cR$ has the following explicit form with $\alpha', \ve'$ as above.
\begin{equation}
\cR= \{p= N(\mu, \Sigma): \mu \in \mathbb{R}^d: \alpha'(\mu) \geq \alpha, \ve'(\mu) \leq \ve\}
\end{equation}
Now, we determine the boundary of this region $\cR$ and for interpretability take $\Sigma= \II_d$ \footnote{We could  work in the transformed coordinates $(\mu_1', \nu_1', \mu')= (\Sigma{^{-1/2} \mu_1}, \Sigma{^{-1/2} \nu_1}, \Sigma{^{-1/2} \mu})$ in general.}  so that 
\begin{equation}
\cR= \{\mu \in \mathbb{R}^d: \|\mu_1- \mu\| \geq \alpha, \|\nu_1 - \mu\| \leq \ve\}= \overline{B}(\nu_1,\varepsilon)\ \cap\ \big(\mathbb{R}^d \setminus B(\mu_1,\alpha)\big).
\end{equation}
In words, the region is the closed $\varepsilon$-disk around $\nu_1$ (denoted as $\overline{B}(\nu_1,\varepsilon)$) but outside the open $\alpha$-ball around $\mu_1$ $\big(\text{denoted as }\mathbb{R}^d \setminus B(\mu_1,\alpha)\big)$. Let \(\Delta=\|\mu_1-\nu_1\|\). 
\begin{figure}[h] 
  \centering
  \includegraphics[width=\linewidth]{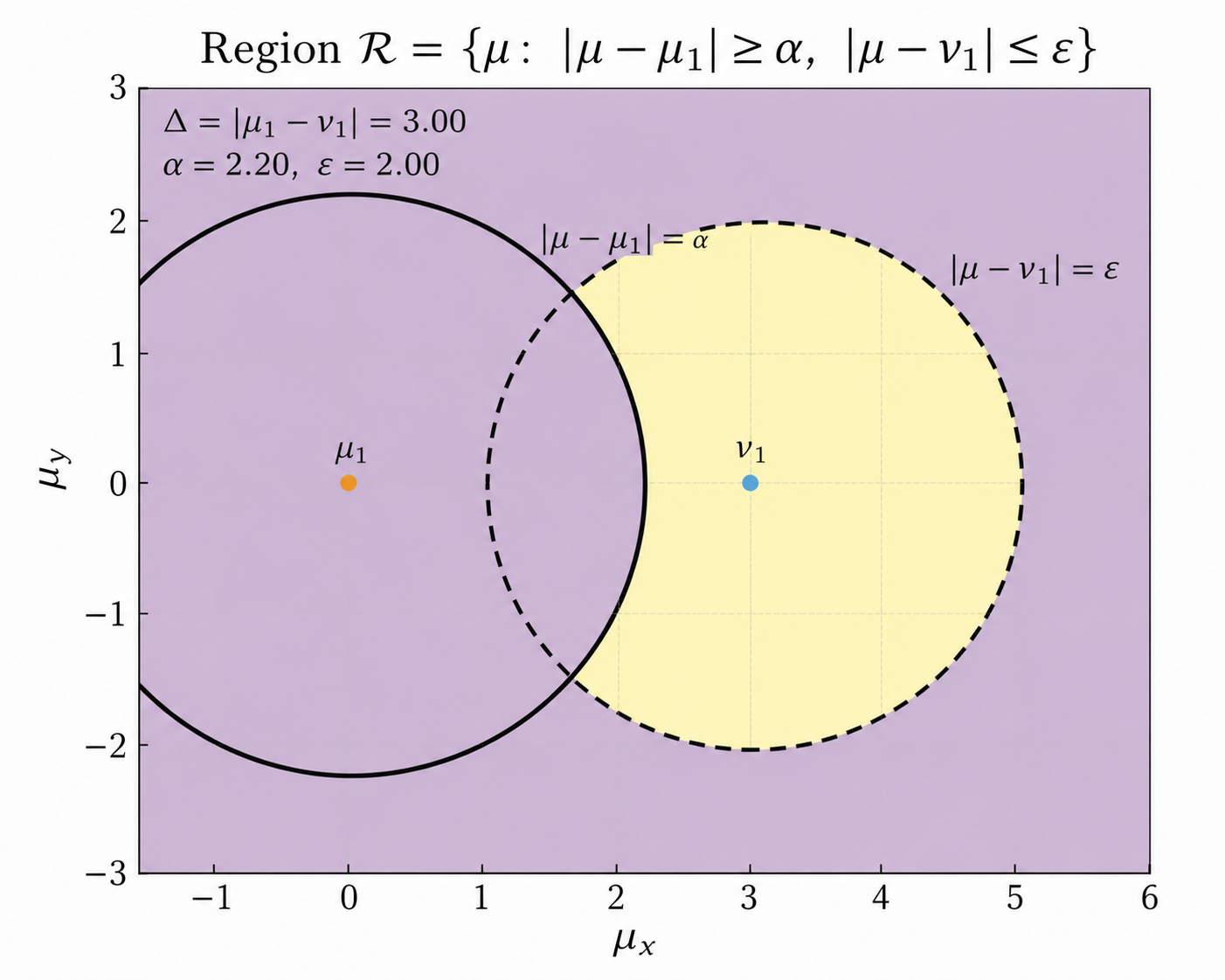}
  \caption{Region $\mathcal{R}=\{\mu:\ \|\mu-\mu_1\|\geq \alpha,\ \|\mu-\nu_1\|\leq \varepsilon\}$ for sample parameters.}
  \label{fig:alpha_e_plot_1}
\end{figure}
In the image \ref{fig:alpha_e_plot_1}, it is the yellow region. Now, it is immediate that $\cR$ is an \textbf{empty} set $\leftrightarrow \overline{B}(\nu_1, \ve)\subset B(\mu_1, \alpha) \leftrightarrow \Delta+\varepsilon<\alpha$ (the whole \(\varepsilon\)-disk of $\nu_1$ is too small and the radius $\alpha$ of the disk at $\mu_1$ is large enough). It follows that for a given $p_1 \leftrightarrow \mu_1, q_1 \leftrightarrow \nu_1$ and for a fixed  $f_c \leftrightarrow \ve $, the largest choice of $\alpha$ allowed for feasibility of such a $p\in \cR$ is given by $\Delta+ \ve$. This generalizes the concept of Pareto frontier in \cite{allouah2025distributionalmachineunlearningselective}. Moreover,  the region is the \textbf{whole closed disk:}   \(\mathcal{R}=\overline{B}(\nu_1,\varepsilon) \leftrightarrow \overline{B}(\nu_1,\varepsilon) \subset \mathbb{R}^d \setminus B(\mu_1,\alpha) \leftrightarrow \Delta-\varepsilon\ge \alpha\) (the \(\varepsilon\)-disk around $\nu_1$ lies entirely outside the \(\alpha\)-ball around $\mu_1$). In this feasible case, we again observe a clean trade-off between $ f_d\leftrightarrow \alpha$ and $f_c \leftrightarrow\ve$. For a given $p_1 \leftrightarrow \mu_1, q_1 \leftrightarrow \nu_1$ and for a fixed  $f_c \leftrightarrow \ve $, the largest choice of $\alpha$ allowed for feasibility of such a $p\in \cR$ is given by $\Delta- \ve$. Further, If \(\Delta-\varepsilon<\alpha\le \Delta+\varepsilon\), then \(\mathcal{R}\) is a \textbf{lens/annular cap} (circular segment)  bounded by the circle \(\|x-\nu_1\|=\varepsilon\) and the circle \(\|x-\mu_1\|=\alpha\), and once again for a given $p_1 \leftrightarrow \mu_1, q_1 \leftrightarrow \nu_1$ the trade-off between $f_d$ and $f_c$ is immediate. See the image  \ref{fig:alpha-eps-feasible} for an illustration of this.

\begin{figure}[t]\label{alpha_e_plot_2A}
  \centering
  \includegraphics[width=0.9\linewidth]{figures/alpha_e_plot_2.png}
  \caption{Feasibility regions in $(\alpha,\varepsilon)$ for fixed $\Delta=\|\mu_1-\nu_1\|$. 
  The lines $\alpha=\Delta-\varepsilon$ (solid) and $\alpha=\Delta+\varepsilon$ (dashed) partition the positive orthant of the plane into three regions based on what $\cR$ is:
  whole disk ($\alpha\le\Delta-\varepsilon$), lens/annular cap ($\Delta-\varepsilon<\alpha\le\Delta+\varepsilon$), and empty ($\alpha>\Delta+\varepsilon$).}
  \label{fig:alpha-eps-feasible}
\end{figure}
\end{proof}

\subsection{Feasible region for shifted log-concave family}
\textbf{Generalization to the one parameter location model:} Next, we show that a similar result holds more generally for any one parameter location family $\cP=\{p' \overset{d}{=} \mu+ X: \mu \in \mathbb{R},  X \sim F\}$ such that $X$ is symmetric $X \overset{d}{=} -X$ and the density of $F$ is a log-concave function. Observe that this captures the one-dimensional Laplace case, and many other family of (shifted) symmetric log-concave distributions. Moreover, notationally, $T(X,Y)= T(P,Q)$ such that $X\overset{d}{=} P$ and $Y \overset{d}{=}Q$

\begin{proposition}[Feasible region of shifted one parameter family] \label{prop:FFRLC}
For $\mu_1, \nu_1 \in \mathbb{R}$ let $p_1 \overset{d}{=} \mu_1+X , q_1 \overset{d}{=} \nu_1+X\in \mathcal{P}:= \{p' \overset{d}{=} \mu+ X: \mu \in \mathbb{R},  X \sim F\}$ be class of shifted log concave distributions with  CDF $F$. Take $f_d= T(X,\alpha+X))$ and $f_c= T(X, \ve +X)$ for $\alpha, \ve \geq 0$. Then 

\begin{equation} \label{eq:XFR}
\cR(\cP, (p_1,q_1), (f_d,f_c)) =
\cR= \{p= \mu+X: \mu \in \mathbb{R}: \alpha'(\mu) \geq \alpha, \ve'(\mu) \leq \ve\} 
\end{equation}
\begin{equation} \label{eq:XFRparameters0}
\text{ where } \alpha'= |\mu_1 -\mu|  \text{ and }\ve'= |\nu_1 -\mu|. 
\end{equation}
Let $\Delta= |\mu_1- \nu_1|$ then $\cR$ is empty $\leftrightarrow$ $ \Delta + \epsilon <\alpha $. Moreover, $\cR= [\nu_1- \ve, \nu_1+\ve]\leftrightarrow \Delta - \ve \geq \alpha $, and $\cR$ is an interval (see the proof and the plots for an explicit description of this set) otherwise. 
Moreover, the Pareto frontier of $(\alpha, \ve)$ pairs, capturing for a fixed preservation level of $\varepsilon \geq 0$, the largest level of removal $\alpha \geq 0$  that is jointly achievable or, equivalently, allows $\cR \neq \emptyset$.
\begin{equation} \label{eq:ParetoLC}
    \text{PF}(p_1, q_1, \cP) := \{(\Delta +\varepsilon, \varepsilon): \varepsilon \geq 0\} \text{ with } \Delta = |(\mu_1- \nu_1)|.
\end{equation}
\end{proposition}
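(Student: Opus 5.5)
The plan mirrors the proof of Proposition \ref{thm:FFR}, with Lemma \ref{lem:SLCTOF} replacing the Gaussian facts of Lemma \ref{lem: GTOF}. Fix $p=\mu+X\in\cP$.

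\textbf{Reduction to distances.} By the symmetry part of Lemma \ref{lem:SLCTOF} with $t_1=\mu$ and $t_2=\mu_1$,
\[
T(p,p_1)=T(X,\alpha'+X),\qquad \alpha'=|\mu_1-\mu| .
\]
Likewise $T(p,q_1)=T(X,\ve'+X)$ with $\ve'=|\nu_1-\mu|$.

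The monotonicity part of Lemma \ref{lem:SLCTOF} then converts the functional inequalities into scalar ones:
\[
T(X,\alpha'+X)\le T(X,\alpha+X)\iff \alpha'\ge\alpha,
\qquad
T(X,\ve'+X)\ge T(X,\ve+X)\iff \ve'\le\ve .
\]
Both sides of each comparison are of the form $f_{X,\cdot}$ with nonnegative parameter, so the lemma applies directly. This proves \eqref{eq:XFR}, since
\[
\cR=\{\mu:|\mu-\mu_1|\ge\alpha,\ |\mu-\nu_1|\le\ve\}=[\nu_1-\ve,\nu_1+\ve]\setminus(\mu_1-\alpha,\mu_1+\alpha).
\]

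\textbf{One-dimensional geometry.} Next I describe this set; without loss of generality take $\mu_1\ge\nu_1$, so $\Delta=\mu_1-\nu_1$. The farthest point of $[\nu_1-\ve,\nu_1+\ve]$ from $\mu_1$ is $\nu_1-\ve$, at distance $\Delta+\ve$.
\begin{itemize}
\item \emph{Empty case.} $\cR=\emptyset$ iff the closed interval lies inside the open ball $(\mu_1-\alpha,\mu_1+\alpha)$, i.e.\ iff $\Delta+\ve<\alpha$.
\item \emph{Whole interval.} $\cR=[\nu_1-\ve,\nu_1+\ve]$ iff the closed interval misses the open ball. The nearest point of the interval to $\mu_1$ is at distance $\Delta-\ve$ when $\Delta\ge\ve$, so this holds iff $\Delta-\ve\ge\alpha$.
\item \emph{Intermediate case.} For $\Delta-\ve<\alpha\le\Delta+\ve$ the ball removes a piece of the interval. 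If $\alpha<\ve-\Delta$, the ball sits strictly inside the interval and $\cR$ is a union of two intervals,
\[
[\nu_1-\ve,\mu_1-\alpha]\cup[\mu_1+\alpha,\nu_1+\ve].
\]
Otherwise $\cR=[\nu_1-\ve,\mu_1-\alpha]$, a single interval.
\end{itemize}
I will state this case split explicitly in the proof, so that ``interval'' is understood to allow the two-component case.

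\textbf{Pareto frontier.} For fixed $\ve$, nonemptiness holds iff $\alpha\le\Delta+\ve$. The supremum $\alpha=\Delta+\ve$ is attained at $\mu=\nu_1-\mathrm{sign}(\mu_1-\nu_1)\,\ve$, which gives \eqref{eq:ParetoLC}.

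\textbf{Main obstacle.} The only delicate point is the reduction step: Lemma \ref{lem:SLCTOF} must hold for the unordered pair $(\mu+X,\mu_1+X)$ regardless of which of $\mu,\mu_1$ is larger. This is exactly where the symmetry $X\overset{d}{=}-X$ and log-concavity enter, via the monotone likelihood ratio and Neyman--Pearson argument. I will invoke the lemma as stated rather than reprove it; the rest is elementary interval arithmetic.
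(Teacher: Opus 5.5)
Your proposal is correct and follows the paper's proof step for step. The symmetry and monotonicity parts of Lemma~\ref{lem:SLCTOF} reduce the two constraints to $|\mu-\mu_1|\ge\alpha$ and $|\mu-\nu_1|\le\ve$, and the rest is interval arithmetic on $[\nu_1-\ve,\nu_1+\ve]\cap\bigl((-\infty,\mu_1-\alpha]\cup[\mu_1+\alpha,\infty)\bigr)$.

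Your intermediate case is more accurate than the paper's: the paper asserts a single subinterval and misses the second component. A few boundary details need fixing:
\begin{itemize}
\item The two-component threshold should be $\alpha\le\ve-\Delta$. At equality the isolated point $\mu_1+\alpha=\nu_1+\ve$ also lies in $\cR$.
\item For $\alpha=0<\ve-\Delta$ the set is the whole interval even though $\Delta-\ve<\alpha$, so the stated whole-interval equivalence needs $\alpha>0$ in that range.
\item Take $\mu=\nu_1-\ve$ (in your orientation $\mu_1\ge\nu_1$) instead of the sign formula, so that $\Delta=0$ is covered.
\end{itemize}
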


\begin{proof} \label{proof:FFR}
Let $p \overset{d}{=} \mu +X \in \mathcal{P}$. From the properties of  trade-off functions \ref{lem:SLCTOF} we have  
\begin{equation}
T(p,p_1)= T(\mu+X, \mu_1+X) = T(X, |\mu_1- \mu|+ X) = f_{X,\alpha'} 
\end{equation}
\begin{equation}
    T(p,q_1)= T(\mu+X, \nu_1+X) = T(X, |\nu_1- \mu|+X) = f_{X, \ve'} 
\end{equation}
\begin{equation} \label{eq:LCFRparameters}
\text{ where } \alpha'= |\mu_1 -\mu|  \text{ and }\ve'= |\nu_1 -\mu|. \text{ Now,}
\end{equation}
\begin{equation}
    T(p,p_1) = f_{X,\alpha'}  \leq f_{X,\alpha} =f_d \leftrightarrow \alpha' \geq \alpha
\end{equation}
\begin{equation}
    T(p,q_1) = f_{X,\ve'} \geq f_{X,\ve} =f_c \leftrightarrow \ve' \leq \ve
\end{equation}
So, given the class $\cP$ of shifted symmetric log concave distributions with a common CDF $F$,  $p_1,q_1$, and $f_d,f_c$ as above, the feasible region $\cR$ has the following explicit form with $\alpha', \ve'$ as above.
\begin{equation}
\cR= \{p\overset{d}{=} \mu+X: \mu \in \mathbb{R}: \alpha'(\mu) \geq \alpha, \ve'(\mu) \leq \ve\}
\end{equation}
Now, we determine the boundary of this region $\cR$  by identifying it in the following way.
\begin{equation}
\cR= \{\mu \in \mathbb{R}: |\mu_1- \mu| \geq \alpha, |\nu_1 - \mu| \leq \ve\}= [\nu_1- \ve, \nu_1 +\ve]\ \cap  \Big((-\infty, \mu_1- \alpha] \cup  [\mu_1+ \alpha, \infty) \Big).
\end{equation}

\begin{figure}[t]
  \centering
  \includegraphics[width=0.9\linewidth,keepaspectratio]{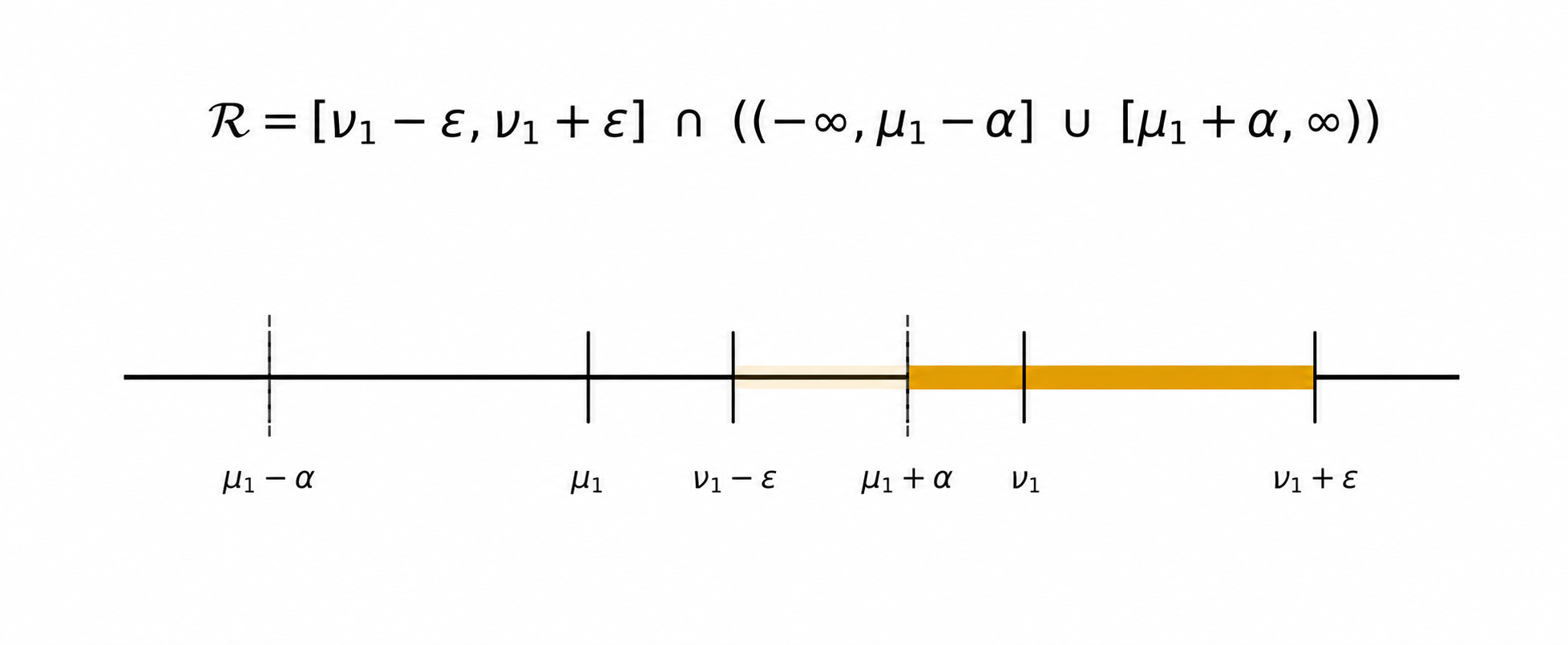}
  \caption{Feasible set on the line:
  $\mathcal R=[\nu_1-\varepsilon,\nu_1+\varepsilon]\ \cap\ \big((-\infty,\mu_1-\alpha]\ \cup\ [\mu_1+\alpha,\infty)\big)$.}
  \label{fig:alpha_e_plot_3}
\end{figure}

In the image \ref{fig:alpha_e_plot_3}, it is the orange region. Now, it is immediate that $\cR$ is an \textbf{empty} set $\leftrightarrow  [\nu_1 -\ve, \nu_1 +\ve]\subset (\mu_1-\alpha, \mu_1+\alpha)\leftrightarrow \Delta+\varepsilon<\alpha$ (the whole \(\varepsilon\)-interval of $\nu_1$ is too small and the radius $\alpha$ of the interval at $\mu_1$ is large enough). It follows that for a given $p_1 \leftrightarrow \mu_1, q_1 \leftrightarrow \nu_1$ and for a fixed  $f_c \leftrightarrow \ve $, the largest choice of $\alpha$ allowed for feasibility of such a $p\in \cR$ is given by $\Delta+ \ve$. This generalizes the concept of Pareto frontier in \cite{allouah2025distributionalmachineunlearningselective}. Moreover,  the region is the \textbf{whole closed interval:}   \(\mathcal{R}=[\nu_1- \ve, \nu_1 + \ve] \leftrightarrow [\nu_1- \ve, \nu_1 + \ve] \subset \Big((-\infty, \mu_1- \alpha] \cup  [\mu_1+ \alpha, \infty) \Big) \leftrightarrow \Delta-\varepsilon\ge \alpha\) (the \(\varepsilon\)-interval around $\nu_1$ lies entirely outside the \(\alpha\)-interval around $\mu_1$). In this feasible case, we again observe a clean trade-off between $ f_d\leftrightarrow \alpha$ and $f_c \leftrightarrow\ve$. For a given $p_1 \leftrightarrow \mu_1, q_1 \leftrightarrow \nu_1$ and for a fixed  $f_c \leftrightarrow \ve $, the largest choice of $\alpha$ allowed for feasibility of such a $p\in \cR$ is given by $\Delta- \ve$. Further, If \(\Delta-\varepsilon<\alpha\le \Delta+\varepsilon\), then \(\mathcal{R}\) is a \textbf{subinterval}  $[\mu_1+\alpha, \nu_1+ \ve]$\footnote{Here, we  assume $\nu_1\geq \mu_1$, but the other case $\mu_1\geq \nu_1$ can be written with appropriate modifications.} of $[\nu_1-\ve, \nu_1+ \ve]$, and once again for a given $p_1 \leftrightarrow \mu_1, q_1 \leftrightarrow \nu_1$ the trade-off between $f_d$ and $f_c$ is immediate. See the image  \ref{fig:alpha_e_plot_4} for an illustration of this.

\begin{figure}[t]
  \centering
  \includegraphics[width=0.9\linewidth,keepaspectratio]{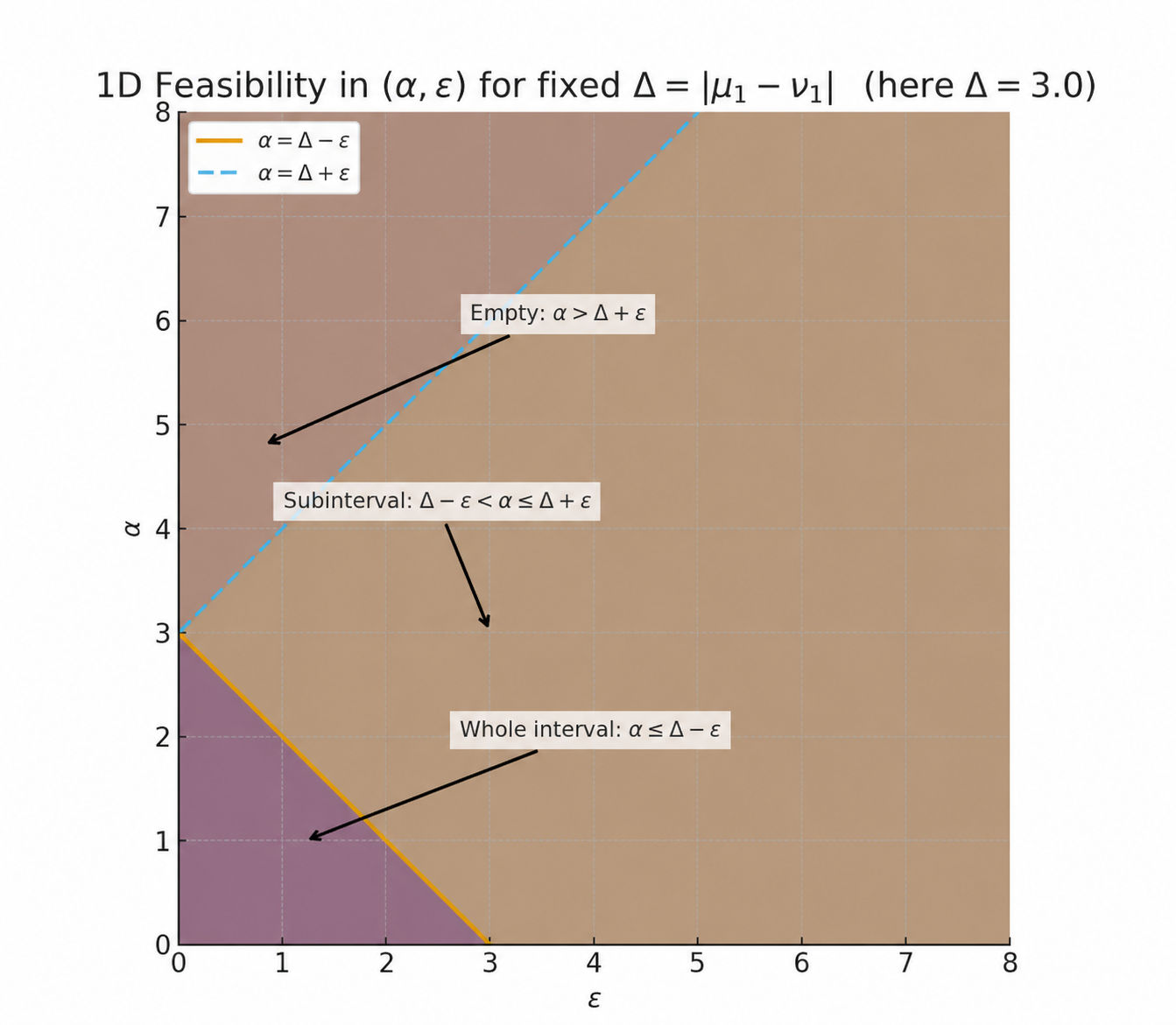}
  \caption{Feasibility regions in $(\alpha,\varepsilon)$ for fixed $\Delta=|\mu_1-\nu_1|$.
  The boundary  lines $\alpha=\Delta-\varepsilon$ (solid) and $\alpha=\Delta+\varepsilon$ (dashed) partition the positive orthant of the plane into three regions based on what $\cR$ is:
 \emph{Whole interval} ($\alpha\le\Delta-\varepsilon$), \emph{Subinterval}
  ($\Delta-\varepsilon<\alpha\le\Delta+\varepsilon$), and \emph{empty} ($\alpha>\Delta+\varepsilon$).}
  \label{fig:alpha_e_plot_4}
\end{figure}

\end{proof}
\begin{remark}
    Given the prior information about $\cP=\{\mu+X: \mu \in \R\}$ and $X\sim F$, it is  natural to compare with the baseline trade-off functions $f_d=T(X,\alpha+X),f_c=T(X, \ve+ X)$ as chosen above in Proposition \ref{prop:FFRLC}. Our unlearning framework (Definition \ref{def:dist-unlearning}) allows this flexibility of utilizing the prior information in a suitable way in comparison to the $(\alpha, \ve)$ distributional unlearning framework of \cite{allouah2025distributionalmachineunlearningselective}. Moreover, if $p$ satisfies $(f_d,f_c)$ unlearning with respect to $p_1,q_1\in \cP$, then $p$ also satisfies $(f_{d'},f_{c'})$ unlearning for TOFs $f_{d'} \geq f_d$ and $f_c \geq f_{c'}$. For $f_d=T(X, \alpha +X)$ and  $f_c=T(X, \ve+X)$, if $p$ satisfies $(f_d \leftrightarrow \alpha ,f_c \leftrightarrow \ve)$ unlearning, then as a consequence of Lemma \ref{lem:SLCTOF} it also satisfies  $(f_{d'} \leftrightarrow \alpha' ,f_{c'} \leftrightarrow \ve')$ unlearning for any $0\leq \alpha' \leq \alpha$ and $\ve' \geq \ve$.
\end{remark}
\subsection{Multiple unwanted populations and retained ones} \label{ssec:multiple}

We first generalize the definition of $(f_d,f_c)$ distributional unlearning  \ref{def:dist-unlearning} to $k$ unwanted populations $p_1, \cdots, p_k$ and $l$ retained populations $q_1, \cdots, q_l$ with their respective baseline trade-off functions.
\begin{definition}[$(f_{\mathbf{d}},f_{\mathbf{c}})$ distributional unlearning]
\label{def:dist-unlearning_multi}
For trade off functions $f_{\mathbf{d}} = \{f_{d1}, \cdots, f_{dk}\},f_{\mathbf{c}} = \{f_{c1}, \cdots,f_{cl}\} \in \cT$, a distribution $p \in \mathcal{P}$ (a class of distributions on some measurable space $(\cW, \cF_{\cW})$) satisfy $(f_{\mathbf{d}},f_{\mathbf{c}})$ unlearning with respect to unwanted populations $\mathbf{p}$ and retained populations $ \mathbf{q} $ if $\mathbf{p}= (p_1, \cdots, p_k)$ with each $p_i \in\cP$ and $\mathbf{q}= (q_1, \cdots, q_l)$ with each $q_j \in \cP$ and
\begin{align}
&
T(p, \mathbf{p}) \leq f_{\mathbf{d}}  \leftrightarrow T(p, p_i) \leq f_{di} \forall i \in [k]  \text{ (removal)},
\\
&
T(p, \mathbf{q}) \geq f_{\mathbf{c}} \leftrightarrow T(p, q_j) \geq f_{cj}  \forall j \in [l] \text{ (preservation)}.
\end{align}
\end{definition}
Now, we generalize the definition of the feasible region \ref{def:FR} for a multiple-population situation.
\begin{definition}[Feasible region]
   For families of probability distributions $\mathbf{p},\mathbf{q} \in \cP$ as above  and TOFs $f_{\mathbf{d}},f_{\mathbf{c}}$ we define the feasible region  $\cR=\{p \in \cP: T(p, \mathbf{p}) \leq f_{\mathbf{d}}, \text{ and } T(p, \mathbf{q}) \geq f_{\mathbf{c}}\}$.
\end{definition}
Given the definitions above, we now describe the feasible regions for a shifted Gaussian family in any dimensions and then for a shifted one-parameter family with symmetric log-concave noise.
\begin{restatable}[Feasible region of shifted Gaussians]{proposition}{FFR}
\label{thm:FFRkl}
Let $p_i = N(\mu_i, \Sigma), q_j= N(\nu_j, \Sigma)\in \mathcal{P}:= \{N(\mu, \Sigma): \mu \in \mathbb{R}^d\}$ be class of shifted Gaussian distributions with common covariance $\Sigma$. Take $f_{di}= T(N(0,1), N(\alpha_i, 1))$ and $f_{cj}= T(N(0,1), N(\ve_j,1))$\footnote{By dimension invariance of Gaussian trade off functions one dimensional $f_d,f_c$ are not restrictive.} for $\alpha_i, \ve_j \geq 0$. Then  

\begin{equation} \label{eq:GFRkl}
\cR(\cP, (\mathbf{p},\mathbf{q}), (f_\mathbf{d},f_\mathbf{c})) =
\cR= \{p= N(\mu, \Sigma): \mu \in \mathbb{R}^d: \alpha_i'(\mu) \geq \alpha_i, \ve_j'(\mu) \leq \ve_j \forall i,j\} 
\end{equation}
\begin{equation} \label{eq:GFRparameters0kl}
\text{ where } \alpha_i'^2= \langle \mu_i -\mu, \Sigma^{-1}(\mu_i-\mu) \rangle \text{ and }\ve_j'^2= \langle \nu_j -\mu, \Sigma^{-1}(\nu_j-\mu) \rangle. 
\end{equation}
 Assume $\Sigma=\II_d$ then $\cR = \bigcap_{j=1}^l\overline{B}(\nu_j, \ve_j) \bigcap_{i=1}^k B(\mu_i, \alpha_i)^c$. Moreover,  if dimension $d=1$ then $\cR= [\max_{j} (\nu_j -\ve_j), \min_{j} (\nu_j + \ve_j)] \bigcap \left((-\infty, \min_{i}(\mu_i -\alpha_i)] \cup [ \max_{i} (\mu_i+\alpha_i), \infty)\right)$\footnote{For $a<b$  emptiness of a set $[L,R] \cap \left((-\infty, a] \cup [b, \infty)\right)$ is equivalent to $L>R$ or $[L,R] \subset (a,b)$.}.  is empty $\leftrightarrow$ $ \max_{j} (\nu_j -\ve_j) > \min_{j} (\nu_j + \ve_j)$ or $[\max_{j} (\nu_j -\ve_j), \min_{j} (\nu_j + \ve_j)] \subset (\min_{i}(\mu_i -\alpha_i), \max_{i} (\mu_i+\alpha_i))$.
\end{restatable}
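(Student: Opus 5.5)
The plan is to reduce the multi-population statement to the single-pair computation already carried out in the proof of Proposition \ref{thm:FFR}, applied once for each unwanted population $p_i$ and each retained population $q_j$, and then to intersect the resulting constraints. Fix $p=N(\mu,\Sigma)\in\cP$. For each $i\in[k]$, the translation argument together with dimension freeness (Lemma \ref{lem: GTOF}, \eqref{eq:dto1a}) gives
\[
T(p,p_i)=T(N(0,1),N(\alpha_i',1)),\qquad \alpha_i'^2=\langle \mu_i-\mu,\Sigma^{-1}(\mu_i-\mu)\rangle.
\]
Likewise, for each $j\in[l]$ we get $T(p,q_j)=T(N(0,1),N(\ve_j',1))$ with $\ve_j'$ defined in the same way. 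By monotonicity \eqref{eq:GMontone}, the condition $T(p,p_i)\le f_{di}$ holds if and only if $\alpha_i'\ge\alpha_i$, and the condition $T(p,q_j)\ge f_{cj}$ holds if and only if $\ve_j'\le\ve_j$.

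Since Definition \ref{def:dist-unlearning_multi} imposes these conditions coordinatewise for every $i$ and $j$, $p$ lies in $\cR$ exactly when all $k+l$ scalar inequalities hold. This proves \eqref{eq:GFRkl}. Specializing to $\Sigma=\II_d$, the parameters become Euclidean distances. Each removal condition says $\mu\notin B(\mu_i,\alpha_i)$, the open ball, and each preservation condition says $\mu\in\overline{B}(\nu_j,\ve_j)$, the closed ball. Intersecting over all $i$ and $j$ gives the stated description of $\cR$.

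For $d=1$, the closed balls are intervals $[\nu_j-\ve_j,\nu_j+\ve_j]$, and their intersection is $[L,R]$ with $L=\max_j(\nu_j-\ve_j)$ and $R=\min_j(\nu_j+\ve_j)$. This interval is empty precisely when $L>R$. The complements of the open intervals $(\mu_i-\alpha_i,\mu_i+\alpha_i)$ are intersected to form the removal constraint set.

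I expect the main obstacle to be this last step. The intersection of the complements equals the complement of the union $\bigcup_i(\mu_i-\alpha_i,\mu_i+\alpha_i)$. This union coincides with the single interval $(\min_i(\mu_i-\alpha_i),\max_i(\mu_i+\alpha_i))$ only when the open intervals chain together without gaps. I would therefore first check whether the stated formula needs the standing assumption that the removal intervals overlap; the simplest sufficient condition is a single unwanted population, or pairwise overlapping removal intervals. Under that assumption, the union is the convex hull, and the stated formula and its emptiness criterion follow from the footnote. For $a<b$, the set $[L,R]\cap((-\infty,a]\cup[b,\infty))$ is empty if and only if $L>R$ or $[L,R]\subset(a,b)$. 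If the intervals do not overlap, the correct region is $[L,R]$ minus the union. I would then note that the stated set is contained in the true region, with equality under the overlap condition, and I would state the result in that form.
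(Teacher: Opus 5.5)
Your reduction is the paper's argument. The paper's proof only says that the $k=l=1$ computation carries over: apply dimension freeness and monotonicity of Gaussian trade-off functions to each pair $(p,p_i)$ and $(p,q_j)$, then intersect the $k+l$ constraints. That gives \eqref{eq:GFRkl} and the ball description for $\Sigma=\II_d$.

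Your doubt about the $d=1$ closed form is justified, and the paper's proof does not address it. The set $\bigcap_i B(\mu_i,\alpha_i)^c$ is the complement of $U:=\bigcup_i(\mu_i-\alpha_i,\mu_i+\alpha_i)$, not of its convex hull.

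A concrete counterexample: take $\mu_1=0$, $\mu_2=10$, $\alpha_1=\alpha_2=1$, $\nu_1=5$, $\ve_1=1$.
\begin{itemize}
\item The true feasible set (identified with means) is $[4,6]$; for instance $N(5,1)$ satisfies every constraint.
\item The stated formula gives $[4,6]\cap\bigl((-\infty,-1]\cup[11,\infty)\bigr)=\varnothing$.
\item The stated emptiness criterion therefore wrongly declares $\cR$ empty.
\end{itemize}

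So the correct general statement is the one you propose: $\cR=[L,R]\setminus U$. It is empty if and only if $L>R$ or $[L,R]\subset U$, equivalently when $[L,R]$ lies inside a single connected component of $U$. The paper's formula is valid when $U$ is connected.

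Two details to get right when you state that hypothesis:
\begin{itemize}
\item Connectedness requires the open intervals to overlap strictly. For $(0,1)\cup(1,2)$, the point $1$ remains feasible but the hull formula excludes it.
\item Indices with $\alpha_i=0$ impose no constraint, but they still enter $\min_i(\mu_i-\alpha_i)$ and $\max_i(\mu_i+\alpha_i)$. You must discard them before forming the hull; otherwise the formula can fail even when $U$ is connected.
\end{itemize}
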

\begin{proof}
    The proof is exactly the same as in the case of $k=l=1$ \ref{proof:FFRG}, and therefore we skip it.
\end{proof}
\begin{remark}
   Determining the non-emptiness of the non-convex set $\cR$ in higher dimensions is computationally hard \cite{MurtyKabadi1987}. It would be interesting to develop algorithms for this.
\end{remark}
\begin{proposition}[Feasible region of shifted one parameter family] \label{prop:FFRLCk}
For $\mu = (\mu_1, \cdots, \mu_k), \nu= (\nu_1, \cdots, \nu_l)$ with $ \mu_i, \nu_j\in \mathbb{R}$ consider  $p_i \overset{d}{=} \mu_i+X , q_j \overset{d}{=} \nu_j+X\in \mathcal{P}:= \{p' \overset{d}{=} \mu+ X: \mu \in \mathbb{R},  X \sim F\}$ be class of shifted log concave distributions with common CDF $F$. Take the baseline trade off functions $f_{di}= T(X,\alpha_i+X))$ and $f_c= T(X, \ve_j +X)$\footnote{Given the prior information about $\cP \leftrightarrow F$, it is indeed appropriate to compare with the baseline trade-off functions $f_{\mathbf{d}},f_{\mathbf{c}}$ as chosen above. It is the flexibility of our framework of utilizing the prior information in an \textit{optimal} way in comparison to the $(\alpha, \ve)$ distributional unlearning framework of \cite{allouah2025distributionalmachineunlearningselective}.} for $\alpha_i, \ve_j \geq 0$. Then 

\begin{equation} \label{eq:XFRk}
\cR(\cP, (\mathbf{p},\mathbf{q}), (f_{\mathbf{d}},f_{\mathbf{c}})) =
\cR= \{p= \mu+X: \mu \in \mathbb{R}: \alpha_i'(\mu) \geq \alpha_i, \ve_j'(\mu) \leq \ve_j\} 
\end{equation}
\begin{equation} \label{eq:XFRkparameters0}
\text{ where } \alpha_i'= |\mu_i -\mu|  \text{ and }\ve_j'= |\nu_j -\mu|. \text{ Moreover,}
\end{equation}
  $\cR= [\max_{j} (\nu_j -\ve_j), \min_{j} (\nu_j + \ve_j)] \bigcap \left((-\infty, \min_{i}(\mu_i -\alpha_i)] \cup [ \max_{i} (\mu_i+\alpha_i), \infty)\right)$ is empty $\leftrightarrow$ $ \max_{j} (\nu_j -\ve_j) > \min_{j} (\nu_j + \ve_j)$ or $[\max_{j} (\nu_j -\ve_j), \min_{j} (\nu_j + \ve_j)] \subset (\min_{i}(\mu_i -\alpha_i), \max_{i} (\mu_i+\alpha_i))$.
\end{proposition}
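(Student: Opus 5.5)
The plan follows the single-population argument for Proposition \ref{prop:FFRLC}, applied coordinatewise to each constraint and followed by an intersection of the resulting one-dimensional sets.

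\textbf{Reducing each constraint to a distance condition.} Fix $p\overset{d}{=}\mu+X\in\cP$. By the symmetry part of Lemma \ref{lem:SLCTOF}, for each $i\in[k]$ we have $T(\mu+X,\mu_i+X)=T(X,|\mu_i-\mu|+X)=f_{X,\alpha_i'}$ with $\alpha_i'=|\mu_i-\mu|$. Similarly, for each $j\in[l]$, $T(\mu+X,\nu_j+X)=f_{X,\ve_j'}$ with $\ve_j'=|\nu_j-\mu|$. The monotonicity part of Lemma \ref{lem:SLCTOF} then gives $f_{X,\alpha_i'}\le f_{X,\alpha_i}\iff \alpha_i'\ge\alpha_i$ and $f_{X,\ve_j'}\ge f_{X,\ve_j}\iff \ve_j'\le\ve_j$. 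Since Definition \ref{def:dist-unlearning_multi} imposes these componentwise, this yields \eqref{eq:XFRk} with the parameters \eqref{eq:XFRkparameters0}.

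\textbf{Identifying $\cR$ with a subset of $\R$.} Under the map $\mu\mapsto\mu+X$, the region becomes
$$\bigcap_j[\nu_j-\ve_j,\nu_j+\ve_j]\cap\bigcap_i\big((-\infty,\mu_i-\alpha_i]\cup[\mu_i+\alpha_i,\infty)\big).$$
The first intersection is an intersection of intervals, namely $[L,R]$ with $L=\max_j(\nu_j-\ve_j)$ and $R=\min_j(\nu_j+\ve_j)$. For the second factor, the complement of $\bigcap_i B(\mu_i,\alpha_i)^c$ is the union of the open intervals $(\mu_i-\alpha_i,\mu_i+\alpha_i)$.

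\textbf{The main obstacle.} The stated closed form asserts that $\bigcap_i B(\mu_i,\alpha_i)^c=(-\infty,a]\cup[b,\infty)$ with $a=\min_i(\mu_i-\alpha_i)$ and $b=\max_i(\mu_i+\alpha_i)$. This equality holds exactly when the union of the open intervals is the single interval $(a,b)$. In general that requires the intervals to overlap (or chain), e.g.\ when they all share a common point. Without such a condition there can be gaps between them, and points in these gaps also belong to $\cR$. I would therefore prove the inclusion $\supseteq$ unconditionally: any $\mu\le a$ or $\mu\ge b$ satisfies $|\mu-\mu_i|\ge\alpha_i$ for every $i$. I would prove equality under the assumption that $\bigcup_i(\mu_i-\alpha_i,\mu_i+\alpha_i)$ is connected. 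If it is not, I would state $\cR$ as $[L,R]$ minus that union. Dealing correctly with this connectedness issue is the main thing that can go wrong in matching the displayed formula.

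\textbf{Emptiness.} Given the interval form, $\cR=[L,R]\cap\big((-\infty,a]\cup[b,\infty)\big)$ is empty iff $L>R$ or $[L,R]\subset(a,b)$. The argument is elementary. If $L\le R$ and $\cR$ is empty, then every point of $[L,R]$ lies in the open interval $(a,b)$, so $a<L$ and $R<b$. Conversely, each of the two conditions clearly forces $\cR=\emptyset$.
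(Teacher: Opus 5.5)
Your reduction of each constraint to $|\mu_i-\mu|\ge\alpha_i$ and $|\nu_j-\mu|\le\ve_j$, via the symmetry and monotonicity parts of Lemma~\ref{lem:SLCTOF}, is exactly the paper's route. The paper's proof only says the argument is the same as for $k=l=1$. Your objection to the closed form is correct. The paper never checks whether, for $k\ge 2$, the set $\bigcap_i\bigl((-\infty,\mu_i-\alpha_i]\cup[\mu_i+\alpha_i,\infty)\bigr)$ collapses to two rays, and in general it does not.

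For example, take $k=2$, $l=1$, $\mu_1=0$, $\mu_2=10$, $\alpha_1=\alpha_2=1$, $\nu_1=5$, $\ve_1=1$. Then $[L,R]=[4,6]\subset(-1,11)=(a,b)$, so the statement declares $\cR=\emptyset$. Yet $p=5+X$ satisfies all three constraints. Hence both the displayed formula and the emptiness equivalence are false as stated. The correct general criterion is $L>R$ or $[L,R]\subseteq\bigcup_i(\mu_i-\alpha_i,\mu_i+\alpha_i)$; the condition $[L,R]\subset(a,b)$ is only a necessary consequence of this. Your plan is the right way to handle it: prove $\supseteq$ unconditionally, prove equality under a hypothesis, and otherwise describe $\cR$ as $[L,R]$ minus the union.

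Your hypothesis needs one repair. Connectedness of $\bigcup_i(\mu_i-\alpha_i,\mu_i+\alpha_i)$ forces this union to equal $(a,b)$ only when every $\alpha_i>0$. An index with $\alpha_i=0$ contributes an empty interval but still enters $a$ and $b$. For instance, $\mu_1=0$, $\alpha_1=1$, $\mu_2=100$, $\alpha_2=0$ gives the connected union $(-1,1)$, while $(a,b)=(-1,100)$. Such indices impose no constraint, so discard them before forming $a$ and $b$. A connected union of nonempty open intervals is then exactly the open interval from the smallest left endpoint to the largest right endpoint, and your equivalence holds.

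Separately, both you and the paper use the ``only if'' half of the monotonicity in Lemma~\ref{lem:SLCTOF}, and that half requires $F$ to be strictly increasing on $\R$. For bounded-support log-concave noise such as $\mathrm{Unif}[-1,1]$, which the paper itself lists as an example, $f_{X,\ve}\equiv 0$ for every $\ve\ge 2$. So the first display can already fail once some $\alpha_i$ or $\ve_j$ exceeds $2$.
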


\begin{proof}
    The proof is exactly the same as in the case of $k=l=1$ \ref{proof:FFR}, and therefore we skip it.
\end{proof}

\subsection{Feasible region for 
shifted Gaussian measures on a Hilbert space} \label{ssec:GaussiansonHS}

We now describe the Gaussian shift experiment in a Hilbert-space formulation. This includes the finite-dimensional
Gaussian models described earlier \ref{thm:FFR} and Gaussian white noise described later (to be precise, this is true only after embedding the observation in a sufficiently large Hilbert space).

Let \(\mathsf H\) be a real separable Hilbert space with inner product $\langle \cdot,\cdot\rangle_{\mathsf H}$ and norm $\|x\|_{\mathsf H} = \sqrt{\langle x,x\rangle_{\mathsf H}}.$ Consider a centered Gaussian probability measure $P_0$ on $(\mathsf H, \cB(\mathsf H))$ \cite{gine2016mathematical, Bogachev1998GaussianMeasures}. Equivalently, if \(X\sim P_0\), then for every \(u\in\mathsf H\), $\langle X,u\rangle_{\mathsf H}$ is a centered real Gaussian random variable\footnote{This is also an equivalent definition of a centered Gaussian distribution in finite dimensions by requiring that for every \(u\in\R^d\), $\langle X,u\rangle$ is a centered real Gaussian random variable on $\R$ \citet{krishnapur2025gaussian}.}. The covariance operator of \(P_0\) is the unique non-negative, self-adjoint, trace-class operator $K:\mathsf H\to\mathsf H $
such that  $\mathbb E_{P_0}
    \bigl[
        \langle X,u\rangle_{\mathsf H}
        \langle X,v\rangle_{\mathsf H}
    \bigr]
    =
    \langle Ku,v\rangle_{\mathsf H}$  for $u,v\in\mathsf H$ \citet{Bogachev1998GaussianMeasures}[Theorem 2.3.1]. We denote $P_0=N(0,K).$ Since \(K\) is  non-negative  and self-adjoint, it has a unique  non-negative square root \(K^{1/2}\). We define the Cameron--Martin space associated with
\(P_0\)  as  $\mathcal H_K :=\operatorname{Range}(K^{1/2}) = (\ker K)^\perp$ \citet{lunardi2016infinite}[Theorem 4.2.7] with the inner product 
\[
    \langle h_1,h_2\rangle_{\mathcal H_K}
    :=
    \left\langle
        K^{-1/2}h_1,
        K^{-1/2}h_2
    \right\rangle_{\mathsf H},
   \text{ for }h_1,h_2\in\mathcal H_K,
\]
where \(K^{-1/2}h\) denotes the unique element in
\(\overline{\operatorname{Range}(K^{1/2})}\) mapped to \(h\) by
\(K^{1/2}\). The corresponding norm is $\|h\|_{\mathcal H_K} = \|K^{-1/2}h\|_{\mathsf H}.$ An equivalent way to describe  $\mathcal H_K$ is to consider an orthonormal basis  \(\{e_j\}_{j\ge 1}\)  of \(\mathsf H\) so that $K e_j=\lambda_j e_j,$ for $ j\ge 1,$  with 
\(\lambda_j\ge 0\), and \(\sum_j\lambda_j<\infty\), then
\begin{equation}
    \mathcal H_K
    =
    \left\{
        h=\sum_{j\ge 1} h_j e_j:
       \|h\|_{\mathcal H_K}^2:=  \sum_{j:\lambda_j>0}\frac{h_j^2}{\lambda_j}<\infty,
        h_j=0 \text{ whenever } \lambda_j=0
        \right\}.
\end{equation}

For \(h\in\mathcal H_K\), define \(P_h\) to be the law of $X+h$ where  $X\sim P_0$, denoted as $P_h=N(h,K)$. Now, the Gaussian shift family on $\cH_K$ is a collection of shifted Gaussian measures on $(\mathsf H, \cB(\mathsf H))$
\[
    \cP
    =
    \{P_h:h\in\mathcal H_K\}.
\]

Cameron-Martin theorem says for all \(h\in\mathcal H_K\) we have  $P_h\ll P_0$ with  Radon--Nikodym derivative
\[
    \frac{dP_h}{dP_0}(x)
    =
    \exp\left\{
        \langle K^{-1}h,x\rangle_{\mathsf H}
        -
        \frac12
        \langle h,h\rangle_{\cH_{K}}
    \right\}, \frac{dP_h}{dP_0} : (\mathsf H , \cB(\mathsf H)) \to \mathbb{R}_{\geq 0}.
\]

Now, for \(h,h'\in\mathcal H_K\), we want the likelihood ratio of $P_{h'}$ with respect to $P_h$ for computing the TOF $T(P_h, P_{h'})$. Let
$v=h'-h.$ Since \(\mathcal H_K\) is a vector space, \(v\in\mathcal H_K\). The log-likelihood ratio is
\[
    \log \frac{dP_{h'}}{dP_h}(x)
    =
    \langle K^{-1}v,x-h\rangle_{\mathsf H}- \frac12
        \Delta^2 , \text{ where } \Delta =\|v\|_{\mathcal H_K}
\]
Under \(P_h\), we have   \(X-h \overset{d}{=}P_0=N(0,K)\).
So, $\langle K^{-1/2} v, K^{-1/2}(X-h)\rangle_{\mathsf H}  \overset{d}{=}
    N\bigl(0,\|v\|_{\mathcal H_K}^2\bigr).$

\[
   \text{Consequently, } \log\frac{dP_{h'}}{dP_h}(X)
    \sim
    N\left(
        -\frac12\|v\|_{\mathcal H_K}^2,
        \|v\|_{\mathcal H_K}^2
    \right)
    \qquad\text{under }P_h.
\]

\[
    \text{Similarly, }\log\frac{dP_{h'}}{dP_h}(X)
    \sim
    N\left(
        \frac12\|v\|_{\mathcal H_K}^2,
        \|v\|_{\mathcal H_K}^2
    \right)
    \qquad\text{under }P_{h'}.
\]
From \citet{pandey2025infinitely}[Lemma 4]  $T(P,Q)= T\left(P \circ \log \left(\frac{dQ}{dP}\right)^{-1}, Q \circ \log \left( \frac{dQ}{dP}\right)^{-1}\right)$.
\[
   \text{So, } T(P_h,P_{h'})(\alpha)
    = 
    T\left(N\left(-\frac{\Delta^2}{2}, \Delta^2\right), N\left(\frac{\Delta^2}{2}, \Delta^2\right)\right)(\alpha)
    =
    T\bigl(N(0,1),N(\Delta,1)\bigr)(\alpha).
\]

Our \textbf{next} lemma summarizes the results proven above for shifted Gaussians on a Hilbert space.

\begin{lemma} \label{lem: TOFH}
    Let $K:\mathsf H \to \mathsf H$ be a non-negative, self-adjoint, trace-class operator on a real separable Hilbert space $(\mathsf H, \langle \cdot, \cdot \rangle_{\mathsf H})$ with its associated Cameron-Martin space $\cH_K:= \text{Range} \left(K^{1/2}\right)$ with $\langle \cdot, \cdot\rangle_{\cH_{K}}= \langle K^{-1/2} \cdot, K^{-1/2} \cdot \rangle_{\mathsf H} $. Consider the associated shifted Gaussian family of measures $\cP:= \{N(h, K): h \in \cH_K\}$ on $(\mathsf H, \cB(\mathsf H))$. Then for $h,h'\in \cH_{K}$, we have the trade-off functions $T(P_h,P_{h'})=  T\bigl(N(0,1),N(\Delta,1)\bigr)$ with $\Delta^2:=   \|v\|_{\mathcal H_K}^2= \langle K^{-1/2}v, K^{-1/2} v \rangle_{\mathsf H}  $ and $v=h'-h$.
\end{lemma}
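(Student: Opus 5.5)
The plan is to reduce the trade-off function to the law of the log-likelihood ratio under the two hypotheses, and then use the Gaussian identity already recorded from \citet{pandey2025infinitely}[Lemma 4]. Fix $h,h'\in\cH_K$ and set $v=h'-h$. Since $\cH_K$ is a linear space, $v\in\cH_K$. First, observe that $P_h$ and $P_{h'}$ are the images of $P_0$ and $P_v$ under the translation $x\mapsto x+h$. This translation is a measurable bijection of $\mathsf H$, so by Theorem~\ref{thm:Blackwell}, applied in both directions with the deterministic kernels given by $x\mapsto x\pm h$, we get $T(P_h,P_{h'})=T(P_0,P_v)$. It therefore suffices to treat the case $h=0$, $h'=v$.

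Next I would invoke the Cameron--Martin theorem. It gives $P_v\ll P_0$ (in fact mutual absolute continuity) with
\[
\log\frac{dP_v}{dP_0}(x)=W_v(x)-\tfrac12\|v\|_{\cH_K}^2 .
\]
The \textbf{main obstacle} is making the linear term $W_v$ rigorous. The paper writes it as $\langle K^{-1}v,x\rangle_{\mathsf H}$, but $K^{-1}v$ need not lie in $\mathsf H$. I would first define $W_v$ for $v\in K(\mathsf H)$, where $W_v(x)=\langle K^{-1}v,x\rangle_{\mathsf H}$ is a genuine continuous linear functional. Under $P_0$ it is centered Gaussian with variance
\[
\langle K K^{-1}v,K^{-1}v\rangle_{\mathsf H}=\|K^{-1/2}v\|_{\mathsf H}^2=\|v\|_{\cH_K}^2 .
\]
The map $v\mapsto W_v$ is therefore an isometry from $(K(\mathsf H),\|\cdot\|_{\cH_K})$ into $L^2(P_0)$. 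Because $K(\mathsf H)$ is dense in $\cH_K$, the map extends uniquely to all of $\cH_K$, and $L^2$-limits of centered Gaussians are centered Gaussian. Consequently $W_v\sim N(0,\Delta^2)$ under $P_0$, with $\Delta=\|v\|_{\cH_K}$.

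From this, $L:=\log\frac{dP_v}{dP_0}$ has law $N(-\Delta^2/2,\Delta^2)$ under $P_0$. To find its law under $P_v$, I would compute the moment generating function by a change of measure:
\[
\E_{P_v}[e^{tL}]=\E_{P_0}[e^{(t+1)L}].
\]
A Gaussian moment-generating-function computation then identifies the law under $P_v$ as $N(\Delta^2/2,\Delta^2)$. This avoids any need to know how $W_v$ behaves under translation.

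Then apply \citet{pandey2025infinitely}[Lemma 4], which reduces the trade-off function to the pair of laws of the log-likelihood ratio:
\[
T(P_0,P_v)=T\bigl(N(-\Delta^2/2,\Delta^2),\,N(\Delta^2/2,\Delta^2)\bigr).
\]
If $\Delta>0$, the affine map $y\mapsto (y+\Delta^2/2)/\Delta$ is a bijection sending these two laws to $N(0,1)$ and $N(\Delta,1)$. Bijections preserve trade-off functions (again by Theorem~\ref{thm:Blackwell}), so $T(P_h,P_{h'})=T(N(0,1),N(\Delta,1))$.

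The degenerate case $\Delta=0$ is separate. Then $v=0$, so $P_h=P_{h'}$, and both sides equal $\mathrm{Id}(\alpha)=1-\alpha$.
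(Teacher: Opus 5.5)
Your proposal is correct and follows the paper's route. Cameron--Martin gives the log-likelihood ratio, whose laws under the two hypotheses are $N(\mp\Delta^2/2,\Delta^2)$, and \citet{pandey2025infinitely}[Lemma 4] followed by an affine standardization yields $T(N(0,1),N(\Delta,1))$. Your extra steps are sound and make rigorous what the paper treats formally: the translation reduction to $h=0$, defining $W_v$ as the isometric $L^2(P_0)$-extension from $K(\mathsf H)$ rather than the formal $\langle K^{-1}v,x\rangle_{\mathsf H}$, identifying the law under $P_v$ via $\E_{P_v}[e^{tL}]=\E_{P_0}[e^{(t+1)L}]$ in place of the paper's ``similarly'', and the separate $\Delta=0$ case.
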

Our \textbf{next} lemma extends the finite dimensional Gaussian result \ref{thm:FFR} with covariance matrix $\Sigma \succ 0$ replaced by an appropriate non-negative, self-adjoint, trace-class covariance operator $K$, and instead of parameterizing the family over all the elements $h\in \mathsf H$, we parametrize over the (restricted) Cameron-Martin subspace $\cH_K$ as otherwise for $h \notin \cH_K$,  the probability measure $P_h= N(h,K)$ is  singular to the centered Gaussian measure $P_0=N(0,K)$ \citet{lunardi2016infinite}[Theorem 3.1.5]. 
\begin{restatable}[Feasible region of shifted Gaussians on a Hilbert space $\mathsf H$]{proposition}{FRSGH}
\label{thm:FRSGH}
Under the conditions of lemma \ref{lem: TOFH} let $p_1 = N(\mu_1, K), q_1= N(\nu_1, K)\in \mathcal{P}:= \{N(\mu, \Sigma): \mu \in \cH_{K}\}$ be class of shifted Gaussian measures on a Hilbert space $(\mathsf H, \cB(\mathsf H))$ with common covariance $K:\mathsf H \to \mathsf H$. Take $f_d= T(N(0,1), N(\alpha, 1))$ and $f_c= T(N(0,1), N(\ve,1))$\footnote{By dimension invariance of Gaussian trade off functions one dimensional $f_d,f_c$ are not restrictive.} for $\alpha, \ve \geq 0$. Then  
\vspace{-2mm}
\begin{equation} \label{eq:GFRSGH}
\cR(\cP, (p_1,q_1), (f_d,f_c)) =
\cR= \{p= N(\mu, K): \mu \in \cH_{K}: \alpha'(\mu) \geq \alpha, \ve'(\mu) \leq \ve\} 
\end{equation}
\begin{equation} \label{eq:GFRparameters0SGH}
\text{ where } \alpha'^2= \langle K^{-1/2}(\mu_1 -\mu), K^{-1/2}(\mu_1-\mu) \rangle \text{ and }\ve'^2= \langle K^{-1/2}(\nu_1 -\mu), K^{-1/2}(\nu_1-\mu) \rangle. 
\end{equation}
Let $\Delta= \|K^{-1/2}(\mu_1- \nu_1)\|$, then $\cR$ is empty $\leftrightarrow$ $ \Delta + \epsilon <\alpha $. Moreover, $\cR= \overline{B}(\nu_1, \ve) \leftrightarrow \Delta - \ve \geq \alpha $, and $\cR$ is annular cap otherwise.  Moreover, the Pareto frontier matches as well.
\begin{equation} \label{eq:ParetoSGH}
    \text{PF}(p_1, q_1, \cP) := \{(\Delta +\varepsilon, \varepsilon): \varepsilon \geq 0\} \text{ with } \Delta = ||K^{-1/2}(\mu_1- \nu_1)||_{\mathsf H}.
\end{equation}
\end{restatable}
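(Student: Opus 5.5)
The plan is to reduce everything to the one-dimensional Gaussian computation via Lemma~\ref{lem: TOFH}, and then repeat the geometric argument from the proof of Proposition~\ref{thm:FFR}, now carried out in the Cameron--Martin space $(\cH_K,\|\cdot\|_{\cH_K})$.

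\textbf{Reduction to Gaussian trade-off functions.} Fix $p=N(\mu,K)$ with $\mu\in\cH_K$. Since $\cH_K$ is a vector space, both $\mu_1-\mu$ and $\nu_1-\mu$ lie in $\cH_K$. Lemma~\ref{lem: TOFH} then gives
\[
T(p,p_1)=T(N(0,1),N(\alpha',1)), \qquad T(p,q_1)=T(N(0,1),N(\ve',1)),
\]
where $\alpha'=\|\mu_1-\mu\|_{\cH_K}$ and $\ve'=\|\nu_1-\mu\|_{\cH_K}$. These are exactly the quantities in \eqref{eq:GFRparameters0SGH}.

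\textbf{Translating the constraints.} By the monotonicity property in Lemma~\ref{lem: GTOF}, $T(p,p_1)\le f_d$ holds if and only if $\alpha'\ge\alpha$, and $T(p,q_1)\ge f_c$ holds if and only if $\ve'\le\ve$. This gives the description \eqref{eq:GFRSGH}. Identifying $p$ with $\mu$, we get
\[
\cR=\overline{B}_{\cH_K}(\nu_1,\ve)\cap\bigl(\cH_K\setminus B_{\cH_K}(\mu_1,\alpha)\bigr),
\]
an intersection of balls in the Hilbert space $\cH_K$. The one point to check is that $\cH_K$ with the inner product $\langle K^{-1/2}\cdot,K^{-1/2}\cdot\rangle_{\mathsf H}$ is a genuine inner product space. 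It is isometric to $\overline{\mathrm{Range}(K^{1/2})}$ via $K^{-1/2}$, so the triangle inequality holds and the segment and line through $\mu_1,\nu_1$ lie in $\cH_K$.

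\textbf{Case analysis.} Set $\Delta=\|\mu_1-\nu_1\|_{\cH_K}$. For emptiness, every $\mu$ with $\|\mu-\nu_1\|\le\ve$ satisfies $\|\mu-\mu_1\|\le\Delta+\ve$ by the triangle inequality. So if $\Delta+\ve<\alpha$, then $\cR=\emptyset$.

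Conversely, suppose $\alpha\le\Delta+\ve$. If $\Delta>0$, the point $\mu=\nu_1+\ve(\nu_1-\mu_1)/\Delta\in\cH_K$ satisfies $\|\mu-\nu_1\|=\ve$ and $\|\mu-\mu_1\|=\Delta+\ve\ge\alpha$. If $\Delta=0$ and $\ve>0$, take any unit direction in $\cH_K$; the case $\cH_K=\{0\}$, or $\ve=0$ with $\Delta=0$, is trivial.

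For the whole-ball case, the triangle inequality gives $\|\mu-\mu_1\|\ge\Delta-\ve$ on the ball, so $\Delta-\ve\ge\alpha$ implies $\cR$ is the whole ball. Conversely, the point $\nu_1-\ve(\nu_1-\mu_1)/\Delta$ attains distance $\Delta-\ve$ from $\mu_1$, so if $\Delta-\ve<\alpha$ it lies outside $\cR$. In the remaining regime the set is a proper nonempty annular cap.

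The Pareto frontier follows from the emptiness criterion: for fixed $\ve$, the largest $\alpha$ with $\cR\neq\emptyset$ is $\Delta+\ve$, which gives \eqref{eq:ParetoSGH}.

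\textbf{Main obstacle.} The delicate step is the first one. It relies on Lemma~\ref{lem: TOFH}, and hence on the Cameron--Martin theorem, which holds only because the parametrization is restricted to $\cH_K$. The point to be careful about is that $K^{-1/2}$ is unbounded on $\mathsf H$, so $\|\cdot\|_{\cH_K}$-balls are not $\mathsf H$-balls. All geometric statements, including "ball" and "annular cap", must therefore be read in the $\cH_K$ norm, and the extremal points constructed above must be checked to remain in $\cH_K$. They do, since they are linear combinations of $\mu_1$ and $\nu_1$. The rest is verbatim the finite-dimensional argument with $\Sigma^{-1/2}$ replaced by $K^{-1/2}$.
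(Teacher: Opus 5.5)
Your proposal follows the paper's route. Lemma~\ref{lem: TOFH} turns both constraints into Gaussian trade-off functions with parameters $\|\mu_1-\mu\|_{\cH_K}$ and $\|\nu_1-\mu\|_{\cH_K}$, and the ball geometry of Proposition~\ref{thm:FFR} is then rerun in the Cameron--Martin norm; the paper only says the proof is ``exactly the same''. Your remark that the balls are $\cH_K$-balls rather than $\mathsf H$-balls is a useful clarification the paper omits.

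One step needs repair, and it exposes corners where the statement itself fails.
\begin{itemize}
\item \textbf{Whole-ball converse.} Your witness $w=\nu_1-\ve(\nu_1-\mu_1)/\Delta$ satisfies $\|w-\mu_1\|_{\cH_K}=|\Delta-\ve|$. This equals $\Delta-\ve$ only when $\ve\le\Delta$. For $\ve>\Delta$, and for $\Delta=0$ where $w$ is undefined, use $\mu_1$ itself instead. It lies in $\overline{B}_{\cH_K}(\nu_1,\ve)$ and violates $\|\mu-\mu_1\|_{\cH_K}\ge\alpha$ whenever $\alpha>0$.
\item \textbf{The case $\alpha=0$.} Here no witness exists, because $f_d(x)=1-x$ makes the removal constraint vacuous. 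So $\cR$ is the whole ball even when $\Delta-\ve<0=\alpha$, and the stated equivalence is false in that corner. The correct criterion is $\max(\Delta-\ve,0)\ge\alpha$.
\item \textbf{The case $\cH_K=\{0\}$, i.e.\ $K=0$.} This is not trivial. It is a counterexample to the emptiness criterion when $0<\alpha\le\ve$: then $\cR=\emptyset$ although $\Delta+\ve\ge\alpha$. You need to assume $K\neq 0$.
\end{itemize}
The paper's one-line proof ignores these degenerate cases as well. You should state them as explicit hypotheses ($K\neq 0$ and $\alpha>0$ for the whole-ball equivalence) rather than wave them off.
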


\begin{proof}
    The proof is exactly the same as that of \ref{thm:FFR}, once we observe that from lemma \ref{lem: TOFH} that for $\mu_1, \nu_1 \in \cH_{K}$ we have $T(P_{\mu_1}, P_{\nu_1}) = T(N(0,1), N(\Delta,1))$ with  $\Delta= \|K^{-1/2}(\mu_1- \nu_1)\|$.
\end{proof}

\begin{remark}[The problem with different covariance operators] In infinite dimensional separable Hilbert spaces $\mathsf H$, such as $\ell^2(\mathbb{N}):=\{(a_n)_{n\geq 0}: a_n\in \R, \sum_{n\geq 0} a_n^2 < \infty\}$, in many generic situations with two different non-negative, self-adjoint, trace-class operators $K,K' : \mathsf H \to \mathsf H$, we have $P_0= N(0,K)$ and $P_{0'}= N(0,K')$ are singular measures with respect to each other \cite{Bogachev1998GaussianMeasures}, \citet{krishnapur2025gaussian}[Theorem 41]. Therefore, the trade-off curve $T(P_0,P_{0'}) \equiv  0$.     
\end{remark}

\begin{table}[t]
\centering
\tiny
\setlength{\tabcolsep}{2.5pt}
\renewcommand{\arraystretch}{1.12}
\begin{tabularx}{\textwidth}{
@{}p{0.105\textwidth}
p{0.275\textwidth}
p{0.30\textwidth}
X@{}}
\toprule
\(\mathsf H\)
& Covariance operator \(K\)
& Cameron--Martin space \(\mathcal H_K\)
& Gaussian measure or process \\
\midrule

\((\mathbb R^d, \langle \cdot, \cdot \rangle)\)
&
\(K=\Sigma\succ0\)
&
\(\R^d\), 
\(\|h\|_{K}^2= \langle \Sigma^{-1/2} h, \Sigma^{-1/2} h \rangle \)
&
Finite-dimensional Gaussian \(N(0,\Sigma)\)
\\

\addlinespace[1.5pt]

\(L^2(Y,\cF,\rho)\)
&
\((Kf)(x)=\int k(x,y)f(y)\,d\rho(y)\),
\(k\succeq0\) and \(\int k(x,x)d\rho(x)<\infty\).
&
If \(k(x,y)=\sum_j\lambda_j e_j(x)e_j(y)\), then $\mathcal H_K$
\(=\{h=\sum_jh_je_j:\sum_{\lambda_j>0}\frac{h_j^2}{\lambda_j}<\infty\}\).
&
Centered Gaussian process on $(Y, \cF)$ 
\\

\addlinespace[1.5pt]

\(L^2([0,1])\)
&
\((Kf)(t)=\int_0^1(s\wedge t)f(s)\,ds\).
&
\(\mathcal H_K=\{h: h(t)= \int_{0}^{t}h'(s)ds\, h, h'\in L^2[0,1]\}\),
\(\|h\|_{\mathcal H_K}^2=\int_0^1|h'(t)|^2dt\).
&
Standard Brownian motion \(B_t\), viewed as an \(L^2[0,1]\)-valued  random element.
\\

\addlinespace[1.5pt]

\(\ell^2(\mathbb N)\)
&
\(K e_j=\lambda_j e_j\), \(\lambda_j >0\), \(\sum_j\lambda_j<\infty\).
&
\(\{h=(h_j)_{j\geq 0}:\ \sum_{j\geq 0} \frac{h_j^2}{\lambda_j}<\infty\}\).
&
\(X_j=\sqrt{\lambda_j}\xi_j\), \(\xi_j\stackrel{iid}{\sim}N(0,1)\).
\\

\bottomrule
\end{tabularx}
\caption{Covariance operators \(K: \mathsf H \to \mathsf H\),  Cameron--Martin spaces $\cH_{K}$, and  Gaussian measures.}
\label{tab:gaussian-covariance-examples}
\end{table}

\subsection{Feasible region for Poisson family}
\FRP*
\begin{proof} \label{proof:FRP}

$\mathcal R
:=
\left\{
\mu>0:
T(P(\mu),P(\mu_1))\le T(P(1),P(\alpha))
\ \text{and}\
T(P(\mu),P(\nu_1))\ge T(P(1),P(\varepsilon))
\right\}.$

\textbf{Proof of $\cR \supseteq\cR_P$:} First, we prove that the feasible region $\cR$ contains $\cR_P$. More precisely, for $\alpha > 1, \varepsilon< 1$ if $\mu$ satisfy the conditions of \ref{eq:FRP}, then we have removal bound $T(P(\mu), P(\mu_1)) \leq T(P(1), P(\alpha))$ as well as the preservation bound $T(P(\mu), P(\nu_1)) \geq T(P(1),P(\varepsilon))$.  

Now, recall the Poisson thinning kernel $K_s$ that takes a sample from $P(\theta)$ and outputs a sample from $P(s\theta)$ for some $s\in [0,1]$. One can implement this by first considering the output  $X\overset{d}{=} P(\theta)\in \mathbb{Z}_{\geq 0}$, and draw  a sample from  $B(X,s)$\footnote{$B(X,s)$ is the binomial distribution with number of trials $X$ and with success probability  s.}. The marginal distribution of  $B(X,s)$ is given by $P(s\theta)$\footnote{See \citet{pandey2025infinitely}[Lemma 21] for a complete description of the subsampling kernel $K_s$.}. Moreover, recall the Poisson superposition kernel $K^{r}$ that takes a sample from $P(\theta)$ and outputs a sample from $P(\theta +r)$ for some $r\geq 0$. One can implement this by first considering the output  $X\overset{d}{=} P(\theta)\in \mathbb{Z}_{\geq 0}$, and draw  an independent  sample from  $Z \overset{d}{=}P(r)$ and output  $X+Z$. The marginal distribution of  $X+Z$ is given by $P(\theta +r)$\footnote{See \citet{pandey2025infinitely}[Lemma 21] for a complete description of the superposition kernel $K^r$.}.  A composition of these two Markov kernels would yield $K^r \circ K_s \circ (P(\theta)) =K^r \circ P(s\theta)= P(s\theta +r)$ for $s\in [0,1], r\geq 0$. Equivalently
\[
\text{ If }
X\sim P(\theta)
\text{ then }
Y= B(X,s) + Z
\overset{d}{=}
P(s\theta+r), 
\text{ where }
X \perp Z \overset{d}{=} P(r).
\]
Therefore, if there exist \(s\in[0,1]\) and \(r\ge0\) such that with $a, b>0$ $c=s a+r, d=s b+r,$
then
\[
T(P(a),P(b))\le T(P(sa), P(sb)) \leq T(P(sa+r), P(sb+r)) = T(P(c),P(d)).
\]
The above inequalities follow from the definition of $T(P,Q)$ \ref{def:TOF} or Blackwell's theorem \ref{thm:Blackwell} that says that $T(P,Q) \leq T(K\circ P, K \circ Q)$ for any two 
probability distributions $P$ and $Q$ and Markov kernel $K$.
Now, we will construct Markov kernels of the kind $K^r\circ K^s$ so that we have the following
\[
P(\mu),P(\mu_1)
\overset{K^{r_1}\circ K_{s_1}}{\longmapsto}
P(1),P(\alpha), \text{ and also }
P(1),P(\varepsilon)
\overset{K^{r_2}\circ K_{s_2}}{\longmapsto}
P(\mu),P(\nu_1).
\]

\noindent\textbf{First.}
We require \(s_1\in[0,1]\), \(r_1\ge0\) so that $1=s_1\mu+r_1,
\alpha=s_1\mu_1+r_1.$ So, when $\mu_1 \neq \mu$
\begin{equation}
0\leq s_1=\frac{\alpha-1}{\mu_1-\mu} \leq 1,
\qquad
r_1=1-\mu\frac{\alpha-1}{\mu_1-\mu} \geq 0.
\end{equation}

\textbf{Second.} We require \(s_2\in[0,1]\), \(r_2\ge0\) so that $\mu=s_2+r_2,
\nu_1=s_2\varepsilon+r_2.$ So, when \(\varepsilon\neq1\),
\begin{equation}
0\leq s_2=\frac{\nu_1-\mu}{\varepsilon-1} \leq 1,
\qquad
r_2=\mu-\frac{\nu_1-\mu}{\varepsilon-1} \geq 0.
\end{equation}
\medskip

Therefore the thinning--superposition construction yields the subset $\mathcal R_{P} \subseteq \mathcal R $ where
\[
\mathcal R_{P}
:=
\left\{
\mu>0:
0\le \frac{\alpha-1}{\mu_1-\mu}\le1,
1-\mu\frac{\alpha-1}{\mu_1-\mu}\ge0,
0\le \frac{\nu_1-\mu}{\varepsilon-1}\le1,
\mu-\frac{\nu_1-\mu}{\varepsilon-1}\ge0
\right\}.
\]

\begin{align} \label{eq:R_P}
\mathcal R_{P}(\alpha, \varepsilon, \mu_1, \nu_1)
&=
\Bigl\{
\mu>0:
\exists\, s_1,s_2\in[0,1],\ r_1,r_2\ge0 \ \text{ such that we have } \\
&\qquad
1 = s_1\mu + r_1,
\alpha = s_1\mu_1 + r_1, 
\mu = s_2 + r_2,
\nu_1 = s_2\varepsilon + r_2
\Bigr\}.
\end{align}
Now, in remark \ref{rem:Poissonregion} and \ref{table:Poissonregion} we determine the representation of the region $\cR_{P}$ as mentioned in \ref{eq:FRP}.

\textbf{Proof of $\cR \subseteq \cR_P$:} For $\alpha>1, \varepsilon <1$ and $\mu_1,\nu_1 >0$, the statement that $T(P(\mu),P(\mu_1))\le T(P(1),P(\alpha))
\ \text{and}\
T(P(\mu),P(\nu_1))\ge T(P(1),P(\varepsilon))$ implies $\mu \in \cR_{P}$ is inspired from \citet[Complement 17, Chapter 10]{torgersen1991comparison}.\footnote{We refer to \citet{torgersen1991comparison} for more connections to statistics and Information theory} More generally, we have the following characterization of when exactly do we have $f:= T(P(a), P(b)) \leq g:=T(P(c), P(d))$ for constants $a,b,c,d >0$. 
\begin{lemma}[Poisson characterization lemma] \label{lem:Poisson-experiments}
    Consider $a,b,c,d>0$. Then  we have 
    \begin{enumerate}
        \item $f:= T(P(a), P(b)) \leq g:=T(P(c), P(d))$ if and only if 
        \label{condition1}
        \item $\exists s \in [0,1], r\geq 0$ such that $c= sa+r$ and $d=sb+r$ if and only if  \label{condition2}
        \item  \label{condition3}
        $(a,b)$ and $(c,d)$ are similarly ordered\footnote{It means either $a\leq b$ and $c\leq d$  or $a\geq b$ and $c\geq d$.} and they satisfy 
        \begin{equation}  \label{eq:condition3}
        |c-d|\leq |a-b|  \text{ and }
        \frac{\max(a,b)}{\min(a,b)} \geq \frac{\max(c,d)}{\min(c,d)}.
        \end{equation}
    \end{enumerate}
\end{lemma}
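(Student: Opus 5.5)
The plan is to prove the cycle \ref{condition2} $\Rightarrow$ \ref{condition1} $\Rightarrow$ \ref{condition3} $\Rightarrow$ \ref{condition2}. The first arrow is a Markov-kernel construction, the last is elementary algebra, and the middle one is the real content.

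\textbf{\ref{condition2} $\Rightarrow$ \ref{condition1}.} This is exactly the thinning--superposition argument already used in the proof of Proposition \ref{thm:FRP}. The kernel $K^r\circ K_s$ (binomial thinning with parameter $s$, followed by adding an independent $P(r)$) sends the pair $(P(a),P(b))$ to $(P(sa+r),P(sb+r))=(P(c),P(d))$. By Theorem \ref{thm:Blackwell}, or directly from Definition \ref{def:TOF}, post-processing can only raise the trade-off function, so $T(P(a),P(b))\le T(P(c),P(d))$.

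\textbf{\ref{condition3} $\Rightarrow$ \ref{condition2}.} If $a=b$, then $|c-d|\le 0$ forces $c=d$, and we take $s=0$, $r=c$. Otherwise set $s=(d-c)/(b-a)$ and $r=c-sa$; these solve the two linear equations.
\begin{itemize}
\item Same ordering gives $s\ge0$, and $|c-d|\le|a-b|$ gives $s\le 1$.
\item For $r\ge0$, by symmetry assume $a<b$, so $c\le d$. Then $r\ge 0 \iff c(b-a)\ge a(d-c) \iff cb\ge ad \iff d/c\le b/a$, which is the ratio condition in \eqref{eq:condition3}.
\end{itemize}
The converse \ref{condition2} $\Rightarrow$ \ref{condition3} follows from the same computation read backwards, though the cycle does not need it.

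\textbf{\ref{condition1} $\Rightarrow$ \ref{condition3}, the main obstacle.} I would extract necessary conditions from a one-parameter family of divergences satisfying data processing and apply Lemma \ref{prop:data-processing-functional}. The natural choice is the Hellinger/R\'enyi integrals
\[
H_\lambda(P\|Q)=\int p^\lambda q^{1-\lambda}.
\]
For Poissons these are explicit:
\[
\log H_\lambda(P(a)\|P(b)) = a^\lambda b^{1-\lambda}-\lambda a-(1-\lambda)b =: \varphi_{a,b}(\lambda).
\]
Data processing for R\'enyi divergences gives, for every $\lambda>1$ and also every $\lambda<0$ (using $D_{1-\lambda}$ of the swapped pair), that $\varphi_{a,b}(\lambda)\ge \varphi_{c,d}(\lambda)$. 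For $\lambda\in(0,1)$ the inequality is reversed. To align the orientation with $T(P(a),P(b))\le T(P(c),P(d))$, I would use the fact (Lemma \ref{lem:ITOF}, Remark \ref{rem:ITOF}) that the inequality also holds for the inverse trade-off functions, i.e. for the swapped pairs. Then I would read off the asymptotics, taking $a>b$ and writing $t=a/b>1$ so that $\varphi_{a,b}(\lambda)=b[(t^\lambda-1)-\lambda(t-1)]$:
\begin{itemize}
\item As $\lambda\to+\infty$, $\varphi_{a,b}(\lambda)$ grows like $b\,t^\lambda$. Comparing exponential growth rates forces $\max(c,d)/\min(c,d)\le t$.
\item As $\lambda\to-\infty$, $t^\lambda\to 0$ and $\varphi_{a,b}(\lambda)\approx -b+|\lambda|(a-b)$. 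Comparing linear slopes forces $|c-d|\le a-b$.
\item The same-ordering requirement should come from combining the two limits. If $c<d$ while $a>b$, then at $\lambda\to-\infty$ the $(c,d)$ side grows exponentially like $(d/c)^{|\lambda|}$, which a linear function cannot dominate.
\end{itemize}

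The delicate bookkeeping is tracking which inequality direction holds in which $\lambda$-range, and checking that the degenerate case $a=b$ (where $T(P(a),P(b))$ is the identity $1-x$ and hence forces $c=d$) is consistent. I expect this to be the main obstacle. If the R\'enyi route gets messy, the fallback is to follow \citet[Complement 17, Chapter 10]{torgersen1991comparison} directly.
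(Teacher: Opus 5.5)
Your proposal is correct. The outer steps match the paper: the thinning--superposition kernel gives $2\Rightarrow 1$, and your choice of $s,r$ gives $3\Rightarrow 2$ (your $s,r$ are the paper's $s=|c-d|/|a-b|$, $r=d-sb$). Your $1\Rightarrow 3$ is organized differently.

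The paper splits $1\Rightarrow 3$ into two parts:
\begin{itemize}
\item It reads off same-ordering and $|c-d|\le|a-b|$ from the endpoint slopes of the trade-off function (Lemma~\ref{lem:ordering}). That lemma rests on Torgersen's identification of $\lim_{u\downarrow 0}(1-f(u))/u$ and $\lim_{u\uparrow 1}f(u)/(1-u)$ with the essential supremum and infimum of the likelihood ratio.
\item It uses the Hellinger transforms only for the ratio condition.
\end{itemize}
You extract all three conditions from the single family $\varphi_{a,b}(\lambda)\ge\varphi_{c,d}(\lambda)$, $\lambda\notin[0,1]$.

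The two routes are closely linked. For $a>b$, $\varphi_{a,b}(\lambda)/|\lambda|\to a-b=\log\operatorname{ess\,sup}_{P(a)} dP(b)/dP(a)$ as $\lambda\to-\infty$. This is precisely the endpoint slope used in Lemma~\ref{lem:ordering}. The exponential blow-up you observe when $c<d$ is the case where the corresponding essential supremum for $(c,d)$ is infinite. So your version is more self-contained: it needs only data processing for $f$-divergences, as in Lemma~\ref{lem:Hellinger}, and no appeal to Torgersen's endpoint-slope result. The paper's version instead gets ordering and the difference bound directly from the shape of $f$ near $0$, without asymptotics.

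When you write it out, establish same-ordering first. For $a>b$ and $c<d$, $\varphi_{c,d}$ grows only linearly as $\lambda\to+\infty$ and exponentially as $\lambda\to-\infty$. So your ratio comparison at $+\infty$ and your slope comparison at $-\infty$ are valid only once $c\ge d$ is known.

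The detour through inverse trade-off functions is unnecessary. The Blackwell kernel sending $(P(a),P(b))$ to $(P(c),P(d))$ gives your inequalities directly. Swapping the pairs is needed only to reduce the case $a<b$ to $a>b$.
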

Using the lemma, and applying condition \ref{condition2} for removal $T(P(\mu),P(\mu_1))\le T(P(1),P(\alpha))$ \text{and} preservation bounds   $T(P(\mu),P(\nu_1))\ge T(P(1),P(\varepsilon))$ implies $\mu \in \cR_{P}$.
\end{proof} 
\begin{proof}[Proof of characterization lemma \ref{lem:Poisson-experiments}]
    First, with some algebra, one can show that conditions \ref{condition2} and \ref{condition3} are equivalent\footnote{\ref{condition2} immediately implies \ref{condition3}. For the converse one can take $s= \frac{|c-d|}{|a-b|} \in [0,1]$ and $r= d- \frac{|c-d|}{|a-b|} b \geq 0$.}. So, it remains to prove that condition \ref{condition1} implies condition \ref{condition3}\footnote{We already showed that condition \ref{condition2} shows condition \ref{condition1} using the Kernels $K^r\circ K^s$ in the proof of \ref{proof:FRP}.}. We now give a proof of this.  First, one observes that as a consequence of Lemma \ref{lem:ordering} we have  if $f=T(P(a),P(b))\leq g=T(P(c), P(d))$, then $(a,b)$ and $(c,d)$ are similarly ordered and $|c-d|\leq |a-b|$. It remains to show the max min condition in \ref{eq:condition3}.
    Now, we compute the Hellinger transform 
    \begin{equation}
    H_t(P,Q)= \int dP^{1-t} dQ^t \text{ for } t \in \R \text{ and } Q \ll P.
    \end{equation}
    An explicit computation of the Hellinger transforms for the Poisson distributions yields for $a,b>0$
    \begin{equation}
        H_t(P(a), P(b)) = \exp \{ -(1-t)a -tb +b^ta^{1-t}\} \text{ for } t\in \R. 
    \end{equation}
    A consequence of Blackwell's theorem \ref{thm:Blackwell} and data processing inequality for Hellinger transforms \ref{lem:Hellinger} shows that $T(P(a), P(b)) \leq T(P(c), P(d))$ implies 
    \begin{equation}
        \exp \{ -(1-t)a -tb +b^ta^{1-t}\} \leq \exp \{ -(1-t)c -td +d^tc^{1-t}\} \text{ for } 0\leq t\leq 1
    \end{equation}
    \begin{equation}
        \exp \{ -(1-t)c -td +d^tc^{1-t}\} \leq \exp \{ -(1-t)a -tb +b^ta^{1-t}\} \text{ for } t < 0 \text{ or } t > 1.
    \end{equation}
    An equivalent description of the inequalities gives rise to the following with $r= \frac{d}{c}$ and $s= \frac{b}{a}$.
    \begin{equation}
      (1-t)(a-c +t(b-d) + c r^t  - a s^t \geq    0 \text{ for } 0\leq t\leq 1
    \end{equation}
    \begin{equation} \label{eq:ttoinfty}
         (1-t)(c-a) +t(d-b) + a s^t - c r^t \geq 0  \text{ for } t < 0 \text{ or } t > 1.
    \end{equation}

    Consider $0<c<d$ and $0<a<b$. Then proving the max min condition in \ref{eq:condition3} is equivalent to proving $s =\frac{b}{a} \geq \frac{d}{c} =r$. If not, then assuming  $s <r$ in equation \ref{eq:ttoinfty} and once we let $t \uparrow \infty$ we have  
    \begin{equation}
      0\leq   (1-t)(c-a) +t(d-b) + a s^t - c r^t \to - \infty  \text{ as } t \uparrow \infty, \text{ a contradiction}.
    \end{equation}
   If  $0<d<c$ and $0<b<a$, one can show the max min condition \ref{eq:condition3} by letting $t\downarrow -\infty$  in \ref{eq:ttoinfty}.
\end{proof}
\begin{lemma}(Supports of likelihood ratios) \label{lem:ordering}
Let $f(u):=T(P(a),P(b))(u)$ for $0\le u\le 1$ and  \(a,b>0\) with \(a\neq b\). Then
\[
\begin{aligned}
\lim_{u\uparrow 1}\frac{f(u)}{1-u}
&=
\begin{cases}
e^{-(b-a)}, & 0<a<b,\\
0, & a>b>0,
\end{cases}
\qquad
\lim_{u\downarrow 0}\frac{1-f(u)}{u}
=
\begin{cases}
+\infty, & 0<a<b,\\
e^{a-b}, & a>b>0.
\end{cases}
\end{aligned}
\]
\end{lemma}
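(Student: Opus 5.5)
The plan is to compute $f=T(P(a),P(b))$ explicitly near its two endpoints using the Neyman--Pearson lemma. For Poisson laws the likelihood ratio is
\[
L(k):=\frac{dP(b)}{dP(a)}(k)=e^{-(b-a)}\Bigl(\frac{b}{a}\Bigr)^{k},\qquad k\in\mathbb{Z}_{\geq 0}.
\]
It is strictly monotone in $k$: increasing if $a<b$, decreasing if $a>b$. The optimal tests are therefore (randomized) threshold tests in $k$. Since the sample space is discrete, $f$ is piecewise linear. Its slopes are $-L(k)$ on successive pieces, ordered by decreasing likelihood ratio as $u$ goes from $0$ to $1$. Hence each limit in the lemma is governed by the extreme values of $L$ on $\mathbb{Z}_{\geq 0}$. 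By the monotonicity just noted, these extremes are attained at $k=0$ or approached as $k\to\infty$.

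\textbf{Endpoint $u\uparrow 1$.} Here the optimal test rejects almost everything and keeps only the outcomes with the smallest $L$.

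If $0<a<b$, the smallest ratio is at $k=0$. Consider the test that rejects every $k\ge 1$ and rejects $k=0$ with probability $1-\gamma$. It has
\[
u=1-\gamma e^{-a},\qquad \beta=\gamma e^{-b}.
\]
By Neyman--Pearson this test is optimal for all $u\in[1-e^{-a},1]$. Therefore $f(u)=(1-u)e^{-(b-a)}$ on that interval, and the limit equals $e^{-(b-a)}$ exactly.

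If $a>b$, the acceptance region of an optimal test near $u=1$ is a (randomized) upper tail $\{k\ge K\}$, with $K\to\infty$ as $u\uparrow 1$. On that region $L\le L(K)$, so
\[
f(u)=\mathbb{E}_{P(b)}[1-\varphi]\le L(K)\,\mathbb{E}_{P(a)}[1-\varphi]=L(K)(1-u).
\]
Since $L(K)=e^{a-b}(b/a)^K\to 0$, the limit is $0$.

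\textbf{Endpoint $u\downarrow 0$.} This is symmetric: the optimal test rejects the outcomes with the largest $L$.

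If $a>b$, the largest ratio is $L(0)=e^{a-b}$. Rejecting $k=0$ with probability $u/e^{-a}$, for $u\le e^{-a}$, gives $1-f(u)=u\,e^{a-b}$ exactly.

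If $a<b$, the rejection region near $u=0$ is an upper tail $\{k\ge K(u)\}$, with $K(u)\to\infty$ as $u\downarrow 0$. On that region $L\ge L(K)$, so
\[
1-f(u)=\mathbb{E}_{P(b)}[\varphi]\ge L(K)\,\mathbb{E}_{P(a)}[\varphi]=L(K)\,u.
\]
Since $L(K)\to\infty$, the limit is $+\infty$.

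\textbf{Main obstacle.} The step needing the most care is the justification that the optimal tests near each endpoint are exactly these tail or singleton tests. This follows from the Neyman--Pearson lemma together with the strict monotonicity of $L$, which makes the level sets of $L$ singletons. One must also check that $K(u)\to\infty$ as $u\to 0$ or $u\to 1$. This holds because every finite set $\{0,\dots,K\}$ has positive $P(a)$-mass, so $u$ is bounded away from the relevant endpoint unless the tail begins beyond every fixed $K$. Everything else is direct computation with the Poisson pmf.
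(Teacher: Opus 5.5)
Your argument is correct, but it takes a different route from the paper's. The paper never looks at the optimal tests. It invokes Torgersen's general result (Complement~15, Chapter~10): for any pair $Q\ll P$ one has $\lim_{u\uparrow 1}T(P,Q)(u)/(1-u)=\operatorname*{ess\,inf}_P \frac{dQ}{dP}$ and $\lim_{u\downarrow 0}(1-T(P,Q)(u))/u=\operatorname*{ess\,sup}_P \frac{dQ}{dP}$. It then reads off the infimum and supremum of $e^{-(b-a)}(b/a)^k$ over $k\in\mathbb{Z}_{\ge 0}$, using that Poisson laws have full support.

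You instead prove this special case by hand. When the extreme ratio is attained at the atom $k=0$, you use an explicit Neyman--Pearson test on that atom. When the extreme is only approached as $k\to\infty$, you use the tail comparison $\mathbb{E}_{P(b)}[g]\le L(K)\,\mathbb{E}_{P(a)}[g]$ (resp.\ $\ge$, according as $a>b$ or $a<b$) for $g\ge 0$ supported on $\{k\ge K\}$. This comparison is exactly the mechanism behind Torgersen's statement.

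The paper's route is two lines and applies to any pair of distributions, but it rests on an external citation. Yours is self-contained and gives more:
\begin{itemize}
\item the exact linear form of $f$ on $[0,e^{-a}]$ (when $a>b$) or on $[1-e^{-a},1]$ (when $a<b$);
\item explicit bounds $f(u)/(1-u)\le L(K(u))$ and $(1-f(u))/u\ge L(K(u))$ in the tail cases.
\end{itemize}

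You can also lighten the step you flag as the main obstacle. In the two tail cases, any test of size $u$ already upper-bounds $f(u)$, so optimality of the test is never used. In the two exact cases, the matching bound follows without Neyman--Pearson: apply $L\le L(0)$ when $a>b$ (resp.\ $L\ge L(0)$ when $a<b$) to an arbitrary test.

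One small correction. What forces $K(u)\to\infty$ is that $P(a)(\{0,\dots,K\})<1$ for every $K$, i.e.\ every tail $\{k>K\}$ has positive $P(a)$-mass. Positivity of the mass of finite sets is not what keeps $u$ away from the endpoint.
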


\begin{proof}
For \(P=P(a)\) and \(Q=P(b)\),  the likelihood ratio is given by 
\[
\frac{dQ}{dP}(k)
=
\frac{e^{-b}b^k/k!}{e^{-a}a^k/k!}
=
e^{-(b-a)}
\left(\frac ba\right)^k,
\qquad k\in\mathbb{Z}_{\geq 0} .
\]
From \cite{torgersen1991comparison}[Complement 15, Chapter 10]  the trade-off function $f=T(P,Q)$  satisfies
\[
\lim_{u\uparrow 1}\frac{T(P,Q)(u)}{1-u}
=
\operatorname*{ess\,inf}_{P}\frac{dQ}{dP} := \sup\{z: P\left(z: z\leq \frac{dQ}{dP}\right)=1 \}, \text{ and}
\]
\[
\lim_{u\downarrow 0}\frac{1-T(P,Q)(u)}{u}
=
\operatorname*{ess\,sup}_{P}\frac{dQ}{dP} := \inf\{z: P\left(z: z\geq \frac{dQ}{dP}\right)=1 \}.
\]

Since both \(P(a)\) and \(P(b)\) have full support on \( \mathbb{Z}_{\geq 0}\), the essential infimum and the essential supremum are simply the infimum and supremum over \(k\in \mathbb{Z}_{\geq 0}\) of the likelihood ratio.

If \(0<a<b\), then \(b/a>1\). So, we have 
\[
\lim_{u\uparrow 1}\frac{f(u)}{1-u}= \operatorname*{ess\,inf}_{P(a)}
\frac{dP(b)}{dP(a)}
=
e^{-(b-a)},
\text{ and }
\lim_{u\downarrow 0}\frac{1-f(u)}{u}
=
\operatorname*{ess\,sup}_{P(a)}
\frac{dP(b)}{dP(a)}
=
+\infty.
\]

If \(a>b>0\), then \(b/a<1\). So, we have

\[
\lim_{u\downarrow 0}\frac{1-f(u)}{u}
=
\operatorname*{ess\,sup}_{P(a)}
\frac{dP(b)}{dP(a)}
=
e^{a-b},
\text{ and }
\lim_{u\uparrow 1}\frac{f(u)}{1-u}
=
\operatorname*{ess\,inf}_{P(a)}
\frac{dP(b)}{dP(a)}
=
0.
\]
\end{proof}
\begin{remark}
    It is immediate from the above lemma  that if $T(P(a), P(b))=f \leq g= T(P(c), P(d))$, then $(a,b)$ and $(c,d)$ are similarly ordered and $|c-d|\leq |a-b|$.
 \end{remark}

\begin{lemma}[Data processing and reverse data processing for Hellinger transforms] \label{lem:Hellinger}
Let \(P,Q\) be probability measures on a measurable space \((\mathcal X,\mathcal A)\) so that $Q\ll P$, and let
\(R\) be a Markov kernel from \((\mathcal X,\mathcal A)\) to another measurable space
\((\mathcal Y,\mathcal B)\). For \(t\in\mathbb R\), define
\[
H_t(P,Q):=\int dP^{1-t}\,dQ^t.
\]
Then (as $x \to x^t$ is concave for $0\leq t\leq 1$) we have the following 
\begin{equation}
H_t(P,Q)\le H_t(R\circ P,R\circ Q) \text{ for } 0\le t\le 1.
\end{equation}
Moreover, (as $x \to x^t$ is convex for $t<0$ or $t> 1$) we have the following 
\begin{equation}
H_t(P,Q)\ge H_t(R\circ P,R\circ Q)
\text{ for }t<0 \ \text{or}\ t>1.
\end{equation}
\end{lemma}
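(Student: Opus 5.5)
The plan is to reduce to the case $Q\ll P$ with likelihood ratio $L:=dQ/dP$ and apply Jensen's inequality conditionally through the kernel. First I handle absolute continuity. If $Q\ll P$, then $R\circ Q\ll R\circ P$: if $R\circ P(B)=\int R(x,B)\,dP(x)=0$, then $R(\cdot,B)=0$ $P$-a.s., hence $Q$-a.s., so $R\circ Q(B)=0$. This lets me write $L':=d(R\circ Q)/d(R\circ P)$ and express both transforms as
\[
H_t(P,Q)=\int L^t\,dP,\qquad H_t(R\circ P,R\circ Q)=\int (L')^t\,d(R\circ P).
\]
For $t<0$ I adopt the convention that the integrand may be $+\infty$ on $\{L=0\}$, so the inequalities are read in $[0,\infty]$.

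The key step is to identify $L'$ as a conditional expectation of $L$. I form the joint law $\pi_P(dx,dy)=P(dx)R(x,dy)$ on $\mathcal X\times\mathcal Y$, and similarly $\pi_Q$. Then $d\pi_Q/d\pi_P(x,y)=L(x)$, and the $y$-marginals are $R\circ P$ and $R\circ Q$. For any $B\in\mathcal B$,
\[
R\circ Q(B)=\int L(x)\,\mathbf 1_B(y)\,d\pi_P=\int_B \mathbb E_{\pi_P}[L(X)\mid Y=y]\,d(R\circ P)(y).
\]
Hence $L'(Y)=\mathbb E_{\pi_P}[L(X)\mid Y]$ $\pi_P$-a.s. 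Existence of the regular conditional expectation is unproblematic here, since $L\ge0$ is integrable under $P$.

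Next I apply conditional Jensen to $\varphi(x)=x^t$ on $[0,\infty)$. For $0\le t\le1$, $\varphi$ is concave, so
\[
\mathbb E[L(X)^t\mid Y]\le (\mathbb E[L(X)\mid Y])^t=L'(Y)^t.
\]
Taking $\pi_P$-expectations gives $H_t(P,Q)=\mathbb E_{\pi_P}[L(X)^t]\le \int (L')^t\,d(R\circ P)$, which is the first inequality. For $t>1$ or $t<0$, $\varphi$ is convex (valued in $[0,\infty]$ for $t<0$), so the inequality reverses and gives the second claim.

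I expect the main obstacle to be the edge cases for $t<0$, where $x^t$ blows up at $0$. If $P(L=0)>0$, then $H_t(P,Q)=\infty$ and the inequality $\ge$ holds trivially. Otherwise $L>0$ $P$-a.s., so $L'>0$ $R\circ P$-a.s., and Jensen applies to the convex function on $(0,\infty)$ with possibly infinite expectations. I will justify this either by monotone truncation $\min(x^t,n)$ or by using the supporting-line form of Jensen, which is valid for nonnegative convex functions. For $t>1$ the same remark covers integrals that may be infinite.
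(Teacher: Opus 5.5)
Your argument is correct, but it takes a more hands-on route than the paper.

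\textbf{The paper's route.} The paper never works with the kernel directly. It observes two things:
\begin{itemize}
\item for $0<t<1$, the quantity $1-H_t(P,Q)=\int f_t(dQ/dP)\,dP$ with $f_t(u)=1-u^t$ convex is an $f$-divergence;
\item for $t<0$ or $t>1$, the quantity $H_t(P,Q)-1=\int g_t(dQ/dP)\,dP$ with $g_t(u)=u^t-1$ convex is an $f$-divergence.
\end{itemize}
Both inequalities then follow from the general data-processing inequality for $f$-divergences (Polyanskiy--Wu, Theorem 7.4).

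\textbf{Your route.} You in effect reprove that inequality in the special case of power functions. The coupling $\pi_P(dx,dy)=P(dx)R(x,dy)$, the identification $\frac{d(R\circ Q)}{d(R\circ P)}(Y)=\mathbb{E}_{\pi_P}[L(X)\mid Y]$, and conditional Jensen are exactly the standard proof of the cited theorem.

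\textbf{Comparison.} The paper's version is a two-line reduction to a textbook result. Yours buys three things:
\begin{itemize}
\item It is self-contained.
\item It makes the reversal of the inequality transparent: it comes straight from concavity versus convexity of $x\mapsto x^t$ in Jensen, rather than from the sign bookkeeping of $f_t$ versus $g_t$.
\item It treats explicitly the edge cases the paper leaves to the conventions of the cited theorem. These are $H_t(P,Q)=+\infty$ when $t<0$ and $P(L=0)>0$ (corresponding to $g_t(0):=g_t(0^+)=+\infty$), and possibly infinite integrals when $t>1$. Your use of the supporting-line form of Jensen for nonnegative convex functions handles these correctly.
\end{itemize}

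\textbf{Minor terminological point.} You do not need a regular conditional expectation. The ordinary conditional expectation given $\sigma(Y)$, written as a function of $Y$ via the Doob--Dynkin lemma, suffices.
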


\begin{proof}
Let \(\mu\) be a measure dominating both \(P\) and \(Q\), and write $p:=\frac{dP}{d\mu},$ and $
q:=\frac{dQ}{d\mu}.$
Then
\[
H_t(P,Q)
=
\int p^{1-t}q^t\,d\mu
=
\int_{p>0} \left(\frac{q}{p}\right)^t\,dP, 
\]

For \(0<t<1\) define  $f_t(u):=1-u^t,  u\ge0.$ Since \(u\mapsto u^t\) is concave on \([0,\infty)\), \(f_t\) is convex. So,
\[
D_{f_t}(Q\|P)
:=
\int f_t\!\left(\frac{dQ}{dP}\right)dP
=
1-H_t(P,Q)
\]
is an \(f\)-divergence \citet[Definition 7.1]{polyanskiy_wu_2025}. By the data-processing inequality for \(f\)-divergences \citet[Theorem 7.4]{polyanskiy_wu_2025}, we have 
\[
D_{f_t}(R\circ Q\|R\circ P)\le D_{f_t}(Q\|P).
\]

\[
\text{So, } 1-H_t(R\circ P,R\circ Q)\le 1-H_t(P,Q), \leftrightarrow 
H_t(P,Q)\le H_t(R\circ P,R\circ Q).
\]

The cases \(t=0\) and \(t=1\) follow, since $H_0(P,Q)=H_1(P,Q)=1$ and likewise $H_0(R\circ P,R\circ Q)=H_1(R\circ P,R\circ Q)=1.$ Now suppose \(t<0\) or \(t>1\). Then \(u\mapsto u^t\) is convex on
\((0,\infty)\). Thus
\[
D_{g_t}(Q\|P)
:=
\int g_t\!\left(\frac{dQ}{dP}\right)dP= H_t(P,Q) -1,
\quad \text{for }
g_t(u):=u^t -1,
\]
is an \(f\)-divergence.  Applying data processing again on $D_{g_{t}}(Q||P)$ yields for  $t<0$ \text{or} $t>1$
\[
H_t(R\circ P,R \circ Q)
=
D_{g_t}(R\circ Q\|R\circ P) + 1
\le
D_{g_t}(Q\|P) +1
=
H_t(P,Q).
\]

\end{proof}

\begin{figure}[t]
    \centering
    \includegraphics[width=0.65\textwidth]{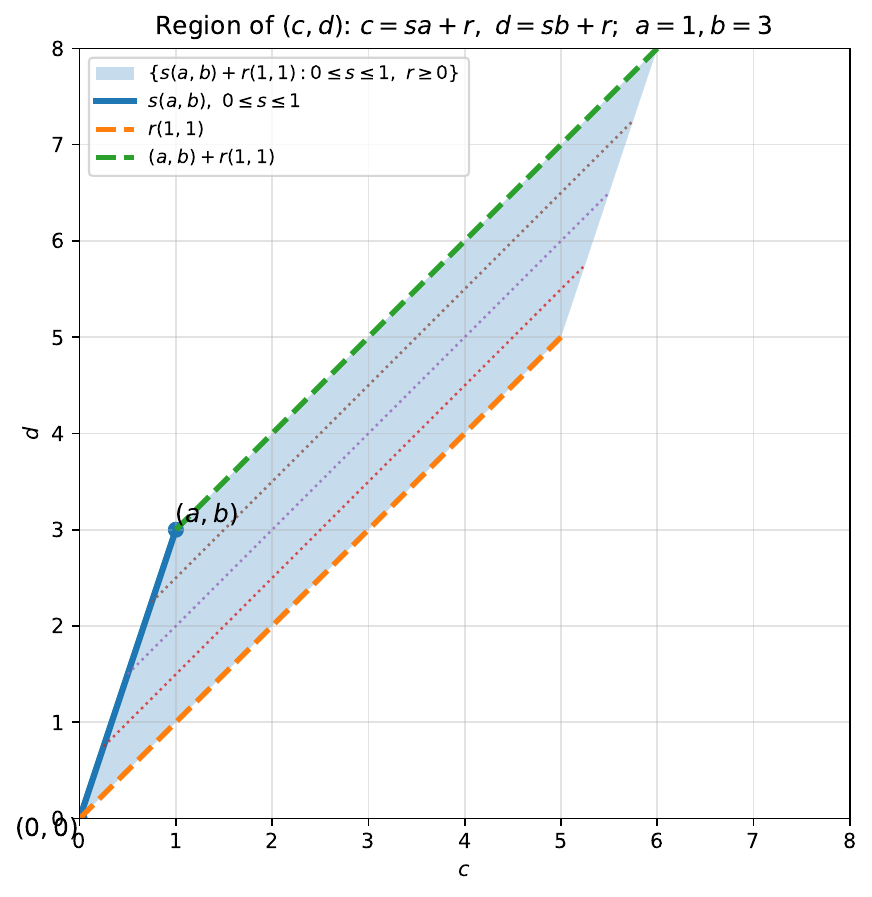}
    \caption{Region of \((c,d)>0\) such that
    \(c=sa+r\) and \(d=sb+r\) for some \(s\in[0,1]\) and \(r\ge 0\).}
    \label{fig:poisson-thinning-region}
\end{figure}
\begin{remark} \label{rem:Poissonregion}
    In the special case when $\alpha >1, \varepsilon <1$ and $\mu_1 , \nu_1 >0$, we have \footnote{Observe that this representation of $\cR_P$ still holds for other choices of $(\alpha, \ve) \in (0, \infty)^2$ too.}
\[
\mathcal R_P
=
\left\{
\mu>0:
0\le \frac{\alpha-1}{\mu_1-\mu}\le1,
1-\mu\frac{\alpha-1}{\mu_1-\mu}\ge0,
0\le \frac{\nu_1-\mu}{\varepsilon-1}\le1,
\mu-\frac{\nu_1-\mu}{\varepsilon-1}\ge0
\right\}.
\]

Since \(\alpha>1\), the condition $0\le \frac{\alpha-1}{\mu_1-\mu}\le1$
implies \(\mu_1-\mu>0  \leftrightarrow \mu<\mu_1\).  The upper bound gives $\alpha-1\le \mu_1-\mu, \leftrightarrow 
\mu\le \mu_1-\alpha+1.$
The condition $1-\mu\frac{\alpha-1}{\mu_1-\mu}\ge0$
is equivalent, since \(\mu_1-\mu>0\), to $\mu(\alpha-1)\le \mu_1-\mu \leftrightarrow 
\mu\le \frac{\mu_1}{\alpha}.$
Therefore, the  constraints are equivalent to\footnote{Observe that an analogous computation for $\alpha <1$ yields that $\mu\ge \max\left\{
\frac{\mu_1}{\alpha},
\mu_1-\alpha+1
\right\}$ }
\[
\mu\le \min\left\{
\frac{\mu_1}{\alpha},
\mu_1-\alpha+1
\right\}.
\]

Next, since \(\varepsilon< 1\), the condition $0\le \frac{\nu_1-\mu}{\varepsilon-1}\le1$ is equivalent to $ \nu_1-\varepsilon+1\ge \mu\ge \nu_1.$
The remaining condition $\mu-\frac{\nu_1-\mu}{\varepsilon-1}\ge0$ is equivalent to
$ 
\mu\le \frac{\nu_1}{\varepsilon}.$
Combining these lower bounds gives\footnote{Observe that $\ve>1$ yields that $\nu_1 \geq \mu\ge
\max\left\{
\frac{\nu_1}{\varepsilon},
\nu_1-\varepsilon+1
\right\}$.}
\[
\nu_1 \le \mu\le
\min\left\{
\frac{\nu_1}{\varepsilon},
\nu_1-\varepsilon+1
\right\}.
\]

\begin{equation} \label{eq:R_P_explicit}
\text{So, }\mathcal R_P(\alpha,\varepsilon,\mu_1,\nu_1)
=
\left[
\nu_1,
\min\left\{
\frac{\mu_1}{\alpha},
\mu_1-\alpha+1, \frac{\nu_1}{\varepsilon},
\nu_1-\varepsilon+1
\right\}
\right],
\end{equation}

\text{provided we have }
$\nu_1
\le
\min\left\{
\frac{\mu_1}{\alpha},
\mu_1-\alpha+1
\right\}.$   Otherwise $\mathcal R_P(\alpha,\varepsilon,\mu_1,\nu_1)=\varnothing.$ 

In general, one can obtain the following table of regions (potentially empty) depending on  $(\alpha, \varepsilon)$.
\[
\begin{array}{c|l}  \label{table:Poissonregion}
\textbf{Case} & \textbf{Region } \mathcal R_P(\alpha,\varepsilon,\mu_1,\nu_1)
\\ \hline
\alpha>1,\ \varepsilon>1
&
\left[
\max\left\{
\dfrac{\nu_1}{\varepsilon},\,
\nu_1-\varepsilon+1
\right\},
\;
\min\left\{
\nu_1,\,
\dfrac{\mu_1}{\alpha},\,
\mu_1-\alpha+1
\right\}
\right]
\\[2ex]

\alpha>1,\ \varepsilon<1
&
\left[
\nu_1,\;
\min\left\{
\dfrac{\mu_1}{\alpha},\,
\mu_1-\alpha+1,\,
\dfrac{\nu_1}{\varepsilon},\,
\nu_1+1-\varepsilon
\right\}
\right]
\\[2ex]

\alpha<1,\ \varepsilon>1
&
\left[
\max\left\{
\dfrac{\mu_1}{\alpha},\,
\mu_1+1-\alpha,\,
\dfrac{\nu_1}{\varepsilon},\,
\nu_1-\varepsilon+1
\right\},
\;
\nu_1
\right]
\\[2ex]

\alpha<1,\ \varepsilon<1
&
\left[
\max\left\{
\dfrac{\mu_1}{\alpha},\,
\mu_1+1-\alpha,\,
\nu_1
\right\},
\;
\min\left\{
\dfrac{\nu_1}{\varepsilon},\,
\nu_1+1-\varepsilon
\right\}
\right]
\end{array}
\]
Moreover, for a fixed $\mu_1, \nu_1>0$, a computation similar to the $\alpha>1 , \varepsilon<1$ considered above yields the following region of allowable removal, preservation pairs $(\alpha >0, \varepsilon>0)$ (Pareto frontiers) so that statistically or information theoretically there is an edited data distribution $p$ that satisfy $(f_d =T(P(1), P(\alpha)) \leftrightarrow \alpha, f_c= T(P(1), P(\varepsilon)))\leftrightarrow \varepsilon$ distributional unlearning \ref{def:dist-unlearning} with respect to $p_1=P(\mu_1),q_1=P(\nu_1)\in \cP= \{P(\mu): \mu >0\}$ or equivalently $\cR_P$ is not empty.
\[
\begin{array}{c|c} \label{table:Paretofrontiers}
\textbf{Case for }\varepsilon
&
\textbf{Allowed values of }\alpha
\\ \hline
\varepsilon>1
&
\displaystyle
\alpha\in
\left[
\max\left\{
\dfrac{\mu_1}{\nu_1},
\mu_1+1-\nu_1
\right\},
1
\right)
\cup
\left(
1,
\min\left\{
\dfrac{\mu_1}{L_\varepsilon},
\mu_1+1-L_\varepsilon
\right\}
\right]
\\[4ex]
0<\varepsilon<1
&
\displaystyle
\alpha\in
\left[
\max\left\{
\dfrac{\mu_1}{U_\varepsilon},
\mu_1+1-U_\varepsilon
\right\},
1
\right)
\cup
\left(
1,
\min\left\{
\dfrac{\mu_1}{\nu_1},
\mu_1+1-\nu_1
\right\}
\right]
\end{array}
\]
\[
\text{ where }
L_\varepsilon
:=
\max\left\{
\dfrac{\nu_1}{\varepsilon},
\nu_1-\varepsilon+1
\right\},
\qquad
U_\varepsilon
:=
\min\left\{
\dfrac{\nu_1}{\varepsilon},
\nu_1+1-\varepsilon
\right\}.
\]

\[
\text{Each interval in the above is interpreted as empty if its left endpoint exceeds its right endpoint.}
\]
\end{remark}
\subsection{Feasible region for Binomial and the Bernoulli family}
We now describe the feasible region for a few more non-location families, more precisely the one-dimensional Binomial and Bernoulli families with appropriate removal preservation baselines.
\begin{remark}[Feasible region for Bernoulli and Binomial family]
    Using \citet[Complement 16, Chapter 10]{torgersen1991comparison}, we can similarly obtain a geometric description of the feasible region $\cR_B$ (see figure \ref{fig:binomial-region-cd}) for the Binomial family $\cP_n= \{B(n,p): p\in (0,1)\}$\footnote{Observe that the binomial family also includes the Bernoulli family $\cP_1$ as a special case.} with $p_1= B(n, \mu_1)$ and $q_1= B(n,\nu_1)$ for some $\mu_1, \nu_1 \in  (0,1)$. In this case, it is natural to consider removal-preservation baselines  $f_{dn}= T(B(n,\frac{1}{2}), B(n, \alpha))$ and $f_{cn}= T(B(n,\frac{1}{2}), B(n, \varepsilon))$  for some $\alpha, \varepsilon \in  (0,1)$.
\end{remark}
\begin{lemma}[Feasible region for Binomial family] \label{lem:binomregion}
    Consider $\cP_n= \{B(n,p): p\in (0,1)\}$ with $p_1= B(n, \mu_1)$ and $q_1= B(n,\nu_1)$ for some $\mu_1, \nu_1 \in  (0,1)$ with removal-preservation baselines  $f_{dn}= T\left(B\left(n,\frac{1}{2}\right), B(n, \alpha)\right)$ and $f_{cn}= T\left(B\left(n,\frac{1}{2}\right), B(n, \varepsilon)\right)$  for some $\alpha, \varepsilon \in  (0,1)$. Then
    \begin{align}
       & \cR_{B}= \{p\in (0,1): (p,\mu_1) \sim \left(\frac{1}{2}, \alpha\right),  (p,\nu_1) \sim \left(\frac{1}{2}, \varepsilon \right)  \text{ and } p \text{ satisfy } 
        \\
        &
        \frac{1-\max(p,\mu_1)}{1-\min(p,\mu_1)} \leq \frac{1-\max\left(\frac{1}{2},\alpha\right)}{1-\min\left(\frac{1}{2},\alpha\right)} \leq \frac{\max\left(\frac{1}{2},\alpha\right)}{\min\left(\frac{1}{2},\alpha\right)} \leq \frac{\max(p,\mu_1)}{\min(p,\mu_1)},
        \\
        &
        \frac{1-\max\left(\frac{1}{2},\varepsilon\right)}{1-\min\left(\frac{1}{2},\varepsilon\right)} \leq \frac{1-\max(p,\nu_1)}{1-\min(p,\nu_1)} \leq \frac{\max(p,\nu_1)}{\min(p,\nu_1)} \leq \frac{\max\left(\frac{1}{2},\varepsilon\right)}{\min\left(\frac{1}{2},\varepsilon\right)}.\}, 
    \end{align}
    where we denote $(a,b) \sim (c,d)$ to mean that $(a,b)$ and $(c,d)$ are similarly ordered.
\end{lemma}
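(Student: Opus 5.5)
Since $\cR_B$ is defined by two comparisons of trade-off functions, $T(B(n,p),B(n,\mu_1))\le T(B(n,\tfrac12),B(n,\alpha))$ and $T(B(n,p),B(n,\nu_1))\ge T(B(n,\tfrac12),B(n,\varepsilon))$, I will mirror the Poisson argument. The first step is a Binomial characterization lemma, the analogue of Lemma \ref{lem:Poisson-experiments}. It should state that for $a,b,c,d\in(0,1)$, $T(B(n,a),B(n,b))\le T(B(n,c),B(n,d))$ holds if and only if $(a,b)\sim(c,d)$ and
\[
\frac{1-\max(a,b)}{1-\min(a,b)}\le\frac{1-\max(c,d)}{1-\min(c,d)}\le\frac{\max(c,d)}{\min(c,d)}\le\frac{\max(a,b)}{\min(a,b)}.
\]
The middle inequality is automatic. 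Applying this with $(a,b,c,d)=(p,\mu_1,\tfrac12,\alpha)$ for removal, and with the roles swapped, $(\tfrac12,\varepsilon,p,\nu_1)$, for preservation, reproduces exactly the displayed description of $\cR_B$.

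For sufficiency I reduce to $n=1$. Because $B(n,\theta)$ is the law of a sum of $n$ i.i.d.\ Bernoulli$(\theta)$ variables, I apply a kernel $R$ with $R\circ \mathrm{Ber}(a)=\mathrm{Ber}(c)$ and $R\circ\mathrm{Ber}(b)=\mathrm{Ber}(d)$ coordinatewise and then sum. Theorem \ref{thm:Blackwell} then gives $T(B(n,a),B(n,b))\le T(B(n,c),B(n,d))$. Such a $2\times 2$ stochastic matrix exists when $(a,b)\sim(c,d)$ and the point $(c,d)$ lies in the quadrilateral spanned by $(a,b)$, $(1-a,1-b)$, $(0,0)$, $(1,1)$. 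For similarly ordered pairs, that region is exactly the one cut out by the two likelihood-ratio inequalities: the ratio $\max/\min$ controls the outcome $1$, and the ratio of complements controls the outcome $0$.

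For necessity I will use two invariants. The first is the endpoint limits of Lemma \ref{lem:ordering}, taken from \citet[Complement 15, Chapter 10]{torgersen1991comparison}. For Binomials with $a<b$, the likelihood ratio $((1-b)/(1-a))^{n}(b/a)^k(\frac{1-a}{1-b})^k$ on $\{0,\dots,n\}$ has essential infimum $((1-b)/(1-a))^n$ and essential supremum $(b/a)^n$. Comparing the limits $\lim_{u\uparrow1}f(u)/(1-u)$ and $\lim_{u\downarrow0}(1-f(u))/u$ gives $\mathrm{ess\,inf}_{a,b}\le \mathrm{ess\,inf}_{c,d}$ and $\mathrm{ess\,sup}_{a,b}\ge\mathrm{ess\,sup}_{c,d}$, which are the two ratio inequalities once $n$-th roots are taken. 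The second is the ordering $(a,b)\sim(c,d)$. It follows because $T\le$ the identity with strict inequality, and the direction of the likelihood ratio is detected, for instance, by $\mathrm{ess\,sup}=\infty$ versus finite in the Poisson case. For Binomials I will instead use the Hellinger transform of Lemma \ref{lem:Hellinger}, $H_t(B(n,a),B(n,b))=(a^{1-t}b^t+(1-a)^{1-t}(1-b)^t)^n$, and let $t\to\pm\infty$, as in the proof of Lemma \ref{lem:Poisson-experiments}.

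I expect the main obstacle to be the sufficiency direction for $n>1$. The coordinatewise kernel is legitimate, but the quadrilateral criterion has to be checked against precisely the stated inequalities, including the degenerate cases $a=b$ and $c=d$, where $T$ is the identity. I would handle this first by an explicit $2\times2$ matrix computation.
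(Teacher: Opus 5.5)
Your plan has the same architecture as the paper's proof. You prove a Binomial characterization lemma, reduce $n$ to $1$ through the endpoint behaviour of the trade-off curve, and settle the Bernoulli case by a $2\times 2$ comparison. Your sufficiency direction and your endpoint-limit computation are sound.

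\textbf{The ordering step fails, because the claim is false.} The map $k\mapsto n-k$ sends $B(n,\theta)$ to $B(n,1-\theta)$, so $T(B(n,a),B(n,b))=T(B(n,1-a),B(n,1-b))$. Accordingly, your Hellinger transform $(a^{1-t}b^t+(1-a)^{1-t}(1-b)^t)^n$ is invariant under $(a,b)\mapsto(1-a,1-b)$. Letting $t\to\pm\infty$ only returns the extreme likelihood ratios $L_+(a,b)^n$ and $L_-(a,b)^n$, where $L_+(x,y):=\max\{y/x,(1-y)/(1-x)\}$ and $L_-(x,y):=\min\{y/x,(1-y)/(1-x)\}$. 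The endpoint limits give exactly this information too, and they turn into the stated $\max/\min$ ratio inequalities only when $(c,d)$ is already similarly ordered with $(a,b)$. The Poisson analogy does not carry over. There, $\operatorname*{ess\,sup}=+\infty$ exactly when $a<b$; here both extremes are finite and positive for either ordering, so nothing detects the direction. Your own parallelogram with vertices $(0,0),(a,b),(1,1),(1-a,1-b)$ already shows this. It is centrally symmetric about $(\tfrac12,\tfrac12)$, and its half across the diagonal from $(a,b)$ consists of oppositely ordered $(c,d)$ for which a kernel exists.

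\textbf{Counterexample.} Take any $n$, $\mu_1=0.9$, $\nu_1=0.1$, $\alpha=0.3$ and any $\varepsilon$.
\begin{itemize}
\item The kernel with $R(0,\{1\})=0.525$ and $R(1,\{1\})=0.275$ maps $(\mathrm{Ber}(0.1),\mathrm{Ber}(0.9))$ to $(\mathrm{Ber}(\tfrac12),\mathrm{Ber}(0.3))$.
\item First spread $K\sim B(n,\theta)$ uniformly over the $0/1$ sequences with $K$ ones, which produces $\mathrm{Ber}(\theta)^{\otimes n}$ without knowing $\theta$. Your sufficiency argument also needs this step. Then apply $R$ coordinatewise and sum; this gives $T(B(n,0.1),B(n,0.9))\le f_{dn}$.
\item Since $T(B(n,0.1),B(n,\nu_1))$ is the identity, $p=0.1\in\cR_B$.
\item Yet $(0.1,0.9)\not\sim(\tfrac12,0.3)$, so $p=0.1$ is excluded from the displayed set. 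Both ratio inequalities hold ($1/9\le 5/7$ and $5/3\le 9$); only the ordering fails.
\end{itemize}

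So the lemma, and the characterization Lemma~\ref{lem:Binom} on which the paper's one-line proof rests, describe only a subset of $\cR_B$. Equivalently, $f_{dn}$ is unchanged under $\alpha\mapsto 1-\alpha$, while the displayed set is not.

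\textbf{What your tools do prove.} Once you drop the ordering, your endpoint limits give necessity and the parallelogram kernel gives sufficiency for the following: $T(B(n,a),B(n,b))\le T(B(n,c),B(n,d))$ if and only if $L_+(c,d)\le L_+(a,b)$ and $L_-(c,d)\ge L_-(a,b)$, i.e.\ $(c,d)$ lies in the whole parallelogram. The correct $\cR_B$ therefore imposes the displayed removal conditions for $\alpha$ or for $1-\alpha$, and the displayed preservation conditions for $\varepsilon$ or for $1-\varepsilon$.
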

\begin{proof}
    The proof follows immediately from the following Binomial characterization lemma. 
\end{proof}
\begin{lemma}[Binomial characterization lemma] \label{lem:Binom}
    Consider $a,b,c,d \in (0,1)$. Then 
    \begin{enumerate}
        \item For $n\geq 1$, $f_n = T(B(n,a), B(n,b)) \leq g_n=T(B(n,c), T(B(n,d)))$ if and only if  \label{Bcondition1}
        \item  $f= T(B(1,a), B(1,b)) \leq g= T(B(1,c), T(B(1,d)))$ if and only if  \label{Bcondition2}
        \item $(a,b)$ and $(c,d)$ are similarly ordered and we have \label{Bcondition3}
        \begin{equation} \label{eq:Binomcondition}
            \frac{1-\max(a,b)}{1-\min(a,b)} \leq \frac{1-\max(c,d)}{1-\min(c,d)} \leq \frac{\max(c,d)}{\min(c,d)} \leq \frac{\max(a,b)}{\min(a,b)}.  
        \end{equation}
    \end{enumerate}
\end{lemma}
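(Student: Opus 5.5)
The plan is to prove the cycle (\ref{Bcondition3}) $\Rightarrow$ (\ref{Bcondition2}) $\Rightarrow$ (\ref{Bcondition1}) $\Rightarrow$ (\ref{Bcondition3}). The necessity direction uses the endpoint-slope characterization of trade-off functions, as in Lemma~\ref{lem:ordering}. The sufficiency direction uses the explicit piecewise-linear form of Bernoulli trade-off functions together with tensorization.

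\textbf{Step 1: (\ref{Bcondition1}) $\Rightarrow$ (\ref{Bcondition3}).} Take $P=B(n,a)$ and $Q=B(n,b)$. The likelihood ratio at $k\in\{0,\dots,n\}$ is $(b/a)^k\bigl((1-b)/(1-a)\bigr)^{n-k}$. All atoms have positive mass, so the essential supremum and infimum of this ratio are $\max\{(b/a)^n,((1-b)/(1-a))^n\}$ and the corresponding minimum.

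By \citet[Complement 15, Chapter 10]{torgersen1991comparison}, exactly as in Lemma~\ref{lem:ordering}, these two numbers equal $\lim_{u\downarrow0}(1-f_n(u))/u$ and $\lim_{u\uparrow1}f_n(u)/(1-u)$. If $f_n\le g_n$, then near $u=0$ the curve $g_n$ lies above $f_n$, which forces $\operatorname{ess\,sup}$ for $g_n$ to be at most $\operatorname{ess\,sup}$ for $f_n$. Near $u=1$ the same comparison forces $\operatorname{ess\,inf}$ for $g_n$ to be at least $\operatorname{ess\,inf}$ for $f_n$.

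The likelihood-ratio range of $(c,d)$ therefore sits inside that of $(a,b)$. Whether the supremum is attained at $k=n$ (ratio $>1$ there) or at $k=0$ is determined by the ordering of the pair, so this gives that $(a,b)$ and $(c,d)$ are similarly ordered. Taking $n$-th roots, the nested ranges give exactly \eqref{eq:Binomcondition}.

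\textbf{Step 2: (\ref{Bcondition3}) $\Rightarrow$ (\ref{Bcondition2}).} Assume without loss of generality $a<b$ and $c<d$. Then $T(B(1,a),B(1,b))$ is the convex polygon from $(0,1)$ to $(1,0)$ with one kink. Its first segment has slope $-b/a$ and its last segment has slope $-(1-b)/(1-a)$; this follows from Neyman--Pearson, since there are only two atoms. The curve $g$ has the same shape, with slopes $-d/c$ and $-(1-d)/(1-c)$.

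Condition \eqref{eq:Binomcondition} says that $f$ leaves $(0,1)$ more steeply than $g$ and arrives at $(1,0)$ more flatly. Both are concave-down-to-convex two-segment curves with common endpoints, so each segment of $f$ lies below the corresponding segment line of $g$ on its range. An elementary check on the two line pieces then gives $f\le g$ on $[0,1]$.

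\textbf{Step 3: (\ref{Bcondition2}) $\Rightarrow$ (\ref{Bcondition1}).} The map $(x_1,\dots,x_n)\mapsto\sum_i x_i$ is sufficient for the Bernoulli product family and pushes $B(1,a)^{\otimes n}$ forward to $B(n,a)$. By Theorem~\ref{thm:Blackwell} applied in both directions (the sum is a kernel, and a conditional-resampling kernel inverts it), $T(B(n,a),B(n,b))=f^{\otimes n}$. Iterating item 3 of Proposition~\ref{prop:tensor-basic-properties} then gives $f^{\otimes n}\le g^{\otimes n}$, i.e.\ $f_n\le g_n$.

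I expect the main obstacle to be Step 2. There one must carefully verify that the slope comparison alone (with no information on the kink locations) forces $f\le g$ globally, in all ordering cases, including the mirrored case $a>b$, $c>d$ and degenerate equalities.
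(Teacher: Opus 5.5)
The step that fails is at the end of your Step 1, where you conclude from the nesting of likelihood-ratio ranges that $(a,b)$ and $(c,d)$ are similarly ordered. The endpoint limits of $f_n$ and $g_n$ only see the unordered extremes $L(x,y):=\max\{y/x,(1-y)/(1-x)\}$ and $\ell(x,y):=\min\{y/x,(1-y)/(1-x)\}$, raised to the power $n$. They carry no information about whether the maximum is attained at $k=n$ or at $k=0$.

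In fact no argument can produce similar ordering, because condition (3) as stated is not necessary. The relabeling $k\mapsto n-k$ is a bijection that pushes $B(n,c)$ to $B(n,1-c)$. Hence $T(B(n,c),B(n,d))=T(B(n,1-c),B(n,1-d))$, while the order of the pair is reversed.

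Concretely, take $(a,b)=(\tfrac{1}{4},\tfrac{1}{2})$ and $(c,d)=(\tfrac{3}{4},\tfrac{1}{2})$. Then $g_n=f_n$ for every $n$, so (1) and (2) hold. Yet $a<b$ and $c>d$, and even the first inequality of \eqref{eq:Binomcondition} fails, since $\tfrac{2}{3}>\tfrac{1}{2}$. More generally, similar ordering fails for $(c,d)=(1-a,1-b)$ whenever $a\neq b$.

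The paper's sketch follows the same route as yours: endpoint behaviour for (1)$\Leftrightarrow$(2), and the explicit Bernoulli curves of Lemma~\ref{lem:BernoulliTOF} for (2)$\Leftrightarrow$(3). It passes over the same point, and its Figure~\ref{fig:binomial-region-cd} likewise omits the reflected triangle with vertex $(1-a,1-b)$.

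What your Step 1 does establish is $L(c,d)\le L(a,b)$ and $\ell(c,d)\ge \ell(a,b)$, and this is the correct criterion. Lemma~\ref{lem:BernoulliTOF} can be rewritten as $T(B(1,x),B(1,y))(u)=\max\{1-L(x,y)\,u,\ \ell(x,y)(1-u)\}$ for $u\in[0,1]$. The two slope inequalities then give $f\le g$ pointwise in one line, for all orderings and including the degenerate cases. So the ``main obstacle'' you anticipate in Step 2 disappears.

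Your Step 3 is fine. Combined with it, this proves (1)$\Leftrightarrow$(2)$\Leftrightarrow$(3$'$). Here (3$'$) means \eqref{eq:Binomcondition} applied to whichever of $(c,d)$ and $(1-c,1-d)$ is similarly ordered with $(a,b)$. Equivalently, $(c,d)$ or $(1-c,1-d)$ lies in the closed triangle with vertices $(0,0)$, $(1,1)$, $(a,b)$.

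The lemma as written is true only under the extra hypothesis that the two pairs are similarly ordered.
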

\begin{proof}
    The proof  is inspired from \cite[Complement 16, Chapter 10]{torgersen1991comparison}. First, as in Lemma \ref{lem:ordering}, one analyses the behavior of $f_n$ and $g_n$ around $\alpha \in \{0,1\} $ to show that condition \ref{Bcondition1} is equivalent to \ref{Bcondition2}. Moreover, an explicit computation of the Bernoulli trade-off curve $T(B(1,a), B(1,b))$ from lemma \ref{lem:BernoulliTOF} yields the equivalence of condition \ref{Bcondition1} and condition \ref{Bcondition3}.
\end{proof}
\begin{lemma}[Bernoulli trade-off curve] \label{lem:BernoulliTOF} Let  $P=B(1,a),$ and $ Q=B(1,b),$ for $a,b\in(0,1).$ Then

\noindent\textbf{Case 1: \(b>a\).}
\[
T(B(1,a),B(1, b))(\alpha)
=
\begin{cases}
1-\dfrac{b}{a}\,\alpha,
& 0\le \alpha\le a,\\[1em]
\dfrac{1-b}{1-a}\,(1-\alpha),
& a\le \alpha\le 1.
\end{cases}
\]

\medskip

\noindent\textbf{Case 2: \(b<a\).}
\[
T(B(1,a),B(1,b))(\alpha)
=
\begin{cases}
1-\dfrac{1-b}{1-a}\,\alpha,
& 0\le \alpha\le 1-a,\\[1em]
\dfrac{b}{a}\,(1-\alpha),
& 1-a\le \alpha\le 1.
\end{cases}
\]

\medskip

\noindent\textbf{Case 3: \(a=b\).}
\[
T(B(1,a),B(1,b))(\alpha)
=
1-\alpha \text{ for } 0\leq \alpha \leq 1.
\]
\end{lemma}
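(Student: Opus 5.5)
We compute the trade-off function directly via the Neyman--Pearson lemma, since on a two-point space the optimal tests are explicit. Write $P=B(1,a)$ and $Q=B(1,b)$ on $\{0,1\}$. The likelihood ratio is $\frac{dQ}{dP}(1)=\frac{b}{a}$ and $\frac{dQ}{dP}(0)=\frac{1-b}{1-a}$.

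\textbf{Case 1 ($b>a$).} Here $\frac{b}{a}>1>\frac{1-b}{1-a}$, so the point $1$ carries the larger likelihood ratio. By Neyman--Pearson, the optimal level-$\alpha$ test (randomized if needed) rejects on $\{1\}$ first and then on $\{0\}$.
\begin{itemize}
\item For $0\le\alpha\le a$, take $\vp(1)=\alpha/a$ and $\vp(0)=0$. Then $\e_P[\vp]=\alpha$, and the type II error is $\e_Q[1-\vp]=1-b\alpha/a$.
\item For $a\le\alpha\le1$, take $\vp(1)=1$ and $\vp(0)=\frac{\alpha-a}{1-a}$. Then the type II error is $(1-b)\bigl(1-\frac{\alpha-a}{1-a}\bigr)=\frac{1-b}{1-a}(1-\alpha)$.
\end{itemize}

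\textbf{Optimality.} It is not enough to exhibit these tests; we must also show no test does better. Any test on $\{0,1\}$ is a pair $(\vp(0),\vp(1))\in[0,1]^2$. The constraint is linear: $(1-a)\vp(0)+a\vp(1)\le\alpha$. The type II error is also linear: $1-(1-b)\vp(0)-b\vp(1)$. Minimizing a linear function over this polytope is a fractional-knapsack problem. The optimum fills coordinates in decreasing order of the ratio (gain in $Q$-power)/(cost in $P$-level), which is exactly the likelihood ratio. This confirms the tests above are optimal, and it also shows the infimum in Definition~\ref{def:TOF} is attained with the constraint tight, since the objective is strictly decreasing in $\alpha$.

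\textbf{Case 2 ($b<a$).} This is symmetric, with the roles of the points $0$ and $1$ swapped. The point $0$ now has the larger ratio $\frac{1-b}{1-a}>1$, so the level budget is spent on $\{0\}$ first.
\begin{itemize}
\item For $0\le\alpha\le 1-a$, the type II error is $1-\frac{1-b}{1-a}\alpha$.
\item For $1-a\le\alpha\le 1$, the type II error is $\frac{b}{a}(1-\alpha)$.
\end{itemize}
Alternatively, apply Case 1 to the relabelled variables $1-X$, which have parameters $1-a$ and $1-b$. The trade-off function is invariant under the bijection $x\mapsto1-x$ by Theorem~\ref{thm:Blackwell} applied in both directions.

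\textbf{Case 3 ($a=b$).} Here $P=Q$, so every test has $\beta_\vp=1-\alpha_\vp$. Hence $T=1-\alpha$.

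\textbf{Checks and main obstacle.} Finally, verify continuity at the breakpoints. In Case 1 at $\alpha=a$, both expressions equal $1-b$; in Case 2 at $\alpha=1-a$, both equal $b$. This is consistent with membership in $\cT$. The only step needing care is the optimality argument, i.e.\ justifying that the randomized Neyman--Pearson test is optimal. The linear-programming argument handles this cleanly, so I expect no real obstacle.
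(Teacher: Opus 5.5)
Your proof is correct: the randomized tests you write down achieve exactly the stated type II errors in all three cases, and the fractional-knapsack argument (filling the level budget in decreasing order of the likelihood ratio, with the constraint tight because all value-to-cost ratios are positive) justifies their optimality. The paper gives no written proof of this lemma; it simply treats it as the explicit Neyman--Pearson computation, which is the same route you take.
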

\begin{figure}[t]
    \centering
    \includegraphics[width=0.65\textwidth]{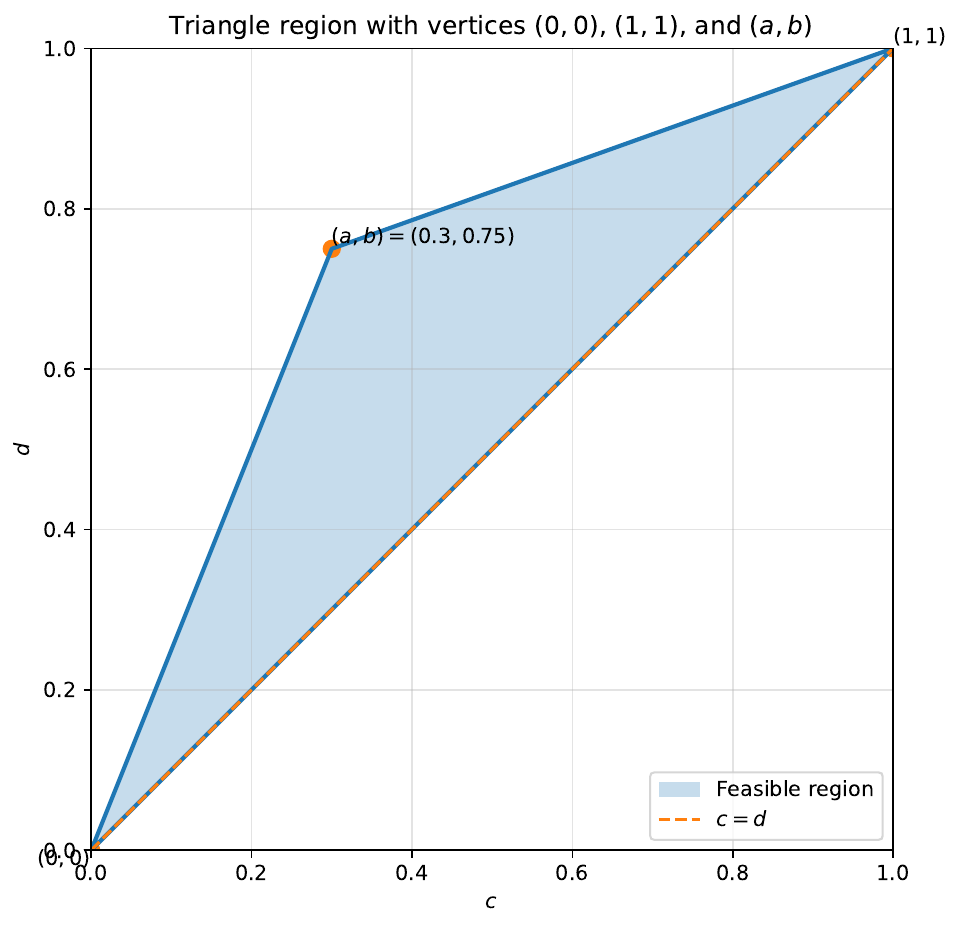}
    \caption{Pairs of \((c,d)\in(0,1)^2\) lying inside the triangle with vertices \((0,0)\), \((1,1)\), and \((a,b)\).}
    \label{fig:binomial-region-cd}
\end{figure}

\section{Appendix: algorithms and information-computation Gap}
\subsection{Random removal for Gaussian families}
\begin{lemma}[Finite-sample concentration of mean] \citet[Lemma 1]{laurent2000adaptive}
\label{lem:hoeffding}
Let $\hat{\mu} =\frac{1}{n} \sum_{i=1}^n X_i$ be the empirical mean of $n$ IID samples drawn from $\mathcal{N}(\mu,\sigma^2\II_d)$. For any $\delta \in (0,1)$, with probability at least $1-\delta$, we have for $\gamma (n,d,2\delta)= \frac{1}{\sqrt{n}} \left( \sqrt{d+2\sqrt{d \log (1/\delta)} + 2 \log (1/\delta)}\right)$
\begin{equation} 
||\hat{\mu} - \mu||_2\leq  \sigma \gamma (n,d,2\delta)
\leftrightarrow 
\mathbb{P}\left[||\hat{\mu} - \mu||_2  \geq \sigma \gamma (n,d,2\delta)\right] \leq \delta
\end{equation} 
\end{lemma}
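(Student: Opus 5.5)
The plan is to reduce the statement to a tail bound for a chi-square random variable and then apply the Laurent--Massart deviation inequality. Write $X_i = \mu + \sigma Z_i$ with $Z_i$ IID $N(0,\II_d)$. Then
\[
\hat\mu - \mu = \frac{\sigma}{n}\sum_{i=1}^n Z_i \overset{d}{=} \frac{\sigma}{\sqrt n}\, Z, \qquad Z \sim N(0,\II_d),
\]
by rotation invariance and the stability of Gaussian sums. Consequently $\|\hat\mu-\mu\|_2^2 \overset{d}{=} \frac{\sigma^2}{n} \|Z\|_2^2$, and $\|Z\|_2^2 \sim \chi^2_d$. The claimed event $\|\hat\mu-\mu\|_2 \ge \sigma\gamma$ is therefore the same as
\[
\|Z\|_2^2 \ge d + 2\sqrt{d\log(1/\delta)} + 2\log(1/\delta),
\]
after squaring both sides (both are nonnegative) and cancelling $\sigma^2/n$.

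The key step is the upper-tail bound: for every $x>0$,
\[
\mathbb P\bigl[\chi^2_d - d \ge 2\sqrt{dx} + 2x\bigr] \le e^{-x}.
\]
Taking $x=\log(1/\delta)$ then gives probability at most $\delta$, which is exactly the statement. The index ``$2\delta$'' in $\gamma(n,d,2\delta)$ is only a notational convention: the one-sided bound needs only $\delta$.

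I expect proving the Laurent--Massart inequality itself to be the only real obstacle. I would do it by the Cramér--Chernoff method.

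1. For $0<\lambda<1/2$, the centered log-MGF of a single $\chi^2_1$ is
\[
\psi(\lambda) = -\lambda - \tfrac12\log(1-2\lambda).
\]
Using $-\log(1-u) - u \le \frac{u^2}{2(1-u)}$, this gives $\psi(\lambda) \le \frac{\lambda^2}{1-2\lambda}$.
2. Summing over the $d$ independent coordinates shows that $\chi^2_d - d$ is sub-gamma with variance factor $v=2d$ and scale $c=2$, i.e. its log-MGF is at most $\frac{v\lambda^2}{2(1-c\lambda)}$.
3. The standard sub-gamma inversion, i.e. optimizing $\lambda$ in the Chernoff bound, yields the tail $\sqrt{2vx}+cx = 2\sqrt{dx}+2x$ with probability at most $e^{-x}$.

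Combining this with the distributional identity above completes the argument.
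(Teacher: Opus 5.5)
Your proposal is correct and follows the paper's route. The paper gives no separate argument and simply cites Laurent--Massart's Lemma~1, which applies through exactly your identity $n\|\hat\mu-\mu\|_2^2/\sigma^2 \sim \chi^2_d$ with $x=\log(1/\delta)$; your Cram\'er--Chernoff sketch (the bound $\psi(\lambda)\le \lambda^2/(1-2\lambda)$ and the sub-gamma inversion with $v=2d$, $c=2$) is the standard proof of that cited lemma, so it makes the argument self-contained without changing the approach.
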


\samples*
\begin{proof}
    We recall that $p_1 = \cN(\mu_1 , \sigma^2\II_d), q_1 = \cN(\nu_1 , \sigma^2\II_d) \in \cP= \{\cN(\mu, \sigma^2\II_d): \mu \in \R^d\}$.  
    From our assumption, we are given $n_1$ IID samples $x_1^{(1)}, \ldots, x_1^{(n_1)}$ from $p_1$ and $n_2$ IID samples $x_2^{(1)}, \ldots, x_2^{(n_2)}$  from $q_1$. Let $\sigma \Delta:=\|\nu_1-\mu_1\|$. Upon removing $n_r \leq n_1$ randomly chosen samples $x_1^{(1)}, \ldots, x_1^{(n_1 - n_r)}$ (which we can just fix to be the first $n_r$ samples from the unwanted data, since they are IID) from the target distribution $p_1$, we set the center $\mu$ of the unlearned distribution $p\in \cP$ to be the weighted MLE (maximum likelihood estimator) with $n_1':= n_1-n_r$:
    \begin{equation}
        \mu = \frac{n_1' \hat{\mu}_1' + n_2 \hat{\nu}_1}{n_1' + n_2}, \text{ where }
        \hat{\mu}_1' := \tfrac{1}{n_1'} \sum_{i=1}^{n_1'} x_1^{(i)} \text{ and } \hat{\nu}_1 := \tfrac{1}{n_2} \sum_{i=1}^{n_2} x_2^{(i)}.
    \end{equation}
 
    From the Laurent-Massart  lemma~\ref{lem:hoeffding} and union bound\footnote{If the $n_1$ samples from $p_1$ are independently sampled from those of the $n_2$ samples from $p_2$, we can use the independence of the two events $A=\{||\hat{\mu}_1' - \mu_1||_2 \leq \sigma \gamma(n_1', d, \delta)\}, B=\{||\hat{\nu}_1 - \nu_1||_2 \leq \sigma \gamma(n_2, d,\delta)\}$ to conclude that $\mathbb{P}[A\cap B]= \mathbb{P}[A] \mathbb{P}[B] \geq 1- \delta + \frac{\delta^2}{2}$, better than $1-\delta$ obtained using union bound.},  with probability at least $1- \delta$, we  have 
    \begin{align}
       \text{simultaneously } ||\hat{\mu}_1' - \mu_1||_2 \leq \sigma \gamma(n_1', d, \delta), \quad ||\hat{\nu}_1 - \nu_1||_2 \leq \sigma \gamma(n_2, d,\delta)
        \label{eq:hoeffding}
    \end{align}
    
Now, we are in search of $(\alpha, \ve)$ pairs so that $T(p,p_1) \leq f_d \leftrightarrow ||\mu_1-\mu||_2 \geq \alpha \sigma  $ and $T(p,q_1) \geq f_c \leftrightarrow ||\nu_1-\mu||_2 \leq \ve \sigma $. Remember, if $\alpha >\Delta +\ve$, then the feasible region is itself empty \ref{thm:FFR}.

\paragraph{Preservation bound.} Using triangle inequality for norms, we have 
\begin{align*}
    ||\mu - \nu_1||_2
    &
    = 
    \left|\left|\frac{n_1' \hat{\mu}_1' + n_2 \hat{\nu}_1}{n_1' + n_2}
    -
    \nu_1 \right|\right|_2 
    = 
    \left|\left|\frac{n_1'}{n_1' + n_2} (\hat{\mu}_1' - \nu_1) 
    +
    \frac{n_2}{n_1' + n_2} (\hat{\nu}_1 - \nu_1) \right|\right|_2
    \\
    &
    = 
    \left|\left|\frac{n_1'}{n_1' + n_2} (\mu_1 - \nu_1) 
    +
    \frac{n_1'}{n_1' + n_2} (\hat{\mu}_1' - \mu_1) 
    + 
    \frac{n_2}{n_1' + n_2} (\hat{\nu}_1 - \nu_1) \right|\right|_2
    \\
    &
    \leq \frac{n_1'}{n_1' + n_2} ||\mu_1 - \nu_1||_2 +
    \frac{n_1'}{n_1' + n_2} ||\hat{\mu}_1' - \mu_1||_2
    +
    \frac{n_2}{n_1' + n_2} ||\hat{\nu}_1 - \nu_1||_2.
\end{align*}
Therefore, using the Laurent Massart bound ~\eqref{eq:hoeffding} we have with probability atleast $1-\delta$:
\begin{align*}
    ||\mu - \nu_1||_2
    &\leq \frac{n_1'}{n_1' + n_2} \sigma \Delta + \frac{n_1'}{n_1' + n_2} \sigma \gamma(n_1',d,\delta) + \frac{n_2}{n_1' + n_2} \sigma \gamma(n_2, d,\delta).
\end{align*}
\begin{remark}[Lower bound] \label{rem:lowerboundRR}
    We also observe a triangle inequality $||a+b||_2 \geq ||a||_2 -||b||_2$ yields with probability at least $1-\delta$ we also have the following lower bound
    \begin{align*}
    ||\mu - \nu_1||_2
    &\geq
    \frac{n_1'}{n_1' + n_2} \sigma \Delta 
    -
    \frac{n_1'}{n_1' + n_2} \sigma \gamma(n_1',d,\delta) 
    -
    \frac{n_2}{n_1' + n_2} \sigma \gamma(n_2, d,\delta).
\end{align*}
\end{remark}
\paragraph{Removal bound.}
Using the Laurent Massart bound ~\eqref{eq:hoeffding} with probability $1-\delta$, we have
\begin{align*}
   &
   \sigma \Delta= ||\mu_1 - \nu_1||_2 
     \leq  ||\mu_1 - \mu||_2 + ||\mu - \nu_1||_2
     \\
     &
    \leq  ||\mu_1 - \mu||_2  +  \frac{n_1'}{n_1' + n_2} \sigma \Delta + \frac{n_1'}{n_1' + n_2} \sigma \gamma(n_1',d,\delta) + \frac{n_2}{n_1' + n_2} \sigma \gamma(n_2, d,\delta).
\end{align*}
Rearranging the terms yields that with probability $1-\delta$ we have the following 
\begin{align*} 
    ||\mu_1 - \mu||_2  \geq 
    \frac{n_2}{n_1' + n_2} \sigma \Delta 
    - \frac{n_1'}{n_1' + n_2} \sigma \gamma(n_1',d,\delta) - \frac{n_2}{n_1' + n_2} \sigma \gamma(n_2, d,\delta)
\end{align*}
\begin{remark}[Upper bound]  \label{rem:upperboundRR}
    We also observe a triangle inequality $||a+b||_2 \leq ||a||_2 +||b||_2$ yields with probability at least $1-\delta$ we also have the following upper bound
    \begin{align*}
    ||\mu - \mu_1||_2
    &
    \leq
    \frac{n_2}{n_1' + n_2} \sigma \Delta 
    +
    \frac{n_1'}{n_1' + n_2} \sigma \gamma(n_1',d,\delta) 
    +
    \frac{n_2}{n_1' + n_2} \sigma \gamma(n_2, d,\delta).
\end{align*}
\end{remark}

\paragraph{Conclusion.}
With probability $1-\delta$, we have $(f_d \leftrightarrow \alpha, f_c \leftrightarrow  \varepsilon)$-distributional unlearning \ref{def:dist-unlearning} with
\begin{align*}
    \alpha 
    &
    \leq
    \alpha_M(R)
    :=
    \frac{n_2}{n_1' + n_2} \Delta 
    -
    \frac{n_2}{n_1' + n_2} \gamma(n_2, d,\delta)
    - 
    \frac{n_1'}{n_1' + n_2} \gamma(n_1',d,\delta) 
    ,\\
    \varepsilon &\geq 
    \ve_m(R)
    :=
    \frac{n_1'}{n_1' + n_2}  \Delta + \frac{n_1'}{n_1' + n_2}  \gamma(n_1',d,\delta) + \frac{n_2}{n_1' + n_2}  \gamma(n_2, d,\delta).
\end{align*}
\end{proof}

\begin{remark}[Information-Computation gap of random removal] \label{rem:ICGRR}
    The information-theoretic feasible region for the shifted Gaussian family (Proposition \ref{thm:FFR}) allows any $\ve \geq 0$, and requires $\alpha \leq \Delta + \ve$. However, for the random removal to output a distribution (with high probability), we need a lower bound $\ve \geq \ve_m(R) >0$. To be very precise, the bound $\ve_m(R)$ is a high probability upper bound. However, from Remark \ref{rem:lowerboundRR}, with high probability (at least $1-\delta$) the random removal algorithm fails to satisfy the preservation bound $(T(p,q_1)\geq T(N(0,1), N(\ve,1)))$ as soon as we take \[
    \ve= \frac{n_1'}{n_1' + n_2}  \Delta - \frac{n_1'}{n_1' + n_2}  \gamma(n_1',d,\delta) - \frac{n_2}{n_1' + n_2}  \gamma(n_2, d,\delta) 
    \text { or lower}.
    \]
    Similarly, from Remark \ref{rem:upperboundRR}, with high probability (at least $1-\delta$) the random removal algorithm fails to satisfy the removal bound $(T(p,p_1)\leq T(N(0,1), N(\alpha,1)))$ as soon as we take \[
    \alpha = \frac{n_2}{n_1' + n_2}  \Delta + \frac{n_1'}{n_1' + n_2}  \gamma(n_1',d,\delta) + \frac{n_2}{n_1' + n_2}  \gamma(n_2, d,\delta) 
    \text { or larger}.
    \]
    In summary, the high probability behavior of the random removal algorithm for shifted Gaussians has a smaller Pareto frontier, or, said differently, for a given pair of removal-preservation parameters $(\alpha, \ve) \in (0,\infty)^2$, the non-emptiness of the information theoretic feasible required $\Delta \geq \alpha- \ve$. However, ignoring the lower-order finite sample approximation terms such as $\frac{n_1'}{n_1' + n_2}  \gamma(n_1',d,\delta)$ or  $\frac{n_2}{n_1' + n_2}  \gamma(n_2, d,\delta)$, the high probability success of the random removal requires 
    \[
    \alpha \leq \frac{n_2}{n_1'+n_2} \Delta 
    \text{ and }
    \frac{n_1'}{n_1'+n_2}\Delta 
    \leq 
    \ve.
    \]
    Regarding ignoring the finite sample approximation terms, we are assuming that the following holds.
    \[
    \min(n_2\Delta, n_1'\Delta) >> \max(n_2 \gamma(n_2,d,\delta), n_1'\gamma(n_1',d,\delta)).
    \]
   Now, once we ignore the $\delta$ term in $\gamma(n, d, \delta)$ \ref{lem:hoeffding} (assume this is user specified and independent of every other parameter such as $d,n_1,n_2,n_r$) and take noise variance $\sigma=1$, we have\footnote{We have $ \sqrt{n}\gamma(n,d,\delta) \sim \sqrt{d}$ or equivalently $\lim_{n,d \to  \infty} \frac{\sqrt{n}\gamma(n,d,\delta)}{\sqrt{d}}= C$ for some absolute $C>0$.}
   \[
   \max(n_2 \gamma(n_2,d,\delta), n_1'\gamma(n_1',d,\delta)) \sim \max( \sqrt{n_2d}, \sqrt{n_1'd}).
   \]
   So, the assumption holds (recall with $\sigma= 1$, we have $\Delta= \|\mu_1- \nu_1\|_2$) as soon as
   \[
   \min(n_2\Delta, n_1'\Delta) >> \max( \sqrt{n_2d}, \sqrt{n_1'd})  \text{ or equivalently }
   \]
   \[
   \Delta = \|\mu_1- \nu_1\|_2
   >>
   \max
   \left(\sqrt{\frac{d}{n_2}}, \sqrt{\frac{dn_1'}{n_2^2}}, \sqrt{\frac{d}{n_1'}}, \sqrt{\frac{n_2 d}{n_1'^2}} \right).
   \]
   This assumption holds in most situations since for any pair of $\mu_1, \nu_1 \in \R^d$, if every coordinate differs by a constant $\Theta(1)$, then $\|\mu_1- \nu_1\|_2 \sim \Theta(\sqrt{d})$, and we need
   \[
   1 >> \max
   \left(\frac{1}{n_2}, \frac{n_1'}{n_2^2}, \frac{1}{n_1'}, \frac{n_2}{n_1'^2} \right) \text{or equivalently }
   \]
   \[
   n_2 >> \max(1, \sqrt{n_1'}) \text{ and }
   n_1' >> \max(1, \sqrt{n_2}).
   \]
\end{remark}
\subsection{Random removal for log-concave families with weighted mean}
Now, we provide a generalization of the finite sample Gaussian result $\{N(\mu, \sigma^2\II_d): \mu\in \R^d\}$ with removal-preservation baselines $f_d= T(N(0,1), N(\alpha, 1)), f_c=T(N(0,1), N(\ve,1))$ \ref{th:random} to a location family $\{p_{\mu} \overset{d}{=} \mu+X: \mu \in \R\}$ in $d=1$ with a symmetric log-concave noise $p_0 \overset{d}{=}X$ and removal-preservation baselines $f_d= T(X, \alpha+X), f_c=T(X,\ve+X)$.
\begin{restatable}[Random Removal]{proposition}{samplesLCRR}
\label{th:randomLC}
Let $p_1\overset{d}{=}\mu_1+X, q_1\overset{d}{=} \nu_1+ X \in \mathcal{P}= \{p_{\mu} \overset{d}{=} \mu+X: \mu \in \R\}$ and $\delta \in (0,1)$, where we assume that $p_0\overset{d}{=}X$ is a symmetric log-concave distribution on $\R$, and consider removal-preservation baselines as $f_d= T(X, \alpha+X), f_c=T(X,\ve+X)$. We observe $n_1$ IID samples from $p_1$ and $n_2$ IID samples from $q_1$, and randomly remove $n_r$ samples from $p_1$ before fitting a weighted mean $\mu$ `according' to the baseline random removal algorithm \ref{algo:RR}\footnote{It is important to mention that for the more general shifted log-concave family $\{\mu+X:\mu\in \R\}$, we do not fit the MLE, since the weighted mean $\mu$ is a weighted average of empirical means $\hat{\mu}_1'$ and $\hat{\nu}_1$, and both the empirical means  $\hat{\mu}_1'$ and $\hat{\nu}_1$ are not MLEs, since empirical mean is not an MLE for log-concave families in general (such as shifted Laplace family), as it is the case for Gaussians. For mathematical tractability, and trying to bring out the structure of a reasonable baseline  in the more general log-concave situation, we still consider weighted empirical  means and not weighted MLEs, as required in the baseline random removal algorithm.}. With probability atleast $1 - \delta$, the resulting  distribution $p_{\mu}$ satisfies $(f_d \leftrightarrow \alpha,f_c\leftrightarrow  \varepsilon)$ unlearning \ref{def:dist-unlearning} with:
\begin{align*}
\alpha & \leq \alpha_M(R):=\frac{n_2}{n_1' + n_2} \Delta - \frac{n_2}{n_1' + n_2} \gamma_X(n_2, 1,\delta) -\frac{n_1'}{n_1' + n_2} \gamma_X(n_1',1,\delta) , \\
\varepsilon &\geq  \ve_m(R):= \frac{n_1'}{n_1' + n_2}  \Delta + \frac{n_1'}{n_1' + n_2}  \gamma_X(n_1',1,\delta) + \frac{n_2}{n_1' + n_2}  \gamma_X(n_2, 1,\delta).
\end{align*}
where $\Delta = |\mu_1- \nu_1|$, $n_1'= n_1- n_r$ and $\gamma_X(n, 1, \delta)>0$ is a number dependent on the distribution $p_0$ (symmetric log-concave) such that for $n$ samples $X_1, \cdots, X_n$   IID from $p_0$ we have 
\begin{equation}
    \mathbb{P}\left[|\sum_{i=1}^n X_i| \geq n \gamma_X(n,1, 2\delta)\right] \leq \delta \leftrightarrow  2 \mathbb{P}\left[\sum_{i=1}^n X_i \geq n \gamma_X(n,1, 2\delta)\right] \leq \delta
\end{equation}
\end{restatable}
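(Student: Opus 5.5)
The plan mirrors the proof of Proposition~\ref{th:random}, with two substitutions: the Gaussian trade-off function calculus is replaced by Lemma~\ref{lem:SLCTOF}, and Laurent--Massart is replaced by the defining tail property of $\gamma_X$.

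\textbf{Step 1: reduce unlearning to distance conditions.} For $p_\mu\overset{d}{=}\mu+X$, symmetry in Lemma~\ref{lem:SLCTOF} gives
\[
T(p_\mu,p_1)=f_{X,|\mu-\mu_1|}, \qquad T(p_\mu,q_1)=f_{X,|\mu-\nu_1|}.
\]
Monotonicity in the same lemma then turns the unlearning conditions into distance conditions:
\[
T(p_\mu,p_1)\le f_d \iff |\mu-\mu_1|\ge\alpha, \qquad T(p_\mu,q_1)\ge f_c \iff |\mu-\nu_1|\le\ve.
\]
This is exactly as in the proof of Proposition~\ref{prop:FFRLC}. Since the $\gamma_X$ are defined on the unit-noise scale, no $\sigma$ appears.

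\textbf{Step 2: high-probability event.} Because the samples are IID, the randomly removed points can be taken to be the first $n_r$. The estimator is then
\[
\mu=\frac{n_1'\hat\mu_1'+n_2\hat\nu_1}{n_1'+n_2},
\]
where
\[
\hat\mu_1'-\mu_1=\frac{1}{n_1'}\sum_{i=1}^{n_1'}X_i, \qquad \hat\nu_1-\nu_1=\frac{1}{n_2}\sum_{j=1}^{n_2}X_j',
\]
with $X_i,X_j'$ IID copies of $X$. Apply the defining tail bound of $\gamma_X$ to each sum at level $\delta/2$; the indexing convention matches how $\gamma(n,d,\delta)$ was used in Lemma~\ref{lem:hoeffding}. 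A union bound then shows that, with probability at least $1-\delta$,
\[
|\hat\mu_1'-\mu_1|\le\gamma_X(n_1',1,\delta) \quad\text{and}\quad |\hat\nu_1-\nu_1|\le\gamma_X(n_2,1,\delta)
\]
hold simultaneously. Independence of $S_1$ and $S_2$ could sharpen this, but it is not needed.

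\textbf{Step 3: preservation bound.} Decompose
\[
\mu-\nu_1=\tfrac{n_1'}{n_1'+n_2}(\mu_1-\nu_1)+\tfrac{n_1'}{n_1'+n_2}(\hat\mu_1'-\mu_1)+\tfrac{n_2}{n_1'+n_2}(\hat\nu_1-\nu_1).
\]
The triangle inequality on this event gives $|\mu-\nu_1|\le\ve_m(R)$. Hence $T(p_\mu,q_1)\ge f_c$ for every $\ve\ge\ve_m(R)$.

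\textbf{Step 4: removal bound.} Write
\[
\mu_1-\mu=\tfrac{n_2}{n_1'+n_2}(\mu_1-\nu_1)-\tfrac{n_1'}{n_1'+n_2}(\hat\mu_1'-\mu_1)-\tfrac{n_2}{n_1'+n_2}(\hat\nu_1-\nu_1).
\]
The reverse triangle inequality gives $|\mu_1-\mu|\ge\alpha_M(R)$. Hence $T(p_\mu,p_1)\le f_d$ for every $\alpha\le\alpha_M(R)$. This direct decomposition yields exactly the stated $\alpha_M(R)$. It is tighter than routing through $\Delta\le|\mu_1-\mu|+|\mu-\nu_1|$, which was the route taken in the Gaussian proof.

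\textbf{Main obstacle.} There is no real analytic difficulty; the one delicate point is bookkeeping in the $\gamma_X$ convention. The hypothesis is stated as $\mathbb P[|\sum X_i|\ge n\gamma_X(n,1,2\delta)]\le\delta$, and the equivalence with the one-sided tail uses $X\overset{d}{=}-X$, which gives $\sum X_i\overset{d}{=}-\sum X_i$. The care needed is to choose the per-sample failure levels so that, after the union bound, the total failure probability is at most $\delta$ while the deviation terms appear exactly as $\gamma_X(\cdot,1,\delta)$.

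Log-concavity enters only through Lemma~\ref{lem:SLCTOF}, that is, through the monotone, symmetric form of the trade-off functions. It is not needed for the concentration step, since $\gamma_X$ is taken as given.
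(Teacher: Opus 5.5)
Your proposal is correct and follows the paper's proof essentially step for step. You reduce both conditions to $|\mu_1-\mu|\ge\alpha$ and $|\nu_1-\mu|\le\ve$ via Lemma~\ref{lem:SLCTOF} (as in Proposition~\ref{prop:FFRLC}), obtain the simultaneous event $|\hat\mu_1'-\mu_1|\le\gamma_X(n_1',1,\delta)$, $|\hat\nu_1-\nu_1|\le\gamma_X(n_2,1,\delta)$ from the $\gamma_X$ tail assumption and a union bound, and then rerun the triangle-inequality arguments of Proposition~\ref{th:random}.

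One small correction: your direct decomposition in Step~4 is not tighter than the paper's route $\Delta\le|\mu_1-\mu|+|\mu-\nu_1|$. That route gives $|\mu_1-\mu|\ge\Delta-\ve_m(R)$, and $\Delta-\ve_m(R)$ is exactly the stated $\alpha_M(R)$, so the two arguments produce the identical bound.
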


\begin{proof}
    The proof is the same as that of the finite sample Gaussian result \ref{th:random}, since from \ref{prop:FFRLC}, we are in search of $(\alpha, \ve)$ pairs so that $T(p,p_1) \leq f_d \leftrightarrow |\mu_1-\mu|_2 \geq \alpha  $ and $T(p,q_1) \geq f_c \leftrightarrow |\nu_1-\mu|_2 \leq \ve $. Similarly, if $\alpha >\Delta +\ve$, then the feasible region is itself empty \ref{prop:FFRLC}.
    Now, we recall that $p_1 \overset{d}{=} \mu_1+X, q_1 \overset{d}{=} \nu_1+ X \in \cP= \{p_{\mu} \overset{d}{=}\mu+X: \mu \in \R\}$.  
    From our assumption, we are given $n_1$ IID samples $x_1^{(1)}, \ldots, x_1^{(n_1)}$ from $p_1$ and $n_2$ IID samples $x_2^{(1)}, \ldots, x_2^{(n_2)}$  from $q_1$. Let $\Delta:=\|\nu_1-\mu_1\|$. Upon removing $n_r \leq n_1$ randomly chosen samples $x_1^{(1)}, \ldots, x_1^{(n_1 - n_r)}$ (which we can just fix to be the first $n_r$ samples from the unwanted data, since they are IID) from the target distribution $p_1$, we set the center $\mu$ of the unlearned distribution $p\in \cP$ to be the weighted mean with $n_1':= n_1-n_r$:
    \begin{equation} \label{eq:empmeanLCRR}
        \mu = \frac{n_1' \hat{\mu}_1' + n_2 \hat{\nu}_1}{n_1' + n_2}, \text{ where }
        \hat{\mu}_1' := \tfrac{1}{n_1'} \sum_{i=1}^{n_1'} x_1^{(i)} \text{ and } \hat{\nu}_1 := \tfrac{1}{n_2} \sum_{i=1}^{n_2} x_2^{(i)}.
    \end{equation}

    From the assumption on the quantity $\gamma_X(n,1,\delta)$ analogous to the Laurent-Massart  lemma~\ref{lem:hoeffding} and union bound,  with probability at least $1- \delta$, we  have  simultaneously 
    \begin{align}
        |\hat{\mu}_1' - \mu_1| \leq \gamma_X(n_1', 1, \delta), \quad |\hat{\nu}_1 - \nu_1| \leq  \gamma_X(n_2, 1,\delta)
        \label{eq:hoeffdingLCRR}
    \end{align}

    Now, one can repeat the arguments of the \textbf{Preservation}  and the  \textbf{Removal} bounds \ref{th:random} to conclude.
    \end{proof} 
\begin{remark}[Table of $\gamma_X(n, 1,\delta)$]
    The finite sample results in the symmetric log-concave case in $d=1$ are very similar to $\gamma(n,d,\delta)$ replaced by $\gamma_X(n,1, \delta)$, and for a given noise $p_0$, one can find a valid choice of $\gamma_X(n,1,2\delta)$. Moreover, the extension of this symmetric log-concave location family to higher dimensions would be interesting, since it requires a generalization of the result \ref{prop:FFRLC}.
\end{remark}
\subsection{Random removal for shifted Laplace family with weighted MLE}
\begin{remark}(Empirical mean to MLE) In  \ref{th:randomLC}, with the same notation  $p_1,q_1, \cP, f_d,f_c$ and the sampling setup, one can replace the empirical mean estimators $\hat{\mu}_1', \hat{\nu}_1$ with any estimator $\hat{\mu}_1'$ of $\mu_1$, and $\hat{\nu}_1$ of $\nu_1$ (for example MLE) and consider $\mu= \frac{n_1' \hat{\mu}_1' + n_2 \hat{\nu}_1}{n_1+n_2}$. Then, the conclusion of \ref{th:randomLC} becomes with probability $\geq 1 - \delta$, the resulting   $p_{\mu}$ satisfies $(f_d \leftrightarrow \alpha,f_c\leftrightarrow  \varepsilon)$-distributional unlearning \ref{def:dist-unlearning} with:
\begin{align*}
\alpha & \leq \alpha_M(R):=\frac{n_2}{n_1' + n_2} \Delta - \frac{n_2}{n_1' + n_2} \gamma_{\nu_1}(n_2, 1,\delta) - \frac{n_1'}{n_1' + n_2} \gamma_{\mu_1}(n_1',1,\delta) , \\
\varepsilon &\geq  \ve_m(R):= \frac{n_1'}{n_1' + n_2}  \Delta + \frac{n_1'}{n_1' + n_2}  \gamma_{\mu_1}(n_1',1,\delta) + \frac{n_2}{n_1' + n_2}  \gamma_{\nu_1}(n_2, 1,\delta).
\end{align*}
where $\Delta = |\mu_1- \nu_1|$, $n_1'= n_1- n_r$ and $\gamma_{\nu}(n, 1, \delta)>0$ for $\nu\in \R$ are numbers (that depends on the estimating procedure $\hat{\nu}$ of $\nu$ too) such that for $n$ samples $X_1, \cdots, X_n$   IID from $p_{\nu}$ we have 
\begin{equation} \label{eq:MLEC}
    \mathbb{P}\left[|\hat{\nu} -\nu| \geq  \gamma_{\nu}(n,1, 2\delta)\right] \leq \delta 
\end{equation}
Values of $\gamma_{\nu}(n,1, 2\delta)$, assuming MLE, can be explicit under smoothness assumptions on the symmetric log-concave  $\cP=\{\mu+X: \mu \in \R\}$ \citet{koltchinskii2023functional, gupta2022finite}. 
\end{remark}

\begin{remark}(MLE in the Laplace case) \label{rem:RRLaplace}
    In \ref{th:randomLC} with the same  $p_1,q_1, \cP, f_d,f_c$ and the sampling setup, once we specialize to $\cP= \{p_{\nu} \overset{d}{=}\nu+X:\nu \in \R\}$ for  $X\overset{d}{=} p_0$ with $2p_0(x)dx = \exp(-|x|) dx$, it is known that  MLE$(X_1, \cdots, X_n)$ for IID $X_i$ from $p_{\nu}$ is given by $\hat{\nu}=$ median$(X_1, \cdots, X_n)$. For simplicity, let's   assume $n=2k+1$ is odd, and  the ordered samples are denoted as
    \begin{equation}
        X_{(1)} \leq  \cdots \leq X_{(k+1)} \leq \cdots  \leq X_{(2k-1)}
    \end{equation}
    Then $\hat{\nu}=$ med$(X_1, \cdots, X_n)= X_{(k+1)}$. Let $Z_i =X_i -\nu \overset{d}{=} p_0$ for $1\leq i\leq n$. Then, we have that $Z_1, \cdots, Z_n$ are IID and
    median$(X_1, \cdots, X_n)= \nu+$ med$(Z_1, \cdots, Z_n)$. Therefore, for any $c>0$ 
    \begin{equation}
        \mathbb{P}[|\hat{\nu}- \nu| \geq c ] = \mathbb{P}[|\text{med}(Z_1, \cdots, Z_n)| \geq c] = 2 \mathbb{P}[\text{med}(Z_1, \cdots, Z_n) \geq c]
    \end{equation}
    where the last equality follows from the symmetry of $p_0$, $(Z_1, \cdots, Z_n)= (-Z_1, \cdots, -Z_n)$ and
    \[
    -\text{med}(Z_1, \cdots, Z_n)= \text{med}(-Z_1, \cdots, -Z_n) \overset{d}{=} \text{med} (Z_1, \cdots, Z_n)=:M_n
    \]
   Define $N_c:=\sum_{i=1}^n \mathbf{1}\{Z_i< c\}\sim  \operatorname{B}(n,F(c))$\footnote{$B(n,p)$ is the standard binomial distribution on $\{0,1,  \cdots, n-1, n\}$ with the success probability $p$.}, where $F(c)= \mathbb P(Z_i\le c) = 1-\frac12 e^{-c}.$ is the CDF of $Z_i$. Since, $M_n=Z_{(k+1)}$, the event \(\{M_n\ge c\}\) implies that at most \(k\) observations are at most \(c\).  
   \[
   \,\mathbb P(M_n\ge c)
   \leq
   \mathbb{P}[N_c \leq k] 
   = \mathbb{P}[B(n, F(c))\leq k]
   \leq 
   \mathbb{P}\left[B(n, F(c)) - nF(c)\leq n \left( \frac{1}{2} - F(c)\right)\right]
   \]

By Hoeffding's inequality\footnote{One can obtain sharper lower tail estimates for Binomial than Hoeffding \citet{boucheron2013concentration}.} applied to Bernoulli  variables
\(\mathbf{1}\{Z_i\le c\}\) and $F(c)-\frac12
    =
    \frac12(1-e^{-c}),$
\[
    2 \mathbb P(M_n\ge c)
    \le
    2\exp\left\{
        -\frac n2(1-e^{-c})^2
    \right\} \leq \delta, \text{ where}
\]

we choose $c = -\log\left(1-\sqrt{\frac{2}{n}\log\frac{2}{\delta}}\right)$, with \(n>2\log(4/\delta)\) to ensure that the quantity inside the logarithm
belongs to \((0,1)\). Therefore, a valid choice of $\gamma_{\nu}(n,1,2\delta)= -\log\left(1-\sqrt{\frac{2}{n}\log\frac{2}{\delta}}\right)$.

We conclude by observing that, as $n \uparrow \infty$, the choices of the tail estimates are $\gamma_{\nu}(n,1, 2\delta) \sim \sqrt{\frac{2}{n}\log\frac{2}{\delta}}$  for MLE, equivalently the median, and $\gamma_{\nu}(n,1, 2\delta) \sim \sqrt{\frac{4}{n}\log\frac{2}{\delta}}$ for the empirical mean.

\end{remark}

\begin{table}[t] \label{table:LMconstants}
\centering
\small
\setlength{\tabcolsep}{6pt}
\renewcommand{\arraystretch}{1.55}
\begin{tabularx}{\textwidth}{
@{}>{\raggedright\arraybackslash}p{0.43\textwidth}
>{\centering\arraybackslash}X@{}}
\toprule
Symmetric log-concave distribution \(p_0\)
&
A valid choice of \(n\gamma_X(n,1,2\delta)\)
\\
\midrule

Standard Gaussian: \(X\sim N(0,1)\)
&
\(n\displaystyle
    \gamma_X(n,1,2\delta)
    =
    \sqrt n\,\Phi^{-1}\!\left(1-\frac{\delta}{2}\right)
\)
\\

Uniform: \(X\sim \operatorname{Unif}[-a,a]\)
&
\(n\displaystyle
    \gamma_X(n,1,2\delta)
    =
    a\sqrt{2n\log\frac{2}{\delta}}
\)
\\

Laplace: \(2p_0(x)=\exp{(-|x|)}\)
&
\(n\displaystyle
    \gamma_X(n,1,2\delta)
    =
    2\sqrt{n\log\frac{2}{\delta}}
    +
    \log\frac{2}{\delta}
\)
\\

Bounded support \(\subseteq[-a,a]\)
&
\(\displaystyle
    n\gamma_X(n,1,2\delta)
    =
    a\sqrt{2n\log\frac{2}{\delta}}
\)
\\

\bottomrule
\end{tabularx}
\caption{Examples of symmetric log-concave  \(p_0\) with  valid \(n\gamma_X(n,1,2\delta)\) \citet{boucheron2013concentration}.}
\label{tab:symmetric-log-concave-tail-bounds}
\end{table}

\subsection{Selective removal for Gaussian family}

\begin{lemma}[Dvoretzky–Kiefer–Wolfowitz  Inequality] \citet{massart1990tight}
\label{lem:DKW}
Let \(X_1, X_2, \dots, X_{n}\) be independent and identically distributed random variables on some probability space $(\Omega, \cF, \mathbb{P})$ with cumulative distribution function \(F:\R \to [0,1]\). Define the empirical distribution function by $\widehat{F}_n(t) = \frac{1}{n} \sum_{i=1}^{n} \mathbf{1}\{X_i \le t\}.$
Then, for any   \(\delta \in (0,1)\),   we have with $d(n, 2\delta):=\sqrt{\frac{\ln(2/\delta)}{2n}}$ 
\[
\mathbb{P}\left[ \sup_{t \in \mathbb{R}} \left|\widehat{F}_n(t) - F(t)\right| \le d(n, 2\delta) \right] \geq 1-\delta \leftrightarrow \mathbb{P}\left[\sup_{t \in \mathbb{R}} \left|\widehat{F}_n(t) - F(t)\right| \geq d(n, 2\delta) \right] \leq \delta 
\]
\end{lemma}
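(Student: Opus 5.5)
The plan is to reduce to the uniform case, prove a one-sided exponential bound with the sharp constant, and then combine the two one-sided bounds by a union bound. With $\lambda=d(n,2\delta)=\sqrt{\ln(2/\delta)/(2n)}$ we have $2e^{-2n\lambda^2}=\delta$. So it suffices to prove the two-sided Massart inequality
\[
\mathbb{P}\Bigl[\sup_{t}|\widehat F_n(t)-F(t)|> \lambda\Bigr]\le 2e^{-2n\lambda^2}.
\]
The event in the lemma uses $\geq$ rather than $>$; this is handled at the end by continuity in $\lambda$. Since the result is classical (\citet{massart1990tight}), the argument below reconstructs its structure rather than inventing a new route.

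\textbf{Step 1 (reduction to uniforms).} Let $U_1,\dots,U_n$ be IID $\mathrm{Unif}[0,1]$ and set $X_i=F^{-1}(U_i)$, where $F^{-1}$ is the generalized left inverse (as in Lemma~\ref{lem:ITOF}). Then $\{X_i\le t\}=\{U_i\le F(t)\}$, so $\widehat F_n(t)-F(t)=\widehat G_n(F(t))-F(t)$, where $\widehat G_n$ is the uniform empirical CDF. Taking the supremum over $t$ only ranges over the set $\{F(t)\}\subseteq[0,1]$, so
\[
\sup_t|\widehat F_n(t)-F(t)|\le \sup_{u\in[0,1]}|\widehat G_n(u)-u|.
\]
It therefore suffices to treat continuous uniform samples, with equality in the continuous case. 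By the symmetry $U\mapsto 1-U$, the upper and lower one-sided statistics $D_n^{\pm}$ have the same law. The union bound then reduces everything to the one-sided estimate
\[
\mathbb{P}\bigl[D_n^+>\lambda\bigr]\le e^{-2n\lambda^2},\qquad D_n^+:=\sup_u\bigl(\widehat G_n(u)-u\bigr).
\]

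\textbf{Step 2 (one-sided bound).} Pointwise, Hoeffding's inequality already gives $\mathbb{P}[\widehat G_n(u)-u>\lambda]\le e^{-2n\lambda^2}$ for each fixed $u$. The whole difficulty is to pass the supremum inside without losing a constant. I would first try the exact Smirnov/Birnbaum--Tingey formula
\[
\mathbb{P}[D_n^+\ge\lambda]=\lambda\sum_{j=0}^{\lfloor n(1-\lambda)\rfloor}\binom{n}{j}\Bigl(1-\lambda-\tfrac jn\Bigr)^{n-j}\Bigl(\lambda+\tfrac jn\Bigr)^{j-1}.
\]
This formula is obtained by conditioning on the last crossing time of the line $u+\lambda$ and using exchangeability (a ballot-type argument). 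One then bounds this sum by $e^{-2n\lambda^2}$. This is done by comparing each term with a binomial tail, and by treating separately the regimes where $\lambda$ is small, so that $e^{-2n\lambda^2}$ is near $1$ and the bound is easy, and where $\lambda\sqrt n$ is large, so that a Laplace-type estimate of the dominant terms applies. Massart's restriction $e^{-2n\lambda^2}\le 1/2$ is exactly what makes the intermediate range work; in the complementary case $2e^{-2n\lambda^2}\ge1$ and the two-sided claim is trivial.

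A softer fallback is available if the sharp constant is not needed. The process $M_u=(\widehat G_n(u)-u)/(1-u)$ is a martingale in $u$. Applying Doob's maximal inequality to $e^{\theta M_u}$ on $[0,1-\eta]$ gives an exponential bound, but it costs constants in the exponent, so it would not yield the stated $d(n,2\delta)$.

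\textbf{Main obstacle.} The main obstacle is Step 2 with the exact constant $2$ in the exponent, that is, the analytic bounding of the Birnbaum--Tingey sum uniformly in $n$ and $\lambda$. I would follow Massart's case split for this step and otherwise cite \citet{massart1990tight}. Steps 1 and 3 (symmetry plus the union bound, and the passage from strict to non-strict inequality by letting $\lambda'\downarrow\lambda$ and using continuity of $e^{-2n\lambda^2}$) are routine.
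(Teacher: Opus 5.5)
Your outline is correct and consistent with the paper, which gives no proof of its own and simply invokes \citet{massart1990tight}. The quantile reduction, the $U\mapsto 1-U$ symmetry, the union bound, and the trivial regime $2e^{-2n\lambda^2}\ge 1$ are all sound, and, as in the paper, the sharp one-sided bound is cited from Massart rather than proved.

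Two small fixes are needed. First, with $D:=\sup_t|\widehat F_n(t)-F(t)|$, upgrading $\mathbb{P}[D>\lambda']\le 2e^{-2n\lambda'^2}$ to the non-strict event requires $\lambda'\uparrow\lambda$, since $\{D\ge\lambda\}=\bigcap_{\lambda'<\lambda}\{D>\lambda'\}$. Taking $\lambda'\downarrow\lambda$ only recovers the strict event, and the distinction matters when $F$ has atoms. Second, the inverse you need is the quantile function $F^{-1}(u)=\inf\{x\in\R:F(x)\ge u\}$, whose switching relation $F^{-1}(u)\le t\iff u\le F(t)$ uses right-continuity of $F$; it is not the inverse of nonincreasing functions from Lemma~\ref{lem:ITOF}.
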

\begin{remark}[Independence of data distribution]
  We observe that in the above inequality, the bound $d(n,\delta)$ is independent of the data distribution $F$, from which the data is sampled. 
\end{remark}

\begin{lemma}[Concentration of the biased empirical mean after selective removal, towards the true mean]
\label{lem:bias_selection}
    Let $\mu_1, \nu_1 \in \R^d$, $\sigma>0$.
    Consider the following sampling setup
    \[
    n_1
    \text{ IID samples }
    x_1^{(1)}, \ldots, x_1^{(n_1)} 
    \text{ from }
    \cN(\mu_1, \sigma^2\II_d),
    \]
    \[
    n_2 \text{ IID  samples }
    x_2^{(1)}, \ldots, x_2^{(n_2)}
    \text{ from }
    \cN(\nu_1, \sigma^2\II_d).
    \]
    We define $\hat{\nu}_1$ the average of the samples from $\cN(\nu_1, \sigma^2)$ given as $\hat{\nu}_1= \frac{1}{n_2} \sum_{j=1}^{n_2}x_2^{(j)}$, and  $\hat{\mu}_1'$ the average of the $n_1'=n_1 - n_r \leq n_1$ closest samples from $x_1^{(1)}, \ldots, x_1^{(n_1)}$ to $\hat{\nu}_1$\footnote{Recall, these closest $n_1'$ samples from the empirical mean $\hat{\nu}_1$ of the desired data, although generated from the unwanted data are used for computing the weighted empirical mean in the selective removal algorithm \ref{algo:SR}.}. We define
    \begin{equation}
    F_1: \R_{\geq 0} \to  [0,1) \text{ as }
    F_1(t):= \mathbb{P}[\|\sigma Z + \mu_1- \nu_1\|_2 \leq t] \text{ with } Z \sim N(0, \II_d). 
    \end{equation}
For $\delta \in (0,1)$, we have with probability at least $1-\delta$ with constants $d(n_1, \delta)$ \ref{lem:DKW} and $\gamma(n_2, d, \delta)$ \ref{lem:hoeffding}
    \begin{equation}
       ||\hat{\mu}_1' - \hat{\nu}_1||_2 \leq F_1^{-1}{\left(\frac{n_1'}{n_1} + d(n_1, \delta)\right)} + \sigma \gamma(n_2,d, \delta).
    \end{equation}
\end{lemma}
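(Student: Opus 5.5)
The plan is to bound $\|\hat{\mu}_1' - \hat{\nu}_1\|_2$ by the largest distance $\max_{i\in S}\|x_1^{(i)}-\hat{\nu}_1\|_2$ over the retained set $S$ of the $n_1'$ closest points. By convexity of the norm, the average of these points lies within that maximal distance of $\hat{\nu}_1$. This maximal distance is exactly the $n_1'$-th order statistic of the scores $s_i=\|x_1^{(i)}-\hat{\nu}_1\|_2$. The proof therefore reduces to a high-probability upper bound on an empirical quantile of the scores.

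\textbf{Step 1: replace $\hat{\nu}_1$ by the true center $\nu_1$.} Define the ideal scores $t_i:=\|x_1^{(i)}-\nu_1\|_2$. These are IID with CDF exactly $F_1$, since $x_1^{(i)}-\nu_1\overset{d}{=}\sigma Z+\mu_1-\nu_1$. By the triangle inequality, $|s_i-t_i|\le \|\hat{\nu}_1-\nu_1\|_2$. It follows that the $n_1'$-th order statistic of the $s_i$ is at most the $n_1'$-th order statistic $t_{(n_1')}$ of the $t_i$ plus $\|\hat{\nu}_1-\nu_1\|_2$. By Lemma \ref{lem:hoeffding}, the latter term is at most $\sigma\gamma(n_2,d,\delta)$ on an event of probability at least $1-\delta/2$, after adjusting the constant in $\delta$ as the paper does in Proposition \ref{th:random}. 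Because $S_1$ is independent of $S_2$, the $t_i$ do not depend on $\hat{\nu}_1$. This independence is exactly what lets us apply a distribution-free bound to the $t_i$.

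\textbf{Step 2: quantile bound via DKW.} Let $\widehat{F}_{n_1}$ denote the empirical CDF of the $t_i$. Apply Lemma \ref{lem:DKW}, so that $\sup_t|\widehat{F}_{n_1}(t)-F_1(t)|\le d(n_1,\delta)$ with probability at least $1-\delta/2$. Set $\tau:=F_1^{-1}(n_1'/n_1+d(n_1,\delta))$. On this event, $\widehat{F}_{n_1}(\tau)\ge F_1(\tau)-d(n_1,\delta)\ge n_1'/n_1$. Hence at least $n_1'$ of the $t_i$ are $\le\tau$, which gives $t_{(n_1')}\le\tau$. Here we use that $F_1$ is continuous and strictly increasing on $(0,\infty)$, since it is the CDF of a noncentral chi distribution, so that $F_1(F_1^{-1}(u))=u$. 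Combining the two events by a union bound yields the stated inequality with probability at least $1-\delta$.

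The main obstacle is bookkeeping in the argument of $F_1^{-1}$. When $n_1'/n_1+d(n_1,\delta)\ge 1$, the inverse is $+\infty$ and the bound is vacuous but still true, so this case should be stated as a convention. The remaining care is in matching the $\delta$-conventions ($d(n,2\delta)$ versus $d(n,\delta)$, and $\gamma(n,d,2\delta)$) so that the two failure probabilities sum to $\delta$. The order-statistic comparison between perturbed scores is routine once it is written with counting functions, as in Step 2.
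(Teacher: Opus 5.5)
Your proposal is correct and follows the paper's proof. Both arguments bound $\|\hat\mu_1'-\hat\nu_1\|_2$ by the $n_1'$-th smallest score $\hat\tau_r$ via the triangle inequality, move from $\hat\nu_1$ to $\nu_1$ at a cost of $\sigma\gamma(n_2,d,\delta)$ via Laurent--Massart, and invert $F_1$ after a DKW bound; your direct order-statistic comparison just replaces the paper's sandwich $\widehat F_1(t-\sigma\gamma(n_2,d,\delta))\le \widehat E_1(t)$. One small correction: independence of $S_1$ and $S_2$ is not needed, because the $t_i$ involve only the deterministic center $\nu_1$ and the union bound requires no independence.
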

\begin{proof}
Recall $\hat{\mu}_1'$ is the average of the $n_1'$ samples, out of $n_1$ IID samples from $p_1=N(\mu_1, \sigma^2\II_d)$, with the closest distance to $\hat{\nu}_1$, the empirical mean of $n_2$ samples from $q_1=N(\nu_1, \sigma^2\II_d)$.  
 We denote by $x_1^{(1:n_1)}, \ldots, x_1^{(n_1:n_1)}$ the original $n_1$ samples from $p_1$ reordered by distance to $\hat{\nu}_1$\footnote{Since we assume the samples are coming from a `continuous' distribution with a Lebesgue density, without loss of generality, we can assume that the distances are all pairwise distinct numbers with probability 1.}:
    \begin{align}
        \|x_1^{(1:n_1)} - \hat{\nu}_1\|_2
        \leq  \ldots \leq \|x_1^{(n_1:n_1)} - \hat{\nu}_1\|_2,
    \end{align}
    with ties broken arbitrarily, then distance-based selection retains only $x_1^{(1:n_1)}, \ldots, x_1^{(n_1':n_1)}$ to obtain
    \begin{equation}
        \label{eq:selectionG}
        \hat{\mu}_1' \coloneqq \tfrac{1}{n_1'} \sum_{i=1}^{n_1'} x_1^{(i:n_1)}.
    \end{equation}
    Denote by $\hat{\tau}_r$ the $n_1'$-th largest distance of the unwanted samples to $\hat{\nu}_1$, defined as 
    \begin{equation} \label{eq:tau}
         \hat{\tau}_r \coloneqq \|x_1^{(n_1':n_1)} - \hat{\nu}_1\|_2
    \end{equation}
    From the triangle inequality,  we have the following upper bound\footnote{We observe this is the step; we only obtain an upper bound on $\|\hat{\mu}_1' - \hat{\nu}_1\|_2$, and therefore obtaining a tighter lower bound on $\|\hat{\mu}_1' - \hat{\nu}_1\|_2$ would be interesting, since that will tell us with high probability when the selective removal algorithm cannot achieve unlearning.}
\begin{align*}
    ||\hat{\mu}_1' - \hat{\nu}_1||_2 
    = 
    \left|\left|\frac{1}{n_1'}\sum_{i=1}^{n_1'} x_1^{(i:n_1)} - \hat{\nu}_1\right|\right|_2 
    \leq
    \frac{1}{n_1'}\sum_{i=1}^{n_1'} \| x_1^{(i:n_1)} - \hat{\nu}_1\|_2  
    \leq
    \hat{\tau}_r
    \label{eq:distance-triangle}
\end{align*}

Let $\widehat{F}_1$ be the CDF of the empirical distribution over the distances $\left \{ \|x_1^{(i)} - \nu_1\|_2 ~\colon~ i \in [n_1] \right\}$, of the unwanted samples $\{x_1^{(i)}: i\in [n_1]\}$ from the population mean $\nu_1$ of the desired population $q_1$ 
\begin{align}
    \widehat{F}_1(t) = \frac{1}{n_1} \sum_{i=1}^{n_1} \mathbf{1}_{\left\{ \|x_1^{(i)} - \nu_1\|_2 \leq t \right\}}.
\end{align}
Similarly, let $\widehat{E}_1$ be the CDF of the empirical distribution over distances $\left \{ \|x_1^{(i)} - \hat{\nu}_1\|_2 ~\colon~ i \in [n_1] \right\}$, of the unwanted samples $\{x_1^{(i)}: i\in [n_1]\}$ from the sample mean $\hat{\nu}_1$ of the desired population $q_1$ 
\begin{align}
    \widehat{E}_1(t) = \frac{1}{n_1} \sum_{i=1}^{n_1} \mathbf{1}_{\left\{ \|x_1^{(i)} - \hat{\nu}_1\|_2 \leq t \right\}}.
\end{align}
Now, we recall the Laurent Massart bound \ref{lem:hoeffding} that  says 
\begin{align}
     \mathbb{P}[||\hat{\nu}_1 - \nu_1||_2 \geq \sigma \gamma(n, d, 2\delta)] \leq  \delta .
\end{align}
Therefore, with probability at least $1-\frac{\delta}{2}$, for $i\in [n_1]$, we simultaneously have the following

\begin{equation}
||x_1^{(i)} - \nu_1||_2 
\leq 
||x_1^{(i)} - \hat{\nu}_1||_2
+
||\hat{\nu}_1 - \nu_1||_2
< 
||x_1^{(i)} - \hat{\nu}_1||_2 
+
\sigma  \gamma(n_2, d, \delta)
\end{equation}
\begin{equation}
||x_1^{(i)} - \hat{\nu}_1||_2 
\leq 
||x_1^{(i)} - \nu_1||_2
+
||\hat{\nu}_1 - \nu_1||_2
<
||x_1^{(i)} - \nu_1||_2 
+
\sigma  \gamma(n_2, d, \delta)
\end{equation}

So, with probability at least $1-\frac{\delta}{2}$,  we simultaneously have the inclusion of events for $i\in [n_1]$
\begin{equation} 
    \{||x_1^{(i)} - \nu_1||_2  < t\} \supseteq 
    \{||x_1^{(i)} - \hat{\nu}_1||_2 + \sigma  \gamma(n_2, d, \delta) \leq t\}
\end{equation}
\begin{equation}
    \{||x_1^{(i)} - \hat{\nu}_1||_2  < t\} \supseteq \{||x_1^{(i)} - \nu_1||_2 + \sigma  \gamma(n_2, d, \delta) \leq t\}
\end{equation}

As a consequence, with probability at least $1-\frac{\delta}{2}$,  we have for all $t\in \R$\footnote{The upper bounds in \eqref{eq:F-to-E} still hold if we replace $\hat{E}_1(t)$ with $\hat{E}_1(t^-):= \frac{1}{n_1} \sum_{i=1}^{n_1} \mathbf{1}_{\left\{\|x_1^{(i)} - \hat{\nu}_1\|_2 < t \right\}} $ }
\begin{equation} \label{eq:F-to-E}
 \widehat{F}_1(t -\sigma \gamma(n_2, d, \delta)) 
 =
 \frac{1}{n_1} \sum_{i=1}^{n_1} \mathbf{1}_{\left\{ \|x_1^{(i)} - \nu_1\|_2 + \sigma \gamma(n_2, d, \delta) \leq t \right\}} 
    \leq
\frac{1}{n_1} \sum_{i=1}^{n_1} \mathbf{1}_{\left\{\|x_1^{(i)} - \hat{\nu}_1\|_2 \leq t \right\}} 
=
\widehat{E}_1(t)
\end{equation}

\begin{align}
 \widehat{E}_1(t -\sigma \gamma(n_2, d, \delta)) 
 =
 \frac{1}{n_1} \sum_{i=1}^{n_1} \mathbf{1}_{\left\{ \|x_1^{(i)} - \hat{\nu}_1\|_2 + \sigma \gamma(n_2, d, \delta) \leq t \right\}} 
    \leq
\frac{1}{n_1} \sum_{i=1}^{n_1} \mathbf{1}_{\left\{\|x_1^{(i)} - \nu_1\|_2 \leq t \right\}} 
=
\widehat{F}_1(t)
\end{align}
In particular, by definition of $\hat{\tau}_r$ \ref{eq:tau}, we have the following\footnote{The equality that $\widehat{E}_1(\hat{\tau}_r) =\frac{n_1'}{n_1}$ requires that the samples are coming from a `continuous' distribution with a Lebesgue density, so that the distances are all pairwise distinct numbers with probability 1. However, since, the inequality $\widehat{F}_1(\hat{\tau}_r - \sigma \gamma(n_2,d,\delta)) 
    \leq 
    \widehat{E}_1(\hat{\tau}_r)$ is true with $\hat{E}_1(\hat{\tau}_r)$ replaced with $\hat{E}_1(\hat{\tau}_r^-)$ because of \ref{eq:F-to-E}. Now, according to the removal strategy of the selective removal algorithm $\hat{E}_1(\hat{\tau}_r^-) \leq \frac{n_1'}{n_1}$ always.} with probability at least $1-\frac{\delta}{2}$
\begin{align} \label{eq:FtoE}
    \widehat{F}_1(\hat{\tau}_r - \sigma \gamma(n_2,d,\delta)) 
    \leq 
    \widehat{E}_1(\hat{\tau}_r)
    =
    \frac{n_1'}{n_1} \leq \widehat{F}_1(\hat{\tau}_r + \sigma \gamma(n_2,d,\delta)) .
\end{align}
Now, observe that $\|x_1^{(i)} - \nu_1\|_2 \overset{d}{=} \|\sigma Z + \mu_1- \nu_1\|_2$\footnote{In the log-concave case \ref{th:randomLCSR} we have  $|x_1^{(i)} - \nu_1| \overset{d}{=} \|X + \mu_1- \nu_1\|$ with $X\sim p_0$.} with $Z \sim N(0, \II_d)$ of location $\mu_1 - \nu_1$ and scale $\sigma^2$, since  $x_1^{(i)}$ follows $p_1 = \cN(\mu_1, \sigma^2\II_d)$ .
Denote by $F_1$ its CDF.
\begin{equation}
    F_1(t):= \mathbb{P}[\|\sigma Z + \mu_1- \nu_1\|_2 \leq t] \text{ with } Z \sim N(0, \II_d). 
\end{equation}
Applying the Dvoretzky–Kiefer–Wolfowitz inequality ~\ref{lem:DKW},\footnote{Observe that the tail estimate $d(n_1, \delta)$ in the Dvoretzky–Kiefer–Wolfowitz inequality is independent of the what the actual CDF $F_1$ is and therefore, the inequality holds   exactly the same in the log-concave case \ref{th:randomLCSR}.} we have with probability at least $1-\tfrac{\delta}{2}$,
\begin{equation}
    \sup_{t\in \R}|\widehat{F}_1(t) - F_1(t)| \leq d(n_1, \delta).
\end{equation}
Plugging it into equation \ref{eq:FtoE} and using a union bound, we get with probability at least $1-\delta$.
\begin{align}
    F_1(\hat{\tau}_r - \sigma \gamma(n_2,d, \delta)) \leq 
    \widehat{F}_1(\hat{\tau}_r - \sigma \gamma(n_2,d, \delta)) + d(n_1, \delta) \leq  \frac{n_1'}{n_1} +  d(n_1, \delta)  .
\end{align}
By taking the inverse $F^{-1}_1$ of  $F_1$\footnote{Since $F_1: \mathbb{R} \to [0,1)$ is the CDF of the norm of a shifted normal distribution $\|\sigma Z + \mu_1- \nu_1\|_2$, it is a strictly increasing function for $\sigma >0$, and therefore admits a functional inverse.}, we obtain with probability at least $1-\delta$ that
\begin{equation} \label{eq:UpperboundSR}
   ||\hat{\mu}_1' - \hat{\nu}_1||_2 \leq  \hat{\tau}_r \leq F_1^{-1}{\left(\frac{n_1'}{n_1} + d(n_1, \delta)\right)} + \sigma \gamma(n_2,d, \delta).
\end{equation}
\end{proof}

\begin{remark}[Lower bound] \label{rem:UBSR}
We observe that the following upper bound can be reversed 
\[
\hat{\tau}_r \leq F_1^{-1}{\left(\frac{n_1'}{n_1} + d(n_1, \delta)\right)} + \sigma \gamma(n_2,d, \delta), \text{ since } 
\frac{n_1'}{n_1} \leq \widehat{F}_1(\hat{\tau}_r + \sigma \gamma(n_2,d,\delta)).
\]
So, applying the Dvoretzky–Kiefer–Wolfowitz inequality  \ref{lem:DKW} we have with probability at least $1-\frac{\delta}{2}$
\[
 \frac{n_1'}{n_1} 
 \leq 
 \widehat{F}_1(\hat{\tau}_r + \sigma \gamma(n_2,d,\delta))
 \leq 
 F_1(\hat{\tau}_r+ \sigma \gamma(n_2,d,\delta))  + d(n_1, \delta)  
\]
\[
\leftrightarrow 
F_1^{-1}\left(\frac{n_1'}{n_1} -d(n_1, \delta)\right) - \sigma \gamma(n_2, d,\delta) \leq \hat{\tau}_r
\]
However, it would be very interesting to be able reverse the inequality $||\hat{\mu}_1' - \hat{\nu}_1||_2 \leq  \hat{\tau}_r$, since that will help up find the a necessary lower bound for $\ve_m(S)$ in Proposition \ref{th:selective}.
\end{remark}

\samplesselection*
\begin{proof}
    We recall that $p_1 = \cN(\mu_1 , \sigma^2 \II_d), q_1 = \cN(\nu_1 , \sigma^2\II_d) \in \cP =\{N(\mu_1, \sigma^2\II_d): \mu_1\in \R^d\}$. We are given $n_1$ IID samples $x_1^{(1)}, \ldots, x_1^{(n_1)}$ from $p_1$ and $n_2$ IID samples $x_2^{(1)}, \ldots, x_2^{(n_2)}$  from $q_1$. The distance-based selection removes $n_r \leq n_1$ selected samples from the unwanted distribution $p_1$ with the $n_r$ largest distances to $\hat{\nu}_1 \coloneqq \tfrac{1}{n_2} \sum_{i=1}^{n_2} x_2^{(i)}$ the empirical estimator of the mean of $q_1$.
    We denote by $x_1^{(1:n_1)}, \ldots, x_1^{(n_1:n_1)}$ the original $n_1$ samples from $p_1$ reordered by increasing distance to $\hat{\nu}_1$:
    \begin{align}
        ||x_1^{(1:n_1)} - \hat{\nu}_1||_2
        \leq  \ldots \leq ||x_1^{(n_1:n_1)} - \hat{\nu}_1||_2,
    \end{align}
    with ties broken arbitrarily, then distance-based selection retains only $x_1^{(1:n_1)}, \ldots, x_1^{(n_1':n_1)}$ to obtain
    \begin{equation}
        \label{eq:selectionSRG}
        \hat{\mu}_1' \coloneqq \tfrac{1}{n_1'} \sum_{i=1}^{n_1'} x_1^{(i:n_1)}.
    \end{equation}
  Subsequently, we set the center $\mu$ of the unlearned distribution $p$ to be the following weighted mean.
    \begin{equation}
        \mu = \frac{n_1' \hat{\mu}_1' + n_2 \hat{\nu}_1}{n_1' + n_2},
    \end{equation}
    where $\hat{\mu}_1 = \tfrac{1}{n_1'} \sum_{i=1}^{n_1'} x_1^{(i:n_1)}$ and $\hat{\nu}_1 = \tfrac{1}{n_2} \sum_{i=1}^{n_2} x_2^{(i)}$ are the empirical mean estimators.
    
    We also observe that the Laurent-Massart bound ~\ref{lem:hoeffding} yields that
    \begin{align}
        \mathbb{P}\left[||\hat{\nu}_1 - \nu_1||_2 \geq \sigma \gamma(n_2,d,2\delta)\right]\leq \delta,
        \label{eq:hoeffding2}
    \end{align}

Now, we are in search of $(\alpha, \ve)$ pairs so that $T(p,p_1) \leq f_d \leftrightarrow ||\mu_1-\mu||_2 \geq \alpha \sigma  $ and $T(p,q_1) \geq f_c \leftrightarrow ||\nu_1-\mu||_2 \leq \ve \sigma $. Remember, if $\alpha >\Delta +\ve$, then the feasible region is itself empty \ref{thm:FFR}.
\paragraph{Preservation bound.}
Using the triangle inequality for norms, we have\footnote{In this step, it would be interesting to provide a lower bound on $\|\hat{\mu}_1' - \hat{\nu}_1\|_2$, so that we can apply the \textit{inverse} triangle inequality $ \|a+b\|_2 \geq \|a\|_2 -\|b\|_2$.}
\begin{align*}
    \|\mu - \nu_1\|_2
    &= \left|\left|\frac{n_1' \hat{\mu}_1' + n_2 \hat{\nu}_1}{n_1' + n_2} - \nu_1 \right|\right| = \left|\left|\frac{n_1'}{n_1' + n_2} (\hat{\mu}_1' - \hat{\nu}_1) +  (\hat{\nu}_1 - \nu_1) \right|\right|\\
    &\leq 
    \frac{n_1'}{n_1' + n_2} \|\hat{\mu}_1' - \hat{\nu}_1\|_2 + \|\hat{\nu}_1 - \nu_1\|_2.
\end{align*}
Therefore, using the Laurent-Massart bound ~\eqref{eq:hoeffding2} we have with probability at least  $1-\tfrac{\delta}{2}$:
\begin{align*}
    ||\mu - \nu_1||_2
    &\leq \frac{n_1'}{n_1' + n_2} ||\hat{\mu}_1' - \hat{\nu}_1||_2+ \sigma \gamma(n_2,d,\delta).
\end{align*}
Moreover, from lemma~\ref{lem:bias_selection}, and more precisely from Equation \ref{eq:UpperboundSR}, we have with probability atleast $1-\delta$ (we have $\{\|\hat{\nu}_1- \nu_1\|_2 \leq \sigma \gamma(n_2, d,\delta)\}$ in common)
\begin{equation*}
    ||\hat{\mu}_1' - \hat{\nu}_1||_2 \leq F_1^{-1}\left(\frac{n_1'}{n_1} +d(n_1, \delta)\right) +  \sigma \gamma(n_2,d,\delta).
\end{equation*}
Combining the above with a union bound, yields that with probability at least $1-\delta$
\begin{align*}
    ||\mu - \nu_1||_2
    &\leq \frac{n_1'}{n_1' + n_2} 
    \left(F_1^{-1}\left(\frac{n_1'}{n_1} +d(n_1, \delta)\right) + \sigma \gamma(n_2,d,\delta)\right)
    + 
   \sigma  \gamma(n_2,d,\delta).
\end{align*}
We can further simplify the above to obtain the simplified expression\footnote{We can also use a tighter bound replacing $2\sigma \gamma(n_2,d,\delta)$ with $\frac{n_1'}{n_1'+n_2} \sigma \gamma(n_2,d,\delta) + \sigma \gamma(n_2,d,\delta) $}.
\begin{equation} \label{eq:tighterbound}
    ||\mu - \nu_1||_2
    \leq \frac{n_1'}{n_1' + n_2} 
    \left(F_1^{-1}\left(\frac{n_1'}{n_1} +d(n_1, \delta)\right)\right)
    + 
    2 \sigma \gamma(n_2,d,\delta).
\end{equation}

\paragraph{Removal bound.}
Using the Laurent Massart bound \ref{eq:hoeffding} with probability atleast $1-\delta$, we have
\begin{align*}
    &
   \sigma \Delta= \|\mu_1 - \nu_1\|_2 
     \leq \|\mu_1 - \mu\|_2 + \|\mu - \nu_1\|_2
    \\
    &\leq \|\mu_1 - \mu\|_2
    + 
    \frac{n_1'}{n_1' + n_2} 
    \left(F_1^{-1}\left(\frac{n_1'}{n_1} +d(n_1, \delta)\right)\right)
    + 
     2\sigma \gamma(n_2,d,\delta).
\end{align*}
Rearranging the terms yields that with probability at least $1-\delta$, we have the following

\begin{equation}
    \|\mu_1 - \mu\|_2\geq  \sigma \Delta - \frac{n_1'}{n_1' + n_2} 
    \left(F_1^{-1}\left(\frac{n_1'}{n_1} +d(n_1, \delta)\right)\right)
    -
     2 \sigma \gamma(n_2,d,\delta).
\end{equation}
\paragraph{Conclusion.}
With probability at least $1-\delta$, $p_{\mu}\in \cP$ satisfy  $(f_d \leftrightarrow \alpha, f_c \leftrightarrow \varepsilon)$ unlearning \ref{def:dist-unlearning} with
\begin{align*}
    \alpha
    &
    \leq 
   \alpha_M(S):= \Delta - 2\gamma(n_2,d,\delta) - \frac{1}{\sigma}\frac{n_1'}{n_1' + n_2} 
    \left(F_1^{-1}\left(\frac{n_1'}{n_1} +d(n_1, \delta)\right)\right)
    \\
    \varepsilon &
    \geq  
    \ve_m(S):= 
    \frac{1}{\sigma}\frac{n_1'}{n_1' + n_2} 
    \left(F_1^{-1}\left(\frac{n_1'}{n_1} +d(n_1, \delta)\right)\right)
    + 
    2\gamma(n_2,d,\delta)
\end{align*}
\end{proof}
\subsection{Selective removal for shifted log-concave family}
Now, we provide a generalization of the finite sample Gaussian result $\{N(\mu, \sigma^2\II_d): \mu\in \R^d\}$ with removal-preservation baselines $f_d= T(N(0,1), N(\alpha, 1)), f_c=T(N(0,1), N(\ve,1))$ \ref{th:random} to a location family $\{p_{\mu} \overset{d}{=} \mu+X: \mu \in \R\}$ in $d=1$ with a symmetric log-concave noise $p_0 \overset{d}{=}X$ and removal-preservation baselines $f_d= T(X, \alpha+X), f_c=T(X,\ve+X)$.
\begin{restatable}[Selective Removal]{proposition}{samplesLCSR}
\label{th:randomLCSR}
Let $p_1\overset{d}{=}\mu_1+X, q_1\overset{d}{=} \nu_1+ X \in \mathcal{P}= \{p_{\mu} \overset{d}{=} \mu+X: \mu \in \R\}$ and $\delta \in (0,1)$, where we assume that $p_0\overset{d}{=}X$ is a symmetric log-concave distribution on $\R$, and consider removal-preservation baselines as $f_d= T(X, \alpha+X), f_c=T(X,\ve+X)$. We observe $n_1$ IID samples from $p_1$ and $n_2$ IID samples from $q_1$, and selectively remove $n_r$ samples from $p_1$ before fitting a weighted mean $\mu$ according to  \ref{algo:SR}\footnote{It is important to mention that for the more general shifted log-concave family $\{\mu+X:\mu\in \R\}$, we do not fit the weighted MLE, since the weighted mean $\mu$ is a weighted average of empirical means $\hat{\mu}_1'$ and $\hat{\nu}_1$, and both the empirical means  $\hat{\mu}_1'$ and $\hat{\nu}_1$ are not MLEs, since empirical mean is not an MLE for log-concave families in general (such as shifted Laplace family), as it is the case for Gaussians. For mathematical tractability, and trying to bring out the structure of a reasonable baseline in the more general log-concave situation, we still consider weighted empirical means and not weighted MLEs, as required in the selective removal algorithm.}. With probability atleast $1 - \delta$, the resulting  distribution $p_{\mu}$ satisfies $(f_d \leftrightarrow \alpha,f_c\leftrightarrow  \varepsilon)$  unlearning \ref{def:dist-unlearning} with $d(n, \delta)= \sqrt{\frac{\log (4/\delta)}{2n}}$ and 
\begin{align*}
\alpha  \leq \alpha_M(S):= \Delta - \ve_m(S),
\varepsilon \geq  \ve_m(S):= \frac{n_1'}{n_1' + n_2} 
   F_X^{-1}\left(\frac{n_1'}{n_1} +d(n_1, \delta)\right) + 2\gamma_X(n_2,1,\delta)
\end{align*}
where $\Delta = |\mu_1- \nu_1|$, $n_1'= n_1- n_r$, $\gamma_X(n, 1, \delta)>0$ is a number dependent on the distribution $p_0$ (symmetric log-concave) such that for $n$ samples $X_1, \cdots, X_n$   IID from $p_0$ we have 
\begin{equation}
    \mathbb{P}\left[|\sum_{i=1}^n X_i| \geq n \gamma_X(n,1, 2\delta)\right] \leq \delta \leftrightarrow  2 \mathbb{P}\left[\sum_{i=1}^n X_i \geq n \gamma_X(n,1, 2\delta)\right] \leq \delta
\end{equation}
Moreover, $F_X^{-1}$ is the functional inverse\footnote{Similar to the Gaussian  $F_1^{-1}$ for any symmetric log-concave noise $p_0$ with a Lebesgue density, $F_X^{-1}$ exists.} of the CDF $ F_X: \R_{\geq 0} \to  [0,1)$ defined as
    \begin{equation}
    F_X(t):= \mathbb{P}[|X + \mu_1- \nu_1|_2 \leq t] \text{ with } X \sim p_0. 
    \end{equation}
\end{restatable}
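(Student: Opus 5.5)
The plan is to transcribe the proof of Proposition \ref{th:selective} to the one-dimensional log-concave setting, reusing the structure of Lemma \ref{lem:bias_selection}.

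\textbf{Reduction to a distance condition.} By Proposition \ref{prop:FFRLC} and Lemma \ref{lem:SLCTOF}, it suffices to show that, with probability at least $1-\delta$,
\[
|\mu_1-\mu|\ge \alpha \quad\text{and}\quad |\nu_1-\mu|\le \varepsilon
\]
for the weighted mean $\mu=\frac{n_1'\hat\mu_1'+n_2\hat\nu_1}{n_1'+n_2}$.

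\textbf{Probability budget.} We split $\delta$ into two events, each of probability at least $1-\delta/2$:
\begin{itemize}
\item $A=\{|\hat\nu_1-\nu_1|\le \gamma_X(n_2,1,\delta)\}$, which follows from the defining tail property of $\gamma_X$;
\item $B=\{\sup_t|\widehat F_1(t)-F_X(t)|\le d(n_1,\delta)\}$, which follows from Lemma \ref{lem:DKW}.
\end{itemize}
Here $\widehat F_1$ is the empirical CDF of $|x_1^{(i)}-\nu_1|$. Since $x_1^{(i)}-\nu_1\overset{d}{=}X+\mu_1-\nu_1$, the true CDF is exactly $F_X$. The $\ln(4/\delta)$ in $d(n,\delta)$ is what allots $\delta/2$ to this event.

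\textbf{Bounding the selection bias.} On $A\cap B$, we bound $|\hat\mu_1'-\hat\nu_1|$ as in Lemma \ref{lem:bias_selection}.
\begin{itemize}
\item Let $\hat\tau_r$ be the $n_1'$-th smallest distance $|x_1^{(i)}-\hat\nu_1|$. The triangle inequality gives $|\hat\mu_1'-\hat\nu_1|\le\hat\tau_r$.
\item On $A$, the distances measured from $\hat\nu_1$ and from $\nu_1$ differ by at most $\gamma_X(n_2,1,\delta)$. The sandwich \eqref{eq:F-to-E} then gives $\widehat F_1(\hat\tau_r-\gamma_X)\le \widehat E_1(\hat\tau_r^-)\le n_1'/n_1$.
\item Combining with $B$ gives $F_X(\hat\tau_r-\gamma_X)\le n_1'/n_1+d(n_1,\delta)$.
\item Inverting $F_X$ gives $\hat\tau_r\le F_X^{-1}(n_1'/n_1+d(n_1,\delta))+\gamma_X(n_2,1,\delta)$.
\end{itemize}

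\textbf{Preservation and removal bounds.} Write $\mu-\nu_1=\frac{n_1'}{n_1'+n_2}(\hat\mu_1'-\hat\nu_1)+(\hat\nu_1-\nu_1)$. Using $\frac{n_1'}{n_1'+n_2}\le1$ to absorb the extra $\gamma_X$ term, we get
\[
|\mu-\nu_1|\le \tfrac{n_1'}{n_1'+n_2}F_X^{-1}\!\big(\tfrac{n_1'}{n_1}+d(n_1,\delta)\big)+2\gamma_X(n_2,1,\delta)=\varepsilon_m(S).
\]
The removal bound then follows from the reverse triangle inequality:
\[
|\mu_1-\mu|\ge \Delta-|\mu-\nu_1|\ge \Delta-\varepsilon_m(S)=\alpha_M(S).
\]
Hence any $\varepsilon\ge\varepsilon_m(S)$ and $\alpha\le\alpha_M(S)$ work.

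\textbf{Main obstacle: invertibility of $F_X$.} The inversion step needs $F_X$ to be continuous and strictly increasing on the relevant range. For this we use that a log-concave density is positive on an interval $(a,b)$, so $|X+\mu_1-\nu_1|$ has a continuous CDF that is strictly increasing on its support.

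If the support of $X$ is bounded, as for the uniform distribution, $F_X^{-1}$ must be read as the generalized inverse $\inf\{t:F_X(t)\ge u\}$. The argument is unchanged as long as $n_1'/n_1+d(n_1,\delta)<1$; otherwise we set the value to the essential supremum, which still gives a valid upper bound.

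Continuity of $X$ also ensures that distance ties occur with probability zero. This is what allows using $\widehat E_1(\hat\tau_r^-)\le n_1'/n_1$ in the sandwich above.
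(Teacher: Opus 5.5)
Your proposal is correct and follows the paper's route. You reduce via Proposition~\ref{prop:FFRLC} to $|\mu_1-\mu|\ge\alpha$ and $|\nu_1-\mu|\le\varepsilon$, control the selection bias through the $\widehat F_1$/$\widehat E_1$ sandwich of Lemma~\ref{lem:bias_selection} combined with the $\gamma_X$ tail event and the DKW inequality (each allotted $\delta/2$), and finish with triangle inequalities; the only cosmetic difference is that you decompose $\mu-\nu_1$ around $\hat\nu_1$ as in Proposition~\ref{th:selective}, whereas the paper bounds $|\hat\mu_1'-\nu_1|\le F_X^{-1}(\cdot)+2\gamma_X(n_2,1,\delta)$ and reuses the random-removal decomposition, which gives the same $\varepsilon_m(S)$, and your treatment of inverting $F_X$ for bounded-support noise is more careful than the paper's footnote.
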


\begin{proof}
    The proof is the same as that of the finite sample Gaussian result \ref{th:random}, since from \ref{prop:FFRLC}, we are in search of $(\alpha, \ve)$ pairs so that $T(p,p_1) \leq f_d \leftrightarrow |\mu_1-\mu|_2 \geq \alpha  $ and $T(p,q_1) \geq f_c \leftrightarrow |\nu_1-\mu|_2 \leq \ve $. Similarly, if $\alpha >\Delta +\ve$, then the feasible region is itself empty \ref{prop:FFRLC}.
    Now, we recall that 
    \[
    p_1 \overset{d}{=} \mu_1+X, q_1 \overset{d}{=} \nu_1+ X \in \cP= \{p_{\mu} \overset{d}{=}\mu+X: \mu \in \R\}.
    \] 
    From our assumption, we are given $n_1$ IID samples $x_1^{(1)}, \ldots, x_1^{(n_1)}$ from $p_1$ and $n_2$ IID samples $x_2^{(1)}, \ldots, x_2^{(n_2)}$  from $q_1$. Let $\Delta:=|\nu_1-\mu_1|$. The distance-based selection removal algorithm \ref{algo:SR} removes $n_r \leq n_1$ selected samples from the unwanted distribution $p_1$ with the $n_r$ largest distances to $\hat{\nu}_1 \coloneqq \tfrac{1}{n_2} \sum_{i=1}^{n_2} x_2^{(i)}$ the empirical estimator of the mean of $q_1$.
    We denote by $x_1^{(1:n_1)}, \ldots, x_1^{(n_1:n_1)}$ the original $n_1$ samples from $p_1$ reordered by increasing distance to $\hat{\nu}_1$:
    \begin{align}
        ||x_1^{(1:n_1)} - \hat{\nu}_1||_2
        \leq  \ldots \leq ||x_1^{(n_1:n_1)} - \hat{\nu}_1||_2,
    \end{align}
    with ties broken arbitrarily, then distance-based selection retains only $x_1^{(1:n_1)}, \ldots, x_1^{(n_1':n_1)}$ to obtain
    \begin{equation}
        \label{eq:selectionSRLC}
        \hat{\mu}_1' \coloneqq \tfrac{1}{n_1'} \sum_{i=1}^{n_1'} x_1^{(i:n_1)}.
    \end{equation}
  Subsequently, we set the center $\mu$ of the unlearned distribution $p$ to be the following weighted mean.
    \begin{equation}
        \mu = \frac{n_1' \hat{\mu}_1' + n_2 \hat{\nu}_1}{n_1' + n_2},
    \end{equation}
    where $\hat{\mu}_1 = \tfrac{1}{n_1'} \sum_{i=1}^{n_1'} x_1^{(i:n_1)}$ and $\hat{\nu}_1 = \tfrac{1}{n_2} \sum_{i=1}^{n_2} x_2^{(i)}$ are the empirical mean estimators.
    \begin{equation} \label{eq:empmeanLCSR}
        \mu = \frac{n_1' \hat{\mu}_1' + n_2 \hat{\nu}_1}{n_1' + n_2}, \text{ where }
        \hat{\mu}_1' := \tfrac{1}{n_1'} \sum_{i=1}^{n_1'} x_1^{(i)} \text{ and } \hat{\nu}_1 := \tfrac{1}{n_2} \sum_{i=1}^{n_2} x_2^{(i)}.
    \end{equation}

  From the assumption on the quantity $\gamma_X(n,1,\delta)$ analogous to the Laurent-Massart  bound~\ref{lem:hoeffding}, and from the definition of the quantity $F_X$, using  the corresponding Dvoretzky–Kiefer–Wolfowitz inequality \ref{lem:DKW}, we have the following with probability at least $1- \delta$, using the union bound
    \begin{align} \label{eq:hoeffdingLCSR}
        |\hat{\mu}_1' - \nu_1| \leq 
        F_X^{-1}\left(\frac{n_1'}{n_1} +d(n_1, \delta)\right) + 2\gamma_X(n_2, 1,\delta), \quad |\hat{\nu}_1 - \nu_1| \leq  \gamma_X(n_2, 1,\delta)
    \end{align}

    Now, one can repeat the arguments of the \textbf{Preservation}  and the  \textbf{Removal} bounds \ref{th:randomLC} to conclude.
    \end{proof}

\subsection{Random removal versus selective removal for shifted Gaussians}
\label{app:simple-selective}

Now, we analyze when the high-probability upper bounds for the Pareto frontiers of the Selective removal algorithm \ref{algo:SR} perform better than those of the Random removal algorithm \ref{algo:RR}. For simplicity, we focus on the one-dimensional $(d=1)$ shifted Gaussian family.

\samples*
\samplesselection*

\begin{remark}[Comparison of $\ve_m(S)$ and $\ve_m(R)$] \label{rem:CSROGH}
We compare when $\ve_m(S)\leq \ve_m(R)$. This is essentially \footnote{We mention this because we know the high probability behavior of the random removal algorithm (Remark \ref{rem:ICGRR}), and therefore a necessary lower bound for $\ve$ for the random removal to be able to satisfy the preservation bound with high probability. However, we have only been able to obtain a one-sided or sufficient upper bound for the selective removal algorithm to satisfy the preservation bound with high probability. Said differently, it would be interesting to find a tighter lower bound than $\ve_m(S)$ or prove that $\ve_m(S)- o(1)$ is a necessary lower bound for the selective removal algorithm to satisfy the preservation bound with high probability.} the situation, where the selective removal allows a smaller preservation level than the random removal algorithm. For simplicity, we assume $d=1$ and $\sigma=1$. From \eqref{eq:tighterbound} we recall that in $\ve_m(S)$  we could replace $2 \gamma(n_2,d,\delta)$ with a tighter bound $ \gamma(n_2,d,\delta) + \frac{n_1'}{n_1'+n_2} \gamma(n_2,d,\delta)$

\begin{align}
&
\frac{n_1'}{n_1'+n_2}\left(F_1^{-1}\left(\frac{n_1'}{n_1} +d(n_1, \delta)\right)\right)
    + 
 \gamma(n_2,d,\delta) + \frac{n_1'}{n_1'+n_2} \gamma(n_2,d,\delta)
\\
& 
\leq 
\frac{n_1'  \Delta }{n_1' + n_2} + \frac{n_1' \gamma(n_1',d,\delta) + n_2 \gamma(n_2, d,\delta) }{n_1' + n_2} \text{ or equivalently}
\\
&
F_1^{-1}\left(\frac{n_1'}{n_1} +d(n_1, \delta)\right)
\leq \Delta + \gamma(n_1',d,\delta) - 2\gamma(n_2,d,\delta)  \leftrightarrow
\\
&
\left(\frac{n_1'}{n_1} +d(n_1, \delta)\right) \leq F_1(\Delta + \gamma(n_1',d,\delta) - 2\gamma(n_2,d,\delta))
\end{align}
Recall $F_1(t)= \mathbb{P}[\|Z + \mu_1 - \nu_1\|_2 \leq t]$. In $d=1$, (without loss of generality $\Delta= \mu_1-\nu_1>0$) 
\[
F_1(t)
= 
\mathbb{P}[-t\leq Z + \mu_1 - \nu_1 \leq t]
= 
\mathbb{P}[-t -\Delta \leq Z \leq t -\Delta] \text{ for } Z \sim N(0,1).
\]
Now, taking $t= \Delta \gamma(n_1',d,\delta) - 2\gamma(n_2,d,\delta)$ in the above we need to have (throughout $d=1$) 
\[
\frac{n_1'}{n_1} +d(n_1, \delta)
\leq 
\mathbb{P}[-2\Delta - \gamma(n_1',d,\delta) + 2\gamma(n_2,d,\delta) \leq Z \leq  \gamma(n_1',d,\delta) - 2\gamma(n_2,d,\delta)]
\]
\[
\frac{n_1'}{n_1} +d(n_1, \delta)
\leq 
\mathbb{P}[- \gamma(n_1',d,\delta) + 2\gamma(n_2,d,\delta)  \leq Z \leq   2\Delta + \gamma(n_1',d,\delta) - 2\gamma(n_2,d,\delta) ]
\]
\begin{equation} \label{eq:SCSR}
\leftrightarrow q \leq \mathbb{P}[A \leq Z \leq 2\Delta -A] = \Phi(2\Delta -A) - \Phi(A), \text{ where}
\end{equation}
\[
A :=  2 \gamma(n_2,1,\delta)-\gamma(n_1',1,\delta),
q
\coloneqq
\frac{n_1'}{n_1}+d(n_1,\delta),
\text{ and }
\Phi(t)=\mathbb{P}[Z \leq t]
\]

The above is a sufficient condition\footnote{In principle, one could obtain a tighter lower bound for $\ve$ in the selective removal algorithm.} on $\Delta$ on when one can obtain a smaller preservation level $\ve$ with selective removal than random removal. Given this sufficient condition  \eqref{eq:SCSR} a finite \(\Delta\) satisfying
\[
q
\leq
\Phi(2\Delta- A) - \Phi(A)
\text{
exists if and only if }
q < 1-\Phi(A). 
\]
\[
\text{Equivalently }
\Phi(A)
+
\sqrt{\frac{\log(2/\delta)}{2n_1}}
<
\frac{n_r}{n_1}.
\]

Now, recall that $\sqrt{n} \gamma(n, 1, \delta):= \sqrt{1+2\sqrt{1 \log (1/\delta)} + 2 \log (1/\delta)}$. So, $\sqrt{n}\gamma(n,1,\delta) \sim 1$ assuming $\delta$ is fixed or user specified and independent of all the other parameters $n_r,n_1,n_2$, and therefore $\Phi(A) \sim \Phi(0) \sim \frac{1}{2}$. As a consequence, we need to be in a regime where we remove at least half the unwanted samples.
\[
n_r
>
n_1
\Phi(A)
+
\sqrt{n_1\log(2/\delta)}
\sim \frac{n_1}{2}.
\]
Under this condition, the minimal finite value of separation \(\Delta = \|\mu_1- \nu_1\|_2\) required is given by
\[
2\Delta 
\geq 
2\Delta_{m}:
=
\left[
A+
\Phi^{-1}\!\left(\Phi(A)+q\right)
\right].
\]
provided that the following feasibility condition is satisfied.
\[
n_r
>
n_1
\Phi(A)
+
\sqrt{n_1\log(2/\delta)}
\sim \frac{n_1}{2}.
\]
As a consequence of the above, we have proven the following result of a sufficient condition for when the selective removal allows a smaller preservation level than the random removal.
\end{remark}

\begin{restatable}[Comparison of Selective and Random Removal on one dimensional Gaussians]{proposition}{CSRRODG}
\label{th:CSRRODG}
Let $p_1=N(\mu_1,1), q_1= N(\nu_1,1) \in \mathcal{P}= \{N(\mu,1): \mu \in \R\}$ and $\delta \in (0,1)$. We observe $n_1$ IID samples from $p_1$ and $n_2$ IID samples from $q_1$, and remove $n_r$ samples from $p_1$ before fitting a weighted MLE , according to \ref{algo:RR} and \ref{algo:SR}. Recall the notation\footnote{We use the tighter bound of $\ve_m(S)$ obtained in the proof of Proposition \ref{th:selective}.}
\begin{align*}
 \ve_m(R):= \frac{n_1'  \Delta }{n_1' + n_2} + \frac{n_1' \gamma(n_1',1,\delta) + n_2 \gamma(n_2, 1,\delta) }{n_1' + n_2}.
\end{align*}
\begin{align*}
\ve_m(S)
:=\frac{n_1'}{n_1'+n_2}\left(F_1^{-1}\left(\frac{n_1'}{n_1} +d(n_1, \delta)\right)\right)
    + 
 \gamma(n_2,1,\delta) + \frac{n_1'}{n_1'+n_2} \gamma(n_2,1,\delta)
\end{align*}
where $\Delta := |\mu_1- \nu_1|$, $n_1'= n_1- n_r$ and $\sqrt{n} \gamma(n, 1, \delta):= \sqrt{1+2\sqrt{1 \log (1/\delta)} + 2 \log (1/\delta)}$, and $F_1(t)= \mathbb{P}[|Z + \mu_1-\nu_1| \leq t]$.  
Now,  we have $\ve_m(S)\leq \ve_m(R)$ if and only if 
\begin{equation} \label{eq:Delta_m}
    2\Delta 
\geq 
2\Delta_{m}:
=
\left[
A+
\Phi^{-1}\!\left(\Phi(A)+q\right)
\right] \text{ provided }
\end{equation} 
\begin{equation} \label{eq:n_r}
n_r
>
n_1
\Phi(A)
+
\sqrt{n_1\log(2/\delta)}
\sim \frac{n_1}{2}, \text{ where}
\end{equation}
\begin{equation}
    A :=  2 \gamma(n_2,1,\delta)-\gamma(n_1',1,\delta)
\end{equation}
\end{restatable}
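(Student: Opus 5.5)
We reduce the comparison $\ve_m(S)\leq \ve_m(R)$ to an inequality on the CDF $F_1$ and then make $F_1$ explicit in $d=1$. This is essentially the computation sketched in Remark~\ref{rem:CSROGH}, but it must be organized so that each step is a genuine equivalence.

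\textbf{Step 1: cancel the common factor.} Using the tighter form of $\ve_m(S)$ from \eqref{eq:tighterbound}, multiply both sides of $\ve_m(S)\le\ve_m(R)$ by $(n_1'+n_2)>0$. The term $n_2\gamma(n_2,1,\delta)$ appears on both sides and cancels. What remains is $n_1'$ times an inequality, so dividing by $n_1'>0$ gives the equivalent condition
\[
F_1^{-1}(q)\le \Delta-A, \qquad q=\tfrac{n_1'}{n_1}+d(n_1,\delta),\quad A=2\gamma(n_2,1,\delta)-\gamma(n_1',1,\delta).
\]

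\textbf{Step 2: remove the inverse.} $F_1$ is continuous and strictly increasing on $[0,\infty)$ with range $[0,1)$. Hence $F_1^{-1}(q)$ is finite only if $q<1$, and in that case $F_1^{-1}(q)\le t$ is equivalent to $q\le F_1(t)$ whenever $t\ge0$. I would also treat the edge case $\Delta-A<0$: there the inequality fails, and the final condition also fails because its right side is nonpositive there. Writing $\Delta=|\mu_1-\nu_1|$ and using the symmetry of $Z$, we get $F_1(t)=\Phi(t-\Delta)-\Phi(-t-\Delta)$. Substituting $t=\Delta-A$ gives the equivalent condition
\[
q\le \Phi(2\Delta-A)-\Phi(A),
\]
after using $\Phi(-x)=1-\Phi(x)$ and reflecting the interval.

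\textbf{Step 3: solve for $\Delta$.} The map $\Delta\mapsto \Phi(2\Delta-A)-\Phi(A)$ is continuous and strictly increasing, with supremum $1-\Phi(A)$ as $\Delta\to\infty$. So a finite $\Delta$ satisfying the inequality exists iff $q<1-\Phi(A)$. Since $1-n_1'/n_1=n_r/n_1$, this is the same as $n_r/n_1>\Phi(A)+d(n_1,\delta)$, i.e.\ \eqref{eq:n_r}; here I would reconcile the constant inside $d(n_1,\delta)$ with the $\sqrt{n_1\log(2/\delta)}$ written there, noting this only affects the constant. Under this condition, monotonicity lets us invert: the inequality holds iff $2\Delta-A\ge\Phi^{-1}(\Phi(A)+q)$, i.e.\ $2\Delta\ge 2\Delta_m$, which is \eqref{eq:Delta_m}. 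The asymptotic statement $\sim n_1/2$ follows from $\gamma(n,1,\delta)=O(n^{-1/2})$, so $A\to0$ and $\Phi(A)\to1/2$.

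\textbf{Main obstacle.} The delicate part is making every step a true equivalence rather than a one-directional implication. This requires the sign condition $\Delta-A\ge0$ in Step 2, the requirement $q<1$ so that $F_1^{-1}$ is defined, and the strict versus non-strict boundary behaviour in Step 3. I would handle each of these explicitly and state the "if and only if" under the proviso \eqref{eq:n_r}.
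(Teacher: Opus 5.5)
Your proposal is correct and follows the same route as the paper's argument in Remark~\ref{rem:CSROGH}. You cancel the common $n_2\gamma(n_2,1,\delta)$ term to get $F_1^{-1}(q)\le\Delta-A$, invert $F_1$, and use the explicit one-dimensional form and reflection to reach $q\le\Phi(2\Delta-A)-\Phi(A)$. You then use monotonicity in $\Delta$. Your explicit handling of the sign of $\Delta-A$, the requirement $q<1$, and the non-attained supremum $1-\Phi(A)$ makes the equivalences cleaner than in the paper.

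On the constant you flagged, no reconciliation is needed. For $\delta\in(0,1)$ one has $\sqrt{n_1\log(2/\delta)}\ge n_1 d(n_1,\delta)$. So the stated proviso \eqref{eq:n_r} is stronger than the exact existence condition $n_r/n_1>\Phi(A)+d(n_1,\delta)$, and the equivalence holds under it as written.
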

\begin{proof}
    The proof is the same as given in Remark \ref{rem:CSROGH} above.
\end{proof}

\begin{remark}[Benefits of Selective removal over Random removal]
    We observe that in a one-dimensional shifted Gaussian family, a sufficient condition for the superior behavior of the Selective removal over the random removal requires us to be in the regime of removing at least more than half the unwanted samples. So, it would be interesting to observe if a tighter bound is possible.
\end{remark}

\newpage

\end{document}